\documentclass[12pt]{amsart}
\usepackage{amsmath}
\usepackage{amssymb}
\usepackage{graphicx}
\usepackage{pinlabel}
\usepackage[all]{xy}

\addtolength{\oddsidemargin}{-.875in}
	\addtolength{\evensidemargin}{-.875in}
	\addtolength{\textwidth}{1.75in}

	\addtolength{\topmargin}{-.875in}
	
	\addtolength{\textheight}{1.75in}

\usepackage{amsmath,amssymb,amsfonts,amsthm,graphicx}

\usepackage{pinlabel} 

\usepackage[usenames,dvipsnames]{color}

\usepackage[usenames,dvipsnames]{color}

\usepackage{hyperref}

\numberwithin{equation}{section}

\newtheorem{dummy}{dummy}[section]
\newtheorem{lemma}[dummy]{Lemma}
\newtheorem{theorem}[dummy]{Theorem}

\newtheorem{corollary}[dummy]{Corollary}
\newtheorem{proposition}[dummy]{Proposition}
\newtheorem{Q}[dummy]{Question}

\theoremstyle{definition}
\newtheorem{definition}[dummy]{Definition}

\newtheorem{example}[dummy]{Example}
\newtheorem{remark}[dummy]{Remark}

\newcommand{\R}{\mathbb {R}}

\newcommand{\C}{\mathbb {C}}
\newcommand{\Z}{\mathbb {Z}}

\newcommand{\M}{{\mathcal{M}}}

\newcommand{\T}{{\mathcal{T}}}
\newcommand{\e}{\epsilon}
\newcommand{\dd}{\partial}

\newcommand{\alg}{\mathcal{A}}

\newcommand{\fraka}{\mathfrak{a}}
\newcommand{\wt}{\widetilde}
\newcommand{\wh}{\widehat}
\newcommand{\ol}{\overline}

\newcommand{\symp}{\mathit{Symp}}

\newcommand{\dga}{\mathfrak{DGA}}
\newcommand{\leg}{\mathfrak{Leg}}

\newcommand{\sgn}{\mbox{sgn}}

\def\wt#1{\widetilde{#1}}
\def\ol#1{\overline{#1}}

\begin{document}

\title[Functorial LCH for immersed Lagrangian cobordisms]{Functorial LCH for immersed Lagrangian cobordisms}

\author{Yu Pan}

\author{Dan Rutherford}

\begin{abstract}

For $1$-dimensional Legendrian submanifolds of $1$-jet spaces, we extend the functorality of the Legendrian contact homology DG-algebra (DGA) from embedded exact Lagrangian cobordisms, as in \cite{EHK}, to a class of  immersed exact Lagrangian cobordisms by considering their Legendrian lifts as conical Legendrian cobordisms.    
To a conical Legendrian cobordism $\Sigma$ from $\Lambda_-$ to $\Lambda_+$, we associate an immersed DGA map, which is a diagram  
$$\alg(\Lambda_+) \stackrel{f}{\rightarrow} \alg(\Sigma) \stackrel{i}{\hookleftarrow} \alg(\Lambda_-),
$$
where $f$  is  a DGA map and $i$ is an inclusion map.
This construction gives a functor between suitably defined categories of Legendrians with immersed Lagrangian cobordisms and DGAs with immersed DGA maps.  In an algebraic preliminary, we consider an analog of the mapping cylinder construction in the setting of DG-algebras and establish several of its properties.  As an application we give examples of augmentations of Legendrian twist knots that can be induced by an immersed filling with a single double point but cannot be induced by any orientable embedded filling.

\end{abstract}

\maketitle

\section{Introduction}
For a Legendrian submanifold $\Lambda$ in a $1$-jet space $(J^1M, \ker \alpha)$, where $\alpha=dz -ydx$ and $x$ is the coordinate of the 
manifold $M$,
there is a useful invariant, the Legendrian contact homology (LCH) differential graded algebra (DGA) \cite{Che, Eli} $(\alg(\Lambda), \dd)$, associated to $\Lambda$.
  Using $\Z/2$ coefficients, the underlying algebra is non-commutative, generated by the Reeb chords of $\Lambda$ and can be equipped with a $\Z/n$-grading whenever $n$ is a divisor of the Maslov number of $\Lambda$ (which is $|2 \mathit{rot}(\Lambda)|$ when $\dim \Lambda = 1$).   
 The differential $\dd$ is defined by counting rigid holomorphic disks in the symplectization $\mathit{Symp}(J^1M)= (\R_t\times J^1M, d(e^t \alpha))$ with boundary on $\R_t \times \Lambda$.
 According to \cite{Ekh}, an alternative way to  define the differential is to count rigid gradient flow trees (GFTs).
The DGA $(\alg(\Lambda),\dd)$ is invariant up to stable tame isomorphism under Legendrian isotopy.
 
Following the principles of symplectic field theory \cite{EGH}, the construction of the Legendrian contact DGA is functorial with respect to embedded exact Lagrangian cobordisms.  See \cite{E2, EK, EHK}.
For  two Legendrian submanifolds $\Lambda_-$ and $\Lambda_+$ in $J^1M$,
an {\bf embedded exact Lagrangian cobordism} $L$ from $\Lambda_-$ to $\Lambda_+$ is an exact Lagrangian surface in $\mathit{Symp}(J^1M)$ that is cylindrical over $\Lambda_+$ and $\Lambda_-$ near the positive end and the  negative end, respectively.  
An embedded exact Lagrangian cobordism $L$ from $\Lambda_-$ to $\Lambda_+$ induces a DGA map from  $\alg(\Lambda_+)$ to  $\alg(\Lambda_-)$.
Moreover, if the cobordism changes by an exact Lagrangian isotopy, the induced DGA map changes by a  DGA homotopy.
The induced DGA maps are compatible with composition in the sense that the induced DGA map of a concatenation of two cobordisms is DGA homotopic to the composition of the DGA maps induced by each.
In summary, we have two categories:  a category $\leg$ whose objects are Legendrian submanifolds in $J^1M$ and morphisms are embedded exact Lagrangian cobordisms in $\mathit{Symp}(J^1M)$ up to exact Lagrangian isotopy; and a category $\dga$ whose objects are finitely generated, triangular DGAs (see definition in Section \ref{sec:triDGA}) 
and morphisms are DGA maps up to DGA homotopy.
 There is a contravariant functor from $\leg$ to $\dga$  that sends a Legendrian submanifold to its DGA and sends an embedded exact Lagrangian cobordism to the induced DGA map; see \cite{E2, EK, EHK}. 

\begin{remark}\label{Maslov}  Strictly speaking, to have a well defined functor, the objects of $\mathfrak{Leg}$ need to be equipped with some auxiliary data, eg. a choice of regular almost complex structure  or metric depending on whether holomorphic disks or GFTs are used for the differential on $\mathcal{A}(\Lambda)$, as well as a choice of Maslov potential to define the grading.   
Moreover, in order for the induced maps to preserve grading,  a cobordism $L$ should also be equipped with a Maslov potential extending those of $\Lambda_-$ and $\Lambda_+$. 
For a Legendrian submanifold $\Lambda$, a $\Z/n$-grading on the Legendrian contact DGA arises from a choice of $\Z/n$-valued Maslov potential for $\Lambda$, and such a Maslov potential exists only when $n$ divides the Maslov number of each component of $\Lambda$  (see Section \ref{sec:LCH}).
In order to simplify statements, in the remainder of the introduction we consider only Legendrians and Lagrangians of Maslov number $0$, so that all DGAs have $\Z$-gradings that are preserved by induced maps.  The main body of the article addresses the general $\Z/n$-graded case.    
\end{remark}

In this article, we extend the  functoriality of the LCH DGA to a wide class of {\it immersed} exact Lagrangian cobordisms.
We say that an immersed exact Lagrangian cobordism $L$ is a {\bf good Lagrangian cobordism}  if the Legendrian lift  of $L$ in $\mathit{Symp}(J^1M)\times \R$ is an embedded Legendrian surface (see definition in Section \ref{sec:good}).
Equivalently, one can carry $L$ through a symplectomorphism $\Phi: \mathit{Symp}(J^1M) \to T^*(\R_{>0}\times M)$ and lift the image to an embedded Legendrian surface $\Sigma$ in $J^1(\R_{>0}\times M)$.
This Legendrian surface has conical ends over $\Lambda_-$ and $\Lambda_+$ at the negative end and the positive end, respectively,  and is called a {\bf conical Legendrian cobordism} from $\Lambda_-$ to $\Lambda_+$ (see Definition \ref{def:conical}).

We extend the functorality of  the DGA from embedded exact Lagrangian cobordisms to conical Legendrian cobordisms in the following way.

\begin{theorem}\label{thm:main}  Let $\Lambda_-, \Lambda_+ \subset J^1M$ be closed, $1$-dimensional Legendrian submanifolds, and let $\Sigma$ be a conical Legendrian cobordism from $\Lambda_-$ to $\Lambda_+$ equipped with a suitable metric $g$ (see Section \ref{sec:functorI}).
Then, there is a DGA $\alg(\Sigma)$ associated to $\Sigma$ that is invariant up to stable tame isomorphism under conical Legendrian isotopy of $(\Sigma,g)$.
Moreover, 
 the cobordism $\Sigma$ induces two DGA maps $f, i$
 \begin{equation} \label{eq:immDGA}
 \alg(\Lambda_+) \stackrel{f}{\rightarrow} \alg(\Sigma) \stackrel{i}{\hookleftarrow} \alg(\Lambda_-),
 \end{equation}
 where $i$ is induced by an inclusion map on generators.
  The diagram \eqref{eq:immDGA} is called an {\bf immersed DGA map} from   $\alg(\Lambda_+)$ to  $\alg(\Lambda_-)$  (see  Definition \ref{def:immDGAmap}).

Moreover, suppose $\Sigma$ and $\Sigma'$ are two conical Legendrian cobordisms from $\Lambda_-$ to $\Lambda_+$ that are related by a conical Legendrian isotopy. Then, their induced immersed DGA maps are {\bf immersed homotopic} (see Definition \ref{def:homotopy}),
i.e. there is a stable tame isomorphism
$\varphi: \alg(\Sigma)*S\to \alg(\Sigma')*S'$ 
\[
\xymatrix{ & \alg(\Sigma)*S   \ar[dd]^\varphi   & \\ \alg(\Lambda_+) \ar[ru]^f \ar[rd]_{f'} & & \alg(\Lambda_-) \ar[lu]_i \ar[ld]^{i'} \\ & \alg(\Sigma')*S' } 
\]
such that 
\begin{itemize}
\item $\varphi\circ i=i'$; and
\item $\varphi\circ f \simeq f'$ (DGA homotopy).
\end{itemize}
\end{theorem}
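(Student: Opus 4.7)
The plan is to treat $\Sigma$ as an embedded Legendrian surface in $J^1(\R_{>0}\times M)$ and apply the Chekanov--Eliashberg construction to it directly, keeping careful track of the conical noncompactness at the two ends. First I would fix a metric $g$ on $\Sigma$ that is cylindrical over the metrics used on $\Lambda_\pm$, so that GFTs on $\Sigma$ restrict in the ends to GFTs of $\Lambda_\pm$. Define $\alg(\Sigma)$ as the tensor algebra on Reeb chords of $\Sigma$, which split into three disjoint sets: chords in the negative end (in bijection with those of $\Lambda_-$), chords in the positive end (in bijection with those of $\Lambda_+$), and interior chords in the cobordism region. The differential counts rigid GFTs as in \cite{Ekh}; compactness on the noncompact $\Sigma$ uses the conical structure to prevent escape of trees to infinity, and invariance under conical Legendrian isotopy follows from a standard bifurcation analysis producing a stable tame isomorphism from a generic one-parameter family (handleslides, births, and deaths).

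For the second part, I would define $i$ to be the obvious inclusion sending each $\Lambda_-$-chord to itself regarded as a $\Sigma$-chord. The crucial verification is that $i$ commutes with the differentials: a rigid GFT on $\Sigma$ whose asymptotic chords all lie in $\Lambda_-$ must, by an action or energy argument, be confined to the negative cylindrical end $\{t\le -T\}\times \Lambda_-$, so it coincides with a GFT defining $\dd_{\Lambda_-}$. The map $f$ is defined by counting rigid GFTs on $\Sigma$ with one positive puncture at a chord of $\Lambda_+$ and any number of negative punctures at generators of $\alg(\Sigma)$, mirroring the cobordism construction in \cite{EHK}. The relation $f\circ \dd_{\Lambda_+} = \dd_\Sigma \circ f$ is then proved in the usual way by identifying its two sides with the boundary of the $1$-dimensional moduli of such GFTs.

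For the isotopy statement, given a conical Legendrian isotopy $\{\Sigma_s\}_{s\in[0,1]}$ with metrics $\{g_s\}$ connecting $\Sigma$ to $\Sigma'$, a generic parameter argument reduces the family to finitely many births, deaths, and handleslides on interior generators, each producing an elementary stabilization or tame isomorphism whose composition gives a stable tame isomorphism $\varphi:\alg(\Sigma)*S\to \alg(\Sigma')*S'$. Conicality of the isotopy forces it to be stationary near the two ends, so $\varphi$ acts as the identity on $\Lambda_\pm$-generators; in particular $\varphi\circ i = i'$ holds by construction. The DGA homotopy $\varphi\circ f\simeq f'$ is obtained by counting rigid GFTs in the parametrized moduli over $s\in[0,1]$ with one positive $\Lambda_+$-puncture; boundary analysis of the resulting $1$-dimensional parametrized moduli space produces the standard chain homotopy relation between $\varphi\circ f$ and $f'$.

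The main obstacle is precisely the ``no-escape'' statement underlying $\dd_\Sigma\circ i = i\circ \dd_{\Lambda_-}$ (and its positive analog, needed to show $f$ lands correctly in $\alg(\Sigma)$): a rigid GFT on the noncompact surface $\Sigma$ whose asymptotics all sit at $\Lambda_-$-chords must be confined to the negative cylindrical end, even though $\Sigma$ has complicated interior geometry and a positive cylindrical end as well. Establishing this rigorously requires exploiting the cylindrical structure of $(\Sigma,g)$ to derive a maximum-principle-type estimate, analogous to the symplectization action/energy inequality for holomorphic curves, and then propagating the same confinement statement to parametrized moduli for the isotopy argument. The surrounding compactness and transversality analysis for GFTs on a conical $\Sigma$ is routine modulo this point, but the confinement estimate is the real technical heart of the theorem.
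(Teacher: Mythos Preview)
There is a genuine gap at the very first step. A conical Legendrian cobordism has \emph{no} Reeb chords in its conical ends: on $j^1(s\cdot\Lambda_\pm - A_\pm)$ the local difference functions are $F_{i,j}(s,x)=s\,f_{i,j}(x)$, and since $\partial_s F_{i,j}=f_{i,j}(x)>0$ at any Reeb chord of $\Lambda_\pm$, there are no critical points with positive critical value. So your proposed generating set---with chords at the positive end in bijection with those of $\Lambda_+$ and chords at the negative end in bijection with those of $\Lambda_-$---does not exist. The paper's fix is to pass from $\Sigma$ to a \emph{Morse cobordism} $\widetilde\Sigma$, replacing the conical ends with $j^1(h_\pm(s)\cdot\Lambda_\pm)$ where $h_-$ has a nondegenerate minimum at $s_-$ and $h_+$ a nondegenerate maximum at $s_M$. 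This does create Reeb chords at both ends, but asymmetrically: the chords at $s_-$ have the same grading as those of $\Lambda_-$, while the chords $\widehat a_i$ at $s_M$ satisfy $|\widehat a_i|=|a_i|+1$. This grading shift is exactly why $\alg(\Sigma)$ is defined to be generated only by the interior chords together with the $\Lambda_-$-chords, and why $f$ is a genuine cobordism map (counting trees from the shifted $\widehat a_i$ down into $\alg(\Sigma)$) rather than an inclusion.

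The second issue is that your direct ``boundary of the $1$-dimensional moduli'' argument for $f\circ\partial_{\Lambda_+}=\partial_\Sigma\circ f$ breaks down: the metric on $\widetilde\Sigma$ is \emph{not} regular in the slice $\{s=s_M\}$, because the grading shift causes rigid trees of $\Lambda_+$ to acquire formal dimension $1-m$ (possibly negative) when viewed in $\widetilde\Sigma$. The paper avoids a direct compactness argument here by constructing a further Legendrian $\overline\Sigma$ (with a min--max--min profile) whose LCH DGA is a \emph{mapping cylinder DGA} for $f_\Sigma$ in the sense of Section~\ref{sec:mapc}; then $\partial_{\overline\Sigma}^2=0$ together with Proposition~\ref{prop:DGAmap} gives that $f_\Sigma$ is a chain map, and the invariance statement is deduced from the Legendrian invariance of $\alg(\overline\Sigma)$ via the algebraic Propositions~\ref{prop:unique} and~\ref{prop:DGAhmtp}. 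Finally, your claim that a conical isotopy is stationary near the ends is false (the constants $A_\pm$ can drift); the paper reduces to compactly supported isotopy via Lemma~\ref{lem:diffconandcom} by concatenating with standard cobordisms of the form $j^1(s\cdot\Lambda_+ - h(s))$.
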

\noindent Theorem \ref{thm:main} is restated in several  parts (Theorems \ref{prop:SigmaDGA}, \ref{DGAmap}, \ref{inva} and Corollary \ref{cor:DGAinva}) in Section \ref{sec:functorI} which are then proven in Section \ref{sec:proof}.  
We remark that the notion of an immersed DGA map is similar to the bordered DGAs introduced by Sivek in \cite{Sivek}.  Moreover, Sivek's work can be viewed as studying the case of $1$-dimensional conical Legendrian cobordisms between $0$-dimensional Legendrian submanifolds.  

 The construction from Theorem \ref{thm:main} may be summarized as follows.  The  
 DGA $\alg(\Sigma)$ is 
generated by Reeb chords of $\Sigma$ and Reeb chords of $\Lambda_-$,
with the map $i: \alg(\Lambda_-) \hookrightarrow \alg(\Sigma)$ induced by the inclusion of generators.
 For the differential $\dd_{\Sigma}$ and the map $f$, we consider a ``Morse cobordism'' $\wt{\Sigma}$, as in \cite{EHK}, obtained from replacing the conical ends of $\Sigma$ with Morse ends of a standard form.  
The Reeb chords of $\wt{\Sigma}$ are the union of Reeb chords of $\Sigma$, $\Lambda_+$ and $\Lambda_-$. 
 Both the differential $\dd_{\Sigma}$ and the map $f$ are 
  defined
  by counting rigid gradient flow trees (GFTs) of $\wt{\Sigma}$.
 To prove that  $f$ is a DGA map and to establish the invariance statement, we 
consider a further Legendrian surface $\ol{\Sigma}$ and show that its DGA is related to the map $f: \alg(\Lambda_+) \rightarrow \alg(\Sigma)$ via a mapping cylinder construction in the setting of DGAs.  (See Section \ref{sec:mapc} for a development of the relevant algebra.)   
Using the algebraic results of Section \ref{sec:mapc}, we see that $\partial_{\ol{\Sigma}}^2=0$  implies that $f$ is a DGA map, and that the   Legendrian isotopy invariance of $\mathcal{A}(\ol\Sigma)$ can be used to establish that the immersed DGA map $\alg(\Lambda_+) \stackrel{f}{\rightarrow} \alg(\Sigma) \stackrel{i}{\hookleftarrow} \alg(\Lambda_-)$, considered up to immersed homotopy, is an invariant of $\Sigma$.

The 
induced immersed DGA map from Theorem \ref{thm:main}  for the concatenation of two cobordisms is immersed homotopic to the composition (suitably defined, see Definition \ref{def:Comp}) of the immersed DGA maps induced by each cobordism.  This is established in Theorem \ref{thm:con}.
To formulate the construction of Theorem \ref{thm:main} as a functor, we 
consider immersed versions of the Legendrian and DGA categories, $\leg_{im}$ and $\dga_{im}$.
The objects in $\leg_{im}$ are 1-dimensional Legendrian submanifolds $\Lambda$ in $J^1M$ (equipped 
with choices of regular metrics and $\Z$-valued Maslov potentials).
A morphism between two objects is a conical Legendrian cobordism, $\Sigma$, (with Maslov potential) considered up to conical Legendrian isotopy. On the other side, the  DGA category $\dga_{im}$ has  $\Z$-graded, finitely generated, triangular DGAs over $\Z/2$ as objects and morphisms are 
immersed DGA maps up to immersed homotopy.  
Thus we have a map $F: \leg_{im}\to \dga_{im}$ that sends a Legendrian to its DGA 
 and a conical Legendrian cobordism to the induced immersed DGA map.
 This construction extends the functor from \cite{EHK} in the following sense.

\begin{theorem} 
The map $F: \leg_{im}\to \dga_{im}$ is a functor.  Moreover, 
 when a good Lagrangian cobordism $L$ is embedded, the DGA $\alg(\Sigma)$ is $\alg(\Lambda_-)$ and the $f$ map is DGA homotopic to the induced  DGA map  introduced in \cite{EHK}. 
\end{theorem}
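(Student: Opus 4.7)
The plan is to separate the statement into three independent pieces: well-definedness of $F$ on morphism classes, preservation of the categorical structure (identities and composition), and the direct comparison with the construction of \cite{EHK} in the embedded case. Each piece either follows from a result already stated in the excerpt or reduces to a concrete local computation on the trivial cylinder or an embedded Morse cobordism.

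For functoriality, well-definedness on objects follows from the stable tame isomorphism invariance of $\alg(\Lambda)$, and well-definedness on morphisms is immediate from the second part of Theorem \ref{thm:main}, which asserts exactly that conically isotopic cobordisms induce immersed homotopic immersed DGA maps. Compatibility with composition is Theorem \ref{thm:con}: the immersed DGA map attached to a concatenation is immersed homotopic to the composition (in the sense of Definition \ref{def:Comp}) of the individual immersed DGA maps. For identity morphisms, I would take the trivial conical cobordism $\Sigma = \R\times\Lambda$, equipped with a product metric. Its Lagrangian projection has no double points, so the generators of $\alg(\Sigma)$ coincide with the Reeb chords of $\Lambda$ and $i$ is the identity inclusion. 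After passing to the associated Morse cobordism $\wt{\Sigma}$, the only rigid GFTs are the trivial ones running straight along the cylindrical direction, and I would check that these contribute precisely the identity to the count defining $f$. This gives the identity immersed DGA map $\alg(\Lambda)\xrightarrow{\mathrm{id}}\alg(\Lambda)\xhookleftarrow{\mathrm{id}}\alg(\Lambda)$, as required.

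For the embedded comparison, the hypothesis that $L$ is embedded means the Legendrian lift $\Sigma$ has no Reeb chords arising from double points of $L$, so every Reeb chord of $\Sigma$ comes from the cylindrical negative end and is recorded in the inclusion $i:\alg(\Lambda_-)\hookrightarrow \alg(\Sigma)$. This forces $\alg(\Sigma)=\alg(\Lambda_-)$ with $i$ the identity. The map $f$ is defined by counting rigid GFTs on the Morse cobordism $\wt{\Sigma}$ exactly as in \cite{EHK}, so the two maps agree as soon as the auxiliary data (metric, perturbation of the Morse ends) match. In general, the two constructions may be set up with slightly different data, so I would produce the DGA homotopy by a standard 1-parameter bifurcation argument: interpolate between the two choices of regular metric on $\wt{\Sigma}$ and count rigid elements in the resulting parameterized moduli space of GFTs to produce a chain homotopy operator, exactly as in the proof of metric-invariance of $\partial_\Sigma$ and $f$ that underlies Theorem \ref{inva}.

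The main obstacle will be verifying the identity case and the embedded comparison at the level of GFT counts, since the main body of the paper treats these via parametric moduli arguments rather than explicit formulas. Composition is already black-boxed into Theorem \ref{thm:con}, and well-definedness into Theorem \ref{thm:main}; what remains is the concrete verification that trivial cylinders contribute only identity GFTs and that the $f$ map constructed here is, up to a choice of regular data, literally the map of \cite{EHK}. Both are local transversality-and-interpolation arguments of the same type used throughout Section \ref{sec:proof}, and I would reuse that machinery verbatim rather than re-derive it.
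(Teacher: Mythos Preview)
Your overall decomposition is correct and matches the paper: well-definedness on morphisms is Theorem~\ref{thm:main} (equivalently Theorem~\ref{inva}), compatibility with composition is Theorem~\ref{thm:con}, and the identity cobordism is handled by an explicit GFT computation on $\widetilde{\Sigma}=j^1(h(s)\cdot\Lambda)$. The paper's identity computation (Proposition~\ref{prop:imfunctor}) is exactly what you sketch, with the dimension count $\dim\Gamma=\dim\Gamma_M+1$ forcing the only rigid trees to be the single edges constant in the $M$-direction.

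There is, however, a mischaracterization in your plan for the embedded comparison. You propose to obtain the DGA homotopy between $f_\Sigma$ and the map of \cite{EHK} by a ``1-parameter bifurcation argument\ldots\ count rigid elements in the resulting parameterized moduli space of GFTs\ldots\ exactly as in the proof of metric-invariance\ldots\ that underlies Theorem~\ref{inva}.'' But that is \emph{not} how Theorem~\ref{inva} is proved in this paper. The paper deliberately avoids parametric GFT moduli spaces and abstract perturbations; invariance is established instead by realizing the map $f_\Sigma$ as the data of a mapping cylinder DGA $\mathcal{A}(\overline{\Sigma})$ and then applying the purely algebraic uniqueness and homotopy results of Section~\ref{sec:mapc} (Propositions~\ref{prop:unique} and~\ref{prop:DGAhmtp}). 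So the machinery you plan to ``reuse verbatim'' from Section~\ref{sec:proof} is not actually there.

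The paper's route to the embedded comparison is shorter and does not require a new bifurcation argument: \cite{EHK} themselves show (their Theorem~1.6) that, for suitable almost complex structures and after fiber rescaling, their holomorphic cobordism map $\Phi_{L,J}$ agrees with a count of rigid GFTs on the associated Morse cobordism---literally the same count defining $f_\Sigma$. Hence the two maps coincide for one choice of data; since each is independently known to be invariant up to DGA homotopy (by \cite{EHK}'s own invariance on one side and Theorem~\ref{inva} on the other), they are DGA homotopic for all choices. Your bifurcation approach would also work in principle, but it would require analytic input the paper does not develop.
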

\noindent The functor $F$ (in the more general $\Z/n$-graded case for $n\geq 0$) is established in Section \ref{sec:functor}, and Section \ref{sec:embed} addresses the case of embedded cobordisms and makes the connection with the constructions from \cite{EHK}.

In extending functoriality to the setting of good Lagrangian cobordisms, we obtain an interesting corollary about the induced maps $f$ in the embedded case.  

\begin{corollary} \label{cor:L}
If $L$ and $L'$ are two embedded exact Lagrangian cobordisms from $\Lambda_-$ to $\Lambda_+$ whose Legendrian lifts are Legendrian isotopic, then the induced DGA maps $f$ and $f'$ are DGA homotopic.
\end{corollary}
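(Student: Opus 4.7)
The plan is to combine the invariance statement of Theorem \ref{thm:main} with the identification of the cobordism DGA in the embedded case provided by the functor theorem stated above.

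By hypothesis the Legendrian lifts $\Sigma$ and $\Sigma'$ of $L$ and $L'$ are Legendrian isotopic, so Theorem \ref{thm:main} yields a stable tame isomorphism
$$\varphi:\alg(\Sigma)*S\longrightarrow\alg(\Sigma')*S'$$
with $\varphi\circ i=i'$ and $\varphi\circ\iota\circ f\simeq\iota'\circ f'$, where $\iota,\iota'$ denote the stabilization inclusions $\alg(\Sigma)\hookrightarrow\alg(\Sigma)*S$ and $\alg(\Sigma')\hookrightarrow\alg(\Sigma')*S'$. The functor theorem identifies $\alg(\Sigma)=\alg(\Lambda_-)=\alg(\Sigma')$ in the embedded case, and under these identifications both $i$ and $i'$ are the identity on $\alg(\Lambda_-)$. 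Thus the condition $\varphi\circ i=i'$ becomes exactly $\varphi\circ\iota=\iota'$ as DGA maps $\alg(\Lambda_-)\to\alg(\Lambda_-)*S'$.

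To pass from the stable tame isomorphism back to a DGA homotopy at the unstabilized level, introduce the projections $\pi:\alg(\Lambda_-)*S\to\alg(\Lambda_-)$ and $\pi':\alg(\Lambda_-)*S'\to\alg(\Lambda_-)$ that kill all stabilization generators. These are DGA maps because the added generators occur in acyclic $\dd$-pairs, and they satisfy $\pi\circ\iota=\mathrm{id}=\pi'\circ\iota'$. Then
$$f=\pi'\circ\iota'\circ f=\pi'\circ\varphi\circ\iota\circ f\simeq\pi'\circ\iota'\circ f'=f',$$
where the middle equality uses $\varphi\circ\iota=\iota'$ and the homotopy uses the DGA homotopy $\varphi\circ\iota\circ f\simeq\iota'\circ f'$ post-composed with the DGA map $\pi'$. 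This is the desired DGA homotopy between $f$ and $f'$.

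The main (and only) delicate point is verifying that the stable tame isomorphism $\varphi$ furnished by Theorem \ref{thm:main} does restrict to the identity on the common subalgebra $\alg(\Lambda_-)$ in the embedded case; once the condition $\varphi\circ i=i'$ is unpacked via the functor theorem's identification $\alg(\Sigma)=\alg(\Lambda_-)=\alg(\Sigma')$, the remainder of the argument is purely formal and reduces to the chain of relations displayed above.
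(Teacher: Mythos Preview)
Your proof is correct and follows essentially the same approach as the paper. The paper packages this argument categorically: it shows that the inclusion functor $I:\mathfrak{DGA}^n\to\mathfrak{DGA}^n_{im}$ is injective on hom-spaces (Proposition~\ref{prop:InclusionFunctor}) and then observes that Corollary~\ref{cor:L} follows immediately from the commutative diagram~(\ref{eq:FunctorDiag}), but the proof of that injectivity is exactly the chain of identities you wrote out---composing with the stabilization projection $\pi'$ and using $\varphi\circ\iota=\iota'$ to collapse $\pi'\circ\varphi\circ\iota$ to the identity.
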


According to \cite{EHK}, if two exact Lagrangian cobordisms are exact Lagrangian isotopic, i.e. isotopic through exact Lagrangians,  then the induced DGA maps are DGA homotopic. 
On the face of it, the condition in Corollary \ref{cor:L} appears to be much weaker since a Legendrian isotopy between the Legendrian lifts of $L$ and $L'$ may pass through Legendrians whose Lagrangian projections are non-embedded.

\begin{Q}\label{Q:strong}
Is there an example of two embedded exact Lagrangian cobordisms which are not exact Lagrangian isotopic but have Legendrian lifts that are Legendrian isotopic?
\end{Q}

In practice, it is often the induced DGA maps $f$ that are used to differentiate cobordisms up to exact Lagrangian isotopy; see \cite{EHK, Pan2}.  However, Corollary \ref{cor:L} shows that obtaining a positive answer to Question \ref{Q:strong} would require a stronger invariant.

\subsection{Immersed cobordisms and augmentations}  
An {\bf augmentation}, $\e$, of $\alg(\Lambda)$ to $\Z/2$ is a DGA map  $\e: (\alg(\Lambda), \dd)\to (\Z/2, 0)$, where the latter chain complex has $\Z/2$ in degree $0$ and is $0$ in the other degrees.  For embedded cobordisms there is a well known connection with augmentations of the Legendrian contact DGA:  by functoriality, an exact Lagrangian filling, i.e. a cobordism with $\Lambda_-= \emptyset$, induces an augmentation $\e=f:\alg(\Lambda_+) \rightarrow \alg(\Lambda_-) = \Z/2$.
However, not all augmentations come from embedded exact Lagrangian fillings since, eg.,  
the Seidel isomorphism \cite{EkhSFT, DR} gives a very restrictive condition on the linearized contact homology, which is an invariant of augmentations of Legendrian submanifolds.  
One motivation for studying immersed Lagrangians is to be able to extend this construction 
of augmentations via fillings to a wider class of augmentations.   In order to use a good (immersed) Lagrangian filling $L$ of $\Lambda$ to induce an augmentation, an extra ingredient is required: a choice of augmentation for the conical Legendrian lift of $L$.  
According to the functoriality, a good Lagrangian filling $L$ of $\Lambda$ with conical Legendrian lift $\Sigma$ induces a DGA map  $f:\alg(\Lambda) \to \alg(\Sigma)$.
\begin{theorem}[See Proposition \ref{prop:aug}]\label{thm:induceaug}
Let $L$ be a good Lagrangian filling of $\Lambda$, and suppose $\alpha:\mathcal{A}(\Sigma) \rightarrow \Z/2$ is an augmentation of the conical Legendrian lift of $L$.  
Then, the pair $(\Sigma, \alpha)$ induces an augmentation $\e_{(\Sigma,\alpha)}$ of $\Lambda$ through 
$$\e_{(\Sigma, \alpha)}:= \alpha\circ f.$$
Moreover, the set 
\[
I_\Sigma = \{[ \e_{(\Sigma, \alpha)}]\,|\, \alpha \mbox{ is an augmentation of $\Sigma$}\}
\]
 of DGA homotopy classes of augmentations of $\Lambda$ induced by $\Sigma$ is invariant under good Lagrangian isotopy of $L$.   
\end{theorem}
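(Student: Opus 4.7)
The first claim reduces immediately to the functoriality of Theorem \ref{thm:main}: both $f$ and $\alpha$ are DGA maps, so their composition $\e_{(\Sigma,\alpha)} = \alpha \circ f$ is again a DGA map, i.e., an augmentation of $\Lambda$. The substantive content is therefore the invariance of $I_\Sigma$ under good Lagrangian isotopy, which the plan is to establish as follows.

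Suppose $L$ and $L'$ are good Lagrangian fillings of $\Lambda$ connected by a good Lagrangian isotopy, with conical Legendrian lifts $\Sigma$ and $\Sigma'$. The isotopy lifts to a conical Legendrian isotopy between $\Sigma$ and $\Sigma'$, so Theorem \ref{thm:main} supplies a stable tame isomorphism
\[
\varphi \colon \alg(\Sigma) * S \stackrel{\cong}{\longrightarrow} \alg(\Sigma') * S'
\]
satisfying $\varphi \circ i = i'$ and $\varphi \circ f \simeq f'$. The strategy is to use $\varphi$ to transport augmentations between the two sides and then check that the resulting induced augmentations of $\Lambda$ coincide up to DGA homotopy.

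Given an augmentation $\alpha$ of $\alg(\Sigma)$, the plan is to extend it to an augmentation $\wt{\alpha}$ of $\alg(\Sigma)*S$---such an extension exists, as is standard in the stabilization formalism---and to define
\[
\alpha' := \bigl(\wt{\alpha} \circ \varphi^{-1}\bigr)\big|_{\alg(\Sigma')},
\]
an augmentation of $\alg(\Sigma')$. The induced augmentation of $\Lambda$ on this side is then
\[
\e_{(\Sigma',\alpha')} = \alpha' \circ f' = \wt{\alpha} \circ \varphi^{-1} \circ f'.
\]
From $\varphi \circ f \simeq f'$ one obtains $f \simeq \varphi^{-1} \circ f'$ (viewing $f$ as landing in $\alg(\Sigma)*S$ via the canonical inclusion), and composing on the left with the DGA map $\wt{\alpha}$ preserves DGA homotopy, yielding $\e_{(\Sigma',\alpha')} \simeq \wt{\alpha} \circ f = \alpha \circ f = \e_{(\Sigma,\alpha)}$. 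This gives $[\e_{(\Sigma,\alpha)}] \in I_{\Sigma'}$; the reverse inclusion follows by running the same construction with $\varphi^{-1}$.

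The main thing to verify carefully will be that pre- and post-composition with DGA maps preserves the DGA homotopy relation. This is a standard property of DGA homotopies and should be available from the algebraic formalism developed in Section \ref{sec:mapc}; it is also the one place where the hypothesis ``$\varphi \circ f \simeq f'$ up to DGA homotopy,'' rather than equality, has to be handled delicately. A minor subsidiary point is the independence of the final homotopy class from the choice of extension $\wt{\alpha}$, which is clear because any two extensions of $\alpha$ restrict back to $\alpha$ on $\alg(\Sigma)$.
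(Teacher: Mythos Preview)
Your proposal is correct and follows essentially the same route as the paper's proof of Proposition~\ref{prop:aug}. The paper phrases the transport of augmentations via the associated homotopy equivalence $\pi'\circ\varphi\circ\iota:\alg(\Sigma)\to\alg(\Sigma')$ (which, being a homotopy equivalence, induces a bijection on DGA homotopy classes of augmentations), whereas you construct $\alpha'$ directly by extending $\alpha$ over the stabilization and pulling back through $\varphi^{-1}$; these are the same construction, since with the canonical extension $\wt\alpha=\alpha\circ\pi$ your $\alpha'=\wt\alpha\circ\varphi^{-1}\circ\iota'$ is precisely precomposition of $\alpha$ with the homotopy inverse of $\pi'\circ\varphi\circ\iota$.
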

\noindent 
In Section \ref{sec:aug}, for a family of Legendrian twist knots we give examples of augmentations that can be induced in this manner by an immersed filling with $1$-double point, but cannot be induced by any oriented embedded filling.

In a follow up paper, \cite{PanRu2}, we undertake a more thorough study of augmentations induced by immersed fillings applying tools from \cite{Henry, RuSu1, RuSu3}  involving the cellular DGA and the correspondence between Morse complex families and augmentations.  In particular, we are able to obtain a flexibility result showing that {\it any} (graded) augmentation to $\Z/2$ can be induced by a good Lagrangian filling.
In a complementary direction, Pezzimenti and Traynor have recently used generating family methods to establish rigidity results for immersed Lagrangian fillings such as bounds relating the genus and number of double points of a filling with the ranks of generating family homology groups.

\subsection{Comparison with other approaches}
  
 In the embedded case, \cite{EHK} uses symplectic field theory style moduli spaces of holomorphic disks in defining the induced maps $f$ associated with a Lagrangian cobordism, i.e. they consider holomorphic disks in $\mathit{Symp}(J^1M)$ with boundary punctures that are asymptotic to Reeb chords at $\pm \infty$.  In sections 4 and 5 of \cite{EHK}, it is then shown that GFTs can be used as a computational tool for evaluating these induced maps.  In contrast, we use counts of rigid GFTs in our  definition.
 The reason that we do not use holomorphic disks directly is that the moduli spaces relevant to the setting of immersed cobordisms have a mix of aspects of the moduli spaces used in the original construction of Legendrian contact homology \cite{EESR2n+1}  and those from relative symplectic field theory \cite{Abbas, BEHWZ}.  That is, an SFT-style definition of the induced map $f$ for a good immersed cobordism, $L$, would involve counts of disks in $\mathit{Symp}(J^1M)$ with some punctures at $\pm\infty$ and some at double points of $L$, and such hybrid disks are not typically considered explicitly in the existing literature on analysis of holomorphic curves.    
In Section \ref{sec:SFTpers}, we sketch an alternate definition of induced maps in the immersed case that is natural from the SFT point of view, and observe using results from \cite{EHK} that under a strong assumption on the almost complex structure the two definitions agree.

With the count of rigid GFTs used as the definition, the construction of the functor $F$ in Sections 2-7, while relying heavily on \cite{Ekh} and \cite{EES,EESR2n+1}, is independent from \cite{E2, EHK} and may have some novelty even in the embedded case. 
In general, our approach employs more (DG)-algebraic results and avoids some of the analytic methods used in \cite{E2, EHK}.  We highlight here a few points in which the two approaches differ:
\begin{itemize}
\item The abstract perturbations from \cite{EK} that are used to establish DGA homotopy invariance of induced maps in \cite{EHK} are replaced with algebraic results about mapping cylinder DGAs.  (In particular, see Propositions \ref{prop:unique} and \ref{prop:DGAhmtp}.)
\item In establishing that concatenation of cobordisms corresponds to composition of (immersed) DGA maps, we do not require stretching and gluing results for holomorphic disks in symplectizations.  Instead, an easy evaluation of GFTs for concatenated {\it Morse} cobordisms is combined with Legendrian invariance of the construction and an algebraic result (Proposition \ref{lem:CompositionAlt}) about composition of immersed DGA maps.
\end{itemize}

However, using a GFT based approach does have a few drawbacks.  So far, the class of contact/symplectic manifolds where GFTs methods are available is restricted mainly to $1$-jet spaces/cotangent bundles.  In addition, for Legendrians of dimension $\geq 3$, the correspondence between GFTs and holomorphic disks only holds for Legendrians with simple front singularities,  so in higher dimensions (i.e. for cobordisms of dimension $\geq 3$) our construction only applies to a restricted class of cobordisms.  This is the reason that our considerations are restricted to $1$-dimensional Legendrians with $2$-dimensional cobordisms.

\subsection{Organization of paper}
We start by addressing the algebraic side of the project in the Sections 2-3, and then turn to the geometric side in later sections. 
 In Section \ref{sec:mapc},
we introduce a class of DGAs that we call mapping cylinder DGAs and establish results that allow us to translate between properties of DGA maps and properties of their associated mapping cylinders. 
 Section \ref{sec:DGAcat}  contains the construction of the 
category $\mathfrak{DGA}_{im}$ of DGAs with immersed homotopy classes of immersed DGA maps.
In Section \ref{sec:defn}, we shift to geometry by introducing good Lagrangian cobordisms and conical Legendrian cobordisms.
After reviewing necessary background about LCH in Section \ref{sec:LCH}, the definition of the immersed DGA maps associated to conical Legendrian cobordisms appears in Section \ref{sec:functorI} where properties of the construction are discussed with most proofs deferred to the following Section \ref{sec:proof}.   
Section \ref{sec:aug} discusses induced augmentations, and provides a family of examples of augmentations that may be induced by immersed fillings but not by   
any oriented embedded fillings.
Finally in Section \ref{sec:SFTpers}, we translate the story to $\mathit{Symp}(J^1M)$ and show that it matches with the SFT framework.

\subsection{Acknowledgements} We thank Georgios Dimitroglou Rizell, John Etnyre, Emmy Murphy, and Mike Sullivan  for discussions related to the work.
The first author was supported by the NSF grant (DMS-1510305).  The second author received support from the Simons Foundation, grant \#429536.

\section{Mapping cylinders and DGA homotopy}\label{sec:mapc}

In this section we consider a mapping cylinder construction, similar to the one for chain complexes, in the setting of DGAs.  
DGAs of a similar type arise naturally in computations of Legendrian contact homology, eg. in spinning constructions \cite{EK, RuSu1}.  However, we are not sure how extensively this construction is studied in existing literature on DG-algebras.  
Propositions \ref{prop:DGAmap}  and \ref{prop:DGAhmtp} show that properties of maps between DGAs can be translated into properties of the associated mapping cylinder DGAs.    In the later sections, we will apply these algebraic results by constructing a compact Legendrian $\overline{\Sigma}$ whose DGA is related to the induced map $f_\Sigma:\mathcal{A}(\Lambda_+) \rightarrow \mathcal{A}(\Sigma)$ via the mapping cylinder construction.  
A key result is a uniqueness theorem for the differential on a mapping cylinder DGA (Proposition \ref{prop:unique}); in our approach this algebraic result serves as a substitute for the abstract perturbations used in \cite{E2, EHK}.  

\subsection{Conventions and notations}

In this article, {\it we work only with algebras over $\Z/2$}, and {\bf DGAs (differential graded algebras)} are unital, associative 
 $\Z/2$-algebras with differentials  $\partial:\mathcal{A} \rightarrow \mathcal{A}$ that are graded derivations of degree $-1$, i.e. $\partial(x\cdot y) = \partial(x)\cdot y + x \cdot \partial(y)$.  The grading is by $\Z/N$, $\mathcal{A} = \oplus_{i \in \Z/N} \mathcal{A}_i$, $\mathcal{A}_{i} \cdot \mathcal{A}_j \subset \mathcal{A}_{i+j}$, for some fixed value of $N \geq 0$.  In particular, $N =0$ corresponds to the case of $\Z$-graded DGAs.   A {\bf DGA map} $f:(\mathcal{A}_1, \partial_1) \rightarrow (\mathcal{A}_2, \partial_2)$ is a grading preserving unital algebra homomorphism that is a chain map, i.e. $ \partial_2\circ f = f \circ \partial_1$.  Two DGA maps $f, g:(\mathcal{A}_1, \partial_1) \rightarrow (\mathcal{A}_2, \partial_2)$ are {\bf DGA homotopic} if there is a degree $1$ $(f,g)$-derivation $H:\mathcal{A}_1 \rightarrow \mathcal{A}_2$ such that $f-g = \partial_2 \circ H + H \circ \partial_1$ (where an {\bf $(f,g)$-derivation} is a graded linear map that satisfies $H(x\cdot y) = H(x) \cdot g(y) + (-1)^{|x|\cdot|H|} f(x) \cdot H(y)$.)
Note that any algebra map, derivation, or $(f,g)$-derivation is uniquely specified by its values on a generating set for $\mathcal{A}$, and when the generating set is free these values can be specified arbitrarily.  Moreover, as long as $f$ is an algebra map (resp. $f$ and $g$ are DGA maps and $H$ is an $(f,g)$-derivation) it is enough to verify the chain map equation (resp. the homotopy equation) on a generating set for $\mathcal{A}_1$.  

Given a finite set $\{x_1, \ldots, x_n\}$ we will use the notation $\Z/2\langle x_1, \ldots, x_n \rangle$ for the free associative unital $\Z/2$-algebra generated by the $x_i$.  Given a subset $S$ of an algebra $\mathcal{A}$, we use the notation $\mathcal{I}(S) \subset \mathcal{A}$ for the $2$-sided ideal generated by $S$.  If $S = \{y_1, \ldots, y_l\}$, we write $\mathcal{I}(S) = \mathcal{I}(y_1, \ldots, y_l)$.
 We will use the notation $\mathcal{A}*\mathcal{B}$ for the free product (categorically, the co-product) of unital associative algebras.  When $\mathcal{A}$ and $\mathcal{B}$ are free
(this is the only case we will need), $\mathcal{A}*\mathcal{B}$ exists and is the free algebra generated by the union of free generating sets for $\mathcal{A}$ and $\mathcal{B}$.  When $(\mathcal{A}, \partial_\mathcal{A})$ and $(\mathcal{B}, \partial_{\mathcal{B}})$ are DGAs, there is a unique DGA differential on $\mathcal{A}*\mathcal{B}$ extending $\partial_\mathcal{A}$ and $\partial_\mathcal{B}$.  This construction is functorial for DGA maps.

\subsection{Triangular DGAs and stable tame isomorphism}\label{sec:triDGA} 
 In the setting of Legendrian contact homology, the DGAs that are encountered are free and equipped with explicit ordered generating sets (given by Reeb chords ordered by height/action) and with differentials that respect the ordering.  We recall definitions and a useful result in this algebraic setting.  

We say that $(\mathcal{A},\partial)$ is a {\bf based DGA} if the algebra $\mathcal{A}$ is equipped with an ordered free generating set $\mathcal{G}= \{x_1, \ldots, x_m\}$ 
 so that $\mathcal{A} = \Z/2\langle x_1, \ldots, x_m\rangle$ and each $x_i$ is a homogeneous element of $\mathcal{A}$, i.e. $x_i$ belongs to a single graded component of $\mathcal{A}$.  A {\bf triangular DGA} is a based DGA $(\mathcal{A},\partial)$ whose differential satisfies 
\[
\partial x_i \in \Z/2\langle x_1, \ldots, x_{i-1} \rangle, \quad \mbox{for $1 \leq i \leq m$.}
\]

An {\bf elementary automorphism} of a based algebra $\mathcal{A} = \Z/2\langle x_1, \ldots, x_m\rangle$ is an algebra map $\varphi: \mathcal{A} \rightarrow \mathcal{A}$ such that for some $1 \leq i \leq m$, $\varphi(x_j) = x_j$ when $j \neq i$ and $\varphi(x_i) = x_i + w$ where $w \in \Z/2\langle x_1, \ldots, \widehat{x_i}, \ldots, x_m \rangle$.  An isomorphism of based algebras $\psi: \mathcal{A}_1 \rightarrow \mathcal{A}_2$ is called {\bf tame} if it can be written as a composition $\psi = \varphi_n \circ \cdots \circ \varphi_1 \circ \iota$ where $\iota: \mathcal{A}_1 \rightarrow \mathcal{A}_2$ extends a bijection of generating sets and the $\varphi_i$ are elementary automorphisms of $\mathcal{A}_2$.  

Recall that a {\bf stabilization} of a based DGA, $(\mathcal{A}, \partial_\mathcal{A})$ is a DGA of the form $(\mathcal{A} * S, \partial)$ where $S = \Z/2\langle a_1, b_1, \ldots, a_r,b_r \rangle$ is freely generated by generators $a_i, b_i$, $1 \leq i \leq r$ with degrees $|a_i| = |b_i|+1 = k_i$ and the differential on $\mathcal{A}$ satisfies  $\partial a_i = b_i$, $\partial b_i =0$, and $\partial|_{\mathcal{A}} = \partial_\mathcal{A}$.  Stabilizations have  associated inclusion and projection maps
\[
\iota: \mathcal{A} \hookrightarrow \mathcal{A}*S  \quad \mbox{and} \quad \pi: \mathcal{A}*S \rightarrow \mathcal{A}
\]
(where $\pi$ maps all generators of $S$ to $0$).  These maps are homotopy inverse to one another since $\pi \circ \iota = \mathit{id}_{\mathcal{A}}$ and there is a DGA homotopy $\iota \circ \pi \simeq \mathit{id}_{\mathcal{A}*S}$ given by
\[
\iota \circ \pi - \mathit{id}_{\mathcal{A}*S} = \partial K + K \partial
\] 
where $K: \mathcal{A}*S \rightarrow \mathcal{A}*S$ is the $(\iota \circ \pi, \mathit{id}_{\mathcal{A}*S})$ derivation that satisfies $K(b_i) = a_i$, $1 \leq i \leq r$, and vanishes on all other generators.

\begin{definition}  A {\bf stable (tame) isomorphism} of two DGAs $(\mathcal{A}, \partial_\mathcal{A})$ and $(\mathcal{B}, \partial_\mathcal{B})$ is a choice of stabilizations $(\mathcal{A} * S, \partial)$ and $(\mathcal{B} * S', \partial')$ together with a (tame) DGA isomorphism $\varphi: \mathcal{A}*S \rightarrow \mathcal{B}*S'$.
\end{definition}

\begin{remark} \label{rem:stable}
\begin{enumerate}
\item[(i)] Any stable isomorphism, $\varphi: \mathcal{A}*S \rightarrow \mathcal{B}*S'$, has an {\bf associated homotopy equivalence}, $h:\mathcal{A} \rightarrow \mathcal{B}$, defined by $h = \pi' \circ \varphi \circ \iota$ where $\iota: \mathcal{B} \rightarrow \mathcal{B}*S$ and $\pi':\mathcal{B}'*S' \rightarrow \mathcal{B}'$ are the inclusions and projections.  Note that $h$ is a DGA homotopy equivalence (since $\pi'$, $\varphi$, and $\iota$ are.)  

\item[(ii)] Stable (tame) isomorphisms can be composed in the following sense: If $\varphi_1: \mathcal{B}_1*S_1 \rightarrow \mathcal{B}_2*S_2$ and $\varphi_2: \mathcal{B}_2*S_2' \rightarrow \mathcal{B}_3*S_3'$ are stable isomorphisms, then
\[
\varphi: \mathcal{B}_1 * S_1*S_2' \stackrel{ \varphi_1 * \mathit{id}_{S_2'}}{\longrightarrow} \mathcal{B}_2* S_2 * S_2' = \mathcal{B}_2* S_2'*S_2 \stackrel{ \varphi_2 * \mathit{id}_{S_2}}{\longrightarrow} \mathcal{B}_3*S_3'*S_2.
\] 
is a stable (tame) isomorphism from $\mathcal{B}_1$ to $\mathcal{B}_3$.  Moreover, one can check that if $h_1:\mathcal{B}_1 \rightarrow \mathcal{B}_2$ and $h_2:\mathcal{B}_2 \rightarrow \mathcal{B}_3$ are the associated homotopy equivalences for $\varphi_1$ and $\varphi_2$, then $h_2 \circ h_1$ is the associated homotopy equivalence for $\varphi$.
\end{enumerate}
\end{remark}

In the triangular setting, the following proposition is useful for producing stable tame isomorphisms.  
\begin{proposition}  \label{prop:cancel}
Let $(\mathcal{A}, \partial)$ be a triangular DGA with ordered generating set $\{x_1, \ldots, x_m\}$.  Suppose that for some $i$
\[
\partial x_i = x_j +w
\]
where $w \in \Z/2\langle x_1, \ldots, x_{j-1} \rangle$.  Write $I = \mathcal{I}(x_i, \partial x_i)$ and $p:\mathcal{A} \rightarrow \mathcal{A}/I$ for the projection.  
\begin{enumerate}
\item The differential $\partial$ induces a differential $\partial^I:\mathcal{A}/I \rightarrow \mathcal{A}/I$ with $\partial^I\circ p = p \circ \partial$, and the quotient DGA $(\mathcal{A}/I, \partial^I)$ is itself a triangular DGA with respect to the generating set given by the equivalence classes $\{\overline{x}_1, \ldots, \widehat{j}, \ldots, \widehat{i}, \ldots, \overline{x}_m\}$.  

\item Set $S(y,z) = \Z/2\langle y,z \rangle$ to be the DGA with grading $|y| = |x_i|$ and $|z| = |x_j|$, and differential $\partial' y = z$ and $\partial' z = 0$.  Then, there exists a tame isomorphism of the form
\begin{equation} \label{eq:gh1}
\varphi = g * h: (\mathcal{A}/I) * S(y,z) \rightarrow \mathcal{A}. 
\end{equation}
Here, $g:\mathcal{A}/I \rightarrow \mathcal{A}$ and $h:S(y,z) \rightarrow \mathcal{A}$ are DGA maps satisfying
  $h(y) = x_i$, $h(z) = \partial x_i$;  and
\begin{equation} \label{eq:gh2}
g \circ p - \mathit{id}_\mathcal{A} = \partial \circ H + H \circ \partial.
\end{equation}
In the last equation,  $H$ is a $(g \circ p, \mathit{id}_\mathcal{A})$-derivation that satisfies $H(x_j) = x_i$ and $H(x_l) = 0$ when $l \neq j$.

\end{enumerate}   

\end{proposition}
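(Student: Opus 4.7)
The plan is to handle part (1) via a change-of-basis trick, and then for part (2) to define $g$ through a recursion dictated by the homotopy identity before recognizing $\varphi$ as a triangular---hence tame---automorphism. For part (1), observe that $I = \mathcal{I}(x_i, \partial x_i) = \mathcal{I}(x_i, x_j+w)$ is preserved by $\partial$, since $\partial x_i = x_j+w \in I$ and $\partial(x_j+w) = \partial^2 x_i = 0$, so $\partial^I$ is well-defined. To see that the quotient is free and triangular on $\{\bar{x}_l : l \neq i, j\}$, I would substitute $x_j' := x_j + w$: because $x_j = x_j' + w$, the set $\{x_1, \ldots, x_{j-1}, x_j', x_{j+1}, \ldots, x_m\}$ is also a free generating set for $\mathcal{A}$, and in this basis $I$ is generated by two of the free generators, $x_i$ and $x_j'$. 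The quotient is therefore free on the remaining generators, and triangularity of $\partial^I$ is inherited from that of $\partial$ via the projection $p$.

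For part (2), define $h: S(y,z) \to \mathcal{A}$ by $h(y) = x_i$ and $h(z) = \partial x_i$; this is a DGA map because $\partial h(y) = \partial x_i = h(\partial' y)$ and $\partial h(z) = \partial^2 x_i = 0$. For $g$, I would set
\[ g(\bar{x}_l) := x_l + H(\partial x_l), \qquad l \neq i, j, \]
where $H$ is the $(g\circ p, \mathit{id}_\mathcal{A})$-derivation determined by $H(x_j) = x_i$ and $H(x_l) = 0$ for $l \neq j$. Although $g$ and $H$ refer to one another, triangularity makes the recursion well-founded: since $\partial x_l \in \Z/2\langle x_1, \ldots, x_{l-1}\rangle$, computing $H(\partial x_l)$ only requires the values of $g\circ p$ on earlier generators. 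To verify the homotopy identity $g\circ p - \mathit{id}_\mathcal{A} = \partial H + H\partial$, one checks that both sides are $(g\circ p, \mathit{id}_\mathcal{A})$-derivations, so it suffices to match them on generators. The three cases are $(\partial H + H\partial)(x_l) = H(\partial x_l) = g(\bar{x}_l) - x_l$ for $l \neq i, j$; $(\partial H + H\partial)(x_i) = H(x_j+w) = x_i$, matching $g\circ p(x_i) - x_i = 0 - x_i$ in $\Z/2$; and $(\partial H + H\partial)(x_j) = \partial x_i + 0 = x_j + w$, matching $g\circ p(x_j) - x_j = g(\bar{w}) - x_j$ by the inductive observation that $H$ vanishes on $\Z/2\langle x_1, \ldots, x_{j-1}\rangle$, so that $g(\bar{x}_{l'}) = x_{l'}$ for $l' < j$ and hence $g(\bar{w}) = w$. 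Chain-map-ness of $g$ is then immediate: $\partial g(\bar{x}_l) = \partial x_l + \partial H \partial x_l = g\circ p(\partial x_l)$ using $\partial^2 = 0$.

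To show $\varphi = g * h$ is a tame isomorphism, I would relabel the free generators of $(\mathcal{A}/I) * S(y,z)$ as $\tilde{x}_l := \bar{x}_l$ for $l \neq i, j$, $\tilde{x}_i := y$, and $\tilde{x}_j := z$. Under the bijection $\tilde{x}_l \leftrightarrow x_l$, $\varphi$ becomes an algebra automorphism $\Phi$ of $\mathcal{A}$ sending $x_l \mapsto x_l + v_l$, where $v_l = H(\partial x_l)$ for $l \neq i, j$, $v_i = 0$, and $v_j = w$. No $v_l$ involves $x_l$ itself, and the dependency graph recording which generators appear in each $v_l$ is acyclic: the only non-triangular edges take the form $l \to i$ (arising from $H(x_j) = x_i$ when $\partial x_l$ involves $x_j$), and vertex $i$ has no outgoing edges since $v_i = 0$. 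Processing the generators in the topological order $x_i, x_1, \ldots, x_{j-1}, x_j, x_{j+1}, \ldots, x_{i-1}, x_{i+1}, \ldots, x_m$ then exhibits $\Phi$ as a composition of elementary automorphisms. The main obstacle I anticipate is the interlocking definition of $g$ and $H$ together with the bookkeeping required to verify the homotopy identity at the ``phantom'' generator $x_j$, which is not free in $\mathcal{A}/I$ but rather equals $\bar{w}$; the key observation that makes the algebra work out on the nose is the inductive vanishing of $H$ on $\Z/2\langle x_1, \ldots, x_{j-1}\rangle$.
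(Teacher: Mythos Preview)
The paper does not prove this proposition; it cites Theorem~2.1 of \cite{RuSu1} and notes that the argument is implicit in Chekanov's original invariance proof. Your approach---the recursive definition $g(\bar x_l) = x_l + H(\partial x_l)$ followed by recognizing $\varphi$ as triangular in a reordered basis---is the standard one, and your tameness argument via the acyclic dependency graph is correct.

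There is, however, a circularity in your verification of the homotopy identity. You claim it suffices to check $g\circ p - \mathit{id} = \partial H + H\partial$ on generators because ``both sides are $(g\circ p,\mathit{id}_\mathcal{A})$-derivations.'' The left side always is, but a direct computation in characteristic $2$ gives
\[
(\partial H + H\partial)(xy) \;=\; (\partial H + H\partial)(x)\cdot y \;+\; (g\circ p)(x)\cdot(\partial H + H\partial)(y) \;+\; \bigl(\partial\,(g\circ p)(x) + (g\circ p)(\partial x)\bigr)\cdot H(y),
\]
so the right side is a $(g\circ p,\mathit{id})$-derivation only once $g\circ p$ is already known to be a chain map---precisely what you deduce \emph{from} the homotopy identity. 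The fix is to carry both statements together by induction on~$l$: your three-case generator check for the homotopy identity at $x_l$ uses nothing circular, and your chain-map verification $\partial g(\bar x_l) = \partial x_l + \partial H\partial x_l = (g\circ p)(\partial x_l)$ invokes the homotopy identity only on $\partial x_l \in \Z/2\langle x_1,\ldots,x_{l-1}\rangle$, where it holds by the inductive hypothesis. With the chain-map property then established on $\Z/2\langle x_1,\ldots,x_l\rangle$, the extra term above vanishes on that subalgebra and the homotopy identity extends from generators to products, completing the inductive step.
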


\begin{remark}
Proposition \ref{prop:cancel} is proven in Theorem 2.1 of \cite{RuSu1}, and is implicit in invariance proofs for Legendrian contact homology going back to \cite{Che}.
\end{remark}

We record some additional observations for later use.

\begin{proposition} \label{prop:observe} The isomorphism $\varphi$ from (\ref{eq:gh1}) satisfies:
\begin{enumerate}
\item $\pi \circ \varphi^{-1} = p$ where $p:\mathcal{A} \rightarrow \mathcal{A}/I$ and $\pi:(\mathcal{A}/I) * S(y,z) \rightarrow \mathcal{A}/I$ are the projections.
\item Suppose $\mathcal{B} \subset \mathcal{A}$ is a based sub-DGA whose generating set $\{b_1, \ldots, b_m\} \subset \{x_1, \ldots, x_n\}$ does not contain $x_i$ or $x_j$.
 If we view $\mathcal{B}$ as a sub-algebra of $\mathcal{A}/I$ by identifying generators with their equivalence classes, then $\phi|_{\mathcal{B}} = \mathit{id}_{\mathcal{B}}$.
\end{enumerate}
\end{proposition}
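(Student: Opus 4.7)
The plan is to verify both parts by checking equality of algebra maps on generators, using the explicit description of $\varphi = g * h$ from Proposition \ref{prop:cancel} together with the homotopy identity \eqref{eq:gh2}.

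For part (1), I would reformulate $\pi \circ \varphi^{-1} = p$ as the equivalent statement $\pi = p \circ \varphi$, an equality of algebra maps out of $(\mathcal{A}/I) * S(y,z)$. It suffices to check on the generating set $\{y, z\} \cup \{\overline{x}_l : l \neq i, j\}$. On $y$ and $z$, both sides vanish: $\pi$ kills $S(y,z)$, while $\varphi(y) = h(y) = x_i \in I$ and $\varphi(z) = h(z) = \partial x_i \in I$. On $\overline{x}_l$, apply \eqref{eq:gh2} to $x_l$: since $H(x_l) = 0$, it yields $g(\overline{x}_l) = x_l + H(\partial x_l)$. The key observation is that because $H$ is a derivation vanishing on every generator except $x_j$, where $H(x_j) = x_i$, every nonzero term of $H(\partial x_l)$ contains a factor of $x_i$ and hence lies in $I$. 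Thus $p(g(\overline{x}_l)) = \overline{x}_l = \pi(\overline{x}_l)$.

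For part (2), the central observation is that $H$ vanishes identically on $\mathcal{B}$. The generating set $\{b_1, \ldots, b_m\}$ avoids $x_j$, so $H$ vanishes on each generator of $\mathcal{B}$; since $H$ is a derivation, an easy induction on word length yields $H(b) = 0$ for all $b \in \mathcal{B}$. Because $\mathcal{B}$ is a sub-DGA, $\partial b_k \in \mathcal{B}$ and therefore $H(\partial b_k) = 0$. Combined with $H(b_k) = 0$, the identity \eqref{eq:gh2} gives $g(p(b_k)) = b_k$, i.e.\ $\varphi(\overline{b}_k) = b_k$. Since $\varphi|_\mathcal{B}$ and $\mathit{id}_\mathcal{B}$ are algebra maps that agree on the generators of $\mathcal{B}$, they coincide.

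Neither claim involves a serious obstacle; the only mildly delicate point is the observation in part (1) that $H(\partial x_l) \in I$, which is immediate once one notes that any nonzero contribution of the derivation $H$ applied to a word in the generators must substitute an $x_i$ in place of an $x_j$.
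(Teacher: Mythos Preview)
Your proposal is correct and follows essentially the same approach as the paper's own proof: reformulate (1) as $\pi = p\circ\varphi$ and verify both parts on generators via the homotopy identity \eqref{eq:gh2}, using that $H$ vanishes on every generator except $x_j$ (so $\operatorname{Im}(H)\subset\mathcal{I}(x_i)\subset\ker p$) and that $\mathcal{B}$ being a sub-DGA forces $H(\partial b_k)=0$. The only cosmetic remark is that $H$ is a $(g\circ p,\mathit{id})$-derivation rather than an ordinary derivation, but your induction-on-word-length argument goes through unchanged for such twisted derivations.
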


\begin{proof}
Note that (1) is equivalent to $\pi = p\circ \varphi$ which we verify on generators.  First, we have $\pi(y) = \pi(z) =0$, and  $p \circ \varphi(y) = p(x_i)=0$ and $p\circ \varphi(z) = p(\partial x_i) = 0$.  
The other generators of $\mathcal{A}/I *S(y,z)$ are of the form $[x_l] = p(x_l)$ with $l \notin \{i,j\}$, and satisfy
\[
\pi([x_l]) = [x_l]
\]  
and
\[
p\circ \varphi([x_l]) = p\circ g([x_l]) = p \left( g \circ p(x_l) \right) = p\left( x_l + \partial \circ H (x_l) +H \circ \partial(x_l)\right) = p(x_l) = [x_l]
\]
where at the last equality we used that $H(x_l) =0$ and $\mbox{Im}(H) \subset \mathcal{I}(x_i) \subset \mbox{Ker}(p)$.

To check (2), compute
\[
\phi([b_i]) = g \circ p(b_i) = b_i + \partial \circ H(b_i) + H \circ \partial (b_i) = b_i 
\]
where $H \circ \partial(b_i) =0$ since the generator $x_j$ does not appear anywhere in $\partial(b_i)$.

\end{proof}

\subsection{Mapping cylinder DGAs}\label{sec:mapcyl}
  Let $\mathcal{A}$ and $\mathcal{B}$ be based graded algebras with generating sets $\{a_1, \ldots, a_m\}$ and $\{b_1, \ldots, b_n\}$, and consider 
\[
\mathcal{C} = \alg * \widehat{\mathcal{A}}* \mathcal{B}
\]
where the generating set for $\widehat{\mathcal{A}}$, $\{ \widehat{a}_1, \ldots, \widehat{a}_m \}$, is in bijection with the generating set of $\mathcal{A}$ but with grading shifted up by $1$, $|\widehat{a}_i| = |a_i|+1$.  We call $\mathcal{C}$ the {\bf mapping cylinder algebra} from $\mathcal{A}$ to $\mathcal{B}$.  

Next, assume that the subalgebras $\mathcal{A}$ and $\mathcal{B}$ are equipped with differentials $\partial_\mathcal{A}$ and $\partial_\mathcal{B}$ making them into DGAs.

\begin{proposition} \label{prop:DGAmap}
Suppose $\partial: \mathcal{C} \rightarrow \mathcal{C}$ is a differential such that
\begin{enumerate}
\item $\mathcal{A}$ and $\mathcal{B}$ are sub-DGAs with $\partial|_\mathcal{A} = \partial_\mathcal{A}$ and $\partial|_\mathcal{B} = \partial_\mathcal{B}$, and
\item for $1 \leq i \leq m$, we have
\begin{equation} \label{eq:dbi}
\partial \widehat{a}_i = f(a_i) + a_i + \gamma_i
\end{equation}
where $f(a_i) \in \mathcal{B}$ and $\gamma_i \in \mathcal{I}(\widehat{a}_1, \ldots, \widehat{a}_m)$ (the $2$-sided ideal generated by the $\widehat{a}_i$).
\end{enumerate}
Then, the extension of $f$ as an algebra homomorphism is a DGA map, $f:(\mathcal{A}, \partial_\mathcal{A}) \rightarrow (\mathcal{B}, \partial_\mathcal{B})$.
\end{proposition}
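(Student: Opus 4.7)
The plan is to extend $f$ uniquely to a unital algebra homomorphism $f:\mathcal{A}\to\mathcal{B}$ via its values on the free generators $a_i$ and then verify the two conditions needed for a DGA map: grading preservation and the chain-map identity, each checked only on the generators. Grading is immediate: since $\partial$ has degree $-1$ and $|\widehat{a}_i|=|a_i|+1$, equation (\ref{eq:dbi}) forces $|f(a_i)|=|a_i|$.

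The core of the argument will be to recast $\partial^2\widehat{a}_i=0$ as the desired chain-map identity. To this end, I will introduce an auxiliary $\N$-grading on $\mathcal{C}$ by $\widehat{a}$-word-length (the number of letters $\widehat{a}_j$ appearing in a word). This grading is additive under concatenation, so $\partial$ decomposes as a finite sum $\partial=\partial_{-1}+\partial_0+\partial_+$ where $\partial_j$ shifts $\widehat{a}$-count by $j$ (with $\partial_+$ collecting all strictly positive shifts), and each summand is itself a $\Z/N$-graded derivation of degree $-1$ on $\mathcal{C}$. Inspecting hypotheses (1) and (2), one finds that $\partial_{-1}$ is nonzero only on the generators $\widehat{a}_i$, where $\partial_{-1}(\widehat{a}_i)=f(a_i)+a_i$, while $\partial_0$ agrees with $\partial_\mathcal{A}$ on $a_i$, with $\partial_\mathcal{B}$ on $b_j$, and sends $\widehat{a}_i$ to the $\widehat{a}$-length-one component $\gamma_i^{(1)}$ of $\gamma_i$.

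From $\partial^2=0$ I will extract the component that shifts $\widehat{a}$-count by $-1$, namely $\partial_0\partial_{-1}+\partial_{-1}\partial_0=0$; evaluating on $\widehat{a}_i$ produces an identity in $\mathcal{A}*\mathcal{B}$ of the form
\[
\partial_\mathcal{B}\bigl(f(a_i)\bigr)+\partial_\mathcal{A}(a_i)+\partial_{-1}\bigl(\gamma_i^{(1)}\bigr)=0.
\]
To finish, I will apply the algebra homomorphism $\widetilde{f}:\mathcal{A}*\mathcal{B}\to\mathcal{B}$ characterized by $\widetilde{f}|_\mathcal{B}=\mathit{id}_\mathcal{B}$ and $\widetilde{f}|_\mathcal{A}=f$. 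The first two terms become $\partial_\mathcal{B} f(a_i)$ and $f(\partial_\mathcal{A} a_i)$, and writing $\gamma_i^{(1)}=\sum_k u_k\widehat{a}_{j_k}v_k$ with $u_k,v_k\in\mathcal{A}*\mathcal{B}$, the third term expands as $\sum_k u_k\bigl(f(a_{j_k})+a_{j_k}\bigr)v_k$; each factor $f(a_{j_k})+a_{j_k}$ is sent by $\widetilde{f}$ to $f(a_{j_k})+f(a_{j_k})$, which is $0$ in characteristic $2$. The third term therefore vanishes, yielding $\partial_\mathcal{B} f(a_i)=f(\partial_\mathcal{A} a_i)$.

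The main subtlety will be that $\gamma_i$ is allowed to be an arbitrary element of the ideal $\mathcal{I}(\widehat{a}_1,\ldots,\widehat{a}_m)$, so one cannot simply push $\partial^2\widehat{a}_i=0$ through the projection $\mathcal{C}\to\mathcal{C}/\mathcal{I}(\widehat{a}_1,\ldots,\widehat{a}_m)\cong\mathcal{A}*\mathcal{B}$, since $\partial$ does not descend to this quotient. The filtration by $\widehat{a}$-count isolates the piece of $\partial^2$ that we need, and the characteristic-$2$ cancellation built into $\widetilde{f}$ then kills the error term coming from $\gamma_i^{(1)}$. Apart from setting up this bookkeeping carefully, the proof involves no substantive analysis.
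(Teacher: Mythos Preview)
Your proposal is correct, and at its core uses the same key cancellation as the paper, but the paper's route is shorter and avoids the $\widehat{a}$-count filtration entirely.

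The paper simply extends your map $\widetilde{f}:\mathcal{A}*\mathcal{B}\to\mathcal{B}$ to an algebra homomorphism $\pi_\mathcal{B}:\mathcal{C}\to\mathcal{B}$ on all of $\mathcal{C}$ by declaring $\pi_\mathcal{B}(\widehat{a}_i)=0$. Then $\pi_\mathcal{B}\circ\partial$ is a $(\pi_\mathcal{B},\pi_\mathcal{B})$-derivation, and one checks directly that $\pi_\mathcal{B}\circ\partial(\widehat{a}_i)=\pi_\mathcal{B}(f(a_i)+a_i+\gamma_i)=f(a_i)+f(a_i)+0=0$ --- the same characteristic-$2$ cancellation you use. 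Since a $(\pi_\mathcal{B},\pi_\mathcal{B})$-derivation vanishing on each $\widehat{a}_i$ (with $\pi_\mathcal{B}(\widehat{a}_i)=0$) automatically vanishes on the entire ideal $\mathcal{I}(\widehat{a}_1,\ldots,\widehat{a}_m)$, applying $\pi_\mathcal{B}\circ\partial$ to the full equation $\partial\widehat{a}_i=f(a_i)+a_i+\gamma_i$ immediately kills the $\gamma_i$ term and yields $0=\partial_\mathcal{B} f(a_i)+f\partial_\mathcal{A}(a_i)$.

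What this buys: the paper never needs to isolate $\gamma_i^{(1)}$ or decompose $\partial$ by $\widehat{a}$-degree. Your filtration argument is a valid workaround, but the observation that $\pi_\mathcal{B}\circ\partial$ kills the ideal (not just its degree-one piece) makes the bookkeeping disappear. Your comment that ``$\partial$ does not descend to the quotient'' is of course right, but the paper sidesteps this by composing the quotient with $\widetilde{f}$ from the start rather than trying to work in $\mathcal{A}*\mathcal{B}$.
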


\begin{proof}
Define $\pi_\mathcal{B}: \mathcal{C} \rightarrow \mathcal{B}$ to be the unique algebra homomorphism satisfying
\[
\pi_\mathcal{B}\big|_{\mathcal{B}} = \mathit{id}_\mathcal{B}; \quad \pi_\mathcal{B}\big|_\mathcal{A} = f; \quad \mbox{and} \quad \pi_\mathcal{B}(\widehat{a}_i) = 0,  \,\, 1 \leq i \leq m.
\]
Post composing (\ref{eq:dbi}) with $\pi_\mathcal{B} \circ \partial$ gives
\begin{align*}
0 =& \pi_{\mathcal{B}} (\partial \circ f(a_i)) + \pi_\mathcal{B}(\partial a_i) + (\pi_\mathcal{B} \circ \partial)(\gamma_i) \\ 
 = & \partial_\mathcal{B} \circ f (a_i) + f \circ \partial_\mathcal{A}(a_i) + (\pi_\mathcal{B} \circ \partial)(\gamma_i),
\end{align*}
so the result follows provided that $\pi_\mathcal{B}\circ \partial$ vanishes on $\mathcal{I}( \widehat{a}_1, \ldots, \widehat{a}_m )$.  To verify this, since $\pi_\mathcal{B}\circ \partial: \mathcal{C} \rightarrow \mathcal{B}$ is a $(\pi_\mathcal{B}, \pi_\mathcal{B})$-derivation with $\pi_\mathcal{B}(\widehat{a}_i) =0$, it suffices to compute
\[
\pi_\mathcal{B}\circ \partial(\widehat{a}_i) = \pi_\mathcal{B}(f(a_i) + a_i + \gamma_i) = f(a_i) + f(a_i) + 0 = 0.
\]
\end{proof}

When $\partial:\mathcal{C} \rightarrow \mathcal{C}$ satisfies the conditions of Proposition \ref{prop:DGAmap}, we call $(\mathcal{C},\partial)$ a {\bf mapping cylinder DGA} for the DGA map $f:(\mathcal{A}, \partial_\mathcal{A}) \rightarrow (\mathcal{B},\partial_\mathcal{B})$.  Note that the differential of a mapping cylinder DGA is {\it not} uniquely determined by $f$, but see Proposition \ref{prop:unique} below for a strong uniqueness up to isomorphism result in the triangular case.  

\subsection{The standard mapping cylinder differential}
The next proposition can be viewed as a converse to the previous one, as it constructs a mapping cylinder DGA for any DGA map $f:(\mathcal{A}, \partial_\mathcal{A}) \rightarrow (\mathcal{B}, \partial_\mathcal{B})$.

\begin{proposition} \label{prop:standard}  Suppose that 
$f:(\mathcal{A}, \partial_\mathcal{A}) \rightarrow (\mathcal{B}, \partial_\mathcal{B})$ is a DGA map between based DGAs, and let $\mathcal{C}= \mathcal{A}*\widehat{\mathcal{A}}*\mathcal{B}$ be the mapping cylinder algebra.

Write $i: \mathcal{A} \rightarrow \mathcal{C}$ for the inclusion, and consider the $(f,i)$-derivation $\Gamma:\mathcal{A} \rightarrow \mathcal{C}$ defined on generators by $\Gamma(a_i) = \widehat{a}_i$.  Define $\partial: \mathcal{C} \rightarrow \mathcal{C}$ to be the unique derivation on $\mathcal{C}$ satisfying
\begin{align}
&\partial\big|_\mathcal{A} = \partial_\mathcal{A}, \quad \partial\big|_\mathcal{B} = \partial_\mathcal{B},  \label{eq:dAdC} \\
&\partial(\widehat{a}_i) = f(a_i) +a_i + \Gamma \circ \partial_\mathcal{A}(a_i).  \label{eq:bGamma}
\end{align}
Then, $(\mathcal{C}, \partial)$ is a DGA.
\end{proposition}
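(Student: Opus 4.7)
The plan is to verify the one nontrivial condition, $\partial^2 = 0$. Since $\partial$ is a derivation of degree $-1$, its square is a (graded) derivation of degree $-2$, so it is determined by its values on the generating set $\{a_1,\dots,a_m,\widehat{a}_1,\dots,\widehat{a}_m,b_1,\dots,b_n\}$ of $\mathcal{C}$. On generators of $\mathcal{A}$ and $\mathcal{B}$, $\partial^2$ vanishes immediately from \eqref{eq:dAdC} together with $\partial_\mathcal{A}^2 = 0$ and $\partial_\mathcal{B}^2 = 0$, so the entire content of the proposition is the vanishing of $\partial^2(\widehat{a}_i)$ for each $i$.

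Next, I would apply $\partial$ to \eqref{eq:bGamma}, using that $f$ is a chain map and that $\partial|_\mathcal{A} = \partial_\mathcal{A}$, to obtain
\[
\partial^2(\widehat{a}_i) = f\bigl(\partial_\mathcal{A}(a_i)\bigr) + \partial_\mathcal{A}(a_i) + \partial\Gamma\bigl(\partial_\mathcal{A}(a_i)\bigr).
\]
Since $\partial_\mathcal{A}^2 = 0$, we have $\Gamma\partial_\mathcal{A}(\partial_\mathcal{A}(a_i)) = 0$, so the vanishing of the above expression reduces to the identity
\[
D := \partial\Gamma + \Gamma\partial_\mathcal{A} \;=\; f + i \quad \text{as linear maps } \mathcal{A} \to \mathcal{C}.
\]
Establishing this identity is the main algebraic step.

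To prove it, I would first check that $D$ is an $(f,i)$-derivation. This follows from a direct expansion: both $\partial\Gamma(xy)$ and $\Gamma\partial_\mathcal{A}(xy)$ can be computed using the $(f,i)$-Leibniz rule for $\Gamma$, the usual Leibniz rule for $\partial$, and the chain-map identities $\partial\circ f = f\circ\partial_\mathcal{A}$, $\partial\circ i = i\circ \partial_\mathcal{A}$; the cross terms $f\partial_\mathcal{A}(x)\cdot\Gamma(y)$ and $\Gamma(x)\cdot i\partial_\mathcal{A}(y)$ appear in both sums and cancel mod $2$, leaving $D(xy) = D(x)\,i(y) + f(x)\,D(y)$. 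On generators, $D(a_i) = \partial(\widehat{a}_i) + \Gamma(\partial_\mathcal{A} a_i) = f(a_i) + a_i + \Gamma(\partial_\mathcal{A} a_i) + \Gamma(\partial_\mathcal{A} a_i) = f(a_i) + i(a_i)$ by \eqref{eq:bGamma} and cancellation over $\Z/2$. One then inducts on word length: assuming $D(x) = f(x)+i(x)$ and $D(y) = f(y)+i(y)$,
\[
D(xy) = (f(x)+i(x))\,i(y) + f(x)\,(f(y)+i(y)) = f(x)f(y) + i(x)i(y) = (f+i)(xy),
\]
the cross terms $f(x)i(y)$ again cancelling mod $2$. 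This gives $D = f+i$ throughout $\mathcal{A}$ and completes the proof.

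The only genuine subtlety—``the main obstacle,'' such as it is—is the derivation bookkeeping: $f+i$ is not an algebra map, so the agreement $D = f+i$ does \emph{not} follow formally from agreement on generators, but requires the $(f,i)$-Leibniz rule to combine with working mod $2$ so that the cross terms disappear. Everything else is a routine verification.
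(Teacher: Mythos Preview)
Your proof is correct and follows essentially the same approach as the paper: both reduce $\partial^2(\widehat{a}_i)=0$ to the identity $\partial\Gamma + \Gamma\partial_\mathcal{A} = f + i$ on $\mathcal{A}$, verify it on generators via \eqref{eq:bGamma}, and extend to all of $\mathcal{A}$ by an induction on word length exploiting the $(f,i)$-derivation property and cancellation mod $2$. Your write-up is in fact more explicit than the paper's, which sketches the inductive step in a bracketed remark.
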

\begin{proof}
The derivation, $\partial$, has degree $-1$ when applied to generators, so we just need to verify that $\partial^2=0$.  First, note that the identity
\begin{equation} \label{eq:fiidentity}
f+i = \partial \circ \Gamma + \Gamma \circ \partial_\mathcal{A}
\end{equation}
holds on all of $\mathcal{A}$.  [Assuming that (\ref{eq:fiidentity}) holds when applied to $x$ and $y$, 
a straight forward computation that uses 
the multiplicative properties of $f, i, \Gamma$, and $\partial$ shows that (\ref{eq:fiidentity}) is also valid when applied to $xy$.  Moreover, the identity (\ref{eq:bGamma}) shows that (\ref{eq:fiidentity}) holds when applied to the generating set $a_i$.]

Now, (\ref{eq:dAdC}) shows that $\partial^2=0$ holds on generators of the form $a_i$ or $b_i$.  To verify that $\partial^2(\widehat{a}_i) =0$, compute using (\ref{eq:fiidentity}) that
\begin{align*}
\partial^2(\widehat{a}_i) =& \partial (f(a_i)) + \partial(a_i) + \partial\left(\Gamma \circ \partial_\mathcal{A}(a_i)\right) \\
 =& (\partial_\mathcal{B} \circ f)(a_i) + \partial_\mathcal{A}(a_i) + (\partial \circ \Gamma) \circ \partial_\mathcal{A}(a_i) \\
 =& (f \circ \partial_\mathcal{A})(a_i) + \partial_\mathcal{A}(a_i) + \left[ f \circ \partial_\mathcal{A}(a_i) + \partial_\mathcal{A}(a_i) + \Gamma \circ \partial_\mathcal{A}^2(a_i) \right] = 0.
\end{align*} 
\end{proof}

We refer to the DGA constructed in Proposition \ref{prop:standard} as the {\bf standard mapping cylinder DGA} associated to $f$.

\subsection{A uniqueness theorem}

Suppose now that $\mathcal{C}= \mathcal{A} *\widehat{\mathcal{A}}*\mathcal{B}$ is a mapping cylinder algebra, and assume the generating set for $\mathcal{C}$ is ordered as
	\begin{equation}  \label{eq:orderC}
	\{b_1, \ldots, b_n, a_1, \ldots, a_m, \widehat{a}_1, \ldots, \widehat{a}_m \}.
	\end{equation}

\begin{proposition}  \label{prop:unique}
Let $\partial_k: \mathcal{C} \rightarrow \mathcal{C}$, $k=1,2$ be DGA differentials such that:
\begin{enumerate}
\item  Both $(\mathcal{C},\partial_1)$ and $(\mathcal{C},\partial_2)$ are triangular DGAs with respect to the ordering of generators from (\ref{eq:orderC}). 
\item  The restrictions to $\mathcal{A}$ and $\mathcal{B}$ agree, 
\[
\partial_1\big|_{\mathcal{A}} = \partial_2\big|_{\mathcal{A}}  \quad \mbox{and}  \quad \partial_2\big|_{\mathcal{B}} = \partial_2 \big|_{\mathcal{B}}.
\]

\item For $k=1,2$, 
\[
\partial_k(\widehat{a}_i) = f(a_i) + a_i + \gamma^k_i, \quad 1 \leq i \leq m,
\]
 where $f(a_i) \in \mathcal{B}$ is independent of $k$ and $\gamma^k_i \in \mathcal{I}(\widehat{a}_1, \ldots, \widehat{a}_m)$.
\end{enumerate} 
Then, there exists a tame DGA isomorphism $\varphi: (\mathcal{C}, \partial_1) \rightarrow (\mathcal{C}, \partial_2)$ such that $\varphi\big|_{\mathcal{A}} = \mathit{id}_\mathcal{A}$ and $\varphi\big|_{\mathcal{B}} = \mathit{id}_\mathcal{B}$.
\end{proposition}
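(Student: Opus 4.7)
The plan is to construct $\varphi$ as a composition of elementary automorphisms of $\mathcal{C}$, each modifying exactly one $\widehat{a}_i$. I will inductively choose elements $u_i \in F_{i-1} := \Z/2\langle b_1, \ldots, b_n, a_1, \ldots, a_m, \widehat{a}_1, \ldots, \widehat{a}_{i-1}\rangle$ of degree $|\widehat{a}_i|$, and define $\varphi: \mathcal{C} \to \mathcal{C}$ to be the algebra endomorphism satisfying $\varphi|_{\mathcal{A} \cup \mathcal{B}} = \mathit{id}$ and $\varphi(\widehat{a}_i) = \widehat{a}_i + u_i$. Since each $u_i$ avoids the variable $\widehat{a}_i$, the resulting $\varphi$ will factor as a product of elementary automorphisms and hence be a tame isomorphism restricting to the identity on both $\mathcal{A}$ and $\mathcal{B}$.

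The only remaining task is to choose the $u_i$ so that $\varphi$ intertwines $\partial_1$ and $\partial_2$. This is automatic on $\mathcal{A} \cup \mathcal{B}$; on $\widehat{a}_i$, using $\gamma_i^1 \in \mathcal{I}(\widehat{a}_1, \ldots, \widehat{a}_{i-1})$ and the observation that $\varphi$ applied to those earlier $\widehat{a}$'s depends only on $u_1, \ldots, u_{i-1}$, the chain-map equation will reduce to
\[
\partial_2 u_i \;=\; \varphi_{i-1}(\gamma_i^1) + \gamma_i^2 \;=:\; \xi_i,
\]
where $\varphi_{i-1}$ denotes the partial composition constructed at stage $i-1$.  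A direct calculation using $\partial_k^2 \widehat{a}_i = 0$, the DGA-map property of $f$ from Proposition \ref{prop:DGAmap} (so that $\partial_k \gamma_i^k = f(\partial_\mathcal{A} a_i) + \partial_\mathcal{A} a_i$ for both $k$), and the inductive hypothesis that $\varphi_{i-1}$ intertwines $\partial_1$ and $\partial_2$ on all generators appearing in $\gamma_i^1$, will yield $\partial_2 \xi_i = 0$.

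The heart of the proof will be showing that $\xi_i$ is a $\partial_2$-boundary of some element of $F_{i-1}$.  First note that $F_{i-1}$ is a sub-DGA of $(\mathcal{C}, \partial_2)$: for $k < i$, $\partial_2 \widehat{a}_k \in F_{k-1} \subset F_{i-1}$ by triangularity together with $\gamma_k^2 \in \mathcal{I}(\widehat{a}_1, \ldots, \widehat{a}_{k-1})$.  Re-ordering the generators of $F_{i-1}$ to interleave the $a_k$ and $\widehat{a}_k$, namely as $b_1, \ldots, b_n, a_1, \widehat{a}_1, a_2, \widehat{a}_2, \ldots, a_{i-1}, \widehat{a}_{i-1}, a_i, \ldots, a_m$, keeps it triangular, and each pair $(a_k, \widehat{a}_k)$ with $k < i$ will satisfy the hypotheses of Proposition \ref{prop:cancel}.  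Iteratively applying that proposition produces a stable tame isomorphism between $F_{i-1}$ and a DGA $\mathcal{B}''$ obtained as the successive quotient of $F_{i-1}$ by the ideals $\mathcal{I}(\widehat{a}_k, \partial_2 \widehat{a}_k)$, $k<i$; its surviving generators are $\{b_1, \ldots, b_n, a_i, \ldots, a_m\}$.  By Remark \ref{rem:stable}(i), the associated iterated-quotient projection $\pi: F_{i-1} \to \mathcal{B}''$ (sending $\widehat{a}_k \mapsto 0$ and $a_k \mapsto f(a_k)$ for $k < i$) is a DGA quasi-isomorphism.  Since every monomial of $\xi_i$ contains some $\widehat{a}_k$ with $k < i$, we will have $\pi(\xi_i) = 0$, so $\xi_i$ bounds in $F_{i-1}$; take $u_i$ to be any such primitive.

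The principal obstacle lies in controlling the iterated application of Proposition \ref{prop:cancel}: at each cancellation step one must verify that the residual $\gamma$-terms land in the subalgebra generated by earlier generators (so that Proposition \ref{prop:cancel} continues to apply), and that the differential on the resulting quotient retains the appropriate (modified) mapping-cylinder form.  With this acyclicity statement in hand, the inductive construction of the $u_i$ assembles into the required tame isomorphism $\varphi$.
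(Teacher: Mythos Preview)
Your approach --- constructing $\varphi$ directly by inductively solving $\partial_2 u_i = \xi_i$ via an acyclicity argument --- differs from the paper's, which inducts on $m$ by cancelling the pair $(\widehat a_1, a_1)$ through Proposition~\ref{prop:cancel}, applying the hypothesis to the quotient $\mathcal{C}/\mathcal{I}(\widehat a_1,\partial\widehat a_1)$, and then lifting back along the tame isomorphisms that Proposition~\ref{prop:cancel} supplies (with a separate lemma to verify the lift fixes $\mathcal{A}$ and $\mathcal{B}$). Your route is natural, but the induction as written has a gap.

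The problem is the assertion that ``every monomial of $\xi_i$ contains some $\widehat a_k$ with $k<i$''. While $\gamma_i^2$ lies in $\mathcal{I}(\widehat a_1,\dots,\widehat a_{i-1})$, the term $\varphi_{i-1}(\gamma_i^1)$ need not: if $\gamma_i^1$ has a monomial $c\,\widehat a_j\,c'$ with $c,c'\in\mathcal{A}*\mathcal{B}$, then expanding $\varphi_{i-1}(\widehat a_j)=\widehat a_j+u_j$ produces a summand $c\,u_j\,c'$, which escapes the ideal unless $u_j\in\mathcal{I}(\widehat a_1,\dots,\widehat a_{j-1})$. Your induction only takes $u_j$ to be ``any primitive'' in $F_{j-1}$, and that ideal is not $\partial_2$-closed (since $\partial_2\widehat a_k$ contains $a_k+f(a_k)$), so one cannot simply restrict to it. The fix is to strengthen the hypothesis to $u_j\in\ker\pi_j$: since the generators $\{\widehat a_l,\,a_l+f(a_l):l<j\}$ of $\ker\pi_j$ also lie in $\ker\pi_i$ for $j<i$, one gets $\pi_i(u_j)=0$, hence $\pi_i\circ\varphi_{i-1}$ annihilates each $\widehat a_k$ and therefore $\pi_i(\xi_i)=0$; then, as $\ker\pi_i$ is acyclic (it is the kernel of a surjective quasi-isomorphism of chain complexes), one may choose $u_i\in\ker\pi_i$, closing the induction. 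A smaller point: your appeal to Proposition~\ref{prop:DGAmap} presumes $\mathcal{A}$ is a sub-DGA, which the present proposition does not assume (see the Remark following it); but in fact you only need $\partial_1\gamma_i^1=\partial_2\gamma_i^2$, and this follows directly from $\partial_k^2\widehat a_i=0$ together with condition~(2).
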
  

\begin{remark}  Note that in the statement of Proposition \ref{prop:unique}, $\partial_1$ and $\partial_2$ do not necessarily need to be mapping cylinder differentials since the condition (2) only implies that $\partial_k(\mathcal{A}) \subset \mathcal{A}*\mathcal{B}$ so that $\partial_k\big|_\mathcal{A}$ may not define a differential on $\mathcal{A}$.  Although our application of the proposition will be to triangular mapping cylinder DGAs, we need to allow the weaker condition (2) to accommodate the inductive proof.  
\end{remark}

\begin{proof}
We use induction on $m$ (the number of $\widehat{a}_i$ generators).  When $m=0$ or $m=1$ we have $\partial_1 = \partial_2$.

For the inductive step, we use that since $\gamma^k_1 = 0$ (by the triangularity condition, no $\widehat{a}_i$ can appear in $\partial_k \widehat{a}_1$) we have
\[
\partial_1 \widehat{a}_1 = \partial_2 \widehat{a}_1 = a_1 + f(a_1).
\]
Write $I = \mathcal{I}(\widehat{a}_1, \partial_1 \widehat{a}_1) = \mathcal{I}(\widehat{a}_1, \partial_2 \widehat{a}_1)$.   Proposition \ref{prop:cancel} states that with respect to the  ordered generating set given by the equivalence classes of generators of $\mathcal{C}$ other than $a_1$ and $\widehat{a}_1$,  $\mathcal{C}/I$ is itself a triangular DGA with differential $\partial_k^I$ satisfying $p \circ \partial_k = \partial_k^I \circ p$ where $p: \mathcal{C} \rightarrow \mathcal{C}/I$ is the projection.  Moreover, the proposition provides DGA maps 
\[
g_k:(\mathcal{C}/I, \partial^I_k)  \rightarrow (\mathcal{C}, \partial_k) \quad \mbox{and} \quad h: ( S(y,z), \partial') \rightarrow (\mathcal{C}, \partial_k)
\]
such that
\begin{itemize}
\item $g_k*h$ is a tame DGA isomorphism;  
\item $h(y) = \widehat{a}_1$ and $h(z) = \partial_k \widehat{a}_1 = f(a_1) + a_1$; and
\item there is a $(g_k \circ p, \mathit{id}_{\mathcal{C}})$-derivation $H_k: \mathcal{C} \rightarrow \mathcal{C}$ with $H_k(a_1) = \widehat{a}_1$ and $H_k(x) =0$ when $x$ is any of the other generators, $b_i, \widehat{a}_i$, or $a_l$ with $l \neq 1$, such that
\begin{equation} \label{eq:gkHomotopy}
g_k \circ p - \mathit{id}_{\mathcal{C}} = \partial_k \circ H_k + H_k \circ \partial_k.
\end{equation} 
\end{itemize}

We will want to apply the inductive hypothesis to the $(\mathcal{C}/I, \partial_k^I)$.  To match the setting from the statement of this proposition, let $\mathcal{C}/I =   \mathcal{A}^I* \widehat{\mathcal{A}}^I* \mathcal{B}^I$ where the factors are the sub-algebras generated by the equivalence classes $[a_i]$ with $i>1$, $[\widehat{a}_i]$ with $i>1$, and all $[b_i]$  respectively.  We now verify the requirements (2) and (3) from the statement of this proposition; (1) has already been observed above.  For (2), let $x= b_i$ or $x=a_j$ with $j \neq 1$, so that $p(x)$ is an arbitrary generator for $\mathcal{B}^I$ or $\mathcal{A}^I$.  
 We have
\[
\partial^I_1 ( p (x))  = p \circ \partial_1(x) = p \circ \partial_2(x) = \partial^I_2( p(x)), 
\]
so $\partial^I_1\big|_{\mathcal{A}^I} = \partial^I_2\big|_{\mathcal{A}^I}$ and $\partial^I_1\big|_{\mathcal{B}^I}=\partial^I_2\big|_{\mathcal{B}^I}$.  
To verify (3),  note that $p(\gamma_i^k) \in I([\widehat{a}_2], \ldots, [\widehat{a}_m])$ holds as $p(\widehat{a}_1)=0$ implies that $p(I(\widehat{a}_1, \ldots, \widehat{a}_m)) \subset I([\widehat{a}_2], \ldots, [\widehat{a}_m])$.

With (1)-(3) verified for the $(\mathcal{C}/I, \partial^I_k)$, the induction produces a tame DGA isomorphism
\[
\varphi_0: (\mathcal{C}/I, \partial^I_1) \rightarrow (\mathcal{C}/I, \partial^I_2), \quad  \mbox{with} \quad \varphi_0\big|_{\mathcal{A}^I} = \mathit{id}_{\mathcal{A}^I}, \, \varphi_0\big|_{\mathcal{B}^I} = \mathit{id}_{\mathcal{B}^I}.  
\]
We then define $\varphi: (\mathcal{C}, \partial_1) \rightarrow (\mathcal{C}, \partial_2)$ as the composition of tame DGA isomorphisms
\[
(\mathcal{C}, \partial_1) \stackrel{(g_1*h)^{-1}}{\longrightarrow} (\mathcal{C}/I, \partial^I_1) *( S(y,z), \partial') \stackrel{\varphi_0*\mathit{id}}{\longrightarrow} (\mathcal{C}/I, \partial^I_2) *( S(y,z), \partial') \stackrel{g_2*h}{\longrightarrow}  (\mathcal{C}, \partial_2).
\]
To see that $\varphi|_\mathcal{A} = \mathit{id}_\mathcal{A}$ and $\varphi|_\mathcal{B} = \mathit{id}_\mathcal{B}$, we establish the following.

\begin{lemma} \label{lem:g1h} We have 
\[
(g_1*h)\big|_{\mathcal{A}^I*\mathcal{B}^I* S(y,z)} = (g_2*h)\big|_{\mathcal{A}^I*\mathcal{B}^I* S(y,z)}, \quad \mbox{and} \quad \mathcal{A}*\mathcal{B} \subset (g_k*h)\left( \mathcal{A}^I*\mathcal{B}^I*S(y,z)\right).
\]
\end{lemma}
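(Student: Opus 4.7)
The plan is to prove both assertions simultaneously by induction on the triangular ordering $b_1,\ldots,b_n,a_1,\ldots,a_m$ of the generators of $\mathcal{A}*\mathcal{B} \subset \mathcal{C}$. Since $h$ is the same in both factorizations, the first assertion reduces to showing that $g_1 \circ p$ and $g_2 \circ p$ agree on the subalgebra $\mathcal{A}*\mathcal{B}$, which on the relevant generators gives $g_1([a_j]) = g_2([a_j])$ for $j>1$ and $g_1([b_i]) = g_2([b_i])$. Accordingly, for each generator $x$ processed so far I will maintain the joint invariants
\begin{itemize}
\item[(i)] $g_1 \circ p(x) = g_2 \circ p(x)$, and
\item[(ii)] $x \in (g_k*h)(\mathcal{A}^I*\mathcal{B}^I*S(y,z))$ for $k=1,2$.
\end{itemize}
The workhorse throughout is the identity $g_k \circ p = \mathit{id}_\mathcal{C} + \partial_k H_k + H_k \partial_k$ coming from (\ref{eq:gkHomotopy}), combined with triangularity and the facts $H_k(a_1)=\widehat{a}_1$, $H_k\equiv 0$ on all other generators.

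The base cases are direct. For $x=b_i$, triangularity puts $\partial_k(b_i) \in \Z/2\langle b_1,\ldots,b_{i-1}\rangle$, on which $H_k$ vanishes, so $g_k\circ p(b_i)=b_i$ and $b_i=(g_k*h)([b_i])$ lies in the image. For $x=a_1$, triangularity forces $\gamma_1^k=0$ (hence $\partial_k\widehat{a}_1=f(a_1)+a_1$) and $\partial_k(a_1)\in\Z/2\langle b_1,\ldots,b_n\rangle$, giving $g_k\circ p(a_1)=a_1+(f(a_1)+a_1)+0=f(a_1)$ independently of $k$. Since $[a_1]=p(f(a_1))\in\mathcal{B}^I$, one obtains $a_1=(g_k*h)(z+f(a_1))$, verifying (ii).

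For the inductive step, fix $j>1$ and suppose (i), (ii) hold for all generators of smaller index. Because $H_k(a_j)=0$, the identity gives $g_k\circ p(a_j) = a_j + H_k(\partial(a_j))$, where $\partial(a_j):=\partial_1(a_j)=\partial_2(a_j)$ is a polynomial in generators of index less than $j$ by triangularity. Expanding $H_k$ via its $(g_k\circ p,\mathit{id}_\mathcal{C})$-derivation property on a monomial $x_1\cdots x_r$ appearing in $\partial(a_j)$ yields a sum
\[
\sum_{l\,:\, x_l=a_1} (g_k\circ p)(x_1)\cdots(g_k\circ p)(x_{l-1})\cdot\widehat{a}_1\cdot x_{l+1}\cdots x_r,
\]
in which every $x_s$ lies strictly below $a_j$ in the ordering. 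Invariant (i) applied to those $x_s$ makes each summand $k$-independent, establishing (i) for $a_j$. For (ii), rewriting $a_j = g_k([a_j])+H_k(\partial(a_j))$ exhibits $a_j$ as a sum of products of elements in the image of $g_k*h$: $g_k([a_j])=(g_k*h)([a_j])$, $\widehat{a}_1=(g_k*h)(y)$, the prefix factors $(g_k\circ p)(x_s)=(g_k*h)([x_s])$, and the suffix factors $x_{l+1},\ldots,x_r$ by inductive (ii).

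The main subtlety is that the homotopy operators $H_1$ and $H_2$, though agreeing on generators, are derivations for distinct algebra maps $g_1\circ p$ and $g_2\circ p$ and so could a priori act differently on products; the induction sidesteps this by showing that the potentially $k$-dependent factors $(g_k\circ p)(x_s)$ arising in the derivation expansion are forced by triangularity to involve only lower-indexed generators, on which (i) has already been propagated.
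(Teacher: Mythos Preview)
Your proof is correct and follows essentially the same inductive strategy as the paper's. The paper carries three invariants through the induction: (i) $g_1|_{\mathcal{A}^I_l*\mathcal{B}^I} = g_2|_{\mathcal{A}^I_l*\mathcal{B}^I}$, (ii) $H_1 = H_2$ on the subalgebra $\mathcal{A}_l*\Z/2\langle\widehat{a}_1\rangle*\mathcal{B}$, and (iii) the \emph{equality} $(g_k*h)(\mathcal{A}^I_l*\mathcal{B}^I*S(y,z)) = \mathcal{A}_l*\Z/2\langle\widehat{a}_1\rangle*\mathcal{B}$. You collapse (ii) into your expansion of $H_k$ via the derivation rule (re-deriving it each time from your invariant (i) on the prefix factors), and you track only the inclusion direction of (iii), which is all the lemma actually asks for; this makes your version slightly more economical.
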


Indeed, given the Lemma, for any $x \in \mathcal{A}*\mathcal{B}$, there exists some $w \in \mathcal{A}^I*\mathcal{B}^I*S(y,z)$ with $g_1*h(w) = g_2*h(w) = x$, and since $\varphi_0$ restricts to the identity on $\mathcal{A}^I*\mathcal{B}^I$ we have
\[
\varphi(x) = (g_2*h) \circ(\phi_0*\mathit{id})\circ(g_1*h)^{-1}(x) = (g_2*h) \circ(\phi_0*\mathit{id})(w) = (g_2*h)(w) = x.
\]
\end{proof}

\begin{proof}[Proof of Lemma \ref{lem:g1h}]

For $l\geq 1$, let $\mathcal{A}^I_l = \Z/2\langle [a_2], \ldots, [a_l] \rangle$ and $\mathcal{A}_l = \Z/2 \langle a_1, \ldots, a_l \rangle$.  Recall the homotopy operators, $H_1$ and $H_2$, from equation (\ref{eq:gkHomotopy}).  We prove the lemma by establishing the following.

\medskip

\noindent{\bf Inductive Statement:}  For $l \geq 1$,  
\begin{itemize}
\item[(i)] $g_1\big|_{\mathcal{A}^I_l * \mathcal{B}^I} = g_2\big|_{\mathcal{A}^I_l * \mathcal{B}^I}$,
\item[(ii)] $H_1\big|_{\mathcal{A}_l*\Z/2\langle \widehat{a}_1 \rangle * \mathcal{B}} = H_2\big|_{\mathcal{A}_l*\Z/2\langle \widehat{a}_1 \rangle * \mathcal{B}}$, and
\item[(iii)]   $(g_k*h)\left( \mathcal{A}^I_l*\mathcal{B}^I*S(y,z)\right) = \mathcal{A}_l*\Z/2\langle \widehat{a}_1 \rangle *\mathcal{B}$.
\end{itemize}

\medskip

\noindent {\bf Base Case:} $l=1$.  For $k=1,2$, $H_k$ is a $(g_k \circ p, \mathit{id}_\mathcal{C})$-derivation that vanishes on the generators of $\mathcal{B}$. This implies that $H_k$ vanishes on all of $\mathcal{B}$.  Therefore, since $\partial_k(b_i) \in \mathcal{B}$ (by triangularity), using (\ref{eq:gkHomotopy}) we have   
\[
g_k([b_i]) = b_i + (\partial_k \circ H_k)(b_i) + (H_k \circ \partial_k)(b_i) = b_i.
\]
This allows us to record that 
\begin{equation}  \label{eq:Cbase}
g_1|_{\mathcal{B}^I} = g_2|_{\mathcal{B}^I} \quad \mbox{and} \quad  \mathcal{B} = g_k\left( \mathcal{B}^I\right), 
\end{equation}
which establishes (i).  Since $p(\mathcal{A}_1 * \Z/2\langle \widehat{a}_1 \rangle *\mathcal{B}) \subset \mathcal{B}^I$, we get that
\[
(g_1 \circ p)\big|_{\mathcal{A}_1 * \Z/2\langle \widehat{a}_1 \rangle*\mathcal{B}} = (g_2 \circ p)\big|_{\mathcal{A}_1 * \Z/2\langle \widehat{a}_1 \rangle*\mathcal{B}} 
\]
and it follows that the $(g_k \circ p, \mathit{id}_{\mathcal{C}})$-derivations, $k=1,2$, $H_1$ and $H_2$ agree when restricted to $\mathcal{A}_1 * \Z/2\langle \widehat{a}_1 \rangle*\mathcal{B}$.  This is requirement (ii).  For (iii),  since $h(y) = \widehat{a}_1$ and $h(z) = a_1 + f(a_1)$ with $f(a_1) \in \mathcal{B}$, we get
\begin{equation} \label{eq:gkhbase}
(g_k*h)\left( \mathcal{B}^I*S(y,z)\right) \subset \mathcal{A}_1*\Z/2\langle \widehat{a}_1 \rangle *\mathcal{B}.
\end{equation}
Moreover, from (\ref{eq:Cbase}) there is some $w \in \mathcal{B}^I$ with $g_k(w) = f(a_1)$, and we get that both $a_1 = (g_k *h)(w+z)$ and $\widehat{a}_1$ belong to $(g_k*h)\left( \mathcal{B}^I*S(y,z)\right)$ so that in (\ref{eq:gkhbase}) the reverse inclusion also holds. 

\medskip

\noindent {\bf Inductive Step:}  Assume that $l \geq 2$ and (i)-(iii) hold for smaller $l$.   Using (\ref{eq:gkHomotopy}), we compute
\begin{align}
g_1([a_l]) = a_l + (\partial_1 \circ H_1)(a_l) + (H_1 \circ \partial_1)(a_l)  & = a_l + H_1(\partial_1(a_l)) \label{eq:alH1} \\
 &= a_l + H_2(\partial_2(a_l)) = g_2([a_l]). \notag
\end{align}
[At the 3rd equality we used that $\partial_1|_\mathcal{A} = \partial_2|_\mathcal{A}$ and that by triangularity $\partial_k(a_l) \in \mathcal{A}_{l-1}*\mathcal{B}$ so that (ii) from the inductive hypothesis can be applied.]  This establishes (i).  Since $H_1$ and $H_2$ are $(g_k \circ p, \mathit{id}_{\mathcal{C}})$-derivations that agree on the generating set, (i) implies (ii).  To establish (iii), note that since the $H_1(\partial_1(a_l))$ term in (\ref{eq:alH1}) belongs to $\mathcal{A}_{l-1}*\Z/2\langle \widehat{a}_1 \rangle *\mathcal{B}$, we get that $g_k([a_l]) \in \mathcal{A}_{l}*\Z/2\langle \widehat{a}_1 \rangle *\mathcal{B}$ so that the $\subset$ inclusion holds in (iii).  Moreover, by the inductive hypothesis (iii), we get $H_1(\partial_1(a_l)) \in (g_1 * h)\left( \mathcal{A}^I_{l-1}*\mathcal{B}^I*S(y,z)\right)$ so that $a_l = g_k([a_l]) + H_1(\partial_1(a_l)) \in (g_1 * h)\left( \mathcal{A}^I_{l}*\mathcal{B}^I*S(y,z)\right)$ and the reverse inclusion $\supset$ holds in (iii) also.
\end{proof}

The following corollary 
allows us to work with standard mapping cylinder DGAs when convenient.

\begin{corollary}  \label{cor:standard}
Let $\mathcal{C} = \mathcal{A} * \widehat{\mathcal{A}} * \mathcal{B}$, and suppose that $(\mathcal{C},\partial)$ is a mapping cylinder DGA for  $f:(\mathcal{A}, \partial_\mathcal{A}) \rightarrow (\mathcal{B},\partial_\mathcal{B})$.

If  $(\mathcal{C}, \partial)$ is triangular with respect to the ordering of generators from (\ref{eq:orderC}),
then there is a DGA isomorphism $\varphi:(\mathcal{C}, \partial) \rightarrow (\mathcal{C}, \partial_\Gamma)$ with $\varphi\big|_\mathcal{A} = \mathit{id}_\mathcal{A}$ and $\varphi\big|_\mathcal{B} = \mathit{id}_\mathcal{B}$ where $(\mathcal{C}, \partial_\Gamma)$ is the standard mapping cylinder DGA for $f$ (from Proposition \ref{prop:standard}).
\end{corollary}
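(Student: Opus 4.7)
The plan is to derive this corollary as an immediate consequence of the uniqueness result Proposition~\ref{prop:unique}, applied with $\partial_1 = \partial$ and $\partial_2 = \partial_\Gamma$, where $(\mathcal{C}, \partial_\Gamma)$ denotes the standard mapping cylinder DGA associated to $f$ via Proposition~\ref{prop:standard}. The entire argument reduces to checking that the three hypotheses of Proposition~\ref{prop:unique} are satisfied by this pair of differentials.

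First I would dispense with conditions~(2) and~(3). For~(2), both differentials restrict to $\partial_\mathcal{A}$ on $\mathcal{A}$ and $\partial_\mathcal{B}$ on $\mathcal{B}$: for $\partial$ this is built into the definition of a mapping cylinder DGA, and for $\partial_\Gamma$ it is equation~(\ref{eq:dAdC}). For~(3), the fact that $(\mathcal{C},\partial)$ is a mapping cylinder for $f$ gives $\partial(\widehat{a}_i) = f(a_i) + a_i + \gamma_i$ with $\gamma_i \in \mathcal{I}(\widehat{a}_1,\ldots,\widehat{a}_m)$, while~(\ref{eq:bGamma}) gives $\partial_\Gamma(\widehat{a}_i) = f(a_i) + a_i + \Gamma\circ\partial_\mathcal{A}(a_i)$ with the same $\mathcal{B}$-component $f(a_i)$. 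The remaining term $\Gamma\circ\partial_\mathcal{A}(a_i)$ lies in $\mathcal{I}(\widehat{a}_1,\ldots,\widehat{a}_m)$ because the $(f,i)$-derivation $\Gamma$ inserts exactly one factor of some $\widehat{a}_j$ into each monomial on which it acts nontrivially.

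The one step requiring actual verification is condition~(1): that $(\mathcal{C}, \partial_\Gamma)$ is triangular with respect to the ordering~(\ref{eq:orderC}). On generators $b_i$ and $a_i$, triangularity is inherited directly from $\partial_\mathcal{B}$ and $\partial_\mathcal{A}$. For $\widehat{a}_i$, triangularity of $\partial_\mathcal{A}$ gives $\partial_\mathcal{A}(a_i) \in \Z/2\langle a_1, \ldots, a_{i-1}\rangle$; applying $\Gamma$, which sends the generator $a_j$ to $\widehat{a}_j$ and satisfies the $(f,i)$-derivation rule, produces a polynomial in $\{a_1, \ldots, a_{i-1}, b_1, \ldots, b_n, \widehat{a}_1, \ldots, \widehat{a}_{i-1}\}$. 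Hence $\partial_\Gamma(\widehat{a}_i)$ lies in the sub-algebra generated by the predecessors of $\widehat{a}_i$ in the ordering~(\ref{eq:orderC}).

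With hypotheses~(1)--(3) in hand, Proposition~\ref{prop:unique} supplies a tame DGA isomorphism $\varphi:(\mathcal{C},\partial) \to (\mathcal{C},\partial_\Gamma)$ that restricts to the identity on $\mathcal{A}$ and on $\mathcal{B}$, which is exactly the desired conclusion. I do not anticipate any genuine obstacle; the most substantive (though still routine) observation is that $\Gamma$ respects the triangular ordering on the $\widehat{a}_i$ factors, and this is essentially encoded in the definition of $\Gamma$ on generators.
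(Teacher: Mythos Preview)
Your proposal is correct and follows exactly the same approach as the paper: the paper's proof is the single line ``Just note that $\partial_\Gamma$ is also triangular so that Proposition~\ref{prop:unique} applies,'' and your argument is a careful expansion of precisely this, verifying each hypothesis of Proposition~\ref{prop:unique} with the only non-immediate point being the triangularity of $\partial_\Gamma$ on the $\widehat{a}_i$ generators.
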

\begin{proof}
Just note that $\partial_\Gamma$ is also triangular so that Proposition \ref{prop:unique} applies.
\end{proof}

\subsection{DGA homotopies from isomorphisms of mapping cylinders}

The following method can be used to produce DGA homotopies. 
In the statement of the proposition, we work with a pair of mapping cylinder algebras
\[
\mathcal{C}_1 = \mathcal{A}*\widehat{\mathcal{A}}*(\mathcal{B}_1) \quad \mbox{and} \quad \mathcal{C}_2 = \mathcal{A}* \widehat{\mathcal{A}}*(\mathcal{B}_2)
\]
associated to a pair of maps with the same domain, $\mathcal{A}$, but different codomains, $\mathcal{B}_1$ and $\mathcal{B}_2$.  We notate generating sets as $\{a_i \,|\, 1 \leq i \leq m\}$, $\{\widehat{a}_i \,|\, 1 \leq i \leq m\}$, $\{b^1_i \,|\, 1 \leq i \leq n_1\}$, and $\{b^2_i \,|\, 1 \leq i \leq n_2\}$.  For the triangularity hypothesis, we order the generating sets of $\mathcal{C}_1$ (resp. $\mathcal{C}_2$) so that the $b^1_i$ (resp. the $b^2_i$) are followed by the $a_i$ and then the $\widehat{a}_i$.  

\begin{proposition}\label{prop:DGAhmtp}
Suppose that $(\mathcal{C}_1, \partial_1)$ and $(\mathcal{C}_2, \partial_2)$ are triangular mapping cylinder DGAs for DGA maps $f_k:(\mathcal{A}, \partial_\mathcal{A}) \rightarrow (\mathcal{B}_k, \partial_{\mathcal{B}_k})$, $k=1,2$.
Suppose further that $\phi:(\mathcal{C}_1, \partial_1) \rightarrow (\mathcal{C}_2, \partial_2)$ is a DGA map such that
\begin{enumerate}
\item $\phi|_\mathcal{A} = \mathit{id}_\mathcal{A}$, and
\item $\phi(\mathcal{B}_1) \subset \mathcal{B}_2$.
\end{enumerate}
Then, $\phi \circ f_1$ and $f_2$ are DGA homotopic (as DGA maps $(\mathcal{A}, \partial_\mathcal{A}) \rightarrow (\mathcal{B}_2, \partial_{\mathcal{B}_2})$).  

\end{proposition}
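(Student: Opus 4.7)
The plan is to reduce to the case of standard mapping cylinder differentials via Corollary \ref{cor:standard}, and then build the homotopy explicitly using the operator $\Gamma$ from Proposition \ref{prop:standard} and a natural projection from $\mathcal{C}_2$ onto $\mathcal{B}_2$.

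\textbf{Step 1: Reduction to standard mapping cylinder differentials.} By Corollary \ref{cor:standard} there exist DGA isomorphisms $\varphi_k \colon (\mathcal{C}_k, \partial_k) \to (\mathcal{C}_k, \partial_{k,\Gamma})$ with $\varphi_k|_\mathcal{A} = \mathit{id}_\mathcal{A}$ and $\varphi_k|_{\mathcal{B}_k} = \mathit{id}_{\mathcal{B}_k}$, where $\partial_{k,\Gamma}$ is the standard mapping cylinder differential for $f_k$. Setting $\phi' := \varphi_2 \circ \phi \circ \varphi_1^{-1}$, one checks that $\phi'$ is a DGA map between standard mapping cylinder DGAs still satisfying conditions (1) and (2), and that $\phi' \circ f_1 = \phi \circ f_1$ as maps $\mathcal{A} \to \mathcal{B}_2$. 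Hence it suffices to prove the proposition under the assumption that $\partial_1$ and $\partial_2$ are the standard mapping cylinder differentials, with associated $(f_k, i_k)$-derivations $\Gamma_k \colon \mathcal{A} \to \mathcal{C}_k$ given by $\Gamma_k(a_i) = \widehat{a}_i$.

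\textbf{Step 2: A chain-map projection.} Define an algebra homomorphism $\pi_{\mathcal{B}_2} \colon \mathcal{C}_2 \to \mathcal{B}_2$ by
\[
\pi_{\mathcal{B}_2}|_{\mathcal{B}_2} = \mathit{id}_{\mathcal{B}_2}, \qquad \pi_{\mathcal{B}_2}|_\mathcal{A} = f_2, \qquad \pi_{\mathcal{B}_2}(\widehat{a}_i) = 0.
\]
I will verify that $\pi_{\mathcal{B}_2}$ is a chain map by checking $\partial_{\mathcal{B}_2} \pi_{\mathcal{B}_2} = \pi_{\mathcal{B}_2} \partial_2$ on generators: on $b^2_i$ and on $a_i$ this uses that $f_2$ is a DGA map, while on $\widehat{a}_i$ one computes $\pi_{\mathcal{B}_2} \partial_2(\widehat{a}_i) = \pi_{\mathcal{B}_2}(f_2(a_i) + a_i + \Gamma_2 \partial_\mathcal{A} a_i) = f_2(a_i) + f_2(a_i) + 0 = 0$, where the last $0$ comes from the fact that $\pi_{\mathcal{B}_2}$ kills the whole two-sided ideal $\mathcal{I}(\widehat{a}_1,\dots,\widehat{a}_m)$ containing $\Gamma_2 \partial_\mathcal{A} a_i$.

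\textbf{Step 3: Construction of the homotopy.} Define
\[
H := \pi_{\mathcal{B}_2} \circ \phi' \circ \Gamma_1 \colon \mathcal{A} \longrightarrow \mathcal{B}_2,
\]
a degree $+1$ map. A routine multiplicative check, using $\phi'|_\mathcal{A} = \mathit{id}_\mathcal{A}$ (so that $\pi_{\mathcal{B}_2} \circ \phi'$ restricted to $\mathcal{A} \subset \mathcal{C}_1$ agrees with $f_2$) and the $(f_1, i_1)$-derivation property of $\Gamma_1$, shows that $H$ is a $(\phi' \circ f_1, f_2)$-derivation. On generators one computes using the chain-map property of both $\phi'$ and $\pi_{\mathcal{B}_2}$, the standard form $\partial_1 \widehat{a}_i = f_1(a_i) + a_i + \Gamma_1 \partial_\mathcal{A}(a_i)$, and condition (2) (which ensures $\phi'(f_1(a_i)) \in \mathcal{B}_2$ is fixed by $\pi_{\mathcal{B}_2}$):
\begin{align*}
(\partial_{\mathcal{B}_2} H + H \partial_\mathcal{A})(a_i) &= \partial_{\mathcal{B}_2}\pi_{\mathcal{B}_2}\phi'(\widehat{a}_i) + \pi_{\mathcal{B}_2}\phi' \Gamma_1\partial_\mathcal{A}(a_i) \\
&= \pi_{\mathcal{B}_2}\phi'\bigl(f_1(a_i) + a_i + \Gamma_1\partial_\mathcal{A}(a_i)\bigr) + \pi_{\mathcal{B}_2}\phi'\Gamma_1\partial_\mathcal{A}(a_i)\\
&= \phi'\circ f_1(a_i) + f_2(a_i).
\end{align*}
Since the left and right sides are both determined by their values on the free generators $a_i$, this verifies the homotopy equation $\phi'\circ f_1 - f_2 = \partial_{\mathcal{B}_2} H + H \partial_\mathcal{A}$, completing the proof.

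\textbf{Expected main obstacle.} The only delicate point is the chain-map property of $\pi_{\mathcal{B}_2}$: for a general mapping cylinder differential $\partial \widehat{a}_i = f(a_i) + a_i + \gamma_i$ with $\gamma_i \in \mathcal{I}(\widehat{a}_1,\dots,\widehat{a}_m)$, $\pi_{\mathcal{B}_2}$ does annihilate $\gamma_i$ automatically (since $\pi_{\mathcal{B}_2}$ is a homomorphism sending each $\widehat{a}_j$ to $0$), so in fact $\pi_{\mathcal{B}_2}$ is a chain map for any mapping cylinder differential. However, the crucial identity that makes Step 4 collapse is the compatibility $\Gamma_1 \partial_\mathcal{A}(a_i) = $ the $\widehat{a}$-part of $\partial_{1,\Gamma}(\widehat{a}_i)$, and this only holds for the \emph{standard} differential --- it is precisely why the reduction in Step 1 (hence the uniqueness result Proposition \ref{prop:unique}) is essential.
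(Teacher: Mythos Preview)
Your proof is correct and follows essentially the same approach as the paper: reduce to the standard mapping cylinder differentials via Corollary~\ref{cor:standard}, define the projection $\pi_{\mathcal{B}_2}$ (checking it is a chain map), and then take the homotopy operator to be $\pi_{\mathcal{B}_2}\circ\phi'\circ\Gamma_1$, verifying both the $(\phi'\circ f_1,f_2)$-derivation property and the homotopy equation on generators. The paper's proof is organized in the same way (with the homotopy called $K$ rather than $H$); the only cosmetic difference is that the paper establishes the generator identity first and then checks the derivation property, whereas you do these in the opposite order.
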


\begin{proof}
Note that by replacing $\phi$ with $\phi' =  \varphi_2\circ \phi \circ \varphi_1^{-1}$ where $\varphi_1$ and $\varphi_2$ are isomorphisms from Corollary \ref{cor:standard}, we can assume that both $\partial_1$ and $\partial_2$ are the standard form differentials,
\[
\partial_k(\widehat{a}_i) = f_k(a_i) + a_i + \Gamma_k \circ \partial_\mathcal{A}(a_i)
\]
where $\Gamma_k:\mathcal{A} \rightarrow \mathcal{C}_k$ is the $(f_k,i)$-derivation ($i$ is the inclusion) with $\Gamma_k(a_i) = \widehat{a}_i$.  [Notice that since the $\varphi_k$ restrict to the identity on $\mathcal{A}$ and $\mathcal{B}_k$, $\phi'$ will still satisfy (1) and (2).]

Define an algebra homomorphism $\pi_{\mathcal{B}_2}: \mathcal{C}_2 \rightarrow \mathcal{B}_2$ to satisfy
\[
\pi_{\mathcal{B}_2}\big|_{\mathcal{A}} = f_2;  \quad \quad \pi_{\mathcal{B}_2}(\widehat{a}_i) = 0;  \quad \quad \pi_{\mathcal{B}_2}\big|_{\mathcal{B}_2} = \mathit{id}_{\mathcal{B}_2}.
\]
Notice that $\pi_{\mathcal{B}_2}$ is a DGA map, i.e. 
\[
\pi_{\mathcal{B}_2} \circ \partial_2 = \partial_{\mathcal{B}_2}\circ \pi_{\mathcal{B}_2}.
\]
[This holds on the sub-algebras $\mathcal{A}$ and $\mathcal{B}_2$ because $f_2$ and $\mathit{id}_{\mathcal{B}_2}$ are chain maps.  On a generator of the form $\widehat{a}_i$, we compute
\begin{align*}
\pi_{\mathcal{B}_2} \circ \partial_2(\widehat{a}_i) &= \pi_{\mathcal{B}_2}( f_2(a_i) + a_i + \Gamma_2 \circ \partial_\mathcal{A}(a_i)) \\
 &= f_2(a_i) + f_2(a_i) + 0 = 0 = \partial_{\mathcal{B}_2} \circ \pi_{\mathcal{B}_2}(\widehat{a}_i)
\end{align*}
where we used that $\mbox{Im}(\Gamma) \subset \mathcal{I}(\widehat{a}_1, \ldots, \widehat{a}_m)$.]

Now, since $\phi$ is also a DGA map, we have
\begin{equation} \label{eq:rhoDGA}
\pi_{\mathcal{B}_2} \circ \phi \circ \partial_{1} =\partial_{\mathcal{B}_2} \circ \pi_{\mathcal{B}_2}  \circ \phi.
\end{equation}
For any $\widehat{a}_i$, we can compute
\begin{align*}
(\pi_{\mathcal{B}_2} \circ \phi \circ \partial_{1})(\widehat{a}_i) & = (\pi_{\mathcal{B}_2} \circ \phi)( f_1(a_i) + a_i + \Gamma_1 \circ \partial_\mathcal{A}(a_i)) \\
 & = \pi_{\mathcal{B}_2}\left( \phi \circ f_1(a_i) + a_i + \phi \circ \Gamma_1 \circ \partial_\mathcal{A}(a_i)\right) \\
 & = \phi \circ f_1(a_i) + f_2(a_i) + (\pi_{\mathcal{B}_2} \circ \phi \circ \Gamma_1) \circ \partial_\mathcal{A}(a_i). 
\end{align*}
[At the second equality we used property (2) of $\phi$; at the third equality we used property (1) of $\phi$ and the definition of $\pi_{\mathcal{B}_2}$.]
In addition, 
we have
\[
\partial_{\mathcal{B}_2}\circ \pi_{\mathcal{B}_2} \circ \phi(\widehat{a}_i) = \partial_{\mathcal{B}_2} \circ( \pi_{\mathcal{B}_2} \circ \phi \circ \Gamma_1)(\widehat{a}_i).
\]
Combining the previous two calculations with (\ref{eq:rhoDGA}), we have shown that for any generator of $\mathcal{A}$,
\begin{equation} \label{eq:fandK}
\phi \circ f_1 (a_i) + f_{2}(a_i) = \partial_{\mathcal{B}_2} \circ K (a_i) + K \circ \partial_\mathcal{A}(a_i)
\end{equation} 
where $K = \pi_{\mathcal{B}_2} \circ \phi \circ \Gamma_1$.  Since $\Gamma_1$ is a $(f_1 , i)$-derivation (where $i: \mathcal{A} \hookrightarrow \mathcal{C}_1$ is the inclusion) and $\pi_{\mathcal{B}_2} \circ \phi$ is an algebra homomorphism, it follows that $K$ is an $(\pi_{\mathcal{B}_2} \circ \phi \circ f_1, \pi_{\mathcal{B}_2} \circ \phi \circ i)$-derivation.  The properties of $\phi$ and the definition of $\pi_{\mathcal{B}_2}$, give that
\[
(\pi_{\mathcal{B}_2} \circ \phi \circ f_1, \pi_{\mathcal{B}_2} \circ \phi \circ i) = ( \phi \circ f_1, f_{2}).
\]
Therefore, $K$ is actually a $(\phi \circ f_1, f_2)$-derivation, and the equality (\ref{eq:fandK}) shows that $\phi \circ f_1$ and $f_{2}$ are DGA homotopic.

\end{proof}

\section{The immersed DGA category}\label{sec:DGAcat}

  To have an appropriate target category for a Legendrian contact homology functor that allows immersed cobordisms, we consider a generalized class of DGA morphisms that we will call immersed DGA maps.  This notion is essentially equivalent to the bordered DGAs studied by Sivek in \cite{Sivek}.  We then introduce a notion of homotopy for immersed DGA maps and give two equivalent characterizations of the composition of immersed maps up to homotopy.  The section concludes by establishing a category of DGAs with morphisms given by homotopy classes of immersed maps and observing some relations with the standard homotopy category of DGAs.   

\subsection{Immersed DGA maps and immersed homotopy}

Let $(\mathcal{A}_1, \partial_1)$ and $(\mathcal{A}_2, \partial_2)$ be finitely generated, triangular DGAs over $\Z/2$.

\begin{definition}\label{def:immDGAmap}
An {\bf immersed DGA map} $M$ from $(\mathcal{A}_1, \partial_1)$ to $(\mathcal{A}_2, \partial_2)$ is a triple $M=((\mathcal{B}, \partial), f, i)$, consisting of a finitely generated, 
 triangular DGA $(\mathcal{B}, \partial)$ together with a pair of DGA maps $f:(\mathcal{A}_1, \partial_1) \rightarrow (\mathcal{B}, \partial)$ and $i:(\mathcal{A}_2, \partial_2) \hookrightarrow (\mathcal{B}, \partial)$ with the requirement that $i$ includes $\mathcal{A}_2$ into $\mathcal{B}$ as a based sub-DGA.  That is, $i$ maps the generating set of $\mathcal{A}_2$ injectively to a subset of the generating set of $\mathcal{B}$.  
\end{definition}
We often present an immersed DGA map as a diagram
\[
M = \big(\mathcal{A}_1 \stackrel{f}{\rightarrow} \mathcal{B} \stackrel{i}{\hookleftarrow} \mathcal{A}_2 \big).
\]
We may use the map $i$ to view $\mathcal{A}_2$ as a sub-DGA of $\mathcal{B}$.  With this identification, it is always possible to reorder the generating set of $\mathcal{B}$ as $\{a_1, \ldots, a_m, b_1, \ldots, b_n\} \subset \mathcal{B}_2$ where $\{a_1, \ldots, a_m \}$ is the generating set for $\mathcal{A}_2$ and so that $\partial$ is triangular with respect to the ordering.

\begin{definition}  \label{def:homotopy}
Two immersed DGA maps $M = \big(\mathcal{A}_1 \stackrel{f}{\rightarrow} \mathcal{B} \stackrel{i}{\hookleftarrow} \mathcal{A}_2 \big)$ and $M' = \big(\mathcal{A}_1 \stackrel{f'}{\rightarrow} \mathcal{B}' \stackrel{i'}{\hookleftarrow} \mathcal{A}_2 \big)$
 are said to be {\bf immersed homotopic}, notated as $M \simeq M'$, if there exists a stable tame isomorphism $\varphi: \mathcal{B}*S \rightarrow \mathcal{B}'*S$ such\footnote{The following equations do not distinguish notationally between $i$ , $i'$, $f$, $f'$ and  their compositions with $\iota:\mathcal{B} \rightarrow \mathcal{B}*S$ or $\iota':\mathcal{B}' \rightarrow \mathcal{B}'*S'$.} that
\begin{itemize}
\item  $\varphi \circ i = i'$; and 
\item $\varphi \circ f \simeq f'$ (DGA homotopy).
\end{itemize}
Diagramatically, the left half (resp. right half) of
\[
\xymatrix{ & \mathcal{B}*S   \ar[dd]^\varphi   & \\ \mathcal{A}_1 \ar[ru]^f \ar[rd]_{f'} & & \mathcal{A}_2 \ar[lu]_i \ar[ld]^{i'} \\ & \mathcal{B}'*S'} 
\]
is commutative up to DGA homotopy (resp. is commutative).  
\end{definition}

If we use the injections $i$ and $i'$ to view $\mathcal{A}_2$ as a sub-DGA of $\mathcal{B}$ and $\mathcal{B}'$, then the second requirement amounts to having $\varphi|_{\mathcal{A}_2} = \mathit{id}_{\mathcal{A}_2}$.

\begin{lemma} Immersed homotopy defines an equivalence relation on the set\footnote{Since we consider only finitely generated DGAs, the usual categorical requirement that the collection of immersed DGA maps between two DGAs forms a set can be met by restricting $\mathcal{B}$ to have those generators that do not belong to $\mathcal{A}_2$ be elements of some fixed 
 set $X$ such as $\mathbb{N}$.  We will not dwell on this point.}  of immersed DGA maps from $\mathcal{A}_1$ to $\mathcal{A}_2$.
\end{lemma}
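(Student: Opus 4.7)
The plan is to verify reflexivity, symmetry, and transitivity in turn; the key ingredients are that stable tame isomorphisms compose (Remark \ref{rem:stable}(ii)) and that DGA homotopy is preserved under pre- and post-composition with DGA maps (an easy consequence of the derivation property together with the fact that DGA maps commute with differentials).

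For reflexivity, given $M = \big(\mathcal{A}_1 \stackrel{f}{\rightarrow} \mathcal{B} \stackrel{i}{\hookleftarrow} \mathcal{A}_2\big)$, I would take the trivial (zero-generator) stabilizations, so that $\mathcal{B}*S = \mathcal{B} = \mathcal{B}*S'$, and let $\varphi = \mathit{id}_\mathcal{B}$. Then $\varphi \circ i = i$ holds strictly, and $\varphi \circ f = f$ is DGA homotopic to $f$ via the zero homotopy.

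For symmetry, given a stable tame isomorphism $\varphi: \mathcal{B}*S \to \mathcal{B}'*S'$ with $\varphi \circ i = i'$ and a DGA homotopy $H$ satisfying $\varphi \circ f - f' = \partial H + H\partial$, I would use $\varphi^{-1}: \mathcal{B}'*S' \to \mathcal{B}*S$. The identity $\varphi^{-1} \circ i' = \varphi^{-1} \circ \varphi \circ i = i$ gives the strict equality on the $i$-side; applying $\varphi^{-1}$ to the homotopy identity and using that $\varphi^{-1}$ is a DGA map yields
\[
f - \varphi^{-1} \circ f' = \partial(\varphi^{-1} \circ H) + (\varphi^{-1} \circ H)\partial,
\]
and a direct check shows $\varphi^{-1} \circ H$ is a $(f, \varphi^{-1} \circ f')$-derivation, so $\varphi^{-1} \circ f' \simeq f$.

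For transitivity, suppose $M \simeq M'$ is witnessed by $\varphi_1: \mathcal{B}*S_1 \to \mathcal{B}'*S_2$ with DGA homotopy $H_1$ for $\varphi_1 \circ f \simeq f'$, and $M' \simeq M''$ by $\varphi_2: \mathcal{B}'*S_2' \to \mathcal{B}''*S_3'$ with DGA homotopy $H_2$ for $\varphi_2 \circ f' \simeq f''$. By Remark \ref{rem:stable}(ii), these compose to a stable tame isomorphism
\[
\varphi: \mathcal{B}*S_1*S_2' \stackrel{\varphi_1 * \mathit{id}_{S_2'}}{\longrightarrow} \mathcal{B}'*S_2*S_2' \stackrel{\varphi_2 * \mathit{id}_{S_2}}{\longrightarrow} \mathcal{B}''*S_3'*S_2,
\]
using the canonical identification $\mathcal{B}'*S_2*S_2' \cong \mathcal{B}'*S_2'*S_2$. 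The $i$-equalities chain to give $\varphi \circ i = \varphi_2 \circ \varphi_1 \circ i = \varphi_2 \circ i' = i''$. For the $f$-homotopy, post-composing $H_1$ with the DGA map $\varphi_2$ gives a $(\varphi_2 \circ \varphi_1 \circ f, \varphi_2 \circ f')$-derivation $\varphi_2 \circ H_1$ witnessing $\varphi_2 \circ \varphi_1 \circ f \simeq \varphi_2 \circ f'$, and then the standard transitivity of DGA homotopy between two fixed DGAs combined with $\varphi_2 \circ f' \simeq f''$ (via $H_2$) yields $\varphi \circ f \simeq f''$. The only bookkeeping hurdle is the reordering of stabilization factors when composing the $\varphi_i$, but this is precisely what Remark \ref{rem:stable}(ii) handles, so no substantive obstacle arises.
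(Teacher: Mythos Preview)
Your proof is correct and follows essentially the same approach as the paper. The paper simply asserts that reflexivity and symmetry are ``easily verified'' and then writes out the transitivity argument via the composition of stable tame isomorphisms exactly as you do (invoking the composition from Remark \ref{rem:stable}(ii) and chaining the DGA homotopies); your additional detail for reflexivity and symmetry just fills in what the paper leaves to the reader.
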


\begin{proof}
Both reflexivity and symmetry are easily verified.  
 To verify transitivity, suppose that in addition we are given $\mathcal{A}_1 \stackrel{f''}{\rightarrow} \mathcal{B}'' \stackrel{i''}{\hookleftarrow} \mathcal{A}_2$ and a stable tame isomorphism $\phi':\mathcal{B}'*S_0' \rightarrow \mathcal{B}''*S''$ realizing an immersed homotopy between $\mathcal{A}_1 \stackrel{f'}{\rightarrow} \mathcal{B}' \stackrel{i'}{\hookleftarrow} \mathcal{A}_2$ and $\mathcal{A}_1 \stackrel{f''}{\rightarrow} \mathcal{B}'' \stackrel{i''}{\hookleftarrow} \mathcal{A}_2$.  Then, the stable tame isomorphism 
\[
\psi: \mathcal{B} * S*S_0' \stackrel{ \varphi * \mathit{id}_{S_0'}}{\longrightarrow} \mathcal{B}'* S' * S_0' = \mathcal{B}'* S_0'*S' \stackrel{ \varphi' * \mathit{id}_{S'}}{\longrightarrow} \mathcal{B}''*S''*S'.
\]
clearly restricts to the identity on $\mathcal{A}_2$, and satisfies
\[
\psi \circ f = (\varphi'* \mathit{id}_{S'}) \circ (\varphi* \mathit{id}_{S_0'}) \circ f = (\varphi'* \mathit{id}_{S'}) \circ \varphi \circ f \simeq (\varphi'* \mathit{id}_{S'}) \circ f'= \varphi' \circ f' \simeq f''.
\]

\end{proof}

\medskip

\subsection{Composition}  
\begin{definition} \label{def:Comp}  The {\bf composition}, $M_2 \circ M_1$, of immersed DGA maps $M_1= \big(\mathcal{A}_1 \stackrel{f_1}{\rightarrow} \mathcal{B}_1 \stackrel{i_1}{\hookleftarrow} \mathcal{A}_2\big)$ and $M_2 = \big(\mathcal{A}_2 \stackrel{f_2}{\rightarrow} \mathcal{B}_1 \stackrel{i_2}{\hookleftarrow} \mathcal{A}_3\big)$ is given by the diagram
\[
\xymatrix{ & & \mathcal{B} & & \\ & \mathcal{B}_1 \ar@{.>}[ru]^{p_1} & & \mathcal{B}_2 \ar@{.>}[lu]_{p_2} & \\ \mathcal{A}_1 \ar[ru]^{f_1} & & \mathcal{A}_2 \ar[lu]_{i_1} \ar[ru]^{f_2}  & & \mathcal{A}_3 \ar[lu]_{i_2}   } 
\]
where $\mathcal{B}$ is the categorical pushout of the maps $i_1$ and $f_2$.  Concretely, $\mathcal{B} = (\mathcal{B}_1*\mathcal{B}_2)/I$ where $I = \mathcal{I}(\{i_1(x)-f_2(x) \,|\, x \in \mathcal{A}_2\})$. The differential from $\mathcal{B}_1*\mathcal{B}_2$ satisfies $\partial(I) \subset I$, and therefore induces a differential that makes $\mathcal{B}$ into a DGA.  The dotted arrows $p_i:\mathcal{B}_i \rightarrow \mathcal{B}$ are the composition of the inclusion and projection maps $\mathcal{B}_i \rightarrow \mathcal{B}_1 * \mathcal{B}_2 \rightarrow \mathcal{B}$, and $M_2 \circ M_1=\big(\mathcal{A}_1 \stackrel{p_1\circ f_1}{\rightarrow} \mathcal{B} \stackrel{p_2 \circ i_2}{\hookleftarrow} \mathcal{A}_3\big)$.   

If the generating sets of $\mathcal{A}_2, \mathcal{B}_1$ and $\mathcal{B}_2$  are $\{a_1, \ldots, a_m\}$, $\{a_1, \ldots, a_m, b^1_1, \ldots, b^1_{m_1}\}$, and $\{b^2_1, \ldots, b^2_{m_2}\}$, then we make $\mathcal{B}$ into a triangular DGA using the ordered generating set 
\[
\left\{[b^2_{1}], \ldots, [b^2_{m_2}], [b^1_{1}], \ldots, [b^1_{m_1}]\right\}.
\]
\end{definition}

\begin{lemma} The composition $M_2 \circ M_1 = \big(\mathcal{A}_1 \stackrel{p_1\circ f_1}{\rightarrow} \mathcal{B} \stackrel{p_2 \circ i_2}{\hookleftarrow} \mathcal{A}_3\big)$ is indeed an immersed DGA map from $\mathcal{A}_1$ to $\mathcal{A}_3$.
\end{lemma}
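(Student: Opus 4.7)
The plan is to verify, in order, the three clauses of Definition \ref{def:immDGAmap}: that $(\mathcal{B},\partial)$ is a finitely generated triangular DGA; that $p_1\circ f_1$ is a DGA map; and that $p_2\circ i_2$ realizes $\mathcal{A}_3$ as a based sub-DGA of $\mathcal{B}$. The core preliminary task is to check that the pushout ideal $I=\mathcal{I}\bigl(\{i_1(x)-f_2(x)\,|\,x\in\mathcal{A}_2\}\bigr)$ is preserved by the differential $\partial$ on the coproduct DGA $\mathcal{B}_1*\mathcal{B}_2$. On a typical generator of $I$, using that both $i_1$ and $f_2$ are DGA maps,
\[
\partial\bigl(i_1(x)-f_2(x)\bigr) = i_1(\partial_{\mathcal{A}_2} x) - f_2(\partial_{\mathcal{A}_2} x)
\]
which lies in $I$ since $\partial_{\mathcal{A}_2} x \in \mathcal{A}_2$; the derivation property then extends this to all of $I$. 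Hence $\partial$ descends to a DGA differential on $\mathcal{B}=(\mathcal{B}_1*\mathcal{B}_2)/I$ and the projections $p_1,p_2$ are DGA maps, so $p_1\circ f_1$ is a DGA map and the composition $p_2\circ i_2$ is a DGA map.

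Next I would verify that $\mathcal{B}$ is finitely generated and free on the ordered set $\{[b^2_1],\ldots,[b^2_{m_2}],[b^1_1],\ldots,[b^1_{m_1}]\}$. The cleanest way is a Tietze-style observation: $\mathcal{B}_1*\mathcal{B}_2$ is free on $\{a_1,\ldots,a_m,b^1_1,\ldots,b^1_{m_1},b^2_1,\ldots,b^2_{m_2}\}$, and each relation $a_i-f_2(a_i)$ in $I$ expresses the generator $a_i$ as a polynomial in the remaining generators, so eliminating the $a_i$'s yields a free algebra on the remaining generators. Alternatively, one defines an algebra map from the free algebra $\Z/2\langle [b^2_k],[b^1_j]\rangle$ to $\mathcal{B}$ by sending generators to their classes, and checks that its inverse comes from the map $\mathcal{B}_1*\mathcal{B}_2\to \Z/2\langle[b^2_k],[b^1_j]\rangle$ defined by $a_i\mapsto f_2(a_i)$, $b^1_j\mapsto[b^1_j]$, $b^2_k\mapsto [b^2_k]$, which kills $I$ by construction.

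With that identification in hand, triangularity of $(\mathcal{B},\partial)$ with respect to the ordering in Definition \ref{def:Comp} is then routine: for $[b^2_j]$, triangularity of $\mathcal{B}_2$ gives $\partial[b^2_j]\in\Z/2\langle[b^2_1],\ldots,[b^2_{j-1}]\rangle$; for $[b^1_j]$, triangularity of $\mathcal{B}_1$ puts $\partial b^1_j$ in the algebra generated by the $a_i$ and the earlier $b^1_l$, and after passing to $\mathcal{B}$ the $a_i$ become $f_2(a_i)\in\Z/2\langle b^2_1,\ldots,b^2_{m_2}\rangle$, which lies in the subalgebra generated by the predecessors of $[b^1_j]$ in the chosen ordering.

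Finally, to see that $p_2\circ i_2$ is a based inclusion, note that by hypothesis $i_2$ carries the generating set of $\mathcal{A}_3$ injectively to a subset of $\{b^2_1,\ldots,b^2_{m_2}\}$, and $p_2$ sends each $b^2_k$ to the generator $[b^2_k]$ of $\mathcal{B}$; the composition therefore injects generators to generators. Injectivity at the algebra level follows since $\mathcal{A}_3$ is free on these generators and they remain free in $\mathcal{B}$. The step I expect to require the most care is the identification of a free generating set for the pushout $\mathcal{B}$ (and hence establishing triangularity and injectivity); once that is pinned down, the remaining verifications are short derivation-type computations.
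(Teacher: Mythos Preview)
Your proposal is correct and follows essentially the same approach as the paper's proof, which is quite terse: the paper simply notes that since $[a_i]=[f_2(a_i)]\in\Z/2\langle[b^2_1],\ldots,[b^2_{m_2}]\rangle$, it is routine to verify freeness and triangularity on the stated generating set, and that $p_2\circ i_2$ is a based inclusion because the classes of generators of $\mathcal{B}_2$ remain generators of $\mathcal{B}$. You have supplied exactly those routine details (and additionally spelled out why $\partial(I)\subset I$, which the paper asserts without proof in the definition).
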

\begin{proof}
Using that $[a_i] = [f_2(a_i)] \in \Z/2\langle [b^2_1], \ldots, [b^2_{m_2}] \rangle$ it is routine to verify that $\mathcal{B}$ is freely generated by $\{[b^2_{1}], \ldots, [b^2_{m_2}], [b^1_{1}], \ldots, [b^1_{m_1}]\}$ and that the differential on $\mathcal{B}$ is triangular.  In addition, since the equivalence classes of all of the generators of $\mathcal{B}_2$ remain generators of $\mathcal{B}$, $p_2 \circ i_2: \mathcal{A}_3 \rightarrow \mathcal{B}$ is an inclusion of based DGAs.
\end{proof}

The following proposition gives an alternate description of $M_2 \circ M_1$ up to immersed homotopy.

\begin{proposition}  \label{lem:CompositionAlt}  
Let $M_1 = \big(\mathcal{A}_1 \stackrel{f_1}{\rightarrow} \mathcal{B}_1 \stackrel{i_1}{\hookleftarrow} \mathcal{A}_2\big)$ and $M_2=\big(\mathcal{A}_2 \stackrel{f_2}{\rightarrow} \mathcal{B}_2 \stackrel{i_2}{\hookleftarrow} \mathcal{A}_3\big)$ be immersed DGA maps.

\begin{enumerate}
\item On the algebra $\mathcal{D} =  \mathcal{B}_1* \widehat{\mathcal{A}}_2* \mathcal{B}_2$ with ordered generating set
\[
\{b^2_1, \ldots, b^2_{m_2}, a_1, \ldots, a_m, \widehat{a}_1, \ldots, \widehat{a}_m, b^1_1, \ldots, b^1_{m_1}\}, \quad |\widehat{a}_i| = |a_i| +1
\]
there exists a differential $\partial$ making $(\mathcal{D}, \partial)$ into a triangular DGA and satisfying 
\begin{itemize}
\item $\mathcal{B}_1$ and $\mathcal{B}_2$ are sub-DGAs of $\mathcal{D}$; and
\item for each of the generators $\widehat{a}_1, \ldots, \widehat{a}_m$ of $\widehat{\mathcal{A}}_2$, we have 
\begin{equation} \label{eq:Ddiff}
\partial \widehat{a}_i = i_1(a_i) + f_2(a_i) + \gamma_i
\end{equation}
 where $\gamma_i \in \mathcal{I}(\widehat{a}_1, \ldots, \widehat{a}_m)$ (2-sided ideal generated by the $\widehat{a}_i$). 
\end{itemize} 
\item For any such $\partial$, the immersed map $\mathcal{A}_1 \stackrel{f_1}{\rightarrow} \mathcal{D} \stackrel{i_2}{\hookleftarrow} \mathcal{A}_3$ 
is immersed homotopic to the composition $M_2 \circ M_1$.
\end{enumerate}

\end{proposition}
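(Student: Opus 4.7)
The plan is to handle Part (1) by adapting the standard mapping cylinder construction of Proposition \ref{prop:standard} to the pair of DGA maps $i_1, f_2:\mathcal{A}_2\to\mathcal{D}$, and Part (2) by iteratively cancelling the pairs $(\widehat{a}_i,a_i)$ via Proposition \ref{prop:cancel}.

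For Part (1), let $\Gamma:\mathcal{A}_2\to\mathcal{D}$ be the $(f_2,i_1)$-derivation with $\Gamma(a_i)=\widehat{a}_i$, extend $\partial_{\mathcal{B}_1}$ and $\partial_{\mathcal{B}_2}$ to a derivation on $\mathcal{D}$ in the obvious way, and set
\[
\partial \widehat{a}_i = i_1(a_i) + f_2(a_i) + \Gamma\circ \partial_{\mathcal{A}_2}(a_i).
\]
Over $\Z/2$, a routine induction on word length gives the identity $i_1+f_2 = \partial\circ\Gamma + \Gamma\circ\partial_{\mathcal{A}_2}$ on all of $\mathcal{A}_2$, and the computation $\partial^2\widehat{a}_i=0$ then proceeds exactly as in the proof of Proposition \ref{prop:standard}. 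Triangularity holds because $\Gamma\circ\partial_{\mathcal{A}_2}(a_i)$ involves only $\widehat{a}_j$ with $j<i$, by the triangularity of $\mathcal{A}_2$.

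For Part (2), the plan is to cancel the pairs $(\widehat{a}_i,a_i)$ one at a time. I would first reorder the generators of $\mathcal{D}$ to $\{b^2_1,\ldots,b^2_{m_2},a_1,\widehat{a}_1,\ldots,a_m,\widehat{a}_m,b^1_1,\ldots,b^1_{m_1}\}$; this interleaved ordering is still triangular because triangularity of $\mathcal{D}$ in the original ordering forces $\gamma_i\in\mathcal{I}(\widehat{a}_1,\ldots,\widehat{a}_{i-1})$, so each $\gamma_i$ lies in the subalgebra generated by strictly earlier generators than $\widehat{a}_i$ in the new ordering. For each $i=1,\ldots,m$ in turn, $\partial\widehat{a}_i$ then has the form $a_i + w$ with $w$ involving only generators preceding $a_i$, so Proposition \ref{prop:cancel} applies; composing the resulting tame isomorphisms via Remark \ref{rem:stable}(ii) produces a stable tame isomorphism $\varphi_0:\mathcal{B}*S\to\mathcal{D}$ where $S=S(y_1,z_1)*\cdots*S(y_m,z_m)$ and the final quotient is $(\mathcal{B}_1*\mathcal{B}_2)/\mathcal{I}(i_1(a_i)+f_2(a_i))$, i.e.\ the pushout $\mathcal{B}$ of Definition \ref{def:Comp}.

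Setting $\varphi=\varphi_0^{-1}$, the plan is to verify the two immersed-homotopy conditions as follows. For $\varphi\circ i_2 = p_2\circ i_2$: the generators of $\mathcal{B}_2$ containing the image of $i_2$ are disjoint from every cancelled pair $(a_i,\widehat{a}_i)$, so iterating Proposition \ref{prop:observe}(2) shows $\varphi_0$ restricts to the canonical inclusion on $\mathcal{B}_2$. For $\varphi\circ f_1\simeq p_1\circ f_1$: combine the DGA homotopies $g^{(i)}\circ p^{(i)}\simeq\mathit{id}$ provided by Proposition \ref{prop:cancel} (using that pre- and post-composition with DGA maps preserves DGA homotopy) into a single homotopy $(\varphi_0\circ\iota)\circ P\simeq\mathit{id}_{\mathcal{D}}$, where $P:\mathcal{D}\to\mathcal{B}$ is the composed projection (which restricts on $\mathcal{B}_1$ to $p_1$) and $\iota:\mathcal{B}\hookrightarrow\mathcal{B}*S$ is the stabilization inclusion; pre-composing with $f_1$ and applying $\varphi$ yields the desired homotopy. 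The main obstacle I expect is the reordering step, since the given ordering does not directly support cancellation of $(\widehat{a}_i,a_i)$ because the $\widehat{a}_*$ follow the $a_*$ while $\partial\widehat{a}_i$ may contain further $\widehat{a}_j$'s; a secondary subtlety is carefully combining the associated homotopy equivalences across the $m$ successive cancellations, for which Remark \ref{rem:stable} and Proposition \ref{prop:observe} are essential.
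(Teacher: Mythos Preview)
Your overall strategy is the same as the paper's: Part (1) via the standard mapping cylinder differential, and Part (2) via iterated cancellation of the pairs $(\widehat{a}_i,a_i)$ using Proposition~\ref{prop:cancel} together with Proposition~\ref{prop:observe} to control the behaviour on $\mathcal{B}_2$ and to produce the required DGA homotopy. The verification of the two immersed-homotopy conditions is also fine.

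There is, however, a genuine gap in your reordering step. From triangularity in the original ordering you correctly deduce $\gamma_i\in\mathcal{I}(\widehat{a}_1,\ldots,\widehat{a}_{i-1})$, but this ideal is \emph{not} contained in the subalgebra generated by $\{b^2_*,a_1,\widehat{a}_1,\ldots,a_{i-1},\widehat{a}_{i-1},a_i\}$: a monomial of $\gamma_i$ must contain some $\widehat{a}_j$ with $j<i$, but the surrounding letters can be any $a_k$ with $k\leq m$, including $k>i$. So the interleaved ordering need not be triangular, and Proposition~\ref{prop:cancel} cannot be invoked with it. You flag this as the expected obstacle, but the resolution you give does not work.

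The fix is simple and is exactly what the paper does: stay in the original ordering and cancel in the order $(\widehat{a}_1,a_1),(\widehat{a}_2,a_2),\ldots$ in the \emph{successive quotients}. At the first step $\gamma_1=0$, so $\partial\widehat{a}_1=a_1+f_2(a_1)$ with $f_2(a_1)\in\Z/2\langle b^2_*\rangle$ and Proposition~\ref{prop:cancel} applies. Inductively, setting $J_k=\mathcal{I}(\widehat{a}_1,\partial\widehat{a}_1,\ldots,\widehat{a}_k,\partial\widehat{a}_k)$, one has $\gamma_{k+1}\in\mathcal{I}(\widehat{a}_1,\ldots,\widehat{a}_k)\subset J_k$, so in $\mathcal{D}/J_k$ the class $[\gamma_{k+1}]$ vanishes and $\partial[\widehat{a}_{k+1}]=[a_{k+1}]+[f_2(a_{k+1})]$ with $[f_2(a_{k+1})]\in\Z/2\langle[b^2_*]\rangle$; since $\mathcal{D}/J_k$ is triangular in the inherited ordering by Proposition~\ref{prop:cancel}(1), the next cancellation goes through. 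With this correction the rest of your argument runs as written.
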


\begin{proof}  (1):  To construct such a DGA $(\mathcal{D}, \partial)$ consider the obvious isomorphism of algebras
\[
\mathcal{D} = \mathcal{B}_1* \widehat{\mathcal{A}}_2* \mathcal{B}_2 \cong  (\mathcal{B}_1 * \mathcal{C})/I
\] 
where $\mathcal{C} = \mathcal{A}_2* \widehat{\mathcal{A}}_2 * \mathcal{B}_2$ and $I = \mathcal{I}(\{i_1(x) - j(x) \,|\, x \in \mathcal{A}_2\})$ with $i_1:\mathcal{A}_2 \hookrightarrow \mathcal{B}_1$ and $j: \mathcal{A}_2 \hookrightarrow \mathcal{C}$ the inclusions.  Any mapping cylinder differential $\partial_\mathcal{C}:\mathcal{C} \rightarrow \mathcal{C}$ for $f_2$ (for instance the standard one from Proposition \ref{prop:standard}) together with the differential from $\mathcal{B}_1$ induces a differential on $(\mathcal{B}_1 * \mathcal{C})/I$ such that the corresponding differential on $\mathcal{D}$ has the required properties. 

\medskip

(2):  We need to show that $\mathcal{A}_1 \stackrel{f_1}{\rightarrow} \mathcal{D} \stackrel{i_2}{\hookleftarrow} \mathcal{A}_3$ is immersed homotopic to $\mathcal{A}_1 \stackrel{p_1\circ f_1}{\rightarrow} \mathcal{B} \stackrel{p_2 \circ i_2}{\hookleftarrow} \mathcal{A}_3$ as constructed above with $\mathcal{B} =(\mathcal{B}_1*\mathcal{B}_2)/I$ where $I = \mathcal{I}(\{i_1(x) -f_2(x)\,|\, x\in \mathcal{A}_2 \})$.  
First, applying Proposition \ref{prop:cancel} inductively, we can obtain a stable tame isomorphism
\[
\varphi: \mathcal{D} \rightarrow \mathcal{D}/J *S
\]
with $J = \mathcal{I}(\widehat{a}_1, \partial \widehat{a}_1, \ldots, \widehat{a}_m, \partial \widehat{a}_m)$ satisfying
\begin{equation} \label{eq:pivarphiprop}
\pi \circ \varphi = q \quad \mbox{and} \quad  \varphi|_{\mathcal{B}_2} = q|_{\mathcal{B}_2}
\end{equation}
where $\pi: \mathcal{D}/J *S \rightarrow \mathcal{D}/J$ and $q:\mathcal{D} \rightarrow \mathcal{D}/J$ are the projections.  

[The inductive procedure is the following:  For $0 \leq k \leq m$, let $J_k = \mathcal{I}(\widehat{a}_1, \partial \widehat{a}_1, \ldots, \widehat{a}_k, \partial \widehat{a}_k)$.  Assume inductively that $\varphi_k:\mathcal{D} \rightarrow \mathcal{D}/J_k * S_k$ is a stable  tame isomorphism satisfying $\pi_k \circ \varphi_k = q_k$ and $\varphi_k|_{\mathcal{B}_2} = q_k$ with respect to the quotient maps $\pi_k:\mathcal{D}/J_k * S_k \rightarrow \mathcal{D}/J_k$ and $q_k:\mathcal{D} \rightarrow \mathcal{D}/J_k$, and also  
 that $\mathcal{D}/J_k$ is a triangular DGA generated by equivalence classes of generators of $\mathcal{D}$ other than $\widehat{a}_1, i_1(a_1), \ldots ,\widehat{a}_k, i_1(a_k)$.  Using (\ref{eq:Ddiff}) and the triangularity of $(\mathcal{D},\partial)$, we have $\partial \widehat{a}_{k+1} = i_1(a_{k+1}) + f_{2}(a_{k+1}) + \gamma_{k+1}$ with $\gamma_{k+1} \in J_k$.  Thus, in $\mathcal{D}/J_k$ we can apply Proposition \ref{prop:cancel} to the generators $[\widehat{a}_{k+1}]$ and $[i_1(a_{k+1})]$ to produce a stable tame isomorphism 
\[
\varphi':\mathcal{D}/J_k \rightarrow (\mathcal{D}/J_k)/\mathcal{I}([\widehat{a}_{k+1}], [\partial \widehat{a}_{k+1}]) *S' = \mathcal{D}/J_{k+1}*S'
\] that allows us to complete the inductive step by putting $\varphi_{k+1} = (\varphi'*\mathit{id}_{S_k}) \circ \varphi_k$.  To verify that $\pi_{k+1} \circ \varphi_{k+1} = q_{k+1}$, observe that
\[
\pi_{k+1} \circ \varphi_{k+1} = \pi'\circ \varphi' \circ \pi_k \circ \varphi_k = (q_{k,k+1}) \circ q_k = q_{k+1}
\] 
where $\pi': \mathcal{D}/J_{k+1}*S' \rightarrow \mathcal{D}/J_{k+1}$ and $q_{k,k+1}: \mathcal{D}/J_{k} \rightarrow \mathcal{D}/J_{k+1}$ are projection maps and the equalities $\pi' \circ \varphi' = q_{k,k+1}$ and $\pi_k \circ \varphi_k = q_k$ are Proposition \ref{prop:observe} (1) and the inductive assumption.   
To check that $\varphi_{k+1}|_{\mathcal{B}_2} = q_{k+1}$, note that the map $H: \mathcal{D}/J_k \rightarrow \mathcal{D}/J_k$ from statement of Proposition \ref{prop:cancel} vanishes on the sub-DGA $q_{k}(\mathcal{B}_2) \subset \mathcal{D}/J_k$ so that equations (\ref{eq:gh1}) and (\ref{eq:gh2}) allow us to see that for any $x \in \mathcal{B}_2$,
\[
\varphi_{k+1}^{-1}(q_{k+1}(x)) = q_k(x) + (\partial \circ H + H \circ \partial)(q_k(x)) = q_k(x).\quad \big] \]  

Note that from (\ref{eq:Ddiff}) we see that
\[
J = \mathcal{I}( \widehat{a}_1, \, i_1(a_1) - f_2(a_1), \ldots, \, \widehat{a}_m, \, i_1(a_m) -f_2(a_m) ),
\]
and this shows that there is an additional isomorphism $\beta: \mathcal{D}/J \rightarrow (\mathcal{B}_1*\mathcal{B}_2)/I$ that just maps $[\widehat{a}_i] \mapsto [0]$ and $[x] \mapsto [x]$ when $x \in \mathcal{B}_1$ or $x \in \mathcal{B}_2$.  Moreover, this is a DGA isomorphism (since, for $x \in \mathcal{B}_1 * \mathcal{B}_2$ in either $\mathcal{D}/J$ or $(\mathcal{B}_1* \mathcal{B}_2)/I$, the differential of $[x]$ is induced from the differentials on $\mathcal{B}_1$ and $\mathcal{B}_2$.)  In particular, 
\[
\overline{\varphi} = (\beta * \mathit{id}_S)\circ \varphi : \mathcal{D} \rightarrow (\mathcal{B}_1 *\mathcal{B}_2)/I *S
\]
 is a stable isomorphism.  To complete the proof we check that  $\overline{\varphi}$ produces the desired immersed homotopy.  First, compute
\begin{align*}
\overline{\varphi} \circ i_2 = (\beta * \mathit{id}_S)\circ \varphi \circ i_2 & = (\beta * \mathit{id}_S)\circ q|_{\mathcal{B}_2} \circ i_2 = \beta \circ q|_{\mathcal{B}_2} \circ i_2 = p_2 \circ i_2.
\end{align*}
[At the 2nd and 3rd equalities we used $\mbox{im}\, i_2 \subset \mathcal{B}_2$ together with (\ref{eq:pivarphiprop}) and then $\mbox{im}\, q \subset \mathcal{D}/J$.] 
Finally, using that $\iota:\mathcal{D}/J \rightarrow \mathcal{D}/J*S$ and $\pi: \mathcal{D}/J*S \rightarrow \mathcal{D}/J$ are homotopy inverse as well as (\ref{eq:pivarphiprop}), we find
\begin{align*}
\overline{\varphi} \circ f_1 \simeq (\mathcal{\beta} *\mathit{id}_S) \circ \iota \circ \pi \circ \varphi \circ f_1 = (\beta \circ q|_{\mathcal{B}_1}) \circ f_1 = p_1 \circ f_1.
\end{align*}
\end{proof}

\begin{proposition} \label{prop:Comp} Composition of immersed maps gives a well-defined, associative operation on immersed homotopy classes.
\end{proposition}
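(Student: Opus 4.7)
The proposition splits into two claims: well-definedness of composition on immersed homotopy classes, and associativity.

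For associativity, I would observe that both $(M_3 \circ M_2) \circ M_1$ and $M_3 \circ (M_2 \circ M_1)$ unfold canonically to the same iterated pushout $\mathcal{A}_1 \to (\mathcal{B}_1 * \mathcal{B}_2 * \mathcal{B}_3)/I \hookleftarrow \mathcal{A}_4$, where $I$ is the two-sided ideal generated by the union of identifying relations $\{i_1(x) - f_2(x)\,|\, x \in \mathcal{A}_2\}$ and $\{i_2(y) - f_3(y)\,|\, y \in \mathcal{A}_3\}$, and both bracketings induce the same triangular ordering on the generators. So associativity holds on the nose as an equality of immersed DGA maps, not merely an immersed homotopy.

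For well-definedness, by transitivity of $\simeq$ it suffices to prove $M_2 \circ M_1 \simeq M_2 \circ M_1'$ when $M_1 \simeq M_1'$ and the symmetric statement when $M_2$ varies. My main tool will be Proposition \ref{lem:CompositionAlt}(2): up to immersed homotopy, $M_2 \circ M_1$ can be represented by the double mapping cylinder DGA $\mathcal{D} = \mathcal{B}_1 * \widehat{\mathcal{A}}_2 * \mathcal{B}_2$ with the standard-form differential
\[
\partial \widehat{a}_i = i_1(a_i) + f_2(a_i) + \Gamma \partial_{\mathcal{A}_2}(a_i),
\]
where $\Gamma$ is the $(f_2, i_1)$-derivation with $\Gamma(a_j) = \widehat{a}_j$. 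Given a stable tame isomorphism $\varphi_1 : \mathcal{B}_1 * S_1 \to \mathcal{B}_1' * S_1'$ realizing $M_1 \simeq M_1'$, the plan is to define $\psi: \mathcal{D} * S_1 \to \mathcal{D}' * S_1'$ by $\psi|_{\mathcal{B}_1 * S_1} = \varphi_1$, $\psi|_{\mathcal{B}_2} = \mathrm{id}$, and $\psi(\widehat{a}_i) = \widehat{a}_i$. The crucial verification that $\psi$ is a chain map on $\widehat{a}_i$ reduces to $\psi \circ \Gamma = \Gamma'$; this holds because both are $(f_2, i_1')$-derivations (using $\psi \circ f_2 = f_2$ and $\psi \circ i_1 = \varphi_1 \circ i_1 = i_1'$) agreeing on the generators $a_j$. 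The immersed homotopy conditions $\psi \circ i_2 = i_2'$ and $\psi \circ f_1 \simeq f_1'$ are then immediate from the hypotheses on $\varphi_1$.

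The case $M_2 \simeq M_2'$ with a strict equality $\varphi_2 \circ f_2 = f_2'$ is handled symmetrically. To reduce the general case, where $\varphi_2 \circ f_2 \simeq f_2'$ only up to a nontrivial DGA homotopy $K$, I would interpose the intermediate immersed DGA map $\widetilde{M}_2 = (\mathcal{A}_2 \stackrel{\varphi_2 \circ f_2}{\to} \mathcal{B}_2' * S_2' \stackrel{i_2'}{\hookleftarrow} \mathcal{A}_3)$. Then $M_2 \simeq \widetilde{M}_2$ is of the strict type, while $\widetilde{M}_2 \simeq M_2'$ uses only the DGA homotopy $K$ via the identity stable tame isomorphism, so only the latter requires new work. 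This is the main technical obstacle: the naive choice $\psi(\widehat{a}_i) = \widehat{a}_i + K(a_i)$ fails to define a DGA map, because the required identity forces correction terms like $K \partial_{\mathcal{A}_2}(a_i)$ to lie in $\mathcal{B}_2' * S_2'$ rather than in the ideal $\mathcal{I}(\widehat{a}_1, \ldots, \widehat{a}_m)$, violating the mapping-cylinder form. My plan is to exploit the freedom in Proposition \ref{lem:CompositionAlt}(1) to work with non-standard $\gamma_i$ on a stabilized mapping cylinder; after absorbing the correction terms into additional cancellation generators and identifying the resulting DGA with its standard form via Proposition \ref{prop:unique}, a suitably modified $\psi$ should become a DGA map realizing the desired immersed homotopy.
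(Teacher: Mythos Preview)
Your treatment of associativity and of the case $M_1 \simeq M_1'$ is correct. For the latter you use the $\mathcal{D}$-model from Proposition~\ref{lem:CompositionAlt} and build an isomorphism $\psi$ fixing $\widehat{\mathcal{A}}_2*\mathcal{B}_2$ and extending $\varphi_1$; the paper instead works directly with the pushout $(\mathcal{B}_1*\mathcal{B}_2)/I$ and observes that $\varphi_1*\mathit{id}_{\mathcal{B}_2}$ descends. Both are fine; the paper's is slightly more direct, yours is in the spirit of Proposition~\ref{lem:CompositionAlt}.

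The genuine gap is in your final case for $M_2\simeq M_2'$, where you have reduced to $\mathcal{B}_2=\mathcal{B}_2'$, identity stable tame isomorphism, and a nontrivial homotopy $K$ between $\varphi_2 f_2$ and $f_2'$. Here your proposed fix via Proposition~\ref{prop:unique} cannot work as stated: that proposition requires (condition (3)) that the two differentials have the \emph{same} $f$-component $f(a_i)\in\mathcal{B}$ in $\partial_k\widehat{a}_i = f(a_i)+a_i+\gamma_i^k$, while in your situation the two $\mathcal{D}$-differentials have $f$-components $\varphi_2 f_2(a_i)$ and $f_2'(a_i)$, which differ by $(\partial K+K\partial)(a_i)\in\mathcal{B}_2'$. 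This discrepancy is not in the ideal $\mathcal{I}(\widehat{a}_1,\ldots,\widehat{a}_m)$, so no choice of $\gamma_i$ (and no stabilization, which only adds generators orthogonal to this issue) will make the hypothesis of Proposition~\ref{prop:unique} apply.

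The paper resolves this by abandoning the attempt to build an isomorphism $\mathcal{D}\to\mathcal{D}'$ and instead (Lemma~\ref{lem:M2}) constructing a DGA map $\psi:\mathcal{D}\to\mathcal{C}'$ to the \emph{pushout}, with $\psi(\widehat{a}_i)=[K(a_i)]$; in $\mathcal{C}'$ the relation $[i_1(a_i)]=[f_2'(a_i)]$ holds, which is exactly what absorbs the $K$-correction. A separate map $g:\mathcal{C}\to\mathcal{D}$ is built by iterating Proposition~\ref{prop:cancel}, and the composite $\varphi=\psi\circ g:\mathcal{C}\to\mathcal{C}'$ is then checked to be the required tame isomorphism satisfying the immersed-homotopy conditions. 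The key idea you are missing is that the obstruction disappears once you target the quotient $\mathcal{C}'$ rather than $\mathcal{D}'$.
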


\begin{proof}  Up to changing $\mathcal{B}$ by a canonical isomorphism, composition of immersed maps is already associative before passing to homotopy classes.   Turning to well definedness, we suppose now that for $k=1,2$, we have immersed maps
\[
M_k = \left(\mathcal{A}_k \stackrel{f_k}{\rightarrow} \mathcal{B}_k \stackrel{i_k}{\hookleftarrow} \mathcal{A}_{k+1}\right) \quad \mbox{and} \quad M_k' = \left(\mathcal{A}_k \stackrel{f'_k}{\rightarrow} \mathcal{B}'_k \stackrel{i'_k}{\hookleftarrow} \mathcal{A}_{k+1}\right).
\]

\medskip

\noindent {\bf Step 1:} Show that if $M_1 \simeq M_1'$, then $M_2 \circ M_1 \simeq M_2 \circ M_1'$.

\medskip

Given an immersed homotopy
\[
\raisebox{1.5cm}{\xymatrix{ & \mathcal{B}_1*S   \ar[dd]^{\varphi_1}   & \\ \mathcal{A}_1 \ar[ru]^{f_1} \ar[rd]_{f'_1} & & \mathcal{A}_2 \ar[lu]_{i_1} \ar[ld]^{i_1'} \\ & \mathcal{B}_1'*S'}}, \quad \quad \begin{array}{l} \varphi_1 \circ f_1 \simeq f_1', \\  \\ \varphi_1 \circ i_1 = i_1', \end{array}
\]
observe that since $\varphi_1 \circ i_1 = i_1'$, $\varphi_1*\mathit{id}_{\mathcal{B}_2}: (\mathcal{B}_1 * S)* \mathcal{B}_2 \rightarrow (\mathcal{B}_1' * S')* \mathcal{B}_2$ induces a well-defined tame isomorphism 
\[
\varphi: (\mathcal{B}_1* S*\mathcal{B}_2)/I_S \rightarrow (\mathcal{B}_1'*S'*\mathcal{B}_2)/I_S'
\]
where $I_S$ and $I_S'$ are the two-sided ideals {\it in $(\mathcal{B}_1* S*\mathcal{B}_2)$ and $(\mathcal{B}_1'*S'*\mathcal{B}_2)$} generated by $\{i_1(x) -f_2(x) \,|\, x \in \mathcal{A}_2\}$ and $\{i_1'(x) -f_2(x) \,|\, x \in \mathcal{A}_2\}$ respectively.  Using $I$ and $I'$ for the ideals {\it in $\mathcal{B}_1*\mathcal{B}_2$ and $\mathcal{B}_1*\mathcal{B}_2$} with the same generating sets as $I_S$ and $I_S'$, we can use the canonical isomorphisms
\begin{equation} \label{eq:canonicaliso}
(\mathcal{B}_1* S*\mathcal{B}_2)/I_S = (\mathcal{B}_1*\mathcal{B}_2)/I * S \quad \mbox{and} \quad (\mathcal{B}_1* S'*\mathcal{B}_2')/I_S' = (\mathcal{B}_1*\mathcal{B}_2)/I' * S'
\end{equation}
to view $\varphi$ as a tame isomorphism, $\varphi:(\mathcal{B}_1*\mathcal{B}_2)/I * S \rightarrow (\mathcal{B}_1*\mathcal{B}_2)/I' * S'$.  
The required identities $\varphi \circ (p_1 \circ f_1) \simeq p'_1 \circ  f_1'$ and $\varphi \circ (p_2 \circ i_2) = p_2' \circ i_2$ are readily verified. [For the first, compute $\varphi \circ (p_1 \circ f_1) = p_1' \circ \varphi_1 \circ f_1 \simeq p_1' \circ f_1'$; for the second, compute $\varphi\circ (p_2 \circ i_2) = p_2' \circ \mathit{id}_{\mathcal{B}_2} \circ i_2$.]

\medskip 

\noindent {\bf Step 2:} Show that if $M_2 \simeq M_2'$, then $M_2 \circ M_1 \simeq M_2' \circ M_1$.

This step is more involved and is based on the following.

\begin{lemma}  \label{lem:M2}
Suppose we are given triangular DGAs $\mathcal{A}_2 \subset \mathcal{B}_1$, $\mathcal{B}_2$, and $\mathcal{B}'_2$ with respective generating sets $\{a_1, \ldots, a_m\}$, $\{a_1, \ldots, a_m, b^1_1, \ldots, b^1_{m_1}\}$,  $\{b^2_1, \ldots, b^2_{m_2}\}$, and $\{(b^2_1)', \ldots, (b^2_{m_2})'\}$ and DGA maps 
\[
\xymatrix{ & & & \mathcal{B}_2 \ar[dd]^{h_2} &  \\ \mathcal{A}_1 \ar[r]^{f_1} &  \mathcal{B}_1 & \mathcal{A}_2   \ar[l]_{i_1} \ar[ru]^{f_2} \ar[rd]_{f'_2}  & \\ & & & \mathcal{B}_2'  & } 
\]
such that 
\begin{itemize}
\item $i_1$ is the inclusion map, 
\item $h_2 \circ f_2 \simeq f_2'$, and
\item $h_2$ is a tame isomorphism.
\end{itemize}
Let $\mathcal{C} = (\mathcal{B}_1* \mathcal{B}_2)/I$ and $\mathcal{C}'=(\mathcal{B}_1 * \mathcal{B}_2')/I'$, with $I= \mathcal{I}(\{i_1(x)-f_2(x) \,|\, x \in \mathcal{A}_2\})$ and $I'=\mathcal{I}(\{i_1(x)-f_2'(x) \,|\, x \in \mathcal{A}_2\})$, be equipped with the free generating sets given by the equivalence classes $\{[b^2_1], \ldots, [b^2_{m_2}], [b^1_1], \ldots, [b^1_{m_1}]\}$ and $\{[(b^2_1)'], \ldots, [(b^2_{m_2})'], [b^1_1], \ldots, [b^1_{m_1}]\}$.
 Then, there is a tame isomorphism $\varphi:\mathcal{C} \rightarrow \mathcal{C}'$ such that
\begin{enumerate}
\item $\varphi \circ p_2 = p_2' \circ h_2$ where $p_2:\mathcal{B}_2 \rightarrow \mathcal{C}$ and $p_2':\mathcal{B}_2' \rightarrow \mathcal{C}'$ denote the projections;
\item $\varphi \circ (p_1 \circ f_1) \simeq (p_1' \circ f_1)$ where $p_1:\mathcal{B}_1 \rightarrow \mathcal{C}$ and $p_1':\mathcal{B}_1 \rightarrow \mathcal{C}'$ denote the projections; and
\item for $1 \leq i \leq m_1$, 
\[
\varphi([b^1_i]) = [b^1_i] + X_i
\]
where $X_i \in (\mathcal{C}')^{i-1}$ belongs to the subalgebra $ (\mathcal{C}')^{i-1} :=\Z/2\langle [(b^2_1)'], \ldots, [(b^2_{m_2})'], [b^1_1], \ldots, [b^1_{i-1}]\rangle$. 
\end{enumerate}
\end{lemma}

Assuming Lemma \ref{lem:M2}, if we are given a stable tame isomorphism $\varphi_2:\mathcal{B}_2*S \rightarrow \mathcal{B}_2'*S'$ that is part of an immersed homotopy $M_2 \simeq M_2'$, we can apply Lemma \ref{lem:M2} with $\varphi_2$ in place of $h_2:\mathcal{B}_2 \rightarrow \mathcal{B}_2'$ to produce (in conjunction with canonical isomorphisms as in (\ref{eq:canonicaliso})) a tame isomorphism $\varphi:(\mathcal{B}_1*\mathcal{B}_2)/I*S \rightarrow  (\mathcal{B}_1*\mathcal{B}'_2)/I*S'$.  
It is then straightforward to check that $\varphi$ produces the required immersed DGA homotopy $M_1 \circ M_2 \simeq M_1 \circ M_2'$.  [The identity $\varphi \circ p_2 = p_2' \circ \varphi_2$ results from (1) of Lemma \ref{lem:M2} and allows us to compute
\[
\varphi \circ (p_2 \circ i_2) = p_2' \circ \varphi_2 \circ i_2 = p_2' \circ i_2';
\]
that $\varphi\circ (p_1 \circ f_1) \simeq (p_1' \circ f_1)$ is just (2) of Lemma \ref{lem:M2}.]
\end{proof}

\begin{proof}[Proof of Lemma \ref{lem:M2}.]
The construction of $\varphi$ will factor through a third DGA, $\mathcal{D}$, associated to $\mathcal{B}_1$ and $\mathcal{B}_2$ as in Proposition \ref{lem:CompositionAlt}.  Let $\mathcal{D} = \mathcal{B}_1 * \widehat{\mathcal{A}}_2 * \mathcal{B}_2$ be equipped with the differential $\partial$ satisfying
\[
\partial|_{\mathcal{B}_k} = \partial_{\mathcal{B}_k}, \quad \partial( \widehat{a}_i ) = f_2(a_i) + i_1(a_i) + \Gamma \circ \partial_{\mathcal{A}_2} (a_i)
\]
where $\Gamma: \mathcal{A}_2 \rightarrow \mathcal{D}$ is the $(f_2, i_1)$-derivation with $\Gamma(a_j) = \widehat{a}_j$ for any generator $a_j$ of $\mathcal{A}_2$.  

\medskip

\noindent {\bf Step 1.}  Construct a DGA map $\psi: \mathcal{D} \rightarrow \mathcal{C}'$.

\medskip

Since $h_2 \circ f_2 \simeq f_2'$, there exists a $(h_2\circ f_2, f_2')$-derivation $K:\mathcal{A}_2 \rightarrow \mathcal{B}_2'$ satisfying 
\[
h_2 \circ f_2 - f_2' = \partial_{\mathcal{B}'_2} \circ K + K \circ \partial_{\mathcal{A}_2}.
\]
 Define $\psi: \mathcal{D} \rightarrow \mathcal{C}'$  
to satisfy
\begin{equation} \label{eq:defpsi}
\psi|_{\mathcal{B}_1} = p_1' \circ \mathit{id}_{\mathcal{B}_1}, \quad \psi|_{\mathcal{B}_2} = p_2' \circ h_2, \quad \mbox{and} \quad \psi(\widehat{a}_i) = [K(a_i)] 
\end{equation}
for generators $a_i \in \mathcal{A}_2$.  

To check that $\psi$ is a chain map, compute
\begin{align*}
(\psi \circ \partial)(\widehat{a}_i) =& \psi\left( f_2(a_i) + i_1(a_i) + \Gamma \circ \partial_{\mathcal{A}_2} (a_i) \right) \\
 = & [h_2 \circ f_2(a_i)] + [i_1(a_i)] + \psi \circ \Gamma( \partial_{\mathcal{A}_2}(a_i))  \\
 = & [h_2 \circ f_2(a_i) + f_2'(a_i) + K\circ \partial_{\mathcal{A}_2}(a_i)]  \\
 = & [\partial_{\mathcal{B}_2'} \circ K(a_i)] = \partial \circ \psi(\widehat{a}_i).
\end{align*}
At the 3rd equality we used that $i_1(a_i)-f_2'(a_i) \in I'$ and that for any $x \in \mathcal{A}_2$, $\psi\circ \Gamma(x) = [K(x)]$.  This last equality is verified by induction on the word length of $x$:  It is immediate from definitions when $x=a_i$, and when the result is known for $x_1, x_2 \in \mathcal{A}_2$ we can use that $\Gamma$ is a $(f_2, i_1)$-derivation  and $K$ is a $(h_2\circ f_2, f_2')$-derivation to compute
\begin{align*}
\psi\circ \Gamma(x_1\cdot x_2) =& \psi\circ \Gamma(x_1) \cdot \psi \circ i_1(x_2) + \psi\circ f_2(x_1) \cdot \psi \circ \Gamma(x_2) \\
 =& [K(x_1)] \cdot [i_1(x_2)] + [h_2 \circ f_2(x_1)] \cdot [K(x_2)] \\
=& [K(x_1)\cdot f_2'(x_2) +h_2 \circ f_2(x_1) \cdot K(x_2)] = [K(x_1\cdot x_2)],
 \end{align*}
completing the induction.

\medskip

\noindent {\bf Step 2.}  Construct a DGA map $g = g_1 \circ g_2 \circ \cdots \circ g_m \circ \alpha: \mathcal{C} \rightarrow \mathcal{D}$ satisfying 
\begin{enumerate}
\item[(i)]  $g([b^2_i]) = b^2_i$, for $1 \leq i \leq m_2$, and
\item[(ii)] $g([b^1_i]) = b^1_i + W_i$ where $W_i \in \mathcal{D}^{i-1} := \Z/2\langle b_1^2, \ldots, b^2_{m_2}, a_1, \ldots, a_m, \widehat{a}_1, \ldots, \widehat{a}_m, b_1^1, \ldots, b^1_{i-1}\rangle$.
\end{enumerate}

\medskip

As in Proposition \ref{lem:CompositionAlt}, consider the sequence of quotients $\mathcal{D}_0 = \mathcal{D}, \ldots, \mathcal{D}_m = \mathcal{D}/J_m$ where $J_l = \mathcal{I}(\{\widehat{a}_i, \partial \widehat{a}_i \,|\, 1 \leq i \leq l\})$.  As in the proof of Proposition \ref{lem:CompositionAlt}, the map
\[
 \alpha:\mathcal{C} \rightarrow \mathcal{D}_m, \quad \alpha([x]) = [x]
\]
is a DGA isomorphism.
 For $1 \leq l \leq m$, (since $\Gamma \circ \partial(a_l) \in J_{l-1}$) we have
\[
\mathcal{D}_l = \mathcal{D}_{l-1}/\mathcal{I}( [\widehat{a}_l], \partial [\widehat{a}_l] = [i_1(a_l)] + [f_2(a_l)]),
\]
so that $\mathcal{D}_l$ is triangular with respect to the ordered generating set
\[
\mathcal{D}_l = \Z/2\langle [b^2_1], \ldots, [b^2_{m_2}], [a_{l+1}], \ldots, [a_m], [\widehat{a}_{l+1}], \ldots, [\widehat{a}_m], [b^1_1], \ldots, [b^1_{m_1}] \rangle.
\]
Moreover, Proposition \ref{prop:cancel} provides a DGA map $g_l:\mathcal{D}_l \rightarrow \mathcal{D}_{l-1}$ such that
\begin{equation} \label{eq:glql}
g_l \circ q_l - \mathit{id}_{\mathcal{D}_{l-1}} = \partial \circ H_l + H_{l} \circ \partial
\end{equation} 
where $q_l: \mathcal{D}_{l-1} \rightarrow \mathcal{D}_l$ is the quotient map and $H_{l}: \mathcal{D}_{l-1} \rightarrow \mathcal{D}_{l-1}$ is a $(g_l \circ q_l, \mathit{id}_{\mathcal{D}_{l-1}})$-derivation with 
\[
H_{l}([a_l]) = [\widehat{a}_l] \quad \mbox{and} \quad H_l(x) = 0
\]
when $x \in \mathcal{D}_{l-1}$ is any generator other than $a_l$.

To verify (i), we check that $g_l([b_i^2]) = [b_i^2]$ for any $1 \leq l \leq m$ and $1 \leq i \leq m_2$.  Compute using (\ref{eq:glql}) 
\[
g_l([b_i^2]) = g_l \circ q_l([b^2_i]) = [b^2_i] + \partial H_l([b^2_i]) + H_l \partial([b^2_i]) = [b^2_i]
\]
where at the second equality we used that $\partial[b^2_i] \in \Z/2\langle [b_1^2], \ldots, [b^2_{m_2}] \rangle$ and $H$ vanishes on this subalgebra.

To verify (ii), for $1 \leq i \leq m_1$, we prove  using a decreasing induction on $l$ with $1 \leq l \leq n+1$ that
\begin{align*}
 &g_l \circ \cdots \circ g_n \circ \alpha([b_i^1]) = [b^1_i] + w_{l-1} \quad \quad \quad \mbox{where} \\
 & w_{l-1} \in \mathcal{D}_{l-1}^{i-1} = \Z/2\langle [b_1^2], \ldots, [b^2_{m_2}], [a_l], \ldots, [a_m], [\widehat{a}_l], \ldots, [\widehat{a}_m], [b^1_1], \ldots, [b^1_{i-1}] \rangle.
\end{align*}
When $l =n+1$, we have $w_n = 0$ since $\alpha([b^1_i]) = [b^1_i]$.  For the inductive step, compute
\begin{align*}
g_l([b_i^1] + w_{l}) &= g_l\circ q_l([b^1_i]) + g_l(w_l)  \\
 &= [b^1_i] + \partial H_l ([b^1_i]) + H_l\partial([b^1_i]) + g_l(w_l) = [b^1_i] + H_l(\partial[b^1_i]) + g_l(w_l).
\end{align*}
Since in the last sum $\partial[b^1_i] \in \mathcal{D}_{l-1}^{i-1}$ and $w_l \in \mathcal{D}_l^{i-1}$, we can set $w_{l-1} = H_l(\partial[b^1_i]) + g_l(w_l)$, and then apply the following claim to verify that $w_{l-1} \in \mathcal{D}_{l-1}^{i-1}$.

\medskip

\noindent {\bf Claim:}  For all $1 \leq i \leq m_1$,
\begin{align}
\label{eq:Claim1} &H_l(\mathcal{D}_{l-1}^{i-1}) \subset \mathcal{D}_{l-1}^{i-1} \quad\quad \quad \mbox{and} \\
\label{eq:Claim2} &g_l(\mathcal{D}_l^{i-1}) \subset \mathcal{D}_{l-1}^{i-1}.
\end{align}

\medskip

In turn, the claim is verified by (increasing) induction on $i$.  First, we check (\ref{eq:Claim2}).  Given a generator of $\mathcal{D}_l^{i-1}$, 
\[
[x] \in \{[b^2_1], \ldots, [b^2_{m_2}], [a_{l+1}], \ldots, [a_m], [\widehat{a}_{l+1}], \ldots, [\widehat{a}_m], [b^1_1], \ldots, [b^1_{i-1}] \}
\]
compute 
\[
g_l([x]) = g_l \circ q_l([x]) = [x] + \partial H_l([x]) + H_l \partial([x]) = [x] + H_l(\partial[x]),
\]
and since $\partial[x] \in \mathcal{D}_{l-1}^{i-2}$ equation (\ref{eq:Claim1}) from the inductive hypothesis shows that $g_l([x]) \in \mathcal{D}_{l-1}^{i-2}$ so that $g_l([x]) \in \mathcal{D}_{l-1}^{i-1}$.  With (\ref{eq:Claim2}) established, (\ref{eq:Claim1}) is verified by first checking $H_l(z) \in \mathcal{C}_{l-1}^{i-1}$ when $z \in \mathcal{C}_{l-1}^{i-1}$ is a generator.  (This is immediate from the definition, since $H_l$ vanishes on all generators other than $a_l$ and satisfies $H_l(a_l) = \widehat{a}_l$.)  Then, we can verify (\ref{eq:Claim1})  for arbitrary words in generators of $\mathcal{C}_{l-1}^{i-1}$ using induction.  To this end, note that 
\[
H_l(y\cdot z) = H_l(y) \cdot z + (g_l \circ q_l)(y) \cdot H_l(z) 
\]
and $q_l( \mathcal{D}_{l-1}^{i-1}) \subset \mathcal{D}_l^{i-1})$.  (This is verified on generators, since $q_l([\widehat{a}_l]) = 0$, $q_l([a_l]) = [f_2(a_l)] \in \Z/2\langle[b^2_1], \ldots, [b^2_{m_2}] \rangle \subset \mathcal{D}_l^{i-1}$.)  Thus, (\ref{eq:Claim2}) implies that $(g_l \circ q_l)(y) \in \mathcal{D}_{l-1}^{i-1}$.

\medskip

\noindent {\bf Step 3.}  Verify that $\varphi := \psi \circ g : \mathcal{C} \rightarrow \mathcal{C}'$ has the required properties.

\medskip

Note that (1) and (3) imply that $\varphi$ is a tame isomorphism.  \big[Given (1) and (3), $\varphi$ can be rewritten as a composition $\varphi_{m_1} \circ \cdots \circ \varphi_1 \circ \varphi_{0}$
 where $\varphi_i: \mathcal{C}' \rightarrow \mathcal{C}'$ with $1 \leq i\leq m_1$ is the elementary isomorphism that maps $[b^1_i] \mapsto [b^1_i] + X_i$ and fixes all other generators; and $\varphi_0: \mathcal{C} \rightarrow \mathcal{C}'$ fixes all $[b^1_i]$ generators and maps $[b^2_i] \mapsto [h_2(b^2_i)]$.  (Note that $\varphi_0$ is a tame isomorphism since $h_2$ is.)\big]

To verify (1), use (\ref{eq:defpsi}) and (i) from Step 2 to compute for any generator $b_i^2 \in \mathcal{B}_2$
\[
\varphi \circ p_2(b_i^2) = \psi \circ g([b_i^2]) = \psi(b_i^2) = p_2' \circ h_2(b_i^2).
\]
To verify (3), start with  (\ref{eq:defpsi}) and (ii) from Step 2 to get
\[
\varphi([b^1_i]) = \psi(b_i^1 + W_i) = [b_i^1]+ \psi(W_i),
\]
and then observe that $\psi(\mathcal{D}^{i-1}) \subset (\mathcal{C}')^{i-1}$ since $\psi(a_i) = [i_1(a_i)] = [f_2(a_i)] \in (\mathcal{C}')^0=\Z/2\langle [b_1^2], \ldots, [b^2_{m_2}] \rangle$ and $\psi(\widehat{a}_i) = [K(a_i)] \in (\mathcal{C}')^0$. 

Finally, to verify (2) observe the identity $\alpha \circ p_1 = q \circ j$ where 
\[
\mathcal{B}_1 \stackrel{j}{\rightarrow} \mathcal{D} \stackrel{q}{\rightarrow} \mathcal{D}_m
\]
are inclusion and projection, and then use that $q_l$ and $g_l$ are homotopy inverse to compute
\begin{align*}
\varphi \circ (p_1 \circ f_1) = \psi \circ g_1 \circ \cdots g_m \circ \alpha \circ p_1 \circ f_1 &= \psi \circ g_1 \circ \cdots \circ g_m \circ q \circ j \circ f_1 \\
& = \psi \circ g_1 \circ \cdots \circ g_m \circ q_m \circ \cdots \circ q_1 \circ j \circ f_1 \\
& \simeq \psi \circ j \circ f_1 = p_1' \circ f_1. 
\end{align*}
(At the last equality use (\ref{eq:defpsi}).)
\end{proof}

\subsection{The immersed DGA category}

{\it Let $n \geq 0$ be a fixed non-negative integer.}  We now define a category $\mathfrak{DGA}^n_{\mathit{im}}$  that we refer to as the {\bf immersed DGA category}.  The objects of $\mathfrak{DGA}^n_{\mathit{im}}$ are finitely generated, triangular DGAs over $\Z/2$ that are graded by $\Z/n$.  Morphisms in $\mathfrak{DGA}^n_{\mathit{im}}$ are immersed homotopy classes of immersed DGA maps.  As in Proposition \ref{prop:Comp}, composition of immersed maps is well-defined on homotopy classes, and this provides the composition operation for $\mathfrak{DGA}^n_{\mathit{im}}$.  

\begin{proposition}
As defined, $\mathfrak{DGA}^n_{\mathit{im}}$ is a category.
\end{proposition}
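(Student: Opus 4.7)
The plan is to verify the three axioms defining a category: associativity of composition, well-definedness of composition on morphisms, and existence of identity morphisms satisfying unit laws. Proposition \ref{prop:Comp} already supplies both the well-definedness on immersed homotopy classes and associativity, so the only remaining work is to exhibit an identity at each object and check the two unit laws.

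For each object $\mathcal{A}$ of $\mathfrak{DGA}^n_{\mathit{im}}$, I would take the identity morphism to be the immersed homotopy class of
$$\mathit{Id}_\mathcal{A} := \big(\mathcal{A} \stackrel{\mathit{id}_\mathcal{A}}{\rightarrow} \mathcal{A} \stackrel{\mathit{id}_\mathcal{A}}{\hookleftarrow} \mathcal{A}\big).$$
This is evidently a valid immersed DGA map: $\mathit{id}_\mathcal{A}$ is a DGA map, and the right-hand identity trivially realizes $\mathcal{A}$ as a based sub-DGA of itself.

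The core of the argument is the verification of the unit laws. Given an arbitrary immersed map $M = \big(\mathcal{A}_1 \stackrel{f}{\rightarrow} \mathcal{B} \stackrel{i}{\hookleftarrow} \mathcal{A}_2\big)$, I would check that $\mathit{Id}_{\mathcal{A}_2} \circ M \simeq M$ by exhibiting an explicit stable tame isomorphism (with trivial stabilization). Unwinding Definition \ref{def:Comp}, the pushout DGA is
$$\mathcal{B}' = (\mathcal{B} * \mathcal{A}_2)/\mathcal{I}(\{i(x) - x \mid x \in \mathcal{A}_2\}),$$
and the collapsing map $\rho: \mathcal{B}' \rightarrow \mathcal{B}$ that sends each equivalence class of a generator of the $\mathcal{A}_2$-factor to its image in $\mathcal{B}$ under $i$ (and is the identity on the remaining generators) is manifestly a DGA isomorphism. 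Under the ordered generating set of $\mathcal{B}'$ prescribed by Definition \ref{def:Comp}, $\rho$ extends a bijection of generating sets and is thus a tame isomorphism. A direct check then gives $\rho \circ (p_1 \circ f) = f$ and $\rho \circ (p_2 \circ \mathit{id}_{\mathcal{A}_2}) = i$, so $\rho$ furnishes the required immersed homotopy. The verification of $M \circ \mathit{Id}_{\mathcal{A}_1} \simeq M$ is entirely symmetric, using the analogous pushout $(\mathcal{A}_1 * \mathcal{B})/\mathcal{I}(\{x - f(x) \mid x \in \mathcal{A}_1\})$ and a collapsing map that sends $[x] \mapsto f(x)$.

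The main obstacle is essentially bookkeeping: one must carefully track the generating-set conventions in Definition \ref{def:Comp} so as to confirm that the collapsing map $\rho$ restricts to a bijection of ordered generating sets, and thus qualifies as tame in the technical sense (rather than just as an abstract DGA isomorphism). Once this is confirmed, the unit laws are immediate, and combining with Proposition \ref{prop:Comp} completes the verification that $\mathfrak{DGA}^n_{\mathit{im}}$ satisfies all category axioms.
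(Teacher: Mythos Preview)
Your proposal is correct and takes essentially the same approach as the paper: the paper's proof simply cites Proposition~\ref{prop:Comp} for well-definedness and associativity, and asserts that the immersed maps $\mathcal{A} \stackrel{\mathit{id}}{\rightarrow} \mathcal{A} \stackrel{\mathit{id}}{\hookleftarrow} \mathcal{A}$ are identity morphisms. You supply the explicit unit-law verification that the paper leaves as an observation.
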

\begin{proof}
This follows from Proposition \ref{prop:Comp} and the observation that the immersed maps $\mathcal{A} \stackrel{\mathit{id}}{\rightarrow} \mathcal{A} \stackrel{\mathit{id}}{\hookleftarrow} \mathcal{A}$ are identity morphisms.
\end{proof}

\subsubsection{Connection with the DGA homotopy category} \label{sec:EmbeddedDGA}
Let  $\mathfrak{DGA}^n$ denote the ordinary homotopy category of $\Z/n$-graded, finitely generated, triangular DGAs (where morphisms are just DGA homotopy classes of maps).  The following result allows us to view $\mathfrak{DGA}^n$
as a (non-full) sub-category of $\mathfrak{DGA}^n_{im}$.  

\begin{proposition} \label{prop:InclusionFunctor} There is a functor $I: \mathfrak{DGA}^n \rightarrow \mathfrak{DGA}^n_{im}$ which is the identity on objects and acts on morphisms via
\[
I\left( \left[ \mathcal{A}_1 \stackrel{f}{\rightarrow} \mathcal{A}_2 \right] \right) =  \big[\big(\mathcal{A}_1 \stackrel{f}{\rightarrow} \mathcal{A}_2 \stackrel{\mathit{id}}{\hookleftarrow} \mathcal{A}_2 \big)\big].
\] 
Moreover, $I$ is injective on all $\mathit{Hom}$-spaces.
\end{proposition}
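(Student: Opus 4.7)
The plan is to check in turn that $I$ is well-defined on morphisms, preserves identities, preserves composition, and is injective on every $\mathit{Hom}$-space; the first three verifications are essentially formal, while the substance of the proposition lies in the injectivity claim.

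For well-definedness, if $f \simeq g$ are DGA-homotopic maps $\mathcal{A}_1 \to \mathcal{A}_2$, the choice $\varphi = \mathit{id}_{\mathcal{A}_2}$ with empty stabilizations $S, S'$ satisfies $\varphi \circ \mathit{id}_{\mathcal{A}_2} = \mathit{id}_{\mathcal{A}_2}$ and $\varphi \circ f = f \simeq g$, which realizes an immersed homotopy $I([f]) \simeq I([g])$. That $I$ preserves identity morphisms is the observation recorded in the proof of Proposition \ref{prop:Comp}. For composition, given $f: \mathcal{A}_1 \to \mathcal{A}_2$ and $g: \mathcal{A}_2 \to \mathcal{A}_3$, the composite $I([g]) \circ I([f])$ is computed via the pushout of Definition \ref{def:Comp}: here $\mathcal{B}_1 = \mathcal{A}_2$, $i_1 = \mathit{id}$, and the auxiliary generating set $\{b^1_1, \ldots, b^1_{m_1}\}$ is empty, so $\mathcal{B} = (\mathcal{A}_2 * \mathcal{A}_3)/\mathcal{I}(\{a - g(a) \,|\, a \in \mathcal{A}_2\})$ is freely generated by the equivalence classes of the generators of $\mathcal{A}_3$. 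The DGA map $h: \mathcal{A}_2 * \mathcal{A}_3 \to \mathcal{A}_3$ with $h|_{\mathcal{A}_2} = g$ and $h|_{\mathcal{A}_3} = \mathit{id}_{\mathcal{A}_3}$ descends to an isomorphism $\bar h: \mathcal{B} \to \mathcal{A}_3$ of based DGAs that carries $p_2 \circ \mathit{id}_{\mathcal{A}_3}$ to $\mathit{id}_{\mathcal{A}_3}$ and $p_1 \circ f$ to $g \circ f$; hence $I([g]) \circ I([f]) = I([g \circ f])$.

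For injectivity on $\mathit{Hom}$-spaces, suppose $I([f]) = I([g])$ for $f, g: \mathcal{A}_1 \to \mathcal{A}_2$. Using the footnote to Definition \ref{def:homotopy} to suppress stabilization inclusions, there is a stable tame isomorphism $\varphi: \mathcal{A}_2 * S \to \mathcal{A}_2 * S'$ satisfying $\varphi \circ \iota = \iota'$ (the right-leg strict commutativity) and $\varphi \circ \iota \circ f \simeq \iota' \circ g$, where $\iota, \iota'$ are the stabilization inclusions of $\mathcal{A}_2$. Post-composing this DGA homotopy with the projection $\pi': \mathcal{A}_2 * S' \to \mathcal{A}_2$ preserves homotopies (since $\pi'$ is a DGA map), giving
\[
(\pi' \circ \varphi \circ \iota) \circ f \simeq (\pi' \circ \iota') \circ g = g.
\]
The strict equality $\varphi \circ \iota = \iota'$ then forces $\pi' \circ \varphi \circ \iota = \pi' \circ \iota' = \mathit{id}_{\mathcal{A}_2}$, so $f \simeq g$. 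The main (and essentially only) conceptual obstacle is recognizing this last point: the associated homotopy equivalence of Remark \ref{rem:stable}(i) is forced to be $\mathit{id}_{\mathcal{A}_2}$ by the strict right-leg commutativity in the definition of immersed homotopy together with the fact that the two ``target'' DGAs in the image of $I$ coincide; granted this observation, injectivity reduces to a one-line application of the homotopy inverse pair $(\iota', \pi')$.
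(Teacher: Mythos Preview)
Your proof is correct and follows essentially the same approach as the paper's own proof. The main difference is one of explicitness: you include the well-definedness check (which the paper omits), and in the injectivity argument you spell out the role of the projection $\pi'$ in passing from $\varphi \circ \iota \circ f \simeq \iota' \circ g$ with $\varphi \circ \iota = \iota'$ to $f \simeq g$, whereas the paper absorbs this step into its notational convention of suppressing stabilization inclusions. Your composition argument constructs the inverse of the paper's isomorphism $\varphi: \mathcal{A}_3 \to \mathcal{B}$, but this is the same verification.
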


\begin{proof}
Clearly, $I$ preserves identities.  To see that $I$ preserves composition, recall that the immersed map
\[
\big(\mathcal{A}_2 \stackrel{f_2}{\rightarrow} \mathcal{A}_3 \stackrel{\mathit{id}}{\hookleftarrow} \mathcal{A}_3 \big) \circ \big(\mathcal{A}_1 \stackrel{f_1}{\rightarrow} \mathcal{A}_2 \stackrel{\mathit{id}}{\hookleftarrow} \mathcal{A}_2 \big)
\]
is $\mathcal{A}_1 \stackrel{p_1 \circ f_1}{\rightarrow} \mathcal{B} \stackrel{p_2}{\hookleftarrow} \mathcal{A}_3$ where $\mathcal{B} = \mathcal{A}_2*\mathcal{A}_3/\mathcal{I}(\{ x - f_2(x) \,|\, x \in \mathcal{A}_2 \})$.  The isomorphism $\varphi: \mathcal{A}_3 \rightarrow \mathcal{B}, \varphi(x) = [x]$ fits into a (fully) commutative diagram
\[
\xymatrix{ & \mathcal{A}_3   \ar[dd]^\varphi   & \\ \mathcal{A}_1 \ar[ru]^{f_2\circ f_1} \ar[rd]_{p_1 \circ f_1} & & \mathcal{A}_3 \ar[lu]_{\mathit{id}} \ar[ld]^{p_2} \\ & \mathcal{B}} 
\]
which shows that $I(f_2) \circ I(f_1) = I(f_2\circ f_1)$.

To verify injectivity, assume $f,g: \mathcal{A}_1 \rightarrow \mathcal{A}_2$ are such that $I(f) =I(g)$.  Then, we have an immersed homotopy
\[
\xymatrix{ & \mathcal{A}_2*S   \ar[dd]^\varphi   & \\ \mathcal{A}_1 \ar[ru]^{f} \ar[rd]_{g} & & \mathcal{A}_2 \ar[lu]_{\mathit{i}} \ar[ld]^{i'} \\ & \mathcal{A}_2*S'} 
\]
with $\varphi \circ f \simeq g$ and $\varphi \circ i = i'$ where $i$ and $i'$ are the inclusion maps.  The latter equality shows that $\varphi|_{\mathcal{A}_2} = \mathit{id}$, so we actually get $f = \varphi \circ f \simeq g$, i.e. $[f]=[g]$ in $\mathfrak{DGA}^n$.

\end{proof}

\section{Good Lagrangian cobordisms and conical Legendrian cobordisms}
\label{sec:defn}

After briefly reviewing Legendrian submanifolds in $1$-jet spaces in Section \ref{sec:leg},
we introduce good Lagrangian cobordisms in symplectizations 
and show that they can be viewed as conical Legendrian cobordisms in a $1$-jet space over a base space of $1$ higher dimension.   
In Section \ref{sec:Concatenate}, we examine the concatenation operation from these two equivalent vantage points, and then establish a lemma that reduces conical Legendrian isotopy to a combination of compactly supported isotopy and concatenation with certain standard form cobordisms.

\subsection{Legendrian submanifolds in $1$-jet spaces}\label{sec:leg}

Let $E$ be an $n$--dimensional manifold.
We work in the $1$--jet space $J^1E=T^*E\times \R$ with the canonical contact structure $\xi=\ker \alpha$, where $\alpha = dz -y dx$, $x=(x_1,\dots, x_n)$ and $y=(y_1, \dots, y_n)$ are local coordinates for $T^*E$, and $z$ is the coordinate for $\R$.
An example is $\R^3=J^1 \R$ with the standard contact structure $\xi=\ker(dz-y dx)$.

A {\bf Legendrian submanifold} $\Lambda$ is a smooth submanifold  in $J^1E$ such that $\dim(\Lambda) =n$ and $T_p\Lambda \subset \xi_p$ for all $p \in \Lambda$.  
We use two canonical projections:
the {\bf front projection} $\pi_{xz}$ projects $J^1E$ to $J^0E= E\times \R$, and
the {\bf Lagrangian projection} $\pi_{xy}$ projects $J^1E$ to $T^*E$.
An example of the front projection and the Lagrangian projection for Legendrian knots  in $J^1\R$ is shown in Figure \ref{trefoil}.

\begin{figure}[!ht]

\includegraphics[width=4in]{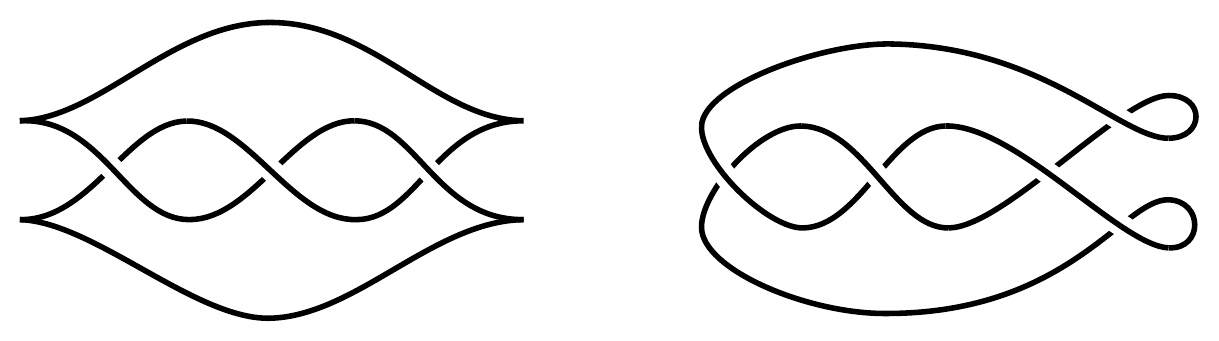}
\caption{The left and right figures are examples of the front projection and the Lagrangian projection of a Legendrian trefoil in $J^1\R$. }
\label{trefoil}
\end{figure}

Any function $f:E \rightarrow \R$ defines a Legendrian via its $1$-jet, $j^1f$, which appears above a coordinate neighborhood $U \subset E$ as $j^1f \cap J^1U = \{(x, \frac{\partial f}{\partial x}(x), f(x)) \,|\, x \in U\}$.  Notice that the front projection of $j^1f$ is simply the graph of $f$.  Moreover, outside of a codimension $1$ singular set, $\Lambda_{\mathit{sing}} \subset \Lambda$, any generic Legendrian $\Lambda \subset J^1E$ locally agrees with some $j^1f$.  When $\dim E = 1$ (resp. $\dim E = 2$) the image of $\Lambda_{\mathit{sing}}$ in the front projection $\pi_{xz}(\Lambda)$ appears as a collection of cusp points (resp. cusp edges and swallowtail points).  

\begin{definition}
For an oriented (possibly immersed) loop $\gamma$ in $\Lambda$ transverse to the singular set of the front projection, the {\bf Maslov index} of $\gamma$ is $$\mu(\gamma)= D(\gamma) - U(\gamma)$$ where $D(\gamma)$ (resp. $U(\gamma)$) is the number of times $\gamma$ passes a cusp on the front projection in the downward (resp. upward) direction.  The Maslov index extends to a homomorphism $\mu:H_1(\Lambda) \rightarrow \Z$. 
The {\bf Maslov number} $m(\Lambda)$ of $\Lambda$ 
is the greatest common factor of the Maslov index of all possible loops in $\Lambda$.

Given $n \in \Z_{\geq 0}$, a $\Z/n$-valued {\bf Maslov potential} for a Legendrian knot or surface $\Lambda$ is a locally constant function $\nu:\Lambda \setminus \Lambda_{\mathit{sing}}\rightarrow \Z/n$ with the property that at cusp points of $\pi_{xz}(\Lambda)$ we have $\nu(S_u) = \nu(S_l)+1$ (mod $n$) where $S_u$ and $S_l$ are upper and lower sheets of $\Lambda$ near the cusp point or cusp edge.  

\end{definition}

\begin{remark} \label{rem:M}
\begin{enumerate}
\item  A Legendrian $\Lambda$ admits a $\Z/n$-valued Maslov potential if and only if $n \,|\, m(\Lambda)$.
\item When $\Lambda$ is of dimension $1$, we have that $m(\Lambda)=2\,\mathit{rot}(\Lambda)$.
\end{enumerate}
\end{remark}

\subsubsection{Some constructions of Legendrian submanifolds}  \label{sec:construct} We will make use of the following methods for constructing and modifying Legendrian submanifolds.

\begin{itemize}

\item {\it Addition by a constant:}  Given a Legendrian $\Lambda \subset J^1E$ and a constant $A \in \R$ we obtain a new Legendrian submanifolds that we denote $j^1(\Lambda + A)$ by applying the contactomorphism $(x,y,z) \mapsto (x,y,z+A)$ to $\Lambda$, i.e by shifting the  $z$-coordinate  of all points of $\Lambda$ by $A$.

\item {\it Addition by a function:}  More generally, given a smooth function $h:E \rightarrow \R$ and $\Lambda \subset J^1E$, we form $j^1(\Lambda+ h(x))$ by applying the contactomorphism defined in local coordinates $(x_1, \ldots, x_n, y_1, \ldots, y_n,z)$ via
\[
(x_1, \ldots, x_n, y_1, \ldots, y_n, z) \mapsto ( x_1, \ldots, x_n, y_1 + \frac{\partial h}{\partial x_1}, \ldots, y_n +\frac{\partial h}{\partial x_n}, z+h(x)).
\]

\item {\it Cylinder with multiplication:}  Let $\Lambda \subset J^1M$ with $\dim M = 1$ and $f:\mathbb{R}_{>0} \rightarrow \mathbb{R}_{>0}$ be a positive function written with variable $f(s)$.  Supposing that $\theta \mapsto (x(\theta), y(\theta), z(\theta))$  parametrizes $\Lambda$, we define $j^1(f(s) \cdot \Lambda) \subset J^1(\R_{>0} \times M)$ to have the parametrization
$$(s,\theta) \rightarrow (x_1,x_2, y_1, y_2, z) = (s,\  x(\theta),\  f'(s)z(\theta),\  f(s)y(\theta),\  f(s)z(\theta)).$$
\end{itemize}
We can combine these operations to obtain Legendrians notated, for instance, as $j^1(f(s) \cdot \Lambda + A)$.  (Form the cylinder, then shift.)

\subsection{Good Lagrangian cobordisms}\label{sec:good}

Let $M$ be a $1$--dimensional manifold. Given two Legendrian submanifolds $\Lambda_+$ and $\Lambda_-$ in $J^1M$, we consider immersed exact Lagrangian cobordisms, which generalize the embedded exact Lagrangian cobordisms considered in \cite{EHK}.

\begin{definition}
Suppose  that $\Lambda_+$ and $\Lambda_-$ are Legendrian submanifolds in $(J^1M, \xi=\ker \alpha)$.
An {\bf (immersed) exact Lagrangian cobordism} $L$  from $\Lambda_-$ to $\Lambda_+$ in the symplectization, $\mathit{Symp}( J^1M)=\big(\R_t\times J^1M, \omega=d(e^t\alpha)\big)$, is  an (immersed) surface $i:L \rightarrow \mathit{Symp}( J^1M)$
 (see Figure \ref{imcob}) such that 
for some $T>0$,
\begin{itemize}
\item when restricted to $i^{-1}\left(\left((-\infty, -T] \cup [T,+\infty)\right) \times J^1M\right)$, $i$ is an embedding with 
\[
i(L)\cap \big([T, \infty) \times J^1M\big) = [T, \infty) \times \Lambda_+  \mbox{ and }  i(L)\cap \big((-\infty,-T] \times J^1M\big) = (-\infty,-T] \times \Lambda_- ,
\]
and $i^{-1}\big([-T, T] \times J^1M\big)$ is compact.
\item 
Moreover, we require that $i^*(e^t \alpha)= d\rho,$  for some smooth function \smallskip $\rho: L \to \R$ which is constant when $t>T$ or $t<-T$. 
The function $\rho$ is called a {\bf primitive}.
\end{itemize}
When the negative end $\Lambda_-$ is empty,  $L$ is an {\bf  (immersed) exact Lagrangian filling} of $\Lambda_+$.
\end{definition}

\begin{figure}[!ht]
\labellist
\pinlabel $t$ at -10 350
\pinlabel $T$ at -15 250
\pinlabel $-T$ at -25 60
\pinlabel $\Lambda_+$ at 350 320
\pinlabel $\Lambda_-$ at 350 120
\pinlabel $L$ at 350 220 
\endlabellist
\includegraphics[width=2in]{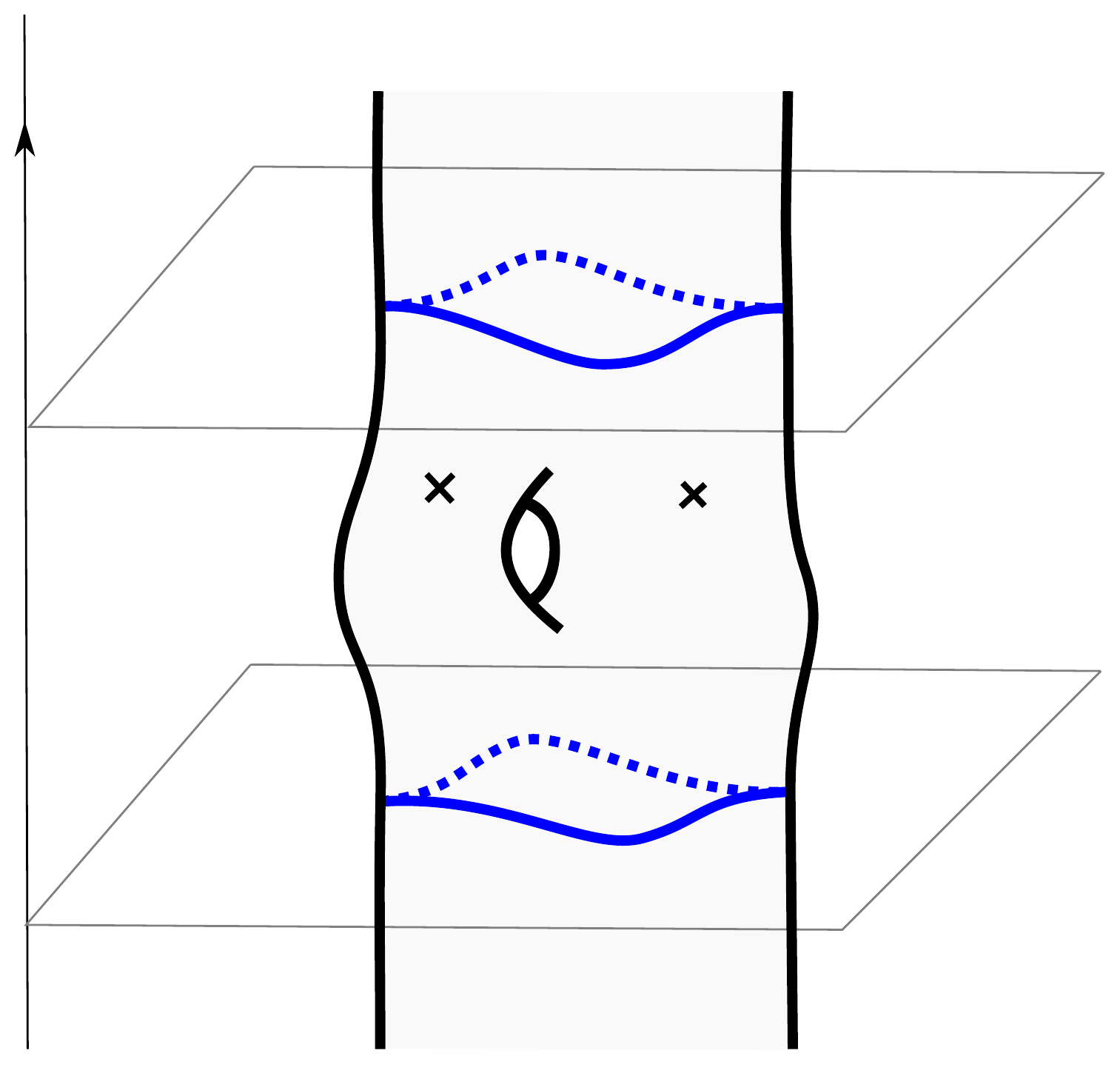}
\caption{An immersed exact Lagrangian cobordism $L$ from $\Lambda_-$ to $\Lambda_+$. Crossings denote  double points of $L$.}
\label{imcob}
\end{figure}

\begin{remark}\begin{itemize}
\item[(i)]  We sometimes suppress the map $i$ from notation and refer to $L$ itself as an immersed exact Lagrangian cobordism.
\item[(ii)]  Note that when $\Lambda_+$ and $\Lambda_-$ are single component knots, the primitive $\rho$ is automatically constant when $t>T$ and $t<-T$.
We require this to be true for the case that $\Lambda_+$ and $\Lambda_-$ are multi-components links. 
If this is not satisfied, the concatenation of two exact Lagrangian cobordisms may not be exact anymore.  See \cite{Ch2015}.
\end{itemize}
\end{remark}

An immersed exact Lagrangian cobordism $L$ in $\mathit{Symp}(J^1M)$ with a primitive $\rho$ can be lifted to a Legendrian surface $\Sigma^{Symp}$ in the contactization of $\mathit{Symp}(J^1M)$, i.e. the contact manifold $\big(\mathit{Symp}(J^1M)\times \R_w, ker(dw+e^t\alpha)\big)$, as the graph of $-\rho$.
We can view $L$ as the Lagrangian projection (to $\mathit{Symp}(J^1M)$)  of the Legendrian surface $\Sigma^{\mathit{Symp}}$.

\begin{definition}
A pair $(L,\rho)$ consisting of an immersed exact Lagrangian cobordism $L$ from $\Lambda_-$ to $\Lambda_+$ together with its primitive $\rho$ is called a {\bf good Lagrangian cobordism} if
its Legendrian lift $\Sigma^{\mathit{Symp}}$  in $\mathit{Symp}(J^1M)\times \R$  is an embedded Legendrian surface.
Two good Lagrangian cobordisms from $\Lambda_-$ to $\Lambda_+$ are called {\bf good Lagrangian isotopic} if they are isotopic through good Lagrangian cobordisms $\Lambda_-$ to $\Lambda_+$, i.e., there exists a smooth 
$1$-parameter family of good Lagrangian cobordisms $(L_t, \rho_t)$ from $\Lambda_-$ to $\Lambda_+$ that varies from one good Lagrangian cobordism to another.  
We may write $(L,\rho): \Lambda_- \rightarrow \Lambda_+$ to indicate that $(L,\rho)$ is a good Lagrangian cobordism from $\Lambda_-$ to $\Lambda_+$.
\end{definition}

After modifying $\Sigma^{\mathit{Symp}}$ by a small Legendrian isotopy (which modifies $L$ by good Lagrangian isotopy) it  can be assumed that $L$ is self-transverse and embedded except for double points.

\begin{remark}
When the exact Lagrangian cobordisms are embedded, the definition of good Lagrangian isotopy recovers the equivalence relation of exact Lagrangian isotopy in \cite{EHK}.
\end{remark}

\subsection{Conical Legendrian cobordisms.}\label{sec:conical}

Note that $\mathit{Symp}( J^1M) \times \R_w$ is contactomorphic to $J^1(\R_{>0}\times M)$ through the following contactomorphism:
\begin{equation} \label{eq:contactomorphism}
\begin{array}{rrcl}
\wt\Phi:& \ \mathit{Symp}( J^1M) \times \R_w& \to & J^1(\R_{> 0}\times M)\\
& (t, x, y, z, w) & \to &\big( (e^t, x), (z, e^t y), e^tz+w\big).\\
\end{array}
\end{equation}
The Legendrian lift $\Sigma^{\mathit{Symp}}$ of a good Lagrangian cobordism $L$ can then be mapped to a Legendrian $\wt\Phi(\Sigma^{\mathit{Symp}})$ in $J^1( \R_{>0}\times M)$; denote it by $\Sigma$.
Recall that when $t>T$ (resp. $t<-T$), $L$ is cylindrical over $\Lambda_+$ (resp. $\Lambda_-$), and the primitive $\rho$ is a constant, say $\rho=A_+$ (resp. $\rho=A_-$). 
Thus, on $J^1\big((e^{T}, \infty)\times M\big)$ and $J^1\big(( 0,e^{-T})\times M\big)$, $\Sigma$ can be parametrized by
\begin{equation}\label{eq:para}
(s, \ x_{\pm}(\theta_{\pm}),\  z_{\pm}(\theta_{\pm}),\  sy_{\pm}(\theta_{\pm}), \ sz_{\pm}(\theta_{\pm})-A_{\pm})
\end{equation}
where $s=e^t$ and $\theta_{\pm}$ parametrizes $\Lambda_{\pm}$ in $J^1 M$ through $(x_{\pm}(\theta_{\pm}), y_{\pm}(\theta_{\pm}), z_{\pm}(\theta_{\pm}))$.  That is, in the notation from Section \ref{sec:construct} the surface $\Sigma$ agrees with $j^1(s\cdot \Lambda_+-A_+)$ (resp. with $j^1(s\cdot \Lambda_--A_-)$ 
 when $s>e^T$ (resp.  $s<e^{-T}$).  This leads us to the following definition.

\begin{definition}\label{def:conical}
Suppose  that $\Lambda_+$ and $\Lambda_-$ are Legendrian submanifolds in $(J^1M, \xi=\ker \alpha)$.
A {\bf conical Legendrian cobordism} $\Sigma$  from $\Lambda_-$ to $\Lambda_+$ is  an (embedded) Legendrian submanifold  in $J^1(\R_{>0}\times M)$ such that 
for some $T>0$ and constants $A_\pm \in \R$,
\begin{itemize}
\item $\Sigma$ agrees with $j^1(s\cdot \Lambda_{+} -A_{+})$
when $s>e^T$,
\item $\Sigma$ agrees with $j^1(s\cdot \Lambda_{-} -A_{-})$
when $s<e^{-T}$, and
\item $\Sigma \cap J^1([e^{-T},e^{T}] \times M)$ is compact.
\end{itemize}

\end{definition}

As one can observe from the equation \eqref{eq:para}, when $s>e^T$ (resp. when $s<e^{-T}$), each $s$--slice of the front projection $\pi_{xz}(\Sigma)$ looks like 
$\pi_{xz}(\Lambda_{+})$ (resp. $\pi_{xz}(\Lambda_{-})$ ) with a linear transformation applied to the $z$-coordinate
and each $x$-slice of $\pi_{xz}(\Sigma)$ is a family of linear functions of $s$.  See Figure \ref{fg:conLeg}.

\begin{figure}[!ht]
\labellist
\small
\pinlabel $z$ at 6 54
\pinlabel $x$ at 20 40
\pinlabel $s$ at 50 0
\pinlabel $\Lambda_-$ at  88 85
\pinlabel $\Lambda_+$ at 155 95
\endlabellist
\includegraphics[width=3in]{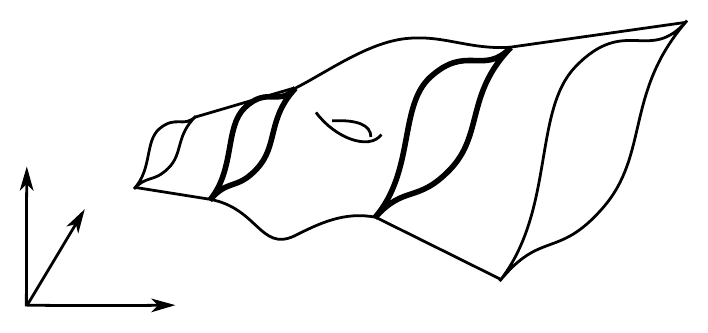}
\caption{The front projection of a conical Legendrian cobordism.}
\label{fg:conLeg}
\end{figure}
\begin{definition}
Two conical Legendrian cobordisms from $\Lambda_-$ to $\Lambda_+$ are {\bf conical Legendrian isotopic} if they are isotopic through conical Legendrian cobordisms from $\Lambda_-$ to $\Lambda_+$.
\end{definition}

The above discussion is summarized in the following.

\begin{proposition} \label{prop:Conical} For any Legendrians $\Lambda_+, \Lambda_- \subset J^1M$, the above correspondence
\[
(L,\rho) \, \longrightarrow \, \Sigma = \widetilde{\Phi}(\Sigma^{\mathit{Symp}})
\]
gives a  bijection from the set of good Lagrangian cobordisms from $\Lambda_-$ to $\Lambda_+$ 
to the set of conical Legendrian cobordisms from $\Lambda_-$ to $\Lambda_+$.  Moreover, under this bijection the equivalence relation of good Lagrangian isotopy corresponds to conical Legendrian isotopy.
\end{proposition}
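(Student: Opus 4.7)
The plan is to produce an explicit inverse to the map $(L,\rho) \mapsto \Sigma$ and to check that both constructions are natural in smooth $1$-parameter families.

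For the forward direction, most of the work has already been done in the paragraph preceding Definition \ref{def:conical}. Starting from $(L,\rho)$, the Legendrian lift $\Sigma^{\mathit{Symp}} \subset \mathit{Symp}(J^1M)\times \R_w$ is embedded by the ``good'' hypothesis, and then $\Sigma = \wt\Phi(\Sigma^{\mathit{Symp}})$ is an embedded Legendrian in $J^1(\R_{>0}\times M)$ because $\wt\Phi$ is a contactomorphism. The parametrization \eqref{eq:para}, obtained directly from the cylindrical form of $L$ together with $\rho\equiv A_\pm$ on the ends, shows that $\Sigma$ agrees with $j^1(s\cdot\Lambda_\pm - A_\pm)$ over $s>e^T$ and $s<e^{-T}$ respectively, and compactness of the middle piece comes immediately from compactness of $L\cap([-T,T]\times J^1M)$. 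Hence $\Sigma$ satisfies Definition \ref{def:conical}.

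Next I would construct the inverse. Given a conical Legendrian cobordism $\Sigma$, set $\Sigma^{\mathit{Symp}} := \wt\Phi^{-1}(\Sigma)$, which is an embedded Legendrian in $\mathit{Symp}(J^1M)\times \R_w$ since $\wt\Phi$ is a contactomorphism. Writing a point of $\Sigma^{\mathit{Symp}}$ as $(t,x,y,z,w)$, let $L$ be the Lagrangian projection of $\Sigma^{\mathit{Symp}}$ to $\mathit{Symp}(J^1M)$. Then $L$ is an immersed exact Lagrangian with primitive $\rho = -w$, because vanishing of the contact form $dw + e^t\alpha$ on $\Sigma^{\mathit{Symp}}$ yields $e^t\alpha\big|_L = -dw = d\rho$. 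The key point to verify is that the conical form of $\Sigma$ at each end produces a cylindrical end of $L$ on which $\rho$ is constant: applying $\wt\Phi^{-1}$ directly to a point of the form \eqref{eq:para} one reads off $(t,x,y,z) = (\log s,\, x_\pm,\, y_\pm,\, z_\pm)$ and $w = -A_\pm$, which shows that $L\cap([T,\infty)\times J^1M) = [T,\infty)\times \Lambda_+$ with $\rho\equiv A_+$, and similarly at the negative end. Since $\Sigma^{\mathit{Symp}}$ is embedded, $(L,\rho)$ is good by definition. That the two constructions are mutually inverse then follows from the fact that $\wt\Phi$ is a bijection and that the $w$-coordinate on $\Sigma^{\mathit{Symp}}$ records precisely the graph of $-\rho$ over $L$.

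For the isotopy statement, I would argue that both constructions respect smooth families. A smooth family $(L_t,\rho_t)$ has smoothly varying Legendrian lifts $\Sigma^{\mathit{Symp}}_t$ (as the graph of $-\rho_t$), and composition with the fixed contactomorphism $\wt\Phi$ gives a smoothly varying family $\Sigma_t$ of conical Legendrian cobordisms; since the endpoints $\Lambda_\pm$ are fixed throughout, this is a conical Legendrian isotopy. The reverse direction uses $\wt\Phi^{-1}$ and the Lagrangian projection in exactly the same way. The only nontrivial step anywhere in the proof is the computation with \eqref{eq:contactomorphism} showing that the conical ends of $\Sigma$ correspond under $\wt\Phi^{-1}$ precisely to cylindrical ends of $L$ with constant primitive; everything else is formal given that $\wt\Phi$ is a contactomorphism and that goodness is built into the hypothesis.
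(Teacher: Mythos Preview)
Your proposal is correct and follows the same approach as the paper. In fact, the paper does not give a separate proof of this proposition at all: it simply says ``The above discussion is summarized in the following'' and states the result, leaving the inverse construction and the isotopy correspondence implicit. You have filled in exactly those details (the inverse via $\wt\Phi^{-1}$ and Lagrangian projection, and the compatibility with smooth families), and your computation checking that conical ends correspond to cylindrical ends with constant primitive via \eqref{eq:contactomorphism} is the right verification.
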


\subsection{Concatenation of cobordisms}

\label{sec:Concatenate}

Let $(L,\rho)$ be a good Lagrangian cobordism in $\mathit{Symp}(J^1M)$.
Consider the map of translation by $t_0$ in the $t-$direction in $\mathit{Symp}(J^1M)$ 
$$T_{t_0}: (t, x, y, z)\to (t+t_0, x, y , z)$$
and denote the composition of $L$ with $T_{t_0}$ by $t_0 \odot L$.
Note that $(t_0 \odot L, e^{t_0} \rho)$ is also a good Lagrangian cobordism since it has $e^{t_0} \rho$ as a primitive when $\rho: L \rightarrow \R$ is a primitive for $L$.

\begin{definition}
Let    $(L_1,\rho_1): \Lambda_0 \rightarrow \Lambda_1$ and $(L_2,\rho_2): \Lambda_1 \rightarrow \Lambda_2$ be a pair of good Lagrangian cobordisms, and let $\sigma >0$ be large enough so that there exists $C \in \R$ such that $L_1$ (resp. $\sigma \odot L_2$) agrees with $\R \times \Lambda_1$ in $(C-1, +\infty)\times J^1M$  (resp. in $(-\infty, C+1) \times J^1M$).
We form their {\bf concatenation}, $(L_2,\rho_2) \circ_\sigma (L_1,\rho_1) = (L_2 \circ_\sigma L_1, \rho)$, as follows:  
\begin{itemize}
\item  Remove $[C+1,+\infty) \times \Lambda_1$ from $L_1$, remove $(-\infty, C-1] \times \Lambda_1$ from $\sigma \odot L_2$, and glue the remaining portions of $L_1$ and $\sigma \odot L_2$ together along $(C-1, C+1) \times \Lambda_1$ to get
\[
L_2 \circ_{\sigma} L_1
= \bigg( L_1 \cap \Big((-\infty, C+1)\times J^1M\Big) \bigg) \cup  \bigg( (\sigma\odot L_2) \cap \Big((C-1, +\infty)\times J^1M\Big) \bigg).
\]
\item To construct the primitive $\rho:L_2 \circ_{\sigma} L_1 \rightarrow \R$, let $A_1^+ \in \R$ be the constant value of $\rho_1$ when $t > C-1$ (assuming $\Lambda_1 \neq \emptyset$), and let $\widetilde{\rho}_2 = e^{\sigma} \rho_2+K$ be a primitive on $\sigma \odot L_2$ with $K \in \R$ chosen so that $\widetilde{\rho}_2$ agrees with $A^+_1$ when $t < C+1$, if $\Lambda_1 \neq \emptyset$; if $\Lambda_1 = \emptyset$, then put $K=0$. The primitive $\rho$ results from piecing $\rho_1$ and $\widetilde{\rho}_2$ together.   
\end{itemize}
\end{definition}

Taking Legendrian lifts and applying the contactomorphism (\ref{eq:contactomorphism}) gives a corresponding concatenation operation for conical Legendrian cobordisms in $J^1(\R_{>0} \times M)$.  To describe it independently, note that the $\R$-action on $\symp(J^1M)$ leads to a multiplicative $\R_{>0}$-action on $J^1(\R_{>0} \times M)$ given by
\[
\tau\odot((s,x), (r,y), z) = ((\tau s, x), (r, \tau y), \tau z),  \quad \tau \in \R_{>0}.
\]
\begin{definition}
Let $\Sigma_1, \Sigma_2 \subset J^1(\R_{>0} \times M)$ be a pair of conical Legendrian cobordisms   with $\Sigma_i: \Lambda_{i-1} \rightarrow \Lambda_i$, $i=1,2$. 
Let $C>1$ be such that $\Sigma_1$ agrees with $j^1(s\cdot \Lambda_1- A_1^+)$ when $ s \geq C-1$ (or so that $\Sigma_1$ is empty in this region if $\Lambda_1 = \emptyset$).  Choose $\tau>0$ large enough so that 
$\tau \odot \Sigma_2$ has the form $j^1(s\cdot \Lambda_1-\tau A_2^-)$ for all $s \leq C+1$ (where $\Sigma_2$ has the from $j^1(s\cdot \Lambda_1 - A_2^-)$ for $s$ near $0$).  Then, 
\[
\Sigma_2 \circ_{\tau} \Sigma_1 = \bigg( \Sigma_1 \cap J^1\Big((0, C+1)\times M\Big) \bigg) \cup  \bigg( j^1(\tau\odot \Sigma_2 +K)\cap J^1\Big((C-1, +\infty)\times M\Big) \bigg)
\]
where the constant $K= \tau A_2^-- A_1^+$ is chosen so that on $J^1((C-1,C+1)\times M)$ both pieces agree with $j^1(s \cdot \Lambda_1 -A_1^+)$ (or defined to be $K=0$, if $\Lambda_1= \emptyset$).  
\end{definition}

\begin{figure}[!ht]
\labellist

\pinlabel $\Sigma_2$ at 30 74
\pinlabel $\Sigma_1$ at 113 74
\pinlabel $\circ$ at 72 72
\pinlabel $\Sigma_1$ at 233 74
\pinlabel $2$ at 314 60
\pinlabel $=$ at 172 72

\endlabellist
\includegraphics[width=4.5in]{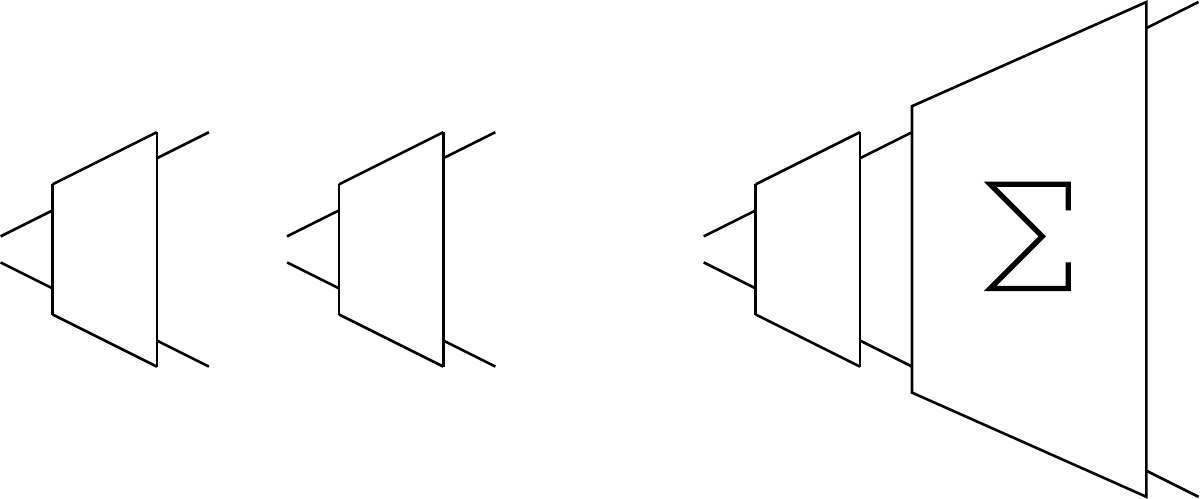}
\caption{A schematic depiction of concatenation for conical Legendrian cobordisms.}
\label{fig:conic}
\end{figure}

Note that $j^1(\tau\odot \Sigma_2+K)$ is conical Legendrian isotopic to $\Sigma_2$. In addition,
while the construction of $L_2 \circ_\sigma L_1$ and $\Sigma_2\circ_\tau \Sigma_1$ depends on the choices of $\sigma$ and $\tau$, the conical Legendrian isotopy class of $\Sigma_2 \circ_\tau \Sigma_1$ (and so also the good Lagrangian isotopy  class of $L_2 \circ_\sigma L_1$)  depends only on $\Sigma_1$ and $\Sigma_2$.   
Thus we have the following proposition.

\begin{proposition} \label{prop:wd}
The concatenation of conical Legendrian isotopy classes (good Lagrangian isotopy classes) of conical Legendrian cobordisms (good Lagrangian cobordisms) is well-defined and associative.

\end{proposition}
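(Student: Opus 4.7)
The plan is to prove the statement for conical Legendrian cobordisms, from which the statement for good Lagrangian cobordisms follows via the bijection of Proposition \ref{prop:Conical} (which was already noted to intertwine conical Legendrian isotopy with good Lagrangian isotopy). The proof reduces to two independent claims: (a) the conical Legendrian isotopy class of $\Sigma_2 \circ_\tau \Sigma_1$ is independent of the auxiliary parameters $\tau$ and $C$, and of conical Legendrian isotopies applied to the inputs $\Sigma_1, \Sigma_2$; and (b) associativity of concatenation up to conical Legendrian isotopy.

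For (a), the key observation is that the $\R_{>0}$-action $\tau \odot$ on $J^1(\R_{>0} \times M)$ from Section \ref{sec:Concatenate} is, for each fixed $\tau$, a contactomorphism, so a smooth path $\tau_u$, $u \in [0,1]$, produces a smooth family of Legendrians $\tau_u \odot \Sigma_2$. Given two admissible choices $\tau < \tau'$, I would connect them by a path lying in the admissible range (using a slightly enlarged $C$ so that every $\tau_u$ is admissible), glue each $\tau_u \odot \Sigma_2$ to $\Sigma_1$ along the common cylindrical region $\{s \in (C-1,C+1)\}$ with the appropriately varying shift constant $K(\tau_u) = \tau_u A_2^- - A_1^+$, and thereby produce an explicit conical Legendrian isotopy from $\Sigma_2 \circ_\tau \Sigma_1$ to $\Sigma_2 \circ_{\tau'} \Sigma_1$. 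Independence of $C$ is even simpler: changing $C$ within the common cylindrical region leaves the glued Legendrian literally unchanged. For invariance under isotopies of the inputs, given a conical Legendrian isotopy $\{\Sigma_1^u\}$, I would choose $\tau$ large enough to serve uniformly for the whole family (possible since the family is compactly supported outside the cylindrical ends), and then $\{\Sigma_2 \circ_\tau \Sigma_1^u\}$ is itself a conical Legendrian isotopy; the case of an isotopy of $\Sigma_2$ is handled symmetrically.

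For (b), associativity, I would pick $\Sigma_i: \Lambda_{i-1} \to \Lambda_i$ for $i=1,2,3$ and choose constants so that both parenthesizations can be realized as a single three-piece gluing. Concretely, fix $C_1 < C_2$ and scaling factors $\tau_2, \tau_3$ such that $\Sigma_1$ is conical for $s$ near $C_1$, the scaled $\tau_2 \odot \Sigma_2$ is conical for $s$ near both $C_1$ and $C_2$, and $\tau_3 \odot \Sigma_3$ is conical for $s$ near $C_2$; then the ``triple concatenation'' obtained by gluing all three along the two overlap regions is well-defined. Applying part (a) with parameter independence, I would identify both $(\Sigma_3 \circ \Sigma_2) \circ \Sigma_1$ and $\Sigma_3 \circ (\Sigma_2 \circ \Sigma_1)$ (for suitably chosen scaling parameters) with this triple concatenation up to conical Legendrian isotopy; in particular the two bracketings differ only by a further rescaling of $\Sigma_3$, which part (a) absorbs.

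The main obstacle is purely bookkeeping: one must track the additive primitive shifts $K$ and multiplicative scalings $\tau$ carefully enough to confirm that all the pieces glue to a well-defined Legendrian, that the resulting surface still has the required conical form at each end, and that the path of parameters used to realize each isotopy remains admissible throughout. There is no novel geometric content beyond the fact that the $\R_{>0}$-action is by contactomorphisms and that the gluing region can be taken within a strictly cylindrical portion of both pieces; once that is set up, both well-definedness and associativity follow from direct verification.
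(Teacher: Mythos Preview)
Your proposal is correct and follows exactly the approach the paper intends; in fact, the paper does not spell out a proof of Proposition~\ref{prop:wd} at all, merely remarking beforehand that $j^1(\tau\odot \Sigma_2+K)$ is conical Legendrian isotopic to $\Sigma_2$ and that the isotopy class of the concatenation depends only on $\Sigma_1$ and $\Sigma_2$, then stating the proposition without further argument. Your sketch supplies the details the paper omits --- the isotopy through varying $\tau$, the uniform choice of $\tau$ for a family, and the triple-gluing argument for associativity --- and these are the expected verifications.
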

When considering concatenations up to conical Legendrian isotopy (or good Lagrangian isotopy) we omit the $\tau$ or $\sigma$ and simply write $\Sigma_2 \circ \Sigma_1$ or $L_2 \circ L_1$.

\begin{remark} There is a natural category whose objects are Legendrians and morphisms are conical Legendrian cobordisms up to conical Legendrian isotopy. However, to form the domain of the functor of interest, we will need to add some additional data to objects later.  See Section \ref{sec:functor} below.  
\end{remark}

We close the section by relating conical Legendrian isotopy to compactly supported Legendrian isotopy using concatenation.
Note that a conical Legendrian isotopy is not necessary compactly supported.
The conical ends may change by vertical shifts during the isotopy. 
However, it can be related to compactly supported Legendrian isotopy in the following way.

\begin{lemma}\label{lem:diffconandcom}
Suppose that $\Sigma_i$, $i=1,2$, are two conical Legendrian cobordisms from $\Lambda_-$ to $\Lambda_+$ that are conical Legendrian isotopic. One can modify $\Sigma_1$ to $\Sigma_1'$ through doing
\begin{enumerate}
\item a global shift in the vertical direction; and 
\item concatenation with a conical Legendrian cobordism  of the form $j^1(s\cdot \Lambda_+-h(s))$ with parameter $\tau=1$, as introduced earlier in the section,
\end{enumerate}
such that $\Sigma_1'$ and $\Sigma_2$ are compactly supported Legendrian isotopic.
\end{lemma}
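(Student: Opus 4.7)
The plan is to construct $\Sigma_1'$ explicitly via the two allowed modifications so that its conical ends exactly match those of $\Sigma_2$, and then to produce a compactly supported Legendrian isotopy from $\Sigma_1'$ to $\Sigma_2$ by applying time-dependent versions of the same modifications to the given conical Legendrian isotopy. Write $A_i^\pm$ for the conical-end constants of $\Sigma_i$, and let $A^\pm(t)$ denote the smoothly varying end constants of the given isotopy $\Sigma_t$, so that $A^\pm(0) = A_1^\pm$ and $A^\pm(1) = A_2^\pm$.

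First I construct $\Sigma_1'$. Apply a global vertical shift of $\Sigma_1$ by $c := A_1^- - A_2^-$, producing a conical Legendrian cobordism $\Sigma_1''$ with end constants $(A_2^-, A_1^+ - c)$. Then concatenate (with $\tau = 1$) with $j^1(s\cdot \Lambda_+ - h(s))$, where $h : \R_{>0} \to \R$ is any smooth function equal to $A_1^+ - c$ for $s$ near $0$ and equal to $A_2^+$ for $s$ large. The result $\Sigma_1'$ is a conical Legendrian cobordism from $\Lambda_-$ to $\Lambda_+$ with end constants $(A_2^-, A_2^+)$, matching those of $\Sigma_2$.

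Next I produce the compactly supported isotopy. For $t \in [0,1]$, shift $\Sigma_t$ by $c(t) := A^-(t) - A_2^-$ and then concatenate (with $\tau = 1$) with $j^1(s \cdot \Lambda_+ - h_t(s))$, where $h_t$ is chosen to vary smoothly in $t$, to equal $A^+(t) - c(t)$ for $s$ near $0$, and to equal $A_2^+$ for $s$ large, with $h_0 = h$. Call the resulting family $\tilde\Sigma_t$. By construction $\tilde\Sigma_0 = \Sigma_1'$. Since $A^\pm(1) = A_2^\pm$, we have $c(1) = 0$ and $h_1 \equiv A_2^+$, so $j^1(s\cdot\Lambda_+ - h_1(s)) = j^1(s\cdot \Lambda_+ - A_2^+)$; the concatenation of $\Sigma_2$ with this cobordism along a gluing region lying inside the conical part of $\Sigma_2$ (where $\Sigma_2$ already agrees with $j^1(s\cdot\Lambda_+ - A_2^+)$) recovers $\Sigma_2$ unchanged, so $\tilde\Sigma_1 = \Sigma_2$.

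Finally, each $\tilde\Sigma_t$ is a conical Legendrian cobordism from $\Lambda_-$ to $\Lambda_+$ with fixed conical-end constants $(A_2^-, A_2^+)$, and by compactness of $[0,1]$ we can choose a uniform $T$ such that $\tilde\Sigma_t$ is conical for $|\log s| \geq T$ for every $t$; outside this compact region $\tilde\Sigma_t$ is independent of $t$, so $\tilde\Sigma_t$ is a compactly supported Legendrian isotopy from $\Sigma_1'$ to $\Sigma_2$. The main subtlety will be arranging the family $h_t$ so that the concatenations vary smoothly in $t$ and reduce at $t = 1$ to a genuine trivial operation rather than only a conical isotopy; this is resolved by the observation that, for $\tau = 1$ and $K = 0$, concatenation of a conical cobordism with a trivial cylinder along a gluing region lying inside the cobordism's conical part recovers the cobordism on the nose.
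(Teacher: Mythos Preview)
Your proof is correct and follows the same overall strategy as the paper: shift to match negative-end constants, concatenate with a $j^1(s\cdot\Lambda_+ - h(s))$-type cobordism to match positive-end constants, then make time-dependent versions of these modifications to produce a compactly supported isotopy. The paper's execution is slightly more indirect: after constructing $\Sigma_1'$ it first argues abstractly that $\Sigma_1'$ is conical Legendrian isotopic to $\Sigma_2$ (via the observation that $j^1(s\cdot\Lambda_+-h(s))$ is itself conically isotopic to a trivial cylinder), chooses \emph{some} conical isotopy $\Sigma^t$ from $\Sigma_1'$ to $\Sigma_2$, and then post- and pre-composes $\Sigma^t$ with two time-dependent families of standard-form cobordisms to freeze both ends separately. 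Your version is more direct: you apply the time-dependent shift and concatenation to the \emph{given} isotopy $\Sigma_t$, so the shift alone already freezes the negative end and only a single concatenation is needed on the positive side. Both arguments rely on the same observation you flag at the end --- that concatenating a cobordism with the trivial cylinder over its own end (with $\tau=1$ and $K=0$) returns the cobordism unchanged --- and both use compactness of $[0,1]$ to obtain a uniform conical bound along the family.
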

\begin{proof}
After shifting $\Sigma_1$ globally in the vertical direction, one can assume that $\Sigma_i$ agree on the negative end.
However, they may have different positive ends.
Suppose that the positive ends of $\Sigma_i$ are $j^1(s\cdot \Lambda_+-A_i)$ when $s \geq S_i$ for $i=1,2$  and $A_1\neq A_2$.
We build a conical Legendrian cobordism $\Sigma_0$ from $\Lambda_+$ to $\Lambda_+$ to switch the positive end of $\Sigma_1$ to the positive end of $\Sigma_2$.
In particular $\Sigma_0= j^1(s\cdot \Lambda_+-h(s))$ , where $h(s)$ is a smooth    function on $\R$ such that $h(s)=A_1$ when $s<T_1$ for some $T_1>S_1+2$ 
    and $h(s)=A_2$ when $s>T_2$ for some $T_2>T_1$.
Let $\Sigma_1'$ be the concatenation of $\Sigma_0$ and $\Sigma_1$ with parameter $\tau=1$ , i.e., $\Sigma_1'=\Sigma_0\circ_1\Sigma_1$.
Note that $\Sigma_0$ is conical Legendrian isotopic to the trivial conical cobordism $j^1(s\cdot \Lambda_+-A_1)$ since $h(s)$ is homotopic to the constant function $A_1$.
It follows that $\Sigma_1'$ is conical Legendrian isotopic to $\Sigma_1$, and thus is conical Legendrian isotopic to $\Sigma_2$.
Observe that  $\Sigma_1'$ agrees with $\Sigma_2$ on both the positive and the negative ends.
We claim that $\Sigma_1'$ and $\Sigma_2$ are compactly supported Legendrian isotopic. 
Let $\Sigma^t$, $t\in[0,1]$ be a smooth $1$-parameter family of 
conical Legendrian cobordisms from $\Lambda_-$ to $\Lambda_+$ that is a conical Legendrian isotopy from $\Sigma_1'$ to $\Sigma_2$.
One can assume that there is  $S_+\in \R_{>0}$ such that $\Sigma^t$ are conical over $\Lambda_+$ when $s>S_+$.
If the positive ends of $\Sigma^t$ do not always agree with the positive end of $\Sigma_2$, using a modification of the above construction, one can construct a smooth family of cobordisms $\Sigma_0^t$ from $\Lambda_+$ to $\Lambda_+$  to switch the positive end of $\Sigma^t$ to the positive end of $\Sigma_2$.
Concatenating $\Sigma^t$ with $\Sigma_0^t$ gives a conical Legendrian isotopy from $\Sigma_1'$ to $\Sigma_2$ whose positive end do not change through the isotopy.
Similarly, one can precompose with an appropriate family of cobordisms to arrange the negative ends to also remain constant so that the result is a compactly supported Legendrian isotopy from $\Sigma_1'$ to $\Sigma_2$.
\end{proof}

\section{Review of Legendrian contact homology} \label{sec:LCH}

  In this section, we review the Legendrian contact homology DGA and the alternate formulation of its differential via gradient flow trees.  Continuation maps associated to Legendrian isotopies as in \cite{EESR2n+1} are recalled in some detail, and in Proposition \ref{prop:inv2} we observe a refined invariance statement that arises under certain assumptions on the action of Reeb chords.      
	In \ref{sec:gene}, we discuss a generalization for non-closed Legendrians with Morse ends following \cite{EK}.  

\subsection{Review of Legendrian contact homology}

Let $(\Lambda, \nu)$ be a Legendrian in a $1$-jet space, $J^1E$, equipped with a choice of $\Z/n$-graded Maslov potential.  Assuming $\Lambda$ is suitable generic, the important work of Etnyre, Ekholm, and Sullivan in \cite{EESR2n+1, EES} associates a $\Z/n$-graded DGA to $(\Lambda,\nu)$ which we will typically denote as $(\mathcal{A}(\Lambda),\partial)$ (with the Maslov potential suppressed from notation).  We briefly review the construction.

\subsubsection{The algebra, $\mathcal{A}(\Lambda)$}
 Recall that in $J^1E$ the Reeb field is $\partial_z$, and a {\bf Reeb  chord} of  $\Lambda$ is a vertical (in $z$-direction) line segment with both ends on the Legendrian.  Reeb chords are in bijection with the double points of the Lagrangian projection, $\pi_{xy}(\Lambda)\subset T^*E$.  
  When defining $\mathcal{A}(\Lambda)$ it should be assumed that all Reeb chords of $\Lambda$ are non-degenerate with endpoints away from the singular set $\Lambda_{\mathit{sing}}$.  This can be arranged by a small Legendrian isotopy.  Then, using $\pi_x:J^1E \rightarrow E$ for the base projection, for any Reeb chord, $c$, we can find  a neighborhood $U \subset E$ of $\pi_x(c)$  and a pair of locally defined functions $f_i,f_j:U \rightarrow \R$, $f_i>f_j$, such that $\Lambda$ agrees with $j^1f_i$ (resp. $j^1f_j$) in a neighborhood of the upper (resp. lower) endpoint of $c$ (with respect to the $z$-direction). The {\bf local difference function} $F_{i,j}:=f_i-f_j$ has a 
  non-degenerate critical point at $\pi_x(c)$, and Reeb chords are in bijection with the critical points of such locally defined difference functions for $\Lambda$.  

The algebra, $\mathcal{A}(\Lambda)$, is the free non-commutative, unital algebra over $\Z/2$ 
 generated by Reeb chords of $\Lambda$.  As a vector space over $\Z/2$, it has a basis consisting of words in the Reeb chords of $\Lambda$.

\subsubsection{Gradings of Reeb chords}\label{sec:grading}

A $\Z$-valued grading is assigned to each Reeb chord $c$ of $\Lambda$ depending on a choice of {\bf capping path}, $\gamma_c$, for $c$ which is a   path on $\Lambda$ from $c_u$ to $c_l$ where $c_u$ and $c_l$ are the upper and lower endpoints of $c$ respectively.   (See \cite[Section 2.3.3]{EENS} about the way to define  capping paths for mixed Reeb chords, i.e.  Reeb chords whose endpoints belong to two different components of $\Lambda$). 
Define  the grading  by 
\begin{equation} \label{eq:CZ}
|c|= CZ(\gamma_c)-1,
\end{equation}
where $CZ(\gamma_c) \in \Z$ is the Conley-Zehnder index discussed in \cite{EESR2n+1}.

\begin{remark}
Note that the grading of Reeb chords depends on the choice of capping paths. 
For pure Reeb chords, i.e. Reeb chords starting and ending at the same component of $\Lambda$, the grading is well defined mod the Maslov number $m(\Lambda)$.  
\end{remark}

When $\Lambda$ is equipped with a $\Z/n$-valued Maslov potential, $\nu$, a well-defined $\Z/n$-grading of Reeb chords arises as 
\begin{equation} \label{eq:Maslov}
|c|= \nu(c_u)-\nu(c_l) + \mathit{ind}(F_{i,j})-1 \in \Z/n,
\end{equation}
where $c_u$ and $c_l$ are the upper and lower endpoints of $c$ and $\mathit{ind}(F_{i,j})$ is the Morse index of the local difference function with critical point at $\pi_x(c)$.

\begin{remark}  \label{rem:twograding}
On each component of $\Lambda$ any two Maslov potentials differ by a constant.
Therefore, the grading of a pure Reeb chord does not depend on the choice of $\nu$.  Moreover, it agrees with the reduction modulo $n$ of the integer grading from (\ref{eq:CZ}).  The grading of mixed Reeb chords depends on the choice of Maslov potential.  However, for a given Maslov potential it is always possible to choose capping paths for mixed Reeb chords so that the two gradings from (\ref{eq:CZ}) and (\ref{eq:Maslov}) agree modulo $n$.  In the following, we assume this is the case and do not distinguish notationally between the $\Z$-valued and $\Z/n$-valued gradings of Reeb chords.  
\end{remark}

\subsubsection{The differential}\label{sec:diff}

The differential $\dd$ is defined by counting certain rigid holomorphic disks.  Following \cite{EES}, 
let $J$ be an almost complex structure on $T^*E$ that is compatible with the standard symplectic structure, $\omega = \sum_{i} dx_i \wedge dy_i$, and {\bf adapted}
to $\Lambda$, i.e
\begin{itemize}
\item in a neighborhood of each double point of $\pi_{xy}(\Lambda)$ there exists a choice of local coordinates where $J$ agrees with the standard complex structure and the two sheets of $\pi_{xy}(\Lambda)$ are real analytic,\footnote{Note that \cite{EES} uses the additional terminology ``$L$ is admissible'' with respect to $J$ for the requirement about real analyticity of $L$.}
\item the triple $(T^*E, \omega, J)$ has finite geometry at infinity 
as explained in \cite[Section 2.1]{EES}. 
\end{itemize}

For any Reeb chords $a, b_1, \dots, b_m$    of $\Lambda$, we consider a moduli space, $\M^A_J(a; b_1, \dots, b_m)$,
of boundary punctured $J$--holomorphic disks, $u$, up to conformal reparametrization, with boundary mapped to $\pi_{xy}(\Lambda)$ and having, in counter-clockwise order, a  positive puncture at $a$ and negative punctures at $b_1, \ldots, b_m$.  Moreover, the restriction of $u$ to the boundary together with the union of capping paths $(-\gamma_a)\cup \gamma_{b_1}\cup \ldots \cup \gamma_{b_m}$ represents the homology class $A \in H_1(\Lambda)$.    See \cite{EESR2n+1, EES} for a detailed definition of $\M^A_J(a;b_1, \ldots, b_m)$.

The formal dimension of $\M^A_J(a; b_1, \dots, b_m)$ is
\begin{equation}\label{eq:dim}
\dim \M^A_J(a; b_1, \dots, b_m)=|a|-|b_1|- \cdots -|b_m|-1+ \mu(A),
\end{equation}
where $\mu(A)$ is the Maslov index as introduced in Section \ref{sec:leg}.  
In \cite{EES}, it is shown that a generic choice of $J$ that is adapted to $\Lambda$ will be {\bf regular}, i.e. all the moduli spaces of $J$--holomorphic disks of formal dimension $\leq 1$ 
are transversely cut out. 
When $J$ is regular and $\dim \M^A_J(a; b_1, \dots, b_m)=0$, a holomorphic disk $u\in \M^A_J(a;b_1, \dots, b_m)$ is called {\bf rigid}.

\begin{remark}\label{defninz}
  The formula for the formal dimension of the moduli space $ \M^A_J(a; b_1, \dots, b_m)$ uses the $\Z$-valued grading of Reeb chords from (\ref{eq:CZ}).  
   Note that the value is independent of the choice of capping paths; changes in $|a|$ and $|b_i|$ caused by the change of capping paths are cancelled by the change to the $\mu(A)$ term. 
\end{remark}

Given a choice of $J$ that is regular and adapted to $\Lambda$, a differential on $\mathcal{A}(\Lambda)$ is defined on generators by summing over rigid holomorphic disks 
$$\displaystyle{\dd(a)= \sum_{\dim \M^A_J(a; b_1, \dots, b_m)=0} |\M^A_J(a; b_1, \dots, b_m)| \ b_1 \cdots b_m}$$
and is extended to $\alg(\Lambda)$ to satisfy the Leibniz Rule.
\begin{proposition}[\cite{EESR2n+1,EES}] \label{prop:EES}
The differential $\partial: \mathcal{A}(\Lambda) \rightarrow \mathcal{A}(\Lambda)$ satisfies $\partial^2 =0$ and has degree $-1$ with respect to the $\Z/n$-grading on $\mathcal{A}(\Lambda)$ arising from any choice of Maslov potential, $\nu$, on $\Lambda$.  Moreover, the stable tame isomorphism type of the $\Z/n$-graded DGA $(\mathcal{A}(\Lambda), \partial)$ is a Legendrian isotopy invariant of $(\Lambda, \nu)$.  
\end{proposition}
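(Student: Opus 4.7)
The statement has three components: (i) $\partial$ has degree $-1$ with respect to the $\Z/n$-grading; (ii) $\partial^2 = 0$; and (iii) the stable tame isomorphism class of $(\alg(\Lambda), \partial)$ is invariant under Legendrian isotopy of $(\Lambda, \nu)$. All three are the main results of \cite{EESR2n+1, EES}, and my plan is to outline the key ingredients rather than redo the analysis.

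For (i), I will use the dimension formula \eqref{eq:dim} directly. Any rigid disk contributing to $\partial a$ lies in a moduli space $\M^A_J(a; b_1,\ldots,b_m)$ of formal dimension zero, so $|a| - \sum|b_i| - 1 + \mu(A) = 0$ as integers. Since $\nu$ is $\Z/n$-valued we have $n \mid m(\Lambda)$ by Remark \ref{rem:M}, hence $\mu(A) \equiv 0 \pmod n$ for every $A \in H_1(\Lambda)$, and Remark \ref{rem:twograding} ensures the $\Z$-valued and $\Z/n$-valued gradings of Reeb chords are compatible modulo $n$. Reducing the dimension equation mod $n$ yields $|a| \equiv \sum |b_i| + 1 \pmod n$, the required statement. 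For (ii), I would carry out the standard SFT-style boundary analysis: for a regular $J$, each formal-dimension-one moduli space $\M^A_J(a; b_1,\ldots,b_m)$ is a smooth $1$-manifold, and by compactness plus gluing its ends consist exactly of two-level broken configurations (a rigid disk with positive asymptote $a$ glued to a rigid disk along one of its negative asymptotes). These boundary configurations are in bijection with the monomials contributing to the coefficient of $b_1\cdots b_m$ in $\partial^2 a$, so the boundary count of a compact $1$-manifold vanishes modulo $2$. A separate bubbling analysis rules out disk bubbles off the boundary, available because $\Lambda$ is exact.

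For (iii), I plan a bifurcation argument. Given a Legendrian isotopy, extend it to a generic one-parameter family $(\Lambda_t, J_t)_{t \in [0,1]}$ with $J_t$ regular and adapted to $\Lambda_t$ except at finitely many times $t_1 < \cdots < t_k$. On each subinterval $(t_i, t_{i+1})$, the DGA is constant up to canonical identification. At each $t_i$ one of three codimension-one degenerations occurs: (a) a handle slide, where a rigid disk with only negative punctures (other than the positive asymptote $a$) appears momentarily and induces a tame elementary automorphism of the underlying algebra realizing the change in differential; (b) a triple-point or similar local moduli-space degeneration, again absorbed into a tame automorphism via a local model computation; or (c) a standard birth/death of a cancelling Reeb-chord pair $(x,y)$ with $|x| = |y|+1$ and $\partial x = y + (\text{lower terms})$, which realizes a stabilization or destabilization exactly in the sense of Section \ref{sec:triDGA}. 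Composing the resulting tame isomorphisms and stabilizations across the $t_i$ produces the required stable tame isomorphism. Invariance under change of regular $J$ with $\Lambda$ fixed is the special case of a constant isotopy with varying almost complex structure.

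The main obstacle in this plan is the analytic machinery underlying (ii) and (iii): compactness and gluing for punctured $J$-holomorphic disks with boundary on a Lagrangian, and the classification of codimension-one bifurcations in one-parameter families (in particular, verifying that handle slides produce elementary automorphisms of exactly the form needed and that birth/death moves match the algebraic stabilization from Section \ref{sec:triDGA}). These technical cornerstones are established in \cite{EESR2n+1, EES}, which I would invoke directly rather than reprove.
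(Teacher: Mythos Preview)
The paper does not give its own proof of this proposition; it is stated as a cited result from \cite{EESR2n+1,EES}, and the surrounding Section~\ref{sec:continuation} then reviews the continuation-map construction (the $(-1)$-disk and birth/death bifurcations) in more detail for later use. Your outline is a correct summary of exactly those arguments---what you call a ``handle slide'' is the $(-1)$-disk case in the paper's terminology, and your birth/death case (c) matches the description around equation~(\ref{eq:Bmap})---so your proposal and the paper's treatment coincide.
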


The DGA $(\mathcal{A}(\Lambda), \partial)$ is called the {\bf Legendrian contact homology DGA} of $\Lambda$.  We occasionally denote it as  $(\mathcal{A}(\Lambda, J), \partial)$ when we want to emphasize the choice of $J$ involved in the differential.

\subsubsection{The action filtration}
Each Reeb chord $c$ has an associated {\bf action} $\fraka(c)$ defined by the length of $c$, i.e. $\fraka(c) = z_u-z_l>0$ where $z_u$ and $z_l$ are the $z$-coordinates of the upper and lower endpoints of $c$.  
By the Stokes Theorem, for a holomorphic disk $u\in \M^A_J(a; b_1, \dots, b_m)$ the energy $E(u) = \int_{D^2} u^*\omega$,   which agrees with the area
 of the image of $u$,  satisfies $E(u)= \fraka(a)- \fraka(b_1)- \cdots -\fraka(b_m)$
 so that 
\begin{equation} \label{eq:action}
\fraka(a) > \sum_i \fraka(b_i)
\end{equation}
whenever  $u$ is non-constant.    
It follows that $(\mathcal{A}(\Lambda), \dd)$ is in fact a {\it triangular} DGA (as in Section \ref{sec:triDGA}) with respect to any ordering of Reeb chords with non-decreasing action.

\subsubsection{Continuation maps} \label{sec:continuation}
The proof of Legendrian invariance (see Section 2.5 of \cite{EESR2n+1} for the case of Legendrians in $J^1\R^n$,  and Section 2.4 of \cite{EES} for the extension to general 1-jet spaces following the method of Section 4.3 of \cite{EESorientation}) associates a stable tame isomorphism to a suitable Legendrian isotopy $\Phi = (\Lambda_t, J_t)$ equipped with a homotopy between regular almost complex structures for $\Lambda_0$ and $\Lambda_1$.  Since we will need to use some particulars of this stable tame isomorphism later, we review the construction.  

In loc. cit., it is shown 
that when $(\Lambda_0, J_0)$ and $(\Lambda_1,J_1)$ are regular so that LCH DGAs are defined, a Legendrian isotopy between $\Lambda_0$ and $\Lambda_1$ can be approximated by an ``admissible'' Legendrian isotopy $\Lambda_t$ and equipped with a family of almost complex structures $J_t$ with the property that there is a sequence of values
\[
0=t_0<t_1< \ldots< t_N = 1
\]
such that the LCH DGA of each $(\Lambda_{t_i}, J_{t_i})$ is defined, denote it $(\mathcal{A}_i,\partial_i)$, and for $0\leq i < N$ the DGAs $(\mathcal{A}_i, \partial_i)$ and $(\mathcal{A}_{i+1}, \partial_{i+1})$ are related by a stable tame isomorphism arising from either
\begin{itemize}
\item[(A)] a $(-1)$-disk, or
\item[(B)] the birth/death of a pair of Reeb chords.
\end{itemize}

In the case (A) of a $(-1)$-disk, for any $t \in[t_i, t_{i+1}]$ the Reeb chords of $\Lambda_t$ are identified with those of $\Lambda_{t_i}$ in a canonical way.  Moreover, for some $t_{i}< t < t_{i+1}$ there exists a disk $u \in \M^A_J(a; b_1, \dots, b_m)$ of formal dimension $-1$, and the elementary isomorphism $\psi:\mathcal{A}_i \rightarrow \mathcal{A}_{i+1}$ defined on Reeb chords by
\begin{equation}  \label{eq:negative1}
\psi(x) = \left\{\begin{array}{cr} a+ b_1\cdots b_m, & x =a, \\  x,  & \mbox{else}, \end{array}\right.
\end{equation}
is a DGA isomorphism.  

For considering case (B), suppose the pair of canceling Reeb chords, $a$ and $b$, exists at $t_{i+1}$ but not at $t_i$.  The remaining Reeb chords of $\Lambda_{i+1}$ are canonically identified with the Reeb chords of $\Lambda_i$.
Moreover, \cite{EESR2n+1, EESorientation} (see Lemmas 2.14 and 2.15 of \cite{EESR2n+1} or Lemma 4.27 of \cite{EESorientation}) show that $\partial_{i+1}a = b+ v$ where $v$ belongs to the sub-algebra generated by Reeb chords with smaller action than $b$, and the projection map 
\[
p:\mathcal{A}_{i+1} \rightarrow \mathcal{A}_i,  \quad a \mapsto 0, \, b \mapsto v, \quad p|_{\mathcal{A}_i} = \mathit{id}
\] 
induces a DGA isomorphism  $\mathcal{A}_{i+1}/I \cong \mathcal{A}_i$ where $I = \mathcal{I}(a, \partial a)$.  As in Proposition \ref{prop:cancel}, there is then a unique stable tame isomorphism
\[
\psi:  \mathcal{A}_i *S(e, f) \rightarrow \mathcal{A}_{i+1}
\]
satisfying 
\begin{equation}  \label{eq:Bmap}
\psi(e) = a, \quad \psi(f) = b +v, \quad \mbox{and} \quad \psi \circ p - \mathit{id}_{\mathcal{A}_{i+1}} = \partial_{i+1} \circ H + H \circ \partial_{i+1}
\end{equation}
 where $H: \mathcal{A}_{i+1} \rightarrow \mathcal{A}_{i+1}$ is the $(\psi \circ p, \mathit{id}_{\mathcal{A}_{i+1}})$-derivation with $H(b) =a$ and $H(x) =0$ for any other Reeb chord.  

Given such an ``admissible'' Legendrian isotopy $(\Lambda_t,J_t)$, we can define the associated {\bf continuation map} to be the stable tame isomorphism
\[
\varphi: \mathcal{A}_0*S_0 \rightarrow \mathcal{A}_1*S_1
\]
obtained by ``composing'' (in the manner specified in Remark \ref{rem:stable}) the above stable tame isomorphisms.

\begin{remark} \begin{enumerate}
\item In \cite{Kalman}, Kalman showed that for an isotopy of $1$-dimensional Legendrians $\Lambda_t \subset J^1\R$ the DGA homotopy class of the continuation map only depends on the homotopy class of $\Lambda_t$ (viewed as a path in the space of Legendrian submanifolds).  In the higher dimensional case, this result does not appear to have yet been established in the literature. 
\item In the $1$-dimensional case, the notion of ``admissible isotopy'' must be expanded from that of the dimension $\geq 2$ case from \cite{EESR2n+1,EES} to also allow triple point moves.  However, the description of continuation maps given above remains valid.  Following \cite[Section 6]{ENS} (see also \cite{Che}), there are two types of triple point moves referred to in \cite{ENS} as Moves I and II.  In Move I, there is a (constant) $(-1)$-disk with 2 negative punctures whose image is the triple point, and the continuation map is as in (\ref{eq:negative1}).  In Move II the analogous constant disk has formal dimension $0$, and the DGA is unchanged by the 
move.    
\end{enumerate}
\end{remark}

We will make use of the following mild strengthening of the invariance result from Proposition \ref{prop:EES}.
\begin{proposition} \label{prop:inv2} Let $(\Lambda_t,J_t)$ be an admissible Legendrian isotopy, and suppose  $0<B_1< B_2$ are such that
\begin{itemize}
\item[(i)] for all $0\leq t \leq 1$, none of the Reeb chords of $\Lambda_t$ have action belonging to the interval $[B_1,B_2]$; and
\item[(ii)] all birth/death pairs of Reeb chords have action belonging to $(0, B_1)$.   
\end{itemize}
Then, the associated continuation map $\varphi: \mathcal{A}_0*S_0 \rightarrow \mathcal{A}_1*S_1$ restricts to a DGA isomorphism $\varphi: \mathcal{A}^{< B_1}_0*S_0 \rightarrow \mathcal{A}^{< B_1}_1*S_1$ where $\mathcal{A}^{< B_1}_i$ is the sub-algebra generated by Reeb chords $c$ with $\fraka(c) < B_1$.
\end{proposition}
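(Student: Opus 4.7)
The strategy is to trace through the explicit construction of the continuation map reviewed in Section~\ref{sec:continuation}, verify that each elementary piece preserves the action filtration as claimed, and then compose. By refining the subdivision $0=t_0<t_1<\cdots<t_N=1$, I may assume that on each subinterval $[t_i,t_{i+1}]$ exactly one of the following occurs: (a) a smooth piece of the isotopy in which Reeb chords persist and are canonically identified; (b) a single $(-1)$-disk event producing the elementary isomorphism $\psi$ of (\ref{eq:negative1}); or (c) a single birth/death event producing the stable tame isomorphism $\psi$ of (\ref{eq:Bmap}). It then suffices to show that each elementary $\psi$ restricts to a (stable) tame isomorphism between the $<B_1$ sub-algebras, and compose.

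For the smooth pieces in (a) the actions of Reeb chords vary continuously with $t$, and hypothesis~(i) forbids any chord from crossing from the region $\{\fraka<B_1\}$ into $\{\fraka>B_2\}$, so the canonical identification restricts to an isomorphism $\mathcal{A}_{t_i}^{<B_1}\cong\mathcal{A}_{t_{i+1}}^{<B_1}$. For the $(-1)$-disk case (b), the action inequality (\ref{eq:action}), $\fraka(a)>\sum_k\fraka(b_k)$, forces each $\fraka(b_k)<\fraka(a)$. Combined with (i), either $\fraka(a)<B_1$, in which case all $b_k$ lie in $\mathcal{A}_{i+1}^{<B_1}$ and $\psi$ modifies the generator $a$ within $\mathcal{A}_i^{<B_1}$ while fixing all other generators; or $\fraka(a)>B_2$, in which case $\psi|_{\mathcal{A}_i^{<B_1}}=\mathit{id}$. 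Either way, the restriction is a tame isomorphism onto $\mathcal{A}_{i+1}^{<B_1}$.

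The heart of the argument is the birth/death case (c). At $t_{i+1}$ the two new Reeb chords $a,b$ satisfy $\fraka(a),\fraka(b)<B_1$ by hypothesis~(ii), and $\partial_{i+1}a=b+v$ where $v$ lies in the sub-algebra generated by Reeb chords of action strictly less than $\fraka(b)$; hence $v\in\mathcal{A}_{i+1}^{<B_1}$. Because the differential strictly decreases action, $\mathcal{A}_{i+1}^{<B_1}$ is a sub-DGA, triangular with respect to the ordering by increasing action, and the pair $(a,\partial_{i+1}a)$ lies entirely inside it. Applying Proposition~\ref{prop:cancel} to this pair inside $\mathcal{A}_{i+1}^{<B_1}$ produces a tame isomorphism
\[
\psi_{\mathrm{res}}\colon \bigl(\mathcal{A}_{i+1}^{<B_1}/\mathcal{I}(a,b+v)\bigr)*S(e,f)\longrightarrow \mathcal{A}_{i+1}^{<B_1},
\]
together with a $(\psi_{\mathrm{res}}\circ p_{\mathrm{res}},\mathit{id})$-derivation $H_{\mathrm{res}}$ on $\mathcal{A}_{i+1}^{<B_1}$ satisfying $H_{\mathrm{res}}(b)=a$ and $H_{\mathrm{res}}=0$ on every other generator. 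The projection $p$ from (\ref{eq:Bmap}) restricts to a DGA isomorphism $\mathcal{A}_{i+1}^{<B_1}/\mathcal{I}(a,b+v)\cong\mathcal{A}_i^{<B_1}$, yielding a stable tame isomorphism $\mathcal{A}_i^{<B_1}*S(e,f)\xrightarrow{\sim}\mathcal{A}_{i+1}^{<B_1}$. This coincides with the restriction of the full continuation map $\psi$ of (\ref{eq:Bmap}) because the $H$ appearing in (\ref{eq:Bmap}) already has $H(b)=a\in\mathcal{A}_{i+1}^{<B_1}$ and vanishes on every other generator, so $H$ restricts to $H_{\mathrm{res}}$; the formulas (\ref{eq:gh1})--(\ref{eq:gh2}) then determine both tame isomorphisms from this common data.

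Composing the restricted elementary stable tame isomorphisms across all subintervals yields the desired restriction $\varphi\colon\mathcal{A}_0^{<B_1}*S_0\to\mathcal{A}_1^{<B_1}*S_1$. The main technical point is the compatibility, in step (c), between performing cancellation globally on $\mathcal{A}_{i+1}$ and performing it inside the sub-DGA $\mathcal{A}_{i+1}^{<B_1}$; hypotheses (i) and (ii) are exactly what guarantees that the ingredients $a$, $b$, and $v$ of the cancellation all live inside $\mathcal{A}_{i+1}^{<B_1}$ so that the homotopy $H$ never produces terms in the forbidden range of actions.
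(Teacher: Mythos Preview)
Your proof is correct and follows essentially the same approach as the paper: reduce to the elementary steps of the continuation map and check each preserves the sub-algebra $\mathcal{A}^{<B_1}$, using the energy estimate for $(-1)$-disks and the triangularity of the cancellation data for birth/deaths. The only cosmetic difference is in case (c): the paper argues directly by induction on the action-ordered generators that $\psi(x_i)=y_i+w$ with $w$ in lower generators (using the homotopy identity $\psi\circ p-\mathit{id}=\partial H+H\partial$), whereas you re-invoke Proposition~\ref{prop:cancel} inside $\mathcal{A}_{i+1}^{<B_1}$ and then assert the two constructions agree; your last sentence (``the formulas \ldots\ determine both tame isomorphisms from this common data'') is correct but implicitly relies on that same induction, since $H$ is a $(g\circ p,\mathit{id})$-derivation and showing it restricts requires knowing $g\circ p$ restricts, which is established generator-by-generator in action order.
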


\begin{proof}
As a consequence of the composition operation from Remark \ref{rem:stable} it suffices to verify the statement in the case $\varphi$ arises from either (A) a (-1)-disk or (B) a birth/death of Reeb chords.  In case (A), note that the energy estimate (\ref{eq:action}) together with (\ref{eq:negative1}) shows that both $\varphi$ and $\varphi^{-1}$ preserve the sub-algebras $\mathcal{A}^{< B_1}_i$.  In case (B), the continuation map has the form $\varphi: \mathcal{A}_0* S(e,f) \rightarrow \mathcal{A}_1$.  Since the pair of cancelling Reeb chords have action less than $B_1$, we can use notation $\{x_1, \ldots, x_n\}$ and $\{y_1, \ldots, y_n\}$ for the generating sets of $\mathcal{A}^{< B_1}_0* S(e,f)$ and $\mathcal{A}^{< B_1}_1$ respectively ordered with increasing action of Reeb chords and with the stabilization generators $e$ and $f$ in the positions of the cancelling Reeb chords $a$ and $b$ of $\mathcal{A}_{1}$.  The identities from  (\ref{eq:Bmap}) show that any generator $x_i \in \mathcal{A}^{< B_1}_0* S(e,f)$ has $\varphi(x_i) = y_i + w$ where $w \in \Z/2\langle y_1, \ldots, y_{i-1}\rangle$. [To verify this in the case when $x_i$ is a Reeb chord, use the identity $\varphi \circ p - \mathit{id}_{\mathcal{A}_{1}} = \partial_{1} \circ H + H \circ \partial_1$ from (\ref{eq:Bmap}), and an inductive argument.]  It follows that $\varphi$ and $\varphi^{-1}$ preserve the sub-algebras $\mathcal{A}^{< B_1}_0* S(e,f)$ and $\mathcal{A}^{< B_1}_1$ as required.      
\end{proof}

\subsection{Gradient flow trees}\label{sec:GFT}
We will briefly describe the gradient flow trees in this section. 
One can find more details in \cite[Section 2.2]{Ekh}.  Our definition appears slightly different from that in \cite{Ekh} since we only consider flow trees with a single positive puncture.  See \cite{RuSu2} for a comparison of the two definitions. 

A {\bf domain tree} is a (connected) oriented tree, $\Gamma$, equipped with the following additional structure:
\begin{enumerate}
\item A choice of {\bf initial vertex}, $v_0$, which is an external (i.e., 1-valent) vertex of $\Gamma$  such that all edges of $\Gamma$ are oriented away from $v_0$. 
\item An ordering of the outgoing edges at each internal (i.e., valence $\geq 2$) vertex of $\Gamma$.
\item An assignment of a length $l(e) \in (0, +\infty]$ to each edge $e$ of $\Gamma$ such that internal edges have $l(e) < +\infty$ and the initial edge (starting at $v_0$) has $l(e) = +\infty$.
\item To each edge $e$ of a domain tree we assign an interval $I_e$:
\begin{itemize}
\item The initial edge has $I_e= (-\infty, 0]$ unless $e$ is the only edge of $\Gamma$.  In the latter case, either $I_e = (-\infty, 0]$ or $(-\infty, +\infty)$. 
\item Other edges have $I_e= \left\{\begin{array}{cr} [0,l(e)] & \mbox{when $l(e) < +\infty$}, \\  \left[0,+\infty\right),  & \mbox{when $l(e) = +\infty$}.  \end{array}\right.$
\end{itemize}
\end{enumerate} 

Let $\Lambda$ be a Legendrian submanifold in $J^1E$, and suppose $g$ is a choice of metric for $E$.  Recall that away from the co-dimension $1$ subset $\Lambda_{\mathit{sing}} \subset \Lambda$, 
any point of $\Lambda$ has a neighborhood whose front projection agrees with the graph of a {\bf local defining function} $f:U\subset E \rightarrow \R$ for $\Lambda$.

\begin{definition} \label{def:GFT}
A {\bf gradient flow tree (GFT)} of $(\Lambda, g)$ is a domain tree $\Gamma$ together with a collection of maps $\gamma: I_e \rightarrow E$ together with pairs of $1$-jet lifts $\gamma^\pm: I_e \rightarrow J^1E$, $\pi_{x} \circ \gamma^{\pm} = \gamma$ such that:
\begin{enumerate}
\item On the interior of each $I_e$, $z(\gamma^+) > z(\gamma^-)$, and $\gamma'(s) = -\nabla(F_{i,j})_{\gamma(s)}$ 
with $F_{i,j}$ the local difference function $f_{i}-f_j$ where $f_i$ and $f_j$ are local defining functions for $\Lambda$ near $\gamma^+(s)$ and $\gamma^-(s)$.

\item When $I_e$ has an end at $\pm\infty$, $\lim_{s \rightarrow \pm \infty} \gamma(s) = p$ where $p$ is a critical point of $F_{i,j}$,  i.e. a Reeb chord of $\Lambda$.  
\item At each internal vertex, $v \in \Gamma$, the $1$-jet lifts of the incoming edge, $e_0$, and the outgoing edges, ordered as $e_1, \ldots, e_m$, fit together continuously by satisfying
\[
\gamma_0^+(v) = \gamma_1^+(v); \quad \gamma_i^-(v) = \gamma_{i+1}^+(v), \, 1 \leq i \leq {m-1}; \quad \mbox{and } \quad  \gamma_{m}^-(v) = \gamma_0^-(v). 
\]
\item At each external vertex $v$ not corresponding to an infinite end of $I_e$, $\gamma^+(v) = \gamma^-(v) \in \Lambda_{\mathit{sing}}$.
\end{enumerate} 
\end{definition}

The Reeb chord $a$ corresponding to the critical point that $\Gamma$ limits to at $-\infty$ along the initial edge of $\Gamma$ is called the {\bf positive puncture} of $\Gamma$, while the critical points $b_i$ that occur as limits at $+\infty$ along other external edges are {\bf negative punctures}. As Reeb chords project to double points of $\pi_{xy}(\Lambda)$, conditions (2)-(4) in Definition \ref{def:GFT} show that the projections of all $\gamma^+$ and $-\gamma^-$ (orientation reverse) patched together forms a single closed curve, $C$, on $\pi_{xy}(\Lambda) \subset T^*E$.  The punctures of $\Gamma$ can then be labeled $a, b_1, \ldots, b_n$ as they appear according to the orientation of $C$.    Moreover, adding the union of capping paths $(-\gamma_a) \cup \gamma_{b_1} \cup \cdots \cup \gamma_{b_n}$ to the $1$-jet lifts $\Gamma$ we obtain a homology class $A(\Gamma) \in H_1(\Lambda)$.    

Two GFTs, $\Gamma$ and $\Gamma'$ are considered {\bf equivalent} if there is a homeomorphism between their domain trees that preserves the additional data associated to edges and vertices, and if the edge maps $\gamma, \gamma': I_e \rightarrow E$ are related by orientation preserving reparametrization of $I_e$.  (Though, a non-identity reparametrization is possible only when $I_e = (-\infty, \infty)$, and this can only occur if $\Gamma$ consists of a single edge.)  Given $A \in H_1(\Lambda)$, let $\T_g^A(a;b_1, \dots, b_m)$  denote the moduli space of GFTs  having $A(\Gamma) = A$ with a single positive puncture at $a$ and negatives punctures at $b_1, \dots, b_m$ up to equivalence.

In \cite[Section 3]{Ekh}, a {\bf formal dimension}, denoted here as $\dim(\Gamma)$ 
is associated to $\T^A_g(a;b_1, \dots, b_m)$ and is shown to agree with $\dim(\mathcal{M}^A(a;b_1, \ldots, b_m)$, i.e.  
\begin{equation} \label{eq:Tformal}
\dim \T_g(a;b_1, \dots, b_m)=|a|-|b_1|- \cdots -|b_m|-1+\mu(A),
\end{equation}
where $\mu(A)$ is the Maslov index of $A$. 
As in Remark \ref{defninz}, $\dim \T_g(a;b_1, \dots, b_m)$ is well defined in $\Z$.
 A metric $g$ on $E$ is called  {\bf regular} for $\Lambda$ when  there are no GFTs of negative formal dimension and each $0$-dimensional GFT space is a collection of finitely many GFTs that are transversely cut out in the sense of \cite[Section 3]{Ekh}.  There it is shown that regularity can be achieved by a generic choice of $g$.
 When $g$ is regular for $\Lambda$, a GFT of formal dimension $0$ is 
called {\bf rigid}.
 Thus, a rigid GFT $\Gamma\in \T_g(a;b_1, \dots, b_m)$ satisfies 
$$|a|- |b_1|- \cdots -|b_m|+ \mu(A(\Gamma))=1.$$

The main reason we introduce the GFTs is that they can be used to count rigid holomorphic disks following the Proposition below.  In the statement, for $0< \lambda \leq 1$ let $s_\lambda: J^1E \rightarrow J^1E$, $s_\lambda(x,y,z) = (x, \lambda y, \lambda z)$ denote the fiber rescaling.  Note that $s_\lambda(\Lambda)$ is Legendrian isotopic to $\Lambda$ and Reeb chords of $s_\lambda(\Lambda)$ are in bijection with those of $\Lambda$. 

\begin{proposition}[\cite{Ekh}] \label{Legcor}
Let $\Lambda\subset J^1E$ be a generic $n$--dimensional Legendrian submanifold for $n=1$ or $2$. 
For any  metric $g$  regular to $\Lambda$, there exists $\lambda_0 >0$ such that for all $0< \lambda < \lambda_0$ after perturbing $s_\lambda(\Lambda)$ by a arbitrarily small Legendrian isotopy, there exists an almost complex structure $J$ on $T^*E$ 
so that the rigid $J$--holomorphic disks with boundary on $\widetilde{\Lambda}$ are in $1$-to-$1$ correspondence to the rigid GFTs of $(\Lambda, g)$.
As a consequence,
the Legendrian contact homology differential of $\Lambda$ can be computed by counting rigid GFTs as 
$$\displaystyle{\dd (a)=\sum_{\dim\T^A_g(a;b_1, \cdots, b_m)=0} |\T^A_g(a;b_1, \cdots, b_m)| \ b_1 \cdots b_m}.$$
\end{proposition}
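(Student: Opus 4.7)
The proposition is an application of Ekholm's correspondence theorem from \cite{Ekh}, so the plan is to sketch how that theorem is established in the current setting. The overall strategy is to introduce a one-parameter family of almost complex structures $J_\lambda$ on $T^*E$ adapted to $s_\lambda(\Lambda)$ that, roughly speaking, stretches in the fiber direction as $\lambda \to 0$; one then sets up a correspondence between rigid $J_\lambda$-holomorphic disks with boundary on $s_\lambda(\Lambda)$ (for $\lambda$ sufficiently small) and rigid GFTs of $(\Lambda,g)$. Because rescaling is a Legendrian isotopy, the moduli counts are unchanged up to the stable tame isomorphisms of Proposition \ref{prop:EES}, so computing $\partial$ via disks on $s_\lambda(\Lambda)$ gives the same DGA as working with $\Lambda$ directly, and the final identity for $\partial(a)$ will follow once the bijection is established.

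To pass from disks to trees (the "easy direction"), the idea is to show that any sequence $u_n$ of rigid $J_{\lambda_n}$-holomorphic disks with $\lambda_n \to 0$ admits, up to passing to a subsequence and rescaling domains, a limit in the sense of Gromov–Floer compactness in which the image projects onto a GFT of $(\Lambda,g)$. The key analytic point is that the fiber rescaling forces $u_n$ to become increasingly thin, so that bubbling is ruled out by the action filtration (\ref{eq:action}) and by the non-existence of rigid disks of negative formal dimension; what survives is a tree of gradient flow lines of local difference functions $F_{i,j}$, with vertices corresponding to either Reeb chord punctures, $Y_0$- or $Y_1$-vertices from \cite[Sec.~2]{Ekh}, or switch points along the singular set $\Lambda_{\mathit{sing}}$, matching Definition \ref{def:GFT}.

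The hard direction, and the main technical obstacle, is the gluing/thickening step: given a rigid GFT $\Gamma \in \T_g^A(a;b_1,\ldots,b_m)$, one must construct for all sufficiently small $\lambda$ an honest $J_\lambda$-holomorphic disk $u_\Gamma^\lambda$ with boundary on $s_\lambda(\Lambda)$ that converges to $\Gamma$, and show that this construction is bijective onto the rigid disks. This is done in \cite{Ekh} by fixing an approximate solution along a thin tubular neighborhood of the image of $\Gamma$, linearizing the Cauchy–Riemann operator in a suitable weighted Sobolev setup adapted to each type of vertex (flow edge, $Y_0$/$Y_1$-vertex, punctures, switches), establishing uniform bounds on the right inverse using the regularity of $g$ and the transverse cut-out condition on $\T^A_g$, and concluding via a Newton–Picard iteration; the genericity perturbation of $s_\lambda(\Lambda)$ mentioned in the statement is what guarantees that the limiting almost complex structure $J$ remains admissible/regular in the sense of \cite{EES} after the thickening.

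Granting the above bijection, the restriction $n = 1,2$ enters because the correspondence between GFTs and holomorphic disks from \cite{Ekh} requires only simple front singularities, which is automatic in these dimensions. Combining the bijection with the formal dimension identity (\ref{eq:dim})=(\ref{eq:Tformal}) shows that rigid disks correspond precisely to rigid GFTs, so the counts in
\[
\partial(a) = \sum_{\dim \M_J^A(a;b_1,\ldots,b_m)=0} |\M_J^A(a;b_1,\ldots,b_m)|\, b_1\cdots b_m
\]
may be replaced by $|\T_g^A(a;b_1,\ldots,b_m)|$, yielding the stated formula. Legendrian isotopy invariance of the DGA (Proposition \ref{prop:EES}) ensures that the differential computed from GFTs of $(\Lambda, g)$ agrees up to stable tame isomorphism with the one defined from any other regular $J$, so the GFT-based formula can be taken as a computationally convenient definition.
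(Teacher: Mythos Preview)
The paper does not supply its own proof of this proposition: it is stated with attribution to \cite{Ekh} and used as a black box. Your sketch is a reasonable outline of Ekholm's argument in that reference (compactness of thin disks to trees as $\lambda \to 0$, Newton--Picard gluing to thicken trees back to disks, and the restriction to $n \leq 2$ to ensure simple front singularities), so there is nothing to compare against here.

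One small correction: you write that the genericity perturbation of $s_\lambda(\Lambda)$ is what guarantees $J$ remains admissible/regular ``after the thickening.'' More precisely, in \cite{Ekh} the perturbation is needed in dimension $n=2$ to make the Legendrian have simple front singularities (swallowtails resolved into a standard form) before the disk/tree correspondence can even be set up; regularity of $J$ is a separate issue handled by the usual transversality arguments. This is a minor point of emphasis, not a gap.
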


\subsection{A generalization to the non-closed case}\label{sec:gene}

In \cite[Section 2]{EK} a generalization of the DGA to non-closed Legendrians with {\it Morse minimum ends} is considered.  We recall (a minor variation of) their construction.

\begin{definition}  Let $\Lambda \subset J^1M$ be  a $1$-dimensional Legendrian.  A standard {\bf Morse minimum end} modelled on $\Lambda$  is a Legendrian surface of the form 
\begin{equation}  \label{eq:MorseEnds}
j^1(q(s)\cdot \Lambda+K) \subset J^1( (s_0-\delta, s_0+\delta) \times M)
\end{equation}
 (with notation as in Section \ref{sec:construct})  for some $s_0\in \R_{>0}$ and $\delta >0$ where $q(s)$ is a positive quadratic function having its minimum at $s_0$ and $K \in \R$.  
 \end{definition}

Let $\Sigma \subset J^1((s_--\delta,s_++\delta)\times M)$ be a 
Legendrian surface 
having Morse minimum ends at $s_-$ and $s_+$.
That is,   
in $J^1((s_--\delta,s_-+\delta)\times M)$ (resp. $J^1((s_+-\delta,s_++\delta)\times M)$) $\Sigma$ agrees with a standard Morse minimum end modeled on $\Lambda_-$ (resp. $\Lambda_+$) for some $\Lambda_\pm \subset J^1M$.  
For considering the Legendrian contact homology DGA of $\Sigma$, 
we use  an almost complex structure on $T^*((s_--\delta, s_++\delta)\times M)$ that is adapted to $\Lambda$, and we add the requirement that $J$ is {\bf standard at the ends}: 
\begin{itemize}
\item  In $T^*((s_--\delta, s_-+\delta)\times M) = T^*(s_--\delta,s_-+\delta) \times T^*M$,  $J$ has the form $J = J_0 \oplus J_-$ where $J_0$ is the standard complex structure on $T^*(s_--\delta,s_-+\delta) \cong (s_--\delta, s_-+\delta) \times i\R \subset \C$ and $J_-$ is an almost complex structure on $T^*M$ adapted to $\Lambda_-$ and regular.  A similar requirement is imposed in $T^*((s_+-\delta, s_++\delta)\times M)$.  
\end{itemize}
 In addition, when considering GFTs for $\Sigma$ we say a metric $g$ on $\R_{>0}\times M$ is {\bf standard at the ends} if:
\begin{itemize}
\item In $(s_\pm-\delta, s_\pm+\delta)\times M$, $g$ has the form  $g_\R\oplus g^\pm_M$  where $g^\pm_M$ are regular for $\Lambda_\pm$. 
\end{itemize}  

The following proposition is observed in \cite[Section 3]{EK} as a consequence of \cite[Lemmas 3.4 and 3.5]{EK}.
\begin{proposition}[\cite{EK}]  \label{prop:EK}  When $\Sigma$ has Morse minimum ends and $J$ is standard at the ends, the Legendrian contact homology DGA $(\mathcal{A}(\Sigma, J),\dd)$ is well-defined, and its stable tame isomorphism type is invariant under Legendrian isotopies through Legendrians with fixed Morse minimum ends. 
  Moreover, the DGA of $\Sigma$ can be computed in the sense of Proposition \ref{Legcor} by counting rigid gradient flow trees with respect to a metric that is standard at the ends.
\end{proposition}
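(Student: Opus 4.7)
The plan is to reduce everything to the standard closed-Legendrian framework reviewed in Section \ref{sec:LCH} (and its proof of Proposition \ref{prop:EES}) by establishing a confinement result for the relevant holomorphic disks (and, in parallel, for gradient flow trees) near the Morse minimum ends. Given such confinement, the compactness, transversality, and gluing arguments from \cite{EESR2n+1,EES} and \cite{Ekh} go through essentially verbatim, and invariance under isotopy is then handled by continuation maps as in Section \ref{sec:continuation}. I expect the entire content of \cite[Lemmas 3.4 and 3.5]{EK} to be exactly this confinement statement, so my task is really to explain how it plugs into the well-definedness, invariance, and GFT-computability assertions.

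First I would analyze the structure of a standard Morse minimum end $j^1(q(s)\cdot \Lambda_\pm + K)$. Parameterizing the end as in Section \ref{sec:construct}, one checks that the local difference functions are of the form $q(s)\bigl(f_i(x)-f_j(x)\bigr)$, whose critical points occur only at $s = s_0$ and correspond bijectively to the Reeb chords of $\Lambda_\pm$. Consequently, the Reeb chords of $\Sigma$ are precisely the interior Reeb chords of $\Sigma$ together with two canonical copies of the Reeb chords of $\Lambda_\pm$ lying in the slices $\{s = s_\pm\}$, and $\mathcal{A}(\Sigma, J)$ is finitely generated. For well-definedness of $\partial$, the compatibility requirement that $J$ split as $J_0\oplus J_\pm$ in the slab near each end, together with the maximum principle applied to the composition of a holomorphic disk with the projection $T^*(s_\pm-\delta,s_\pm+\delta)\to \mathbb{C}$, forces the disk's $s$-component to attain its extrema at boundary punctures; since those punctures are Reeb chords at $s = s_\pm$, the image of any such disk is confined to a compact region of $\Sigma$. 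This is the content of \cite[Lemma 3.4]{EK}, and once we have it, the usual Gromov compactness and transversality arguments give finiteness of rigid moduli spaces and the identity $\partial^2 = 0$ by counting broken configurations in $1$-dimensional moduli spaces.

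For the stable tame isomorphism invariance under Legendrian isotopies with fixed Morse minimum ends, I would follow exactly the admissible isotopy strategy recalled in Section \ref{sec:continuation}: approximate the given isotopy by one that changes the DGA only through isolated $(-1)$-disks or Reeb chord birth/deaths, and assemble the continuation map out of the elementary isomorphisms $(\ref{eq:negative1})$ and $(\ref{eq:Bmap})$. Since the ends are kept fixed, no Reeb chord at $s_\pm$ is ever born, dies, or migrates, and the confinement result again guarantees that the bifurcation disks governing the elementary moves lie in the compact region where the closed-case analysis of \cite{EESR2n+1, EES, EESorientation} is directly applicable. For the final assertion about GFTs, I would invoke Proposition \ref{Legcor}: the analogous confinement for gradient flow trees under a metric standard at the ends (the content of \cite[Lemma 3.5]{EK}), combined with the fiber rescaling $s_\lambda$ and small perturbation from \cite{Ekh}, produces the one-to-one correspondence between rigid $J$-holomorphic disks and rigid GFTs and hence the GFT formula for $\partial$.

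The main obstacle is entirely the confinement/compactness question near the Morse minimum ends: one needs that no holomorphic disk (respectively GFT) escapes through the slices $\{s = s_\pm \mp \delta\}$, for otherwise neither finiteness of rigid moduli spaces nor the standard breaking analysis for $\partial^2 = 0$ and for continuation maps would apply. The positivity of $q(s)$ and the split form of $J$ (respectively $g$) at the ends are precisely what forces the maximum-principle argument to work, and once these confinement statements are in hand, the rest of the proof is a straightforward transcription of the closed case.
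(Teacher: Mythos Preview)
Your proposal is correct and follows essentially the same approach as the paper. The paper's own treatment is not a detailed proof but a brief remark explaining how \cite[Lemmas 3.4 and 3.5]{EK} adapt to the present setting: Lemma 3.4 is the holomorphic-disk confinement via the $(J,J_0)$-holomorphic projection $T^*((s_\pm-\delta,s_\pm+\delta)\times M)\to T^*(s_\pm-\delta,s_\pm+\delta)\subset\mathbb{C}$ (your maximum-principle argument), and Lemma 3.5 is the GFT analogue coming from the vanishing of the $\partial/\partial s$ component of $\nabla F_{i,j}$ at $s_\pm$ under the split metric---exactly the mechanism you describe.
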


\begin{remark}
 The setting of \cite{EK} is very slightly different than our setting since \cite{EK} restricts to Legendrians in $\R^{2n+1}$ and uses the standard complex structure on $\C^n$.  The Lemmas 3.4 and 3.5 from \cite{EK} are easily adapted to our setting as follows.  Lemma 3.4 is the statement that holomorphic disks with positive puncture at a point with $s=s_\pm$ (resp. $s \in(s_-,s_+)$) must have their entire image in the $s=s_\pm$ slice of $T^*((s_--\delta, s_++\delta)\times M)$ (resp. within the region where $s\in [s_-,s_+]$);  Lemma 3.5 is the analogous statement for flow trees.  The proof of Lemma 3.4 is  based on having a $(J,J_0)$-holomorphic projection map  $T^*((s_\pm-\delta, s_\pm+\delta)\times M) \rightarrow T^*(s_\pm-\delta, s_\pm+\delta) \subset \mathbb{C}$, and in our setting this is arranged by the requirement that $J$ be standard at the ends.  The proof of Lemma 3.5 just uses that since the $\frac{\partial}{\partial s}$ derivative of all local difference functions vanishes at $s_\pm$, the gradient vector fields all point tangent to $T^*M$.  This follows from the form of the metric $g = g_\mathbb{R} \times g_M$ at the Morse ends.
\end{remark}

Note that when $(\Sigma_t, J_t)$ is an admissible isotopy with $\Sigma_t$ and $J_t$ independent of $t$ in the Morse ends, a continuation map $\varphi$ arises just as in Section \ref{sec:continuation}.
\begin{lemma} \label{lem:identity2} \begin{enumerate}
\item The Reeb chords at $s=s_\pm$ are in bijective correspondence with Reeb chords of $\Lambda_{\pm}$ and generate sub-DGAs $\mathcal{A}(\Lambda_\pm) \subset \mathcal{A}(\Sigma)$.
\item The continuation map $\varphi$ associated to an admissible isotopy $(\Sigma_t, J_t)$ with $\Sigma_t$ and $J_t$ independent of $t$ in the Morse ends restricts to the identity on the sub-DGAs $\mathcal{A}(\Lambda_\pm)$.
\end{enumerate}
\end{lemma}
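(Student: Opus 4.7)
My plan is to establish (1) by a direct analysis of Reeb chords in the Morse end region, and (2) by tracking the elementary stable tame isomorphisms that comprise $\varphi$. For the Reeb chord bijection in (1), I would parametrize a sheet of $\Sigma$ in the Morse end by $(s,\theta)\mapsto(s,\,x(\theta),\,q'(s)z(\theta),\,q(s)y(\theta),\,q(s)z(\theta)+K)$ following the cylinder-with-multiplication construction of Section \ref{sec:construct}; a Reeb chord pair $(s_1,\theta_1)\neq(s_2,\theta_2)$ then satisfies $s_1=s_2=:s$, $x(\theta_1)=x(\theta_2)$, $q'(s)(z(\theta_1)-z(\theta_2))=0$ and $q(s)(y(\theta_1)-y(\theta_2))=0$. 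Since $q>0$ always and $q'$ vanishes only at $s_\pm$, having $\theta_1\neq\theta_2$ forces $s=s_\pm$, and the remaining equalities $x(\theta_1)=x(\theta_2)$, $y(\theta_1)=y(\theta_2)$ are precisely those defining a Reeb chord of $\Lambda_\pm$. To verify the grading matches, I would compute that a local difference function for $\Sigma$ near such a chord has the factored form $q(s)(f_i-f_j)(x)$, whose Hessian at the critical point is block diagonal with $s$-block $q''(s_\pm)(f_i-f_j)(x_0)>0$ and $x$-block of the same signature as $\mathrm{Hess}_x(f_i-f_j)|_{x_0}$, so the Morse indices match; since the cusp edges of $\Sigma$ at $s=s_\pm$ project to cusp points of $\Lambda_\pm$, any Maslov potential on $\Sigma$ restricts to one on $\Lambda_\pm$, and formula \eqref{eq:Maslov} yields identical gradings.

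For the sub-DGA claim in (1), I will compute $\partial$ by counting rigid GFTs as in Proposition \ref{prop:EK} and adapt \cite[Lemma 3.5]{EK}: for a metric $g_\R\oplus g^\pm_M$ on the Morse end, the $s$-component of the gradient of a local difference function $q(s)F^\Lambda_{i,j}(x)$ equals $q'(s)F^\Lambda_{i,j}(x)$ and vanishes precisely at $s=s_\pm$. Hence every GFT of $\Sigma$ with positive puncture at $s=s_\pm$ has each of its edges trapped in the slice $\{s_\pm\}\times M$ and coincides—with matching formal dimension via \eqref{eq:Tformal}—with a GFT of $(\Lambda_\pm,g^\pm_M)$. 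Consequently, $\partial c$ for $c$ at $s=s_\pm$ lies in the subalgebra generated by Reeb chords at $s=s_\pm$ and agrees with the $\Lambda_\pm$-differential of the corresponding chord, giving sub-DGAs $\mathcal{A}(\Lambda_\pm)\subset\mathcal{A}(\Sigma)$ as claimed.

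For (2), I will write $\varphi$ as a composition of elementary stable tame isomorphisms of types (A) and (B) as in Section \ref{sec:continuation}, and rule out any piece that touches a Reeb chord at $s=s_\pm$. A type (B) birth/death at $s=s_\pm$ is impossible since $\Sigma_t$ is constant on the Morse end. A type (A) move at $s=s_\pm$ would arise from a $J_{t_*}$-holomorphic disk of formal dimension $-1$ with positive puncture at a chord $c$ of $\Sigma_{t_*}$ at $s=s_\pm$; arranging $J_t=J_0\oplus J^M_\pm$ on the Morse end and using the confinement argument of \cite[Lemma 3.4]{EK}, such a disk must lie in $\{s_\pm\}\times T^*M$ and thus be a $J^M_\pm$-holomorphic $(-1)$-disk for $\Lambda_\pm$—impossible by the fixed regularity of $(\Lambda_\pm,J^M_\pm)$. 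For each remaining elementary factor $\psi$, the explicit formula from Section \ref{sec:continuation} gives $\psi(c)=c+H(\partial c)$ for any chord $c$ not participating in the cancellation; when $c$ is at $s=s_\pm$, the sub-DGA property of part (1) puts $\partial c\in\mathcal{A}(\Lambda_\pm)$, while $H$ vanishes on $\mathcal{A}(\Lambda_\pm)$ because its only nonzero value is at the canceling chord $b$, which lies outside the Morse end. Hence $\psi(c)=c$, and composing shows $\varphi|_{\mathcal{A}(\Lambda_\pm)}=\mathrm{id}$.

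The main obstacle is a faithful transfer of the confinement lemmas \cite[Lemmas 3.4 and 3.5]{EK}, originally formulated in $J^1\R^n$ with the standard complex structure, to the present $J^1(\R_{>0}\times M)$ setting and, crucially, to the $1$-parameter family: to make the holomorphic projection onto $T^*(s_\pm-\delta,s_\pm+\delta)\subset\C$ $(J_t,J_0)$-holomorphic uniformly in $t$, one must be able to arrange the admissible family $J_t$ to have the product form $J_0\oplus J^M_\pm$ on a full neighborhood of $s=s_\pm$ for every $t$, without losing genericity of the family elsewhere. Once this uniform product structure is secured, the remainder of the argument reduces to bookkeeping with the explicit algebraic formulas for the elementary stable tame isomorphisms.
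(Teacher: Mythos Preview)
Your proposal is correct and follows essentially the same approach as the paper's proof, which is considerably terser but relies on the same ingredients: the confinement lemmas \cite[Lemmas 3.4 and 3.5]{EK} for (1), and for (2) the observation that no $(-1)$-disk or birth/death chord can occur at $s=s_\pm$, combined with the explicit formulas \eqref{eq:negative1} and \eqref{eq:Bmap} (the paper also points to Proposition~\ref{prop:observe}(2) for the birth/death case, which packages your $\psi(c)=c+H(\partial c)$ computation). Your stated ``main obstacle'' is not actually an obstacle here: the product form $J_0\oplus J^M_\pm$ on the Morse ends and the $t$-independence of $(\Sigma_t,J_t)$ there are \emph{hypotheses} of the lemma (see the definition of ``standard at the ends'' preceding Proposition~\ref{prop:EK} and the clause ``with $\Sigma_t$ and $J_t$ independent of $t$ in the Morse ends'' in the lemma statement), not something you need to arrange.
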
 
\begin{proof} That the differential of $\mathcal{A}(\Sigma)$ preserves Reeb chords at $s=s_\pm$ and agrees with the differential of $\mathcal{A}(\Lambda_\pm)$ follows from \cite[Lemma 3.4]{EK} and the form of $J$ at $s=s_\pm$.  In proving (2), observe that \cite[Lemma 3.4]{EK} again shows  there are no $(-1)$-disks with positive punctures $s=s_\pm$, and no birth/death Reeb chord can appear in the differential of any of the Reeb chords at $s=s_{\pm}$.  Thus, the formulas (\ref{eq:negative1})  and (\ref{eq:Bmap}) show that $\varphi$ is the identity on these subalgebras. (See also, Proposition \ref{prop:observe} (2).)
\end{proof}

\section{The immersed LCH functor I:  Construction of immersed maps}
\label{sec:functorI} 

In this section we give the definition of an immersed DGA map 
\[
M_\Sigma = \left(\mathcal{A}(\Lambda_+, g_+) \stackrel{f_\Sigma}{\rightarrow} \mathcal{A}(\Sigma) \stackrel{i}{\hookleftarrow} \mathcal{A}(\Lambda_-, g_-) \right)
\]
induced by a conical Legendrian cobordism $\Sigma:\Lambda_- \rightarrow \Lambda_+$.  Although the construction uses a choice of metric for $\Sigma$, the immersed homotopy type of $M_\Sigma$ depends only on the choices of regular metrics $g_\pm$ for the $1$-dimensional Legendrians $\Lambda_\pm$.  The various ingredients that form $M_\Sigma$ are defined using certain {\it Morse cobordisms} constructed from $\Sigma$, and these are introduced in Section \ref{sec:Morse}.  With the Morse cobordisms in hand, the definition of $M_\Sigma$ appears in Section \ref{sec:DefM} along with the statement of the main properties of the construction, with proofs deferred until Section \ref{sec:proof}.   In particular, the construction determines a  functor between an appropriately defined category of Legendrians with conical cobordisms and the category of DGAs with homotopy classes of immersed maps constructed in Section \ref{sec:DGAcat}.  This categorical formulation is presented in Section \ref{sec:functor}.  In Section \ref{sec:embed} we consider the case of embedded Lagrangian cobordisms.

\subsection{Associated Morse cobordisms} \label{sec:Morse}
Let $\Sigma \subset J^1(\R_{>0} \times M)$ be a conical Legendrian cobordism from $\Lambda_-$ to $\Lambda_+$.  In this section, we introduce related cobordisms obtained by modifying $\Sigma$ near its ends that we denote $\widetilde{\Sigma}$, and $\overline{\Sigma}_0$.  These cobordisms no longer have conical ends, and will be referred to as {\bf Morse cobordisms}.  Their construction is summarized as follows:
\begin{enumerate}
\item  $\widetilde{\Sigma}$ has a Morse minimum end at $s=s_-$ and a Morse maximum end at $s= s_M$.
\item  $\overline{\Sigma}_0$ has a Morse minimum end at $s=s_-$ and has a Morse maximum at $s=s_M$ followed by a Morse minimum end at $s=s_m$.   
\end{enumerate}
See Figure \ref{contrast}.  The Morse cobordism $\widetilde{\Sigma}$ is used to define the DGA of a conical Legendrian cobordism $\mathcal{A}(\Sigma)$ as well as the cobordism map $f_\Sigma$.  The second Morse cobordism $\overline{\Sigma}_0$ will be used in establishing properties of $\mathcal{A}(\Sigma)$ and $f_\Sigma$.

\begin{figure}[!ht]
\labellist
\small

\pinlabel $\Sigma$ at 70 140
\pinlabel $s_-$ at 20 -8
\pinlabel $e^{-T}$ at 55 -5
\pinlabel $e^T$ at 90 -5
\pinlabel $s_+$ at 125 -8
\pinlabel $s$ at 190 3 

\pinlabel $\ol\Sigma_0$ at 570 140

\pinlabel $s_-$ at 525 -8
\pinlabel $e^{-T}$ at 555 -5
\pinlabel $e^T$ at 590 -5
\pinlabel $s_M$ at 630 -5
\pinlabel $s_m$ at 655 -5
\pinlabel $s$ at 690 3

\pinlabel $s_-$ at 270 -8
\pinlabel $e^{-T}$ at 300 -5
\pinlabel $e^T$ at 340 -5
\pinlabel $s_M$ at 380 -5
\pinlabel $\wt{\Sigma}$ at 320 140
\pinlabel $s$ at 440 5
\endlabellist
\includegraphics[width=7in]{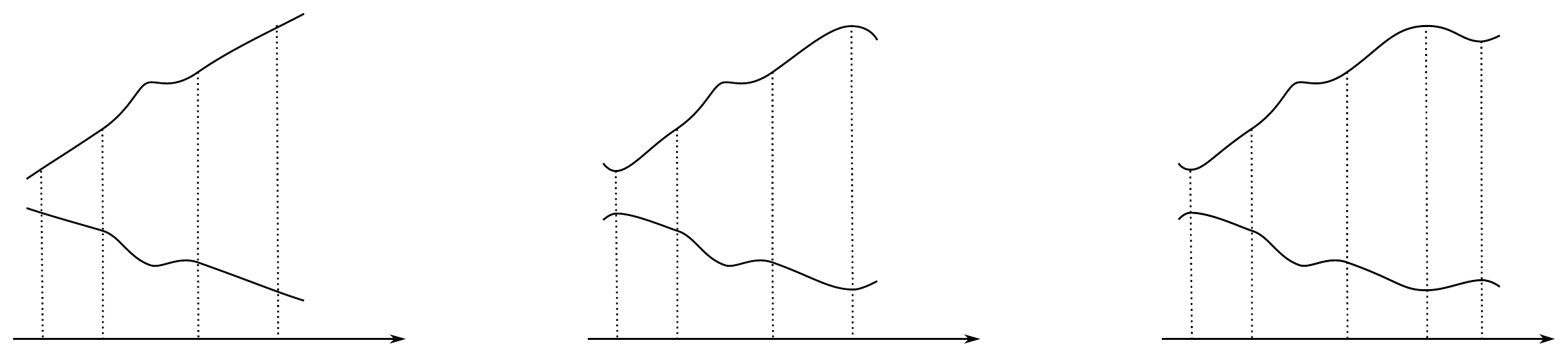}
\vspace{0.1in}

\caption{Schematic pictures of $\Sigma$, $\wt\Sigma$ and $\ol{\Sigma}_0$ in the projection to  $J^0(\R_{>0})$.}
\label{contrast}
\end{figure}

\medskip

\subsubsection{Construction of $\widetilde{\Sigma}$}\label{sec:wtSigma}
  Recall that for $T \gg 0$,  $\Sigma$ agrees with $j^1(s\cdot \Lambda_--A_-)$ on $J^1\big((0, e^{-T})\times M \big)$.  Let  $s_-, s_M$,  and  $\epsilon>0$ be positive numbers such that  
\[
s_-+3\epsilon<e^{-T} \quad \mbox{and} \quad s_M-3\epsilon >e^T,
\] 
and choose Morse functions $h_-, h_+: \R_{>0}\to \R$ satisfying
$$ h_-(s)=\begin{cases} s & \mbox{ if } s>s_-+2\epsilon\\
B_-+(s-s_-)^2 & \mbox{ if } s<s_-+\epsilon,\\
\end{cases}
\quad \mbox{and} \quad 
h_+(s)=\begin{cases} s & \mbox{ if } s<s_M-2\epsilon\\
B_M-(s-s_M)^2 & \mbox{ if } s>s_M-\epsilon,\\
\end{cases}
$$
for some number $B_-,B_M>0$ and having $h_{\pm}>0$ on $(s_--\e, s_M+\e)$ and  $(h_\pm)'>0$ on $(s_-,s_M)$.
Define the Morse cobordism $\wt\Sigma$ to be the Legendrian surface in $J^1\big((s_--\epsilon, s_M+\epsilon)\times M\big)$ that
\begin{itemize}
\item agrees with $\Sigma$ in $J^1\big([e^{-T}, e^T]\times M\big)$; 
\item with $j^1(h_-(s)\cdot \Lambda_--A_-)$ in $J^1\big((s_--\epsilon, e^{-T}]\times M\big)$; 
\item and with $j^1( h_+(s) \cdot \Lambda_+ -A_+)$ in $J^1\big([e^T, s_M+\epsilon)\times M\big)$.  
\end{itemize} See Figure \ref{contrast}.

We refer to the pair of functions $(h_-, h_+)$ that specifies $\widetilde{\Sigma}$ as the choice of {\bf end data} for the Morse cobordism.

\subsubsection{Construction of $\overline{\Sigma}_0$}\label{sec:Sigmabar}

The construction of $\ol{\Sigma}_0$ comes from a slight extension of $\wt\Sigma$.
Let $s_m$ be a number such that $s_m-3\e'>s_M$ for some small
$\e'>0$ and define a positive Morse function $\ol h_+(s): \R_{>0}\to \R$ such that

\begin{enumerate}
\item  $$\ol h_+(s) =\begin{cases} h_+(s) & \mbox{ if } s<s_M+\e'\\
B_m+(s-s_m)^2 & \mbox{ if } s>s_m-\e',
\end{cases}$$
where $B_m>0$ is some constant; and
\item $\ol h_+(s)$ has a unique local maximum at $s_M$, a unique local minimum at $s_m$, and no other critical points.
\end{enumerate}

Let $\ol\Sigma_0$ be the Legendrian surface in $J^1\big((s_--\epsilon, s_m+\epsilon')\times M\big)$ that agrees with $\wt\Sigma$ in $J^1\big((s_--\epsilon, s_M]\times M\big)$  
and with $j^1(\ol h_+(s) \cdot \Lambda_+ -A_+)$ in $J^1\big([s_M, s_m+\epsilon')\times M\big)$.  (See Figure \ref{contrast}).

\subsubsection{Reeb chords of $\widetilde{\Sigma}$}  In the following, we write $R(\Sigma), R(\Lambda_\pm)$, and $R(\widetilde{\Sigma})$ for the Reeb chords of the respective Legendrians, and assume all of these Reeb chords are provided consistent $\Z$- and 
$\Z/n$-gradings via a choice of capping paths and the Maslov potential $\nu$ on $\Sigma$ (as in Remark \ref{rem:twograding}).   Given a subset $I\subset \R_{>0}$, let $R\big(\widetilde{\Sigma}; I\big) \subset R(\widetilde{\Sigma})$ consist of those Reeb chords whose image under the projection map $J^1(\R_{>0}\times M) \to \R_{>0}$ is in $I$. 

\begin{proposition} \label{prop:chords1}
The Reeb chords of $\widetilde{\Sigma}$ satisfy
\[
R(\widetilde{\Sigma}) = R(\widetilde{\Sigma}; \{s_-\}) \cup R(\widetilde{\Sigma}; (s_-,s_M)) \cup R(\widetilde{\Sigma}; \{s_M\})
\]
and there are bijections 
\begin{equation} \label{eq:chords1}
R\big(\wt\Sigma; \{s_-\}\big) \cong R(\Lambda_-)  \quad  R\big(\wt\Sigma; (s_-, s_M)\big) \cong R(\Sigma), \quad \mbox{and} \quad  R(\widetilde{\Sigma}; \{s_M\}) \cong R(\Lambda_+)
\end{equation}
where the first two bijections preserve the grading and the third bijection has $|\widehat{a}_i| = |a_i|+1$ for corresponding Reeb chords $\widehat{a}_i \in R(\widetilde{\Sigma}; \{s_M\})$ and $ a_i \in R(\Lambda_+)$.
\end{proposition}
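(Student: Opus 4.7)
The plan is to perform a direct critical point analysis of local difference functions in each of the three regions defining $\widetilde\Sigma$: the Morse-minimum end $\widetilde\Sigma = j^1(h_-(s)\cdot\Lambda_- - A_-)$ near $s_-$, the middle region where $\widetilde\Sigma = \Sigma$ on $J^1([e^{-T},e^T]\times M)$, and the Morse-maximum end $\widetilde\Sigma = j^1(h_+(s)\cdot\Lambda_+ - A_+)$ near $s_M$, together with the ``cylindrical'' intermediate strips where $\widetilde\Sigma$ also has cylinder-over-$\Lambda_\pm$ form but with $h(s)=s$. Since Reeb chords correspond to critical points of local difference functions of defining functions for pairs of sheets, a pointwise computation in each region will suffice.

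My first step will be to observe that in any region of the form $j^1(h(s)\cdot\Lambda + K)$, two sheets are locally described by $F_\alpha(s, x) = h(s) f_\alpha(x) + K$ where $f_\alpha$ are local defining functions of the corresponding sheets of $\Lambda$, so the local difference function factors as $F_i - F_j = h(s)\,(f_i - f_j)(x)$. Its critical points satisfy $h'(s)(f_i - f_j)(x) = 0$ and $h(s)(f_i - f_j)'(x) = 0$. Since $h>0$ throughout and $(f_i - f_j)(x_c) > 0$ at any critical point $x_c$ of $f_i - f_j$ (the upper sheet of a Reeb chord is above the lower), the critical points are exactly the pairs $(s_0, x_c)$ where $s_0$ is a critical point of $h$ and $x_c$ is a Reeb chord of $\Lambda$. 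Applied with $h = h_\pm$ in the Morse ends this immediately yields the bijections with $R(\Lambda_\pm)$, since $h_-$ has a unique critical point at $s_-$ and $h_+$ has a unique critical point at $s_M$ on the relevant subintervals. Applied with $h(s)=s$ on the cylindrical intermediate strips --- both those of $\widetilde\Sigma$ and the conical ends of $\Sigma$ outside $[e^{-T},e^T]$ --- it rules out Reeb chords there, so $R(\widetilde\Sigma;(s_-,s_M))$ coincides with the Reeb chords of $\Sigma$ in $[e^{-T},e^T]$, which is all of $R(\Sigma)$.

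For the grading claims, I will use formula \eqref{eq:Maslov} together with the fact that the Hessian of $F_i - F_j$ at $(s_0, x_c)$ is block diagonal --- the mixed term $h'(s)(f_i-f_j)'(x)$ vanishes there because both factors do. The $x$-block is $h(s_0)$ times the Hessian of $f_i - f_j$ at $x_c$; since $h(s_0) > 0$, its Morse index is exactly $\mathit{ind}(f_i-f_j)(x_c)$, matching that of the corresponding Reeb chord of $\Lambda_\pm$. The $s$-block is the scalar $h''(s_0)(f_i-f_j)(x_c)$: at $s_-$ this is positive (contributes index $0$), and at $s_M$ it is negative (contributes index $1$). Because the Maslov potential on $\widetilde\Sigma$ extends that of $\Lambda_\pm$, formula \eqref{eq:Maslov} then gives $|c|_{\widetilde\Sigma} = |c|_{\Lambda_-}$ for chords at $s_-$ and $|\widehat a|_{\widetilde\Sigma} = |a|_{\Lambda_+}+1$ for chords at $s_M$, while gradings in the middle region agree tautologically. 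The one substantive input is the sign of $(f_i-f_j)(x_c)$, which is forced to be positive by the very definition of a Reeb chord as a height-preserving segment; this sign is precisely what produces the single unit of grading shift at the Morse maximum, and is really the only subtle point in the argument.
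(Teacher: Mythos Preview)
Your proof is correct and follows essentially the same approach as the paper's own proof: analyze the factored local difference function $h(s)\cdot f_{i,j}(x)$ to locate critical points at pairs $(s_0,x_c)$ with $h'(s_0)=0$, and use the block-diagonal Hessian to read off the Morse-index shift from the sign of $h''(s_0)$. The paper's version is considerably terser, while you spell out the Hessian computation and the role of the positivity $(f_i-f_j)(x_c)>0$ more explicitly; one small imprecision is that on the full intermediate strips $(s_-,e^{-T}]$ and $[e^T,s_M)$ the function $h_\pm$ is not literally equal to $s$ everywhere, but your general criterion ($h'\neq 0$ there, since $h_\pm'>0$ on $(s_-,s_M)$ by construction) already handles this.
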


\begin{proof} 
In a neighborhood of the slice $s=s_-$, 
a local difference function for $\widetilde{\Sigma}$ has the form $F_{i,j}(s, x)=h_-(s)f_{i,j}(x)$, where $f_{i,j}$ is a local difference function for $\Lambda_-$.  Therefore, critical points with positive critical value, i.e. Reeb chords of $\Sigma$, near $s=s_-$ occur only at $s_-$ and are in bijection (preserving the Morse index) with the critical points of the $f_{i,j}$, i.e. Reeb chords of $\Lambda_-$.  
A similar discussion applies near the slice $s=s_M$, except that since $h_+$ has a local maximum rather than a local minimum, the index of a critical point $(s_M, x)$ for $F_{i,j}$ is $1$ larger than the index of $x$ as a critical for $f_{i,j}$.  This results in the grading shift under the bijection between Reeb chords in $R(\widetilde{\Sigma}; \{s_M\})$ and $R(\Lambda_+)$.

Since the only critical points of $h_\pm$ are at $s_-$ and $s_M$, the above are the only Reeb chords of $\widetilde{\Sigma}$ outside of $J^1([e^{-T},e^T]\times M)$.  Moreover, all Reeb chords of $\Sigma$ are located within $J^1([e^{-T},e^T]\times M)$ and $\widetilde{\Sigma}$ agrees with $\Sigma$ there.
\end{proof}

\subsubsection{GFTs and almost regular metrics}\label{sec:wtgft} Assume that $g_\pm$ are metrics on $M$ that are regular for $\Lambda_\pm$.   
\begin{definition}  We say that a metric $g$ on $\R\times M$ is {\bf cylindrical over $(g_-,g_+)$ at the ends of $\widetilde{\Sigma}$} if 
there exists $\delta>0$ so that $g$ has the form $g_\R \times g_-$ (resp. $g_\R\times g_+$) in $(0, s_-+\delta]\times M$ (resp. $[s_M-\delta, +\infty)\times M$).
\end{definition}
Given such a metric $g$, let $\mathcal{T}_g(\widetilde{\Sigma};I)$ denote those GFTs of $(\wt\Sigma,g)$ with image in $I \subset \R_{>0}$ under the projection $\R_{>0} \times M \rightarrow \R_{>0}$.  Note that we do not consider the Reeb chords at positive and negative punctures to belong to the image of a GFT (since edges only approach Reeb chords in the limit), so GFTs in $\mathcal{T}_g(\widetilde{\Sigma};(s_-,s_M))$ may have punctures at $\{s=s_-\}$ or $\{s=s_M\}$.

\begin{proposition} \label{prop:GFT1}
\begin{enumerate}
\item The GFTs of $(\widetilde{\Sigma}, g)$ can be divided into three disjoint sets
\[
\mathcal{T}_g(\widetilde{\Sigma}) = \mathcal{T}_g(\widetilde{\Sigma};\{s_-\}) \sqcup \mathcal{T}_g(\widetilde{\Sigma};(s_-,s_M)) \sqcup \mathcal{T}_g(\widetilde{\Sigma};\{s_M\}),
\]
and there are bijections 
\[
\mathcal{T}_g(\widetilde{\Sigma};\{s_-\}) \cong \mathcal{T}_{g_-}(\Lambda_-)  \quad \mbox{and} \quad \mathcal{T}_g(\widetilde{\Sigma};\{s_M\}) \cong \mathcal{T}_{g_+}(\Lambda_+). 
\]
\item Any GFT with a positive puncture at $s=s_-$ is in $\mathcal{T}_g(\widetilde{\Sigma};\{s_-\})$.  Any GFT with a negative puncture at $s=s_M$ is in $\mathcal{T}_g(\widetilde{\Sigma};\{s_M\})$.
\end{enumerate}
\end{proposition}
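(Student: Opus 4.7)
The plan is to exploit the product-like structure of $\widetilde\Sigma$, the metric $g$, and the local difference functions near the Morse ends in order to show that the slices $\{s=s_-\}\times M$ and $\{s=s_M\}\times M$ are invariant under the negative gradient flow of every relevant local difference function, and then to pass from individual edges to entire trees using connectedness together with a stable/unstable analysis at the punctures.

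First I would carry out the gradient computation near $s=s_-$. On the end $J^1((s_--\e,s_-+\e)\times M)$, every local difference function of $\widetilde\Sigma$ takes the form $F_{i,j}(s,x)=h_-(s)\,f^-_{i,j}(x)$ for the corresponding local difference function $f^-_{i,j}$ of $\Lambda_-$, and because $g=g_\R\oplus g_-$ there,
\[
\nabla F_{i,j}(s,x) = \bigl(\,h_-'(s)\,f^-_{i,j}(x),\ h_-(s)\,\nabla_{g_-} f^-_{i,j}(x)\,\bigr).
\]
The hypothesis $h_-'(s_-)=0$ makes the $s$-component vanish identically on $\{s=s_-\}$, so this slice is invariant under the negative gradient flow, and uniqueness of solutions of the flow ODE then prevents any single edge of a GFT from crossing it: each edge either lies entirely in this slice or is strictly disjoint from it. A verbatim computation with $h_+'(s_M)=0$ establishes the same no-crossing property for $\{s=s_M\}$.

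Next I would globalize from edges to trees. The image of a GFT in $\R_{>0}\times M$ is connected and its $s$-coordinate is continuous across every internal or external vertex, so the previous no-crossing statement confines the $s$-projection of any GFT to one of the three regions $\{s_-\}$, $(s_-,s_M)$, or $\{s_M\}$, which yields the disjoint decomposition in (1). The bijection $\mathcal{T}_g(\widetilde\Sigma;\{s_-\}) \cong \mathcal{T}_{g_-}(\Lambda_-)$ then comes by restriction to the invariant slice: there the flow equation reduces to $\dot x = -B_-\,\nabla_{g_-} f^-_{i,j}(x)$, which is a positive reparametrization of the $\Lambda_-$ gradient flow, and combined with the Reeb-chord correspondence of Proposition \ref{prop:chords1} and the trivial extension as inverse this gives the claimed bijection. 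The argument at $s=s_M$ is identical.

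For part (2) I would read off the signature of the Hessian of $F_{i,j}$ at each Reeb chord on a Morse end. At $(s_-,p)$ it is block diagonal, with the $s$-block equal to $h_-''(s_-)\,f^-_{i,j}(p)>0$, and hence stable; consequently the unstable manifold of $(s_-,p)$ is contained in the invariant slice $\{s=s_-\}$, and the initial edge of any GFT with a positive puncture at $(s_-,p)$ is trapped in $\{s=s_-\}$. Propagating across each subsequent internal vertex---where the invariance of $\{s=s_-\}$ reapplies to every outgoing edge---confines the entire tree to $\{s=s_-\}$. Symmetrically, at $(s_M,q)$ the $s$-block $h_+''(s_M)\,f^+_{i,j}(q)<0$ is unstable, so the stable manifold lies in $\{s=s_M\}$; any edge ending at $(s_M,q)$ is therefore confined there, forcing the whole tree into that slice. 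The only delicate point I anticipate is verifying that this propagation passes correctly through external vertices on the cusp edges of $\widetilde\Sigma$ at a Morse end, but these cusp edges are exactly the cusp points of $\Lambda_\pm$ cylindered in $s$, so every local difference function meeting at such a vertex has $s$-component of the gradient $h_\pm'(s)(\cdot)$, which still vanishes on $\{s=s_\pm\}$.
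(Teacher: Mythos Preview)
Your proposal is correct and follows essentially the same approach as the paper: both compute $\nabla F_{i,j}=(h_\pm'(s)f_{i,j}^\pm(x),\,h_\pm(s)\nabla f_{i,j}^\pm(x))$ near the ends, use the vanishing of the $s$-component on the critical slices to get invariance and the decomposition, and obtain the bijections by reparametrizing the restricted flow. For part (2) your Hessian/stable--unstable manifold framing is equivalent to the paper's observation that the $\tfrac{\partial}{\partial s}$-component of $-\nabla F_{i,j}$ points toward $s_-$ (resp.\ away from $s_M$) for nearby $s$, which forces an initial edge limiting to a positive puncture at $s_-$ (resp.\ a terminal edge limiting to a negative puncture at $s_M$) to lie in the slice; your extra remark about cusp-edge external vertices is a harmless elaboration the paper leaves implicit.
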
 
\begin{proof}
Since $h_-(s)$ has a local minimum at $s=s_-$, the $\frac{\dd}{\dd s}$ component of $-\nabla F_{i,j}$ vanishes at $s_-$ and points towards $s=s_-$ for nearby values of $s$.  Moreover, the $TM$ component of $\nabla F_{i,j}$ is just $h_-(s) \nabla f_{i,j}$.  (We have used that $g= g_\R \times g_-$ near $s=s_-$.) Similarly, all $\nabla F_{i,j}$ point tangent to the slice $s= s_M$ with the $\frac{\dd}{\dd s}$ component of $-\nabla F_{i,j}$ pointing away from $s=s_M$ for nearby values of $s$.  The vanishing of $\frac{\dd}{\dd s}$ along $s=s_-$ and $s=s_M$ implies that no GFT can pass out of $\{s_-\}$, $\{s_M\}$, or  $(s_-,s_M)$ at an interior point of any edge.  This establishes the stated decomposition of $\mathcal{T}_g(\widetilde{\Sigma})$, and the bijections in (1) follow since the identity $\nabla F_{i,j}=h_-(s_-) \nabla f_{i,j}$ shows that flow lines the $-\nabla F_{i,j}$ in $s=s_-$ are just reparametrizations of flow lines of $\nabla f_{i,j}$.  (A similar statement applies at $s=s_M$).   
Finally, the above statement about directionality of the $\frac{\dd}{\dd s}$ component in neighborhoods of  $s=s_-$ and $s=s_M$ implies  (2) of the proposition.  (For instance, any GFT with a positive puncture at $s=s_-$ must have its initial edge limiting to $s=s_-$ in negative time.  Therefore, the interior of the edge, so also the rest of the tree, must be entirely in the slice $s=s_-$.)
\end{proof}

As above, let $g$ be cylindrical over $(g_-,g_+)$ at the ends of $\widetilde{\Sigma}$ where $g_\pm$ are regular metrics for $\Lambda_\pm$.
\begin{definition}
The metric $g$ is called {\bf almost regular} 
if it is regular with respect to the GFTs in $\mathcal{T}_g(\widetilde{\Sigma};\{s_-\}) \sqcup \mathcal{T}_g(\widetilde{\Sigma};(s_-,s_M))$, i.e. this subset does not contain any GFTs of negative formal dimension and all $0$-dimensional GFTs are transversally cut out.   
\end{definition}

\begin{remark} \label{rem:almostreg}
\begin{enumerate}
\item The bijections of $\mathcal{T}_g(\widetilde{\Sigma};\{s_-\})$ and $\mathcal{T}_g(\widetilde{\Sigma};\{s_M\})$ with  $\mathcal{T}_{g_-}(\Lambda_-)$ and $\mathcal{T}_{g_+}(\Lambda_+)$ show that the regularity requirement is then automatically satisfied in $\mathcal{T}_g(\widetilde{\Sigma};\{s_-\})$ (since $(\Lambda_-, g_-)$ is assumed regular).  However, regularity will generally fail in $\mathcal{T}_g(\widetilde{\Sigma};\{s_M\})$ since the shift in the degree of Reeb chords in (\ref{eq:chords1}) changes the formal dimension of trees from $\mathcal{T}_{g_+}(\Lambda_+)$.  (See Section \ref{sec:fmapproof} below.)

\item The almost regular condition can be arranged by perturbing $g$ within $(s_-,s_M)\times M$.  
\end{enumerate}
\end{remark}

\subsection{Definition of the immersed DGA map $M_\Sigma$} \label{sec:DefM}  

We are now ready to define the various pieces that form the immersed map
\[
M_\Sigma = \left(\mathcal{A}(\Lambda_+, g_+) \stackrel{f_\Sigma}{\rightarrow} \mathcal{A}(\Sigma,g) \stackrel{i}{\hookleftarrow} \mathcal{A}(\Lambda_-, g_-)\right).
\]  To this end, fix a choice of end data $(h_-,h_+)$ to form $\widetilde{\Sigma}$ and choose  
an almost regular metric $g$ for $\widetilde{\Sigma}$ with the form $g_{\R}\times g_\pm$ in the Morse ends where $g_-$ and $g_+$ are regular metrics for $\Lambda_-$ and $\Lambda_+$.  

First, we will define the DGA $\mathcal{A}(\Sigma)= \mathcal{A}(\Sigma,g)$ as in the usual definition of LCH with generating set given by the Reeb chords
\[
R\big(\widetilde{\Sigma}; [s_-,s_M)\big) \cong R(\Lambda_-) \cup R(\Sigma)
\]
and with differential defined by counting rigid GFTs of $(\widetilde{\Sigma},g)$.   For emphasis, the generators of $\mathcal{A}(\Sigma)$ correspond to Reeb chords of $\Sigma$ {\it and} Reeb chords of $\Lambda_-$.  

\begin{theorem}  \label{prop:SigmaDGA}
\begin{enumerate}
\item The DGA $(\alg(\Sigma, g),\dd)$ is well-defined, i.e. $(\alg(\Sigma, g),\dd)$ is a DGA.
\item The above bijection  $R(\Lambda_-) \cong  R\big(\wt\Sigma; \{s_-\}\big)$ induces an inclusion of  DGAs,
\[
i:\mathcal{A}(\Lambda_-, g_-) \hookrightarrow \mathcal{A}(\Sigma, g).  
\] 
\end{enumerate} 
\end{theorem}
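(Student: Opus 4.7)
The plan is to reduce part (1) to an application of Proposition \ref{prop:EK} by exhibiting $(\alg(\Sigma, g), \dd)$ as a sub-DGA of the Legendrian contact homology DGA associated to the enlarged Morse cobordism $\ol{\Sigma}_0$, which has genuine Morse minimum ends at both $s=s_-$ and $s=s_m$. First I would extend $g$ to a metric $\ol g$ on the domain of $\ol{\Sigma}_0$, noting that $g$ is already of the form $g_{\R}\times g_+$ beyond $s=s_M-\delta$ and so extends without modification; after a generic small perturbation inside $(s_M, s_m)\times M$ if necessary, $\ol g$ can be assumed fully regular for all GFTs of $\ol{\Sigma}_0$. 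Since any such perturbation is supported away from $[s_-,s_M]\times M$, the trees of $(\wt\Sigma, g)$ used in defining $\dd$ are unaffected. Proposition \ref{prop:EK} then yields a well-defined triangular DGA $(\alg(\ol{\Sigma}_0,\ol g), \dd_{\ol{\Sigma}_0})$.

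Next I would show that the subalgebra of $\alg(\ol{\Sigma}_0)$ generated by the Reeb chords in $R(\Sigma)\cup R(\Lambda_-)$ is closed under $\dd_{\ol{\Sigma}_0}$ and that the restricted differential coincides with $\dd$. The geometric input is the directionality argument already used in the proof of Proposition \ref{prop:GFT1}(2): at $s=s_M$ the function $\ol h_+$ attains a local maximum, so the $\partial_s$ component of $-\nabla F_{i,j}$ vanishes along $\{s=s_M\}$ and points away from it for nearby values of $s$. Since every GFT edge satisfies $F_{i,j}>0$ on its interior, no forward-time flow starting at a positive puncture of height below $s_M$ can cross $s=s_M$ or limit to a Reeb chord at $s=s_M$ or beyond. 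Hence any GFT of $(\ol{\Sigma}_0, \ol g)$ with positive puncture in $R(\Sigma)\cup R(\Lambda_-)$ stays inside $[s_-,s_M)\times M$ where $\ol{\Sigma}_0$ agrees with $\wt\Sigma$ and $\ol g$ with $g$, and in particular its negative punctures also lie in $R(\Sigma)\cup R(\Lambda_-)$. This exhibits $(\alg(\Sigma,g), \dd)$ as a sub-DGA of $(\alg(\ol{\Sigma}_0,\ol g), \dd_{\ol{\Sigma}_0})$, so $\dd^2=0$. The remaining DGA properties (the Leibniz rule and grading $-1$ of $\dd$) are built into the definition and follow from the formal dimension formula \eqref{eq:Tformal}.

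For part (2), the bijection $R(\Lambda_-)\cong R(\wt\Sigma; \{s_-\})$ from Proposition \ref{prop:chords1} is grading-preserving and extends to an injective algebra homomorphism $i:\alg(\Lambda_-,g_-)\hookrightarrow \alg(\Sigma,g)$ respecting the action filtration. To check that $i$ intertwines the differentials, I would invoke Proposition \ref{prop:GFT1}: any GFT of $(\wt\Sigma, g)$ with positive puncture at $s=s_-$ lies entirely in $\{s=s_-\}\times M$, and the induced bijection $\T_g(\wt\Sigma; \{s_-\}) \cong \T_{g_-}(\Lambda_-)$ preserves formal dimension (since gradings and relative homology classes match), hence preserves rigidity. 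Counting rigid trees therefore gives $\dd\circ i = i\circ\dd_{\Lambda_-}$ on generators, so $i$ is a DGA inclusion.

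The subtlest step I anticipate is the transversality argument in part (1): ensuring that $\ol g$ can be made regular for all of $\ol{\Sigma}_0$ by a perturbation confined to $(s_M, s_m)\times M$. The key observation is that the only GFTs where the perturbation is needed are those with positive puncture at $s=s_M$ or $s=s_m$, and by the directionality argument above these trees are confined to $[s_M,s_m]\times M$, so a generic perturbation there suffices without disturbing any of the trees counted by $\dd$.
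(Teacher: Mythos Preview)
Your argument for part (2) is correct and matches the paper's proof.

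For part (1), your overall strategy---embedding $\alg(\Sigma,g)$ as a sub-DGA of the LCH DGA of a Legendrian with two Morse minimum ends---is the same route the paper takes. However, there is a genuine gap in your transversality step. You claim that the GFTs requiring perturbation (those with positive puncture at $s=s_M$) are confined to $[s_M,s_m]\times M$, but this is false. Since $s_M$ is a local \emph{maximum} of $\ol h_+$, the flow $-\nabla F_{i,j}$ is \emph{repelled} from $\{s=s_M\}$ on both sides, so a GFT with positive puncture at $s=s_M$ can lie in $\{s_M\}$, in $(s_-,s_M)$, or in $(s_M,s_m)$. (Indeed, the trees in $(s_-,s_M)$ with positive puncture at $s_M$ are precisely those defining $f_\Sigma$.) The non-transverse trees are those lying entirely in the slice $\{s=s_M\}$: as noted in Remark~\ref{rem:almostreg}(1) and the discussion preceding Proposition~\ref{prop:sigmabar}, a rigid tree of $(\Lambda_+,g_+)$ with $m\ge 2$ negative punctures corresponds to a tree $\widehat\Gamma$ in $\{s_M\}$ of formal dimension $1-m<0$. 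These trees are untouched by any perturbation supported in $(s_M,s_m)\times M$, so $\ol g$ cannot be made regular for $\ol\Sigma_0$ by your proposed perturbation, and Proposition~\ref{prop:EK} does not apply.

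The paper resolves this by perturbing the Legendrian $\ol\Sigma_0$ itself (not merely the metric) in a small neighborhood of the slice $\{s=s_M\}$, producing $\ol\Sigma$; see Proposition~\ref{prop:sigmabar}. This destroys the exceptional slice so that regularity can be achieved, while the rigid trees with positive puncture in $[s_-,s_M)$---being finitely many with images bounded away from $s_M$---are unaffected by a sufficiently small perturbation. A broken-tree argument then verifies that $\alg(\Sigma,g)$ sits inside $\alg(\ol\Sigma,\ol g)$ as a sub-DGA, giving $\dd^2=0$.
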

\begin{proof}[Proof of (2).]  That the inclusion map $i$ commutes with differentials follows from the bijection $\mathcal{T}_g(\widetilde{\Sigma};\{s_-\}) \cong \mathcal{T}_{g_-}(\Lambda_-)$ from Proposition \ref{prop:GFT1}.
\end{proof}

The proof of (1) is addressed in the following Section \ref{sec:proof}.   We refer to $\mathcal{A}(\Sigma)$ as the {\bf Legendrian contact homology DGA}  
of the conical Legendrian cobordism $\Sigma$.

 Next, let $\widehat{a} \in R\big(\wt\Sigma; \{s_M\}\big)$ be a Reeb chord corresponding to $a\in \Lambda_+$, and 
 let $b_1, \dots, b_m$ be Reeb chords in $R\big(\wt\Sigma; [s_-, s_M)\big)$.
Denote by $\T_g(\widehat{a}; b_1,\dots,b_m) \subset \mathcal{T}_g(\widetilde{\Sigma};(s_-,s_M))$ the space of  GFTs for $(\wt\Sigma,g)$ with the given punctures.
\begin{definition}\label{defnf}
Given a pair $(\wt\Sigma, g)$ where $g$ is a metric  almost regular to $\wt\Sigma$, the {\bf cobordism map} $f_{\Sigma}:\alg(\Lambda_+, g_+)\to \alg(\Sigma,g)$ is defined on generators  of $\alg(\Lambda_+)$ through
\[
\displaystyle{f_{\Sigma}(a)=\sum_{\dim\T_g(\widehat{a}; b_1,\dots,b_m)=0}} |\T_g(\widehat{a}; b_1,\dots,b_m)| \ b_1\cdots b_m
\]
and extended to $\alg(\Lambda_+)$ to be an algebra homomorphism.
\end{definition}

\begin{theorem}\label{DGAmap}
The  cobordism 
map $f_{\Sigma}: (\alg(\Lambda_+, g_+), \dd_{\Lambda_+})\to (\alg(\Sigma, g), \dd_\Sigma)$  is a DGA map.
\end{theorem}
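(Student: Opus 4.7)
The plan is to realize $\alg(\ol{\Sigma}_0)$ as a triangular mapping cylinder DGA for $f_\Sigma$ and then invoke Proposition \ref{prop:DGAmap}. Since $\ol{\Sigma}_0$ is a Legendrian with Morse minimum ends at both $s=s_-$ and $s=s_m$, Proposition \ref{prop:EK} guarantees that $(\alg(\ol{\Sigma}_0), \dd)$ is a well-defined DGA whose differential can be computed by counting rigid GFTs of $(\ol{\Sigma}_0, \ol g)$ with respect to any metric $\ol g$ extending $g$ across the Morse minimum at $s_m$ and standard in both ends. Thus our whole argument is reduced to identifying the algebraic shape of this differential.

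First, I would establish the mapping cylinder algebra structure. Analogues for $\ol{\Sigma}_0$ of Propositions \ref{prop:chords1} and \ref{prop:GFT1} decompose the Reeb chords $R(\ol{\Sigma}_0)$ into four groups: $R(\Lambda_-)$ at $s=s_-$, $R(\Sigma)$ in $(s_-, s_M)$, Reeb chords $\widehat{a}_i$ at $s=s_M$ in bijection with $R(\Lambda_+)$ under the degree shift $|\widehat{a}_i|=|a_i|+1$, and Reeb chords $a_i$ at $s=s_m$ identified with $R(\Lambda_+)$ preserving degree. Using GFT confinement — GFTs with positive puncture at a Morse minimum slice stay in that slice, and GFTs with positive puncture in $R(\Sigma)$ stay in $[s_-, s_M]$ where $\ol{\Sigma}_0 = \wt{\Sigma}$ — this presents $\alg(\ol{\Sigma}_0)$ as the mapping cylinder algebra $\alg(\Lambda_+) * \widehat{\alg(\Lambda_+)} * \alg(\Sigma)$ from Section \ref{sec:mapcyl}, with $\alg(\Sigma)$ and $\alg(\Lambda_+)$ appearing as sub-DGAs carrying their LCH differentials. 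As a byproduct, $\dd_\Sigma^2 = 0$, which establishes Theorem \ref{prop:SigmaDGA}(1) along the way.

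What remains is to verify that $\dd \widehat{a}_i = f_\Sigma(a_i) + a_i + \gamma_i$ for some $\gamma_i \in \mathcal{I}(\widehat{a}_1, \dots, \widehat{a}_m)$. Rigid GFTs with positive puncture $\widehat{a}_i$ are of two geometrically distinguished types: (i) those whose image lies in $[s_-, s_M]$ are, by construction, exactly the trees counted by $f_\Sigma(a_i)$; and (ii) those whose image lies in $[s_M, s_m]$, where $\ol{\Sigma}_0$ has the explicit product form $j^1(\ol h_+(s) \cdot \Lambda_+ - A_+)$. Among the type-(ii) trees, the single gradient flow line of each local difference function along constant $x=x_c$ from $(s_M, x_c)$ down to $(s_m, x_c)$ is manifestly a rigid GFT — rigidity follows from the dimension count $|\widehat{a}_i| - |a_i| - 1 = 0$ — and these short trees together contribute the diagonal $a_i$ term. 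The hard part will be showing that every other rigid GFT in the upper region $[s_M, s_m]$ must have at least one negative puncture at some $\widehat{a}_j$, thus contributing only to $\gamma_i$. This requires a careful analysis in the product-form Morse region, analogous to the treatment of stabilizations in \cite{EK}: using the factorization of the gradient vector field in the $s$- and $M$-directions, one rules out rigid configurations whose negative punctures all lie at $s=s_m$ except for the constant-$x$ short trees. Granted this, Proposition \ref{prop:DGAmap} immediately yields that $f_\Sigma$ is a DGA map.
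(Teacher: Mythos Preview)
Your overall strategy—exhibit $\alg(\ol\Sigma_0)$ as a mapping cylinder DGA for $f_\Sigma$ and then invoke Proposition~\ref{prop:DGAmap}—is exactly the paper's approach, and your identification of the Reeb chord decomposition and the confinement of GFTs is correct. However, there is a genuine gap in the step where you apply Proposition~\ref{prop:EK}: the metric $\ol g$ you want to use is \emph{not} regular for $\ol\Sigma_0$. The trees confined to the slice $\{s=s_M\}$ are in bijection with GFTs of $(\Lambda_+,g_+)$, but the grading shift $|\widehat a_i|=|a_i|+1$ changes their formal dimension by $1-m$ (where $m$ is the number of negative punctures), so a rigid tree of $\Lambda_+$ with $m\geq 2$ negative punctures becomes a tree of \emph{negative} formal dimension for $\ol\Sigma_0$. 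Proposition~\ref{prop:EK} therefore does not apply with this metric, and you cannot read off $\dd\widehat a_i$ by counting its ``rigid'' GFTs.

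The paper handles this by perturbing $(\ol\Sigma_0,g)$ near $\{s=s_M\}$ to a regular pair $(\ol\Sigma,\ol g)$ (Proposition~\ref{prop:sigmabar}). The transversally cut out trees giving $f_\Sigma(a_i)$ and $a_i$ persist under the perturbation, but one must also rule out any \emph{new} rigid trees from $\widehat a_i$ landing entirely in $\alg(\Sigma)*\alg(\Lambda_+)$. This is done by a broken-tree dimension estimate: any such tree would limit (as the perturbation is undone) to a broken configuration whose first level sits in $\{s=s_M\}$, and the formal dimension of that broken tree is shown to be $\geq 1$ using regularity of $g_+$ on $\Lambda_+$ and almost-regularity of $g$ on the other regions. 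Your ``hard part'' paragraph gestures at an analysis in the product region $[s_M,s_m]$, but the actual obstruction lives at $s=s_M$ and is about controlling what survives a perturbation, not about classifying trees for the unperturbed product metric.
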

The proof will be given in Section \ref{sec:fmapproof}.

We now define the  {\bf induced immersed DGA map} associated to $\Sigma$ to be the immersed map $M_\Sigma$ resulting from combining the maps $i$ and $f_\Sigma$ associated to an almost regular metric $g$ for $\Sigma$ together with choice of end data $(h_-,h_+)$ used to form $\widetilde{\Sigma}$.    It satisfies the following invariance property.

\begin{theorem}\label{inva}  
The immersed homotopy type (as in Definition \ref{def:homotopy}) of the immersed DGA map
\[
M_\Sigma = \left(\mathcal{A}(\Lambda_+, g_+) \stackrel{f_\Sigma}{\rightarrow} \mathcal{A}(\Sigma,g) \stackrel{i}{\hookleftarrow} \mathcal{A}(\Lambda_-, g_-)\right)
\]
depends only on $(\Lambda_-,g_-)$, $(\Lambda_+, g_+)$, and the conical Legendrian isotopy type of $\Sigma$.
\end{theorem}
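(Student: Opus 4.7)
The strategy is to use the auxiliary Morse cobordism $\overline{\Sigma}_0$ from Section \ref{sec:Sigmabar}, which I will identify as a mapping cylinder DGA for $f_\Sigma$, and then translate the Legendrian isotopy invariance of $\mathcal{A}(\overline{\Sigma}_0)$ (Proposition \ref{prop:EK}) into invariance of $M_\Sigma$ up to immersed homotopy via the algebraic machinery of Section \ref{sec:mapc}.

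First, I would adapt the Reeb chord analysis of Proposition \ref{prop:chords1} to $\overline{\Sigma}_0$: the chords decompose as $R(\overline{\Sigma}_0) \cong R(\Sigma) \sqcup \widehat{R}(\Lambda_+) \sqcup R(\Lambda_+)$ corresponding to the interior/$s\in(s_-,s_M)$ region, the maximum slice $\{s=s_M\}$ (with Morse index, hence grading, shifted up by one), and the minimum slice $\{s=s_m\}$; there are no chords in $(s_M,s_m)$ since $\bar{h}_+$ is monotone there. This gives an algebra identification $\mathcal{A}(\overline{\Sigma}_0) = \mathcal{A}(\Sigma) * \widehat{\mathcal{A}}(\Lambda_+) * \mathcal{A}(\Lambda_+)$. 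A GFT analysis parallel to Proposition \ref{prop:GFT1} shows that no flow tree crosses the slice $s=s_M$ (the $\partial/\partial s$-component of $-\nabla F_{i,j}$ vanishes there and points outward on both sides), so $\mathcal{A}(\Sigma)$ and $\mathcal{A}(\Lambda_+)$ are sub-DGAs. Dissecting the GFTs with positive puncture at an $s_M$-chord $\widehat{a}_i$, the initial edge either flows leftward (contributing exactly the terms of $f_\Sigma(a_i)$), flows rightward—where rigidity and the product metric force the unique rigid tree to be the trivial flow line ending at $(s_m, x_{a_i})$ and contributing $a_i$—or remains in the slice $\{s=s_M\}$, producing terms in $\mathcal{I}(\widehat{a}_1,\ldots,\widehat{a}_m)$. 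Thus $\partial_{\overline{\Sigma}_0}\widehat{a}_i = f_\Sigma(a_i) + a_i + \gamma_i$, and Proposition \ref{prop:DGAmap} identifies $(\mathcal{A}(\overline{\Sigma}_0), \partial_{\overline{\Sigma}_0})$ as a mapping cylinder DGA for $f_\Sigma$.

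Second, given two choices of data producing $\overline{\Sigma}_0$ and $\overline{\Sigma}_0'$ with $\Sigma$ and $\Sigma'$ conically Legendrian isotopic, I would construct an admissible Legendrian isotopy between $\overline{\Sigma}_0$ and $\overline{\Sigma}_0'$ through Legendrians with fixed Morse minimum ends at $s_-$ and $s_m$. Lemma \ref{lem:diffconandcom} reduces the conical isotopy to a compactly supported isotopy combined with vertical shifts and concatenation with standard cobordisms $j^1(s\cdot\Lambda_+ - h(s))$, each of which extends naturally through the Morse modification; the choices of end data and almost regular metric can be varied through a contractible space of options absorbed into compactly supported isotopies. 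By Proposition \ref{prop:EK} and Lemma \ref{lem:identity2} applied at both Morse ends, this produces a stable tame isomorphism $\psi:\mathcal{A}(\overline{\Sigma}_0)*S \to \mathcal{A}(\overline{\Sigma}_0')*S'$ that restricts to the identity on the sub-DGAs $\mathcal{A}(\Lambda_-)$ and $\mathcal{A}(\Lambda_+)$ (the latter sitting at $s=s_m$).

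Third, I would extract the immersed homotopy by aligning the mapping cylinder structures before applying Proposition \ref{prop:DGAhmtp}. Since the stabilization generators can be absorbed into the $\mathcal{B}$-factor, both $\mathcal{A}(\overline{\Sigma}_0)*S$ and $\mathcal{A}(\overline{\Sigma}_0')*S'$ are triangular mapping cylinder DGAs. Pushing $\partial_{\overline{\Sigma}_0}$ forward by $\psi$ yields a second mapping cylinder differential on $\mathcal{A}(\overline{\Sigma}_0')*S'$ (with $\mathcal{A}=\mathcal{A}(\Lambda_+)$ fixed pointwise), and Proposition \ref{prop:unique} supplies a tame isomorphism aligning this pushed-forward differential with $\partial_{\overline{\Sigma}_0'}$ while fixing both the $\mathcal{A}(\Lambda_+)$- and $\mathcal{A}(\Sigma')*S'$-subalgebras. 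Composing this alignment with $\psi$ gives a stable tame isomorphism of mapping cylinder DGAs that sends $\mathcal{A}(\Sigma)*S$ into $\mathcal{A}(\Sigma')*S'$, restricts to the identity on $\mathcal{A}(\Lambda_+)$, and agrees with $\psi$ on $\mathcal{A}(\Lambda_-)$. Proposition \ref{prop:DGAhmtp} then yields the DGA homotopy $\varphi\circ f_\Sigma \simeq f_{\Sigma'}$, while $\varphi\circ i = i'$ is immediate, establishing $M_\Sigma \simeq M_{\Sigma'}$. I expect the main obstacle to be this final alignment step—ensuring that Proposition \ref{prop:unique} applies to the stabilized DGAs in a way that preserves both the $\mathcal{A}(\Lambda_+)$-identity and the subalgebra $\mathcal{A}(\Sigma')*S'$—so that the composed isomorphism genuinely respects the mapping cylinder splitting required by Proposition \ref{prop:DGAhmtp}.
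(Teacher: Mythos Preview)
Your overall architecture matches the paper's: identify $\mathcal{A}(\overline{\Sigma})$ as a mapping cylinder DGA for $f_\Sigma$, use Legendrian invariance of this DGA, and extract the immersed homotopy via Proposition~\ref{prop:DGAhmtp}. Two points, one minor and one substantive.

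\textbf{Minor.} The metric $g$ is \emph{not} regular for $\overline{\Sigma}_0$: the slice $\{s=s_M\}$ supports GFTs of negative formal dimension (a rigid tree of $\Lambda_+$ with $m$ negative punctures becomes a tree of formal dimension $1-m$ for $\overline{\Sigma}_0$). The paper handles this by perturbing $\overline{\Sigma}_0$ to $\overline{\Sigma}$ near $s=s_M$ (Proposition~\ref{prop:sigmabar}) and arguing that no broken trees of nonpositive dimension exist with first level in $\{s=s_M\}$, so the mapping cylinder form survives the perturbation. Your GFT dissection is correct in spirit but needs this regularization.

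\textbf{Substantive.} Your third step does not work as written, and you correctly flagged it as the obstacle. The issue is hypothesis~(2) of Proposition~\ref{prop:DGAhmtp}: you need $\psi(\mathcal{A}(\Sigma)*S) \subset \mathcal{A}(\Sigma')*S'$. Your proposed fix via Proposition~\ref{prop:unique} fails for two reasons. First, since $\psi$ is already a DGA isomorphism, the ``pushed-forward differential'' $\psi\,\partial_{\overline{\Sigma}_0}\,\psi^{-1}$ \emph{equals} $\partial_{\overline{\Sigma}_0'}$; there is no second differential to align. Second, even if you manufactured two differentials, Proposition~\ref{prop:unique} requires them to agree on $\mathcal{B}$ and to have the \emph{same} $f$-component on the $\widehat{a}_i$; neither is available here, and in any case the conclusion (an isomorphism fixing $\mathcal{A}$ and $\mathcal{B}$) says nothing about where $\psi$ itself sends $\mathcal{B}_1$.

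The paper resolves this with an action-filtration argument that you are missing. By choosing the end data so that $s_M$ is large (Lemma~\ref{invhs}), one arranges constants $B$ with $\mathfrak{a}(c) < B-1 < B+1 < \mathfrak{a}(\widehat{a})$ for all interior chords $c$ and all chords $\widehat{a}$ at $s_M$ or $s_m$, \emph{uniformly along the isotopy}. Since the Morse ends are fixed, all birth/death pairs occur among the low-action chords. Proposition~\ref{prop:inv2} then shows the continuation map $\psi$ restricts to a stable tame isomorphism between the low-action subalgebras $\mathcal{A}(\Sigma_i)*S_i$, which is exactly the missing hypothesis. This action argument is the genuine content you need to add; Proposition~\ref{prop:unique} is not the right tool here (the paper uses it elsewhere, in Corollary~\ref{cor:standard} and implicitly in Proposition~\ref{prop:DGAhmtp} itself, but not for this step). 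The paper also separates the argument into compactly supported invariance first (Proposition~\ref{thm:inva}), then the concatenation formula (Proposition~\ref{prop:concatenate}), and only then assembles full conical invariance via Lemma~\ref{lem:diffconandcom}; this ordering avoids having to control the isotopy of $\overline{\Sigma}$ through the noncompact modifications directly.
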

In particular, Theorem \ref{inva} establishes the following:
\begin{corollary}\label{cor:DGAinva} For any fixed $(g_-,g_+)$, the stable tame isomorphism type of the LCH DGA $\mathcal{A}(\Sigma, g)$ is a conical Legendrian isotopy invariant of $\Sigma$.
\end{corollary}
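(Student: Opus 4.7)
The plan is to establish invariance of the immersed homotopy class $[M_\Sigma]$ under the three sources of choice in the construction: the almost regular metric $g$, the end data $(h_-, h_+)$, and the representative of the conical Legendrian isotopy class of $\Sigma$. The unifying tool is the auxiliary Morse cobordism $\ol{\Sigma}_0$ from Section \ref{sec:Sigmabar}: as established in the proof of Theorem \ref{DGAmap}, the DGA $\alg(\ol{\Sigma}_0)$ has generators in bijection with those of the mapping cylinder algebra $\alg(\Lambda_+) * \widehat{\alg(\Lambda_+)} * \alg(\Sigma)$, and the differential makes it into a triangular mapping cylinder DGA for $f_\Sigma:\alg(\Lambda_+) \to \alg(\Sigma)$, with $\alg(\Lambda_+)$ generated by the Reeb chords at $s=s_m$ and $\alg(\Sigma) \supset \alg(\Lambda_-)$ by those at $s \in [s_-, s_M)$.

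For invariance under conical Legendrian isotopy of $\Sigma$, the first step is to apply Lemma \ref{lem:diffconandcom} to reduce to three elementary moves: (i) a global vertical shift, (ii) concatenation with a standard cobordism of the form $j^1(s\cdot \Lambda_+ - h(s))$, and (iii) compactly supported Legendrian isotopy. Case (i) is immediate since vertical shifts preserve the Reeb chord structure and all GFTs verbatim. For (ii) one can choose $h$ monotonic so that the attached piece contributes no interior Reeb chords, and a direct check shows $M_{\Sigma_0 \circ_1 \Sigma}$ agrees with $M_\Sigma$ up to canonical identification. The essential content is case (iii), together with the changes in $g$ and $(h_-,h_+)$, all of which can be arranged as Legendrian isotopies $\ol{\Sigma}_{0,t}$ of the full Morse cobordism through Legendrians with fixed Morse ends at $s=s_-$ and $s=s_m$.

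Applied to such an isotopy, Proposition \ref{prop:EK} produces a stable tame isomorphism $\Psi:\alg(\ol{\Sigma}_0)*S \to \alg(\ol{\Sigma}_0')*S'$, and Lemma \ref{lem:identity2} guarantees that $\Psi$ restricts to the identity on the two Morse end sub-DGAs $\alg(\Lambda_-)$ and $\alg(\Lambda_+)$. To extract from $\Psi$ a stable tame isomorphism $\varphi:\alg(\Sigma)*S \to \alg(\Sigma')*S'$ and to verify the hypothesis $\Psi(\alg(\Sigma)*S) \subset \alg(\Sigma')*S'$ of Proposition \ref{prop:DGAhmtp}, I would choose end data with $B_M,B_m$ sufficiently large that every Reeb chord at $s_M$ or $s_m$ has action strictly exceeding the uniform bound on Reeb chord actions in $\alg(\Sigma_t)$ during the isotopy; this is possible because compactly supported isotopies have uniformly bounded Reeb chord actions and all birth/death pairs occur in the interior region $(s_-,s_M)$. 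Proposition \ref{prop:inv2} then shows that $\Psi$ restricts to a stable tame isomorphism on the low-action sub-algebras, which coincide with $\alg(\Sigma)*S$ and $\alg(\Sigma')*S'$; this restriction provides $\varphi$, and $\varphi \circ i = i'$ follows from Lemma \ref{lem:identity2}. Finally, Proposition \ref{prop:DGAhmtp} supplies the required DGA homotopy $\varphi \circ f_\Sigma \simeq f_{\Sigma'}$, completing the immersed homotopy data.

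The main technical obstacle will be the action-separation argument: arranging that the hypotheses of Proposition \ref{prop:inv2}, namely that no Reeb chords have action in a controlled window $[B_1,B_2]$ throughout the isotopy and that all birth/death pairs lie below $B_1$, hold uniformly for a given isotopy with the chosen end data. A secondary but important point is the invariance under changes of $g$ and $(h_-,h_+)$ with $\Sigma$ fixed; these are handled by the same method applied to a constant isotopy $\Sigma_t \equiv \Sigma$ equipped with the varying auxiliary data, once one verifies that such variations can be realized as Legendrian isotopies of $\ol{\Sigma}_0$ through Morse cobordisms, possibly after first extending the cobordism to absorb any end data changes into the interior.
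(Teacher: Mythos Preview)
Your proposal is correct and follows essentially the same strategy as the paper: reduce via Lemma \ref{lem:diffconandcom} to compactly supported isotopy plus the two elementary modifications, then for the compactly supported case use the mapping cylinder DGA $\alg(\ol{\Sigma})$ together with the action-separation argument (Proposition \ref{prop:inv2}), the identity-on-ends property (Lemma \ref{lem:identity2}), and Proposition \ref{prop:DGAhmtp} to extract the immersed homotopy. The paper carries this out in Proposition \ref{thm:inva} and the end of Section \ref{sec:concate}, and then simply observes that the corollary is immediate from Theorem \ref{inva} since an immersed homotopy contains by definition a stable tame isomorphism of the middle DGAs.

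Two minor points where the paper differs from your sketch. First, independence of end data is not handled by viewing the change as an isotopy of $\ol{\Sigma}_0$; instead Lemma \ref{invhs} produces an explicit diffeomorphism of the base intertwining the gradient flows, so the GFTs (hence $M_\Sigma$) are literally unchanged, and this is used to pass to common end data before running the isotopy argument. Second, the concatenation case (ii) is handled via the general concatenation formula (Proposition \ref{prop:concatenate}) together with a computation that $M_{\Sigma_0}$ is the identity immersed map, rather than by a direct check. Also, throughout you should write $\ol{\Sigma}$ rather than $\ol{\Sigma}_0$: the unperturbed $\ol{\Sigma}_0$ typically has non-regular GFTs in the $s=s_M$ slice, and it is only after the small perturbation of Proposition \ref{prop:sigmabar} that one obtains a genuine mapping cylinder DGA.
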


Moreover, induced immersed DGA maps have the following behavior under concatenation.

\begin{theorem}\label{thm:con}
Let $(\Lambda_k,g_k)$, $k=0,1,2$ be Legendrians in $J^1M$ equipped with regular metrics, and let $\Sigma_i:\Lambda_{i-1} \rightarrow \Lambda_i$, $i=1,2$ be conical Legendrian cobordisms from $\Lambda_{i-1}$ to $\Lambda_i$.  Then, we have
\[
M_{\Sigma_2 \circ \Sigma_1} \simeq M_{\Sigma_1} \circ M_{\Sigma_2}  \quad \quad \mbox{(immersed DGA homotopy).}
\]
\end{theorem}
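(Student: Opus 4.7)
The plan is to realize the composition $M_{\Sigma_1}\circ M_{\Sigma_2}$ geometrically by constructing a ``concatenated Morse cobordism'' whose LCH DGA has the mapping-cylinder form of Proposition~\ref{lem:CompositionAlt}(1), and then to invoke Proposition~\ref{lem:CompositionAlt}(2) together with the Legendrian invariance of Theorem~\ref{inva}.

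I would first build a compact Legendrian surface $\wt{\Sigma}_\star$ by gluing the standard Morse cobordisms $\wt{\Sigma}_1$ and $\wt{\Sigma}_2$ through a strictly monotone cylindrical piece of the form $j^1(h(s)\cdot\Lambda_1+K)$ connecting the top Morse maximum of $\wt{\Sigma}_1$ (at a level $s_M^1$ modelled on $\Lambda_1$) to the bottom Morse minimum of $\wt{\Sigma}_2$ (at a level $s_-^2$ again modelled on $\Lambda_1$), with the critical values arranged so that $h'<0$ on the middle region. Equip $\wt{\Sigma}_\star$ with a metric $g_\star$ of product form, cylindrical over regular metrics for $\Lambda_0,\Lambda_1,\Lambda_2$ in neighborhoods of each of the four Morse critical levels. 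Since the scaling functions have vanishing derivative at each critical level, the argument of Proposition~\ref{prop:GFT1} shows that GFTs cannot cross these slices, and the Reeb chord generating set of $\mathcal{A}(\wt{\Sigma}_\star,g_\star)$ (omitting the top $\wh R(\Lambda_2)$) decomposes as
\[
R(\Lambda_0)\,\sqcup\,R(\Sigma_1)\,\sqcup\,\wh R(\Lambda_1)\,\sqcup\,R(\Lambda_1)\,\sqcup\,R(\Sigma_2),
\]
with grading shifted by $+1$ at the middle Morse maximum over $\Lambda_1$. This identifies $\mathcal{A}(\wt{\Sigma}_\star)$ as an algebra with the mapping cylinder algebra $\mathcal{A}(\Sigma_2)\,*\,\wh{\mathcal{A}}(\Lambda_1)\,*\,\mathcal{A}(\Sigma_1)$ of Proposition~\ref{lem:CompositionAlt}(1); the same confinement shows the differential restricts to $\partial_{\Sigma_i}$ on each $\mathcal{A}(\Sigma_i)$.

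The heart of the argument is the computation of $\partial\wh a$ for $\wh a\in\wh R(\Lambda_1)$. Such GFTs split according to the direction of the initial edge off the middle critical point $(s_M^1,x_c)$: downward into the $\Sigma_1$-region, stationary at $s=s_M^1$, or upward into the monotone middle region. In the monotone region the flow equations give $\dot s=-h'(s)f_{i,j}(x)>0$, so $s$ increases strictly along every edge (including after branching at any internal vertex) and negative punctures can only occur at $s_-^2$; moreover, the formal dimension of an upward tree with negative punctures $b_1,\ldots,b_m$ at $s_-^2$ is $(|a|+1)-\sum|b_j|-1$, exceeding the formal dimension of the corresponding tree of $\Lambda_1$ by one, so the regularity of $g_1$ ensures the only rigid upward contributions are the trivial vertical cylinders from $\wh a$ to the corresponding $a$ at $s_-^2$, contributing exactly $i_1(a)\in\mathcal{A}(\Sigma_2)$. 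Downward GFTs are in bijection with the GFTs of $\wt{\Sigma}_1$ defining $f_{\Sigma_1}(a)$, while stationary GFTs at $s=s_M^1$ necessarily have all negative punctures in $\wh R(\Lambda_1)$. Together these give
\[
\partial\wh a \;=\; i_1(a)\;+\;f_{\Sigma_1}(a)\;+\;\gamma,\qquad \gamma\in\mathcal{I}(\wh R(\Lambda_1)),
\]
placing $\mathcal{A}(\wt{\Sigma}_\star)$ in the mapping-cylinder form of Proposition~\ref{prop:DGAmap} matching the composition structure of Proposition~\ref{lem:CompositionAlt}(1). A parallel confinement argument for GFTs with positive puncture in $\wh R(\Lambda_2)$ (unaffected by the middle insertion) identifies the cobordism map $f_{\wt{\Sigma}_\star}$ with $f_{\Sigma_2}$, and Proposition~\ref{lem:CompositionAlt}(2) then yields $M_{\wt{\Sigma}_\star}\simeq M_{\Sigma_1}\circ M_{\Sigma_2}$.

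To identify $M_{\wt{\Sigma}_\star}$ with $M_{\Sigma_2\circ\Sigma_1}$, observe that the intermediate Morse max--min pair over $\Lambda_1$ in $\wt{\Sigma}_\star$ can be cancelled by a compactly supported Legendrian isotopy deforming the scaling function across the max--min pair into a monotone one, producing a Morse cobordism of the standard form of Section~\ref{sec:wtSigma} for a conical cobordism conically Legendrian isotopic to $\Sigma_2\circ\Sigma_1$. Theorem~\ref{inva}, together with the built-in independence of $M_\Sigma$ on the choice of end data and metric, then gives $M_{\wt{\Sigma}_\star}\simeq M_{\Sigma_2\circ\Sigma_1}$, completing the argument. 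The main obstacle is the GFT accounting in the third paragraph, in particular verifying the uniqueness of the trivial cylinders as the only rigid upward contribution and the dimension bookkeeping that excludes larger branching upward trees; a secondary technical point is making the final invariance step precise, since $\wt{\Sigma}_\star$ has intermediate Morse critical pairs beyond the standard form of Section~\ref{sec:wtSigma} and one must check that the construction of $M_\Sigma$ is stable under Morse cancellations and insertions in the interior.
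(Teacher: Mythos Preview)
Your approach is essentially the paper's (Proposition~\ref{prop:concatenate}): insert a max--min pair over $\Lambda_1$ in the interior of the concatenation, identify the resulting DGA with the algebra $\mathcal{D}$ of Proposition~\ref{lem:CompositionAlt}(1), and invoke part~(2). The paper frames the insertion as a compactly supported Legendrian isotopy of the \emph{conical} cobordism $\Sigma_2\circ_\tau\Sigma_1$ to a new conical $\Sigma'$; your $\wt\Sigma_\star$ is then precisely $\wt{\Sigma'}$ in the standard sense of Section~\ref{sec:wtSigma}, so your ``secondary technical point'' evaporates---Proposition~\ref{thm:inva} applies directly to $\Sigma'\simeq\Sigma_2\circ_\tau\Sigma_1$.

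There is, however, a real gap you have not identified, and it is not the one you flag. The metric $g_\star$ is \emph{not} almost regular for $\wt\Sigma_\star$. The generators $\wh R(\Lambda_1)$ sit in the \emph{interior} $(s_-,s_M)$ of $\wt{\Sigma'}$ (not at the top Morse max), so almost-regularity demands regularity for trees at $s=s_M^1$. But your ``stationary'' trees in that slice are exactly the non-regular trees of Remark~\ref{rem:almostreg}(1): a rigid $\Lambda_1$-tree with $m$ negative punctures becomes a tree of formal dimension $1-m$, negative whenever $m\ge 2$. Thus $\partial_{\Sigma'}$ is not yet defined and your displayed formula for $\partial\wh a$ is a statement about a non-regular count. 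The paper makes a small perturbation near $s=s_a$ and then argues via broken trees, exactly as in the proof of Proposition~\ref{prop:sigmabar}, that after perturbation the contributions with no negative puncture in $\wh R(\Lambda_1)$ are unchanged---i.e., that the $\gamma$ term remains in the ideal $\mathcal{I}(\wh R(\Lambda_1))$. (Indeed, the region $[s_-,s_b]$ of $\wt{\Sigma'}$ is literally a copy of $(\overline{\Sigma}_1)_0$, so Proposition~\ref{prop:sigmabar} applies verbatim.) By contrast, your stated ``main obstacle''---uniqueness of the trivial cylinders in the upward monotone region---is the easy part, handled by the identity-cobordism dimension count of Proposition~\ref{prop:imfunctor} once the metric is taken of product form $g_\R\times g_1$ throughout that whole region (not merely near the critical levels, as you wrote).
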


The proofs of the above Theorems \ref{inva} and \ref{thm:con} appear in Section \ref{sec:proof}.  As a corollary of Theorem \ref{thm:con}, we get a computation of $\mathcal{A}(\Sigma_2 \circ \Sigma_1)$ up to stable tame isomorphism.

\begin{corollary}
The Legendrian contact DGA of $\Sigma_2 \circ \Sigma_1$ is stable tame isomorphic to a DGA, $\mathcal{B}$, that is the  categorical pushout (in the category of unital DGAs) of the inclusion map $i_1:\mathcal{A}(\Lambda_1) \hookrightarrow \mathcal{A}(\Sigma_2)$ (the inclusion) and $f_{\Sigma_1}: \mathcal{A}(\Lambda_1) \rightarrow \mathcal{A}(\Sigma_1)$,
\[
\xymatrix{ \mathcal{A}(\Sigma_2) \ar@{.>}[r] & \mathcal{B}  
\\ \mathcal{A}(\Lambda_1) \ar[u]^{i_1} \ar[r]^{f_{\Sigma_1}} & \mathcal{A}(\Sigma_1) \ar@{.>}[u] }  \quad \quad  \raisebox{-.8cm}{$\mathcal{B}*S \cong \mathcal{A}(\Sigma_2 \circ \Sigma_1)*S'$.}
\]
\end{corollary}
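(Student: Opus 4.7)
The plan is to deduce this corollary directly from Theorem \ref{thm:con} combined with the explicit pushout description of composition supplied by Definition \ref{def:Comp}, without any additional geometric input. First I would record the two immersed DGA maps whose composition is being formed. For each $i \in \{1,2\}$, the cobordism $\Sigma_i: \Lambda_{i-1} \to \Lambda_i$ gives
\[
M_{\Sigma_i} = \left(\mathcal{A}(\Lambda_i) \stackrel{f_{\Sigma_i}}{\rightarrow} \mathcal{A}(\Sigma_i) \stackrel{\iota_i}{\hookleftarrow} \mathcal{A}(\Lambda_{i-1})\right),
\]
where $\iota_i$ denotes the canonical inclusion on generators (and $\iota_2$ is the $i_1$ in the corollary statement). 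Then $M_{\Sigma_1} \circ M_{\Sigma_2}$ makes sense as an immersed DGA map from $\mathcal{A}(\Lambda_2)$ to $\mathcal{A}(\Lambda_0)$, and by the explicit formula in Definition \ref{def:Comp} its middle DGA is precisely the categorical pushout of $\iota_2: \mathcal{A}(\Lambda_1) \hookrightarrow \mathcal{A}(\Sigma_2)$ and $f_{\Sigma_1}: \mathcal{A}(\Lambda_1) \to \mathcal{A}(\Sigma_1)$; this is exactly the DGA $\mathcal{B}$ in the corollary, with its triangular generating set as described in Definition \ref{def:Comp}.

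With the middle DGA of $M_{\Sigma_1} \circ M_{\Sigma_2}$ identified with $\mathcal{B}$, the rest is immediate from Theorem \ref{thm:con}, which asserts $M_{\Sigma_2 \circ \Sigma_1} \simeq M_{\Sigma_1} \circ M_{\Sigma_2}$ in the immersed homotopy sense. Unpacking Definition \ref{def:homotopy}, an immersed homotopy between immersed DGA maps \emph{includes} as part of its data a stable tame isomorphism between the middle DGAs of the two maps. Applying this to the immersed homotopy furnished by Theorem \ref{thm:con} yields a stable tame isomorphism $\varphi: \mathcal{A}(\Sigma_2 \circ \Sigma_1)*S \to \mathcal{B}*S'$, which is the conclusion. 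Since all the substantive work (building the DGAs $\mathcal{A}(\Sigma_i)$, proving $f_{\Sigma_i}$ is a DGA map, and establishing Theorem \ref{thm:con} itself) has been done upstream, there is no genuine obstacle in this proof; it is purely a matter of reading off the middle component of an already-established immersed homotopy and matching it against the pushout formula of Definition \ref{def:Comp}.
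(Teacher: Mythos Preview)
Your proof is correct and is exactly the argument the paper intends: the corollary is stated immediately after Theorem~\ref{thm:con} without a separate proof, because it follows by reading off the middle DGA of $M_{\Sigma_1}\circ M_{\Sigma_2}$ from Definition~\ref{def:Comp} and then extracting the stable tame isomorphism built into Definition~\ref{def:homotopy}. There is nothing to add.
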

\begin{remark}  In \cite{Sivek} this result is established for $0$-dimensional Legendrians with $1$-dimensional cobordisms up to isomorphism rather than stable tame isomorphism.  
 \end{remark}

\subsection{A categorical formulation}\label{sec:functor}

The results of this section can be succinctly stated in the language of category theory.  
Given a non-negative integer $n \geq 0$, we consider two categories $\leg^n_{im}$ and $\dga^n_{im}$.  The category $\dga^n_{im}$ is the immersed DGA category constructed in Section \ref{sec:DGAcat}: the objects are triangular $\Z/n$-graded DGAs  
and the morphisms are ($\Z/n$-grading preserving) immersed DGA maps up to immersed homotopy.
The category $\leg^n_{im}$ will be called the {\bf$n$-graded immersed Legendrian  category}.  The objects are triples $(\Lambda, g, \nu)$
consisting of   a Legendrian submanifold $\Lambda \subset J^1M$ equipped with a regular metric $g$ and a $\Z/n$-valued Maslov potential $\nu$.  
A morphism from $(\Lambda_-, g_-, \nu_-)$ to $(\Lambda_+, g_+, \nu_+)$ is an equivalence class of pairs $(\Sigma, \nu)$ where $\Sigma \subset J^1(\R_{>0}\times M)$ is a conical Legendrian cobordism  from $\Lambda_-$ to $\Lambda_+$ with $\nu|_{\Lambda_\pm} = \nu_{\pm}$ and the equivalence is conical Legendrian isotopy.  Composition is concatenation as in Section \ref{sec:Concatenate}; note that Maslov potentials extend over the concatenation in a well-defined way.

\begin{remark}
The choice of Maslov potential is included with objects and morphisms in $\leg^n_{im}$ in order to specify well defined $\Z/n$-gradings as in Remark \ref{rem:twograding}. 
Note that as in Remark \ref{rem:M} all Legendrians in $\leg^n_{im}$ have $n\,|\, m(\Lambda)$ (the Maslov number of $\Lambda$). 
\end{remark}

We define the {\bf immersed LCH functor} 
\[
F^n: \leg^n_{im}\to \mathfrak{DGA}^n_{im}
\]
on objects by 
\[
F^n(\Lambda, g, \nu) = \big(\mathcal{A}(\Lambda, g), \partial\big)
\]
 with the Maslov potential $\nu$ inducing the $\Z/n$-grading on $\mathcal{A}(\Lambda, g)$ and on morphisms by 
\[
F^n(\Sigma, \nu) = [M_\Sigma] 
\]
where $[M_\Sigma]$ denotes the immersed homotopy class of the induced map $M_\Sigma$ (with $\Z/n$-gradings).

\begin{proposition}\label{prop:imfunctor}
The map $F^n: \leg^n_{im}\to \mathfrak{DGA}^n_{im}$ is a contravariant functor.
\end{proposition}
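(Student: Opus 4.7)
The plan is to verify the three standard conditions for $F^n$ to be a contravariant functor: (i) well-definedness on morphisms, (ii) preservation of identities, and (iii) the contravariant composition identity $F^n(\Sigma_2 \circ \Sigma_1) = F^n(\Sigma_1) \circ F^n(\Sigma_2)$. Both (i) and (iii) will reduce immediately to earlier results. Morphisms in $\leg^n_{im}$ are conical Legendrian cobordisms $(\Sigma,\nu)$ taken up to conical Legendrian isotopy, and Theorem \ref{inva} asserts precisely that $[M_\Sigma]$ depends only on this isotopy class together with the regular metrics $g_\pm$ fixed as part of the objects; the contravariant composition identity $[M_{\Sigma_2\circ\Sigma_1}] = [M_{\Sigma_1}]\circ [M_{\Sigma_2}]$ is exactly Theorem \ref{thm:con}.

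To verify (ii), I will take the identity morphism on $(\Lambda,g,\nu)$ in $\leg^n_{im}$ to be the trivial conical cobordism $\Sigma_{\mathrm{triv}}=j^1(s\cdot\Lambda-A)$ equipped with the Maslov potential pulled back from $\nu$. Unpacking the concatenation formula of Section \ref{sec:Concatenate}, both $\Sigma\circ\Sigma_{\mathrm{triv}}$ and $\Sigma_{\mathrm{triv}}\circ\Sigma$ differ from $\Sigma$ only by a rescaling in the $s$-coordinate and are thus conical Legendrian isotopic to $\Sigma$; hence it suffices to show $M_{\Sigma_{\mathrm{triv}}}$ coincides with the identity immersed map on $\alg(\Lambda,g)$. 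By Theorem \ref{inva} any convenient almost regular metric and end data may be used.

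For the concrete computation, I will form the Morse cobordism $\widetilde{\Sigma}_{\mathrm{triv}}=j^1(h(s)\cdot\Lambda-A)$ with $h$ a positive function on $\R_{>0}$ whose only critical points are a non-degenerate minimum at $s_-$ and a non-degenerate maximum at $s_M$, and take the product metric $g_{\R}\times g$ on $\R_{>0}\times M$ (with a generic perturbation inside $(s_-,s_M)\times M$ if needed to achieve almost regularity). Local difference functions then factor as $F(s,x)=h(s)f(x)$, where $f$ is a local difference function for $\Lambda$, so critical points of $F$ force $h'(s)=0$ and $\nabla f(x)=0$ (using $h>0$ and that $f$ is nonzero at its critical points). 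Hence all Reeb chords of $\widetilde{\Sigma}_{\mathrm{triv}}$ lie at $s\in\{s_-,s_M\}$, giving $R(\Sigma_{\mathrm{triv}})=\emptyset$, $\alg(\Sigma_{\mathrm{triv}})=\alg(\Lambda_-,g)=\alg(\Lambda,g)$, and the inclusion $i=\mathrm{id}$.

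For the cobordism map $f_{\Sigma_{\mathrm{triv}}}$, I will observe that the $x$-component of any flow line of $-\nabla F$ satisfies $\dot{x}=-h(s)\nabla f(x)$; after the reparametrization $d\sigma/dt=h(s)$, it becomes a flow line of $-\nabla f$ on $M$. Combined with the fact that matching of $1$-jet lifts at interior vertices and the location of singular external vertices descend from $\widetilde{\Sigma}_{\mathrm{triv}}$ to $\Lambda$ under the $M$-projection, this shows that any GFT of $(\widetilde{\Sigma}_{\mathrm{triv}},g)$ with positive puncture $\widehat{a}$ and negative punctures at $\{s=s_-\}$ whose $x$-projection is non-trivial descends to a genuine GFT of $(\Lambda,g)$ from $a$ to correspondingly labelled Reeb chords. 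Using $|\widehat{a}|=|a|+1$ and comparing the formal-dimension formulas, the projected $\Lambda$-tree has formal dimension exactly one lower than the original; thus a rigid ($\dim 0$) tree of $\widetilde{\Sigma}_{\mathrm{triv}}$ would project to a $\Lambda$-tree of formal dimension $-1$, which cannot exist by regularity of $g$. The only remaining contribution is the constant-in-$x$ edge running along $\{x=a\}$ from $\widehat{a}$ down to $a_-$, yielding $f_{\Sigma_{\mathrm{triv}}}(a)=a$. Thus $M_{\Sigma_{\mathrm{triv}}}$ is literally the identity immersed map, and $F^n$ preserves identities. The main obstacle in this plan is the projection-plus-dimension-count step ruling out non-trivial rigid GFTs of $\widetilde{\Sigma}_{\mathrm{triv}}$; the remaining pieces are direct invocations of Theorems \ref{inva} and \ref{thm:con} or routine unpacking of the Morse-cobordism construction.
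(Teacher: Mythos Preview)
Your proposal is correct and follows essentially the same approach as the paper: well-definedness and the contravariant composition law are read off from Theorems \ref{inva} and \ref{thm:con}, and preservation of identities is checked by an explicit GFT computation on the trivial cobordism using the product metric, projecting trees to $M$ and invoking the dimension shift $\dim\Gamma=\dim\Gamma_M+1$ to rule out all rigid trees except the single constant-in-$x$ edge. One small expository point: the parenthetical ``with a generic perturbation \ldots if needed'' is unnecessary and slightly at odds with the rest of your argument, since the projection-and-dimension-count step uses the product structure of $g_\R\times g$ and in fact \emph{proves} that this metric is already almost regular; the paper simply proceeds with the product metric and records almost regularity as a consequence.
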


\begin{proof}
That $F^n$ is well-defined and reverses compositions follows from Theorems \ref{inva} and \ref{thm:con}.  To check that $F^n$ also preserves identity morphisms, we need to compute that $f_\Sigma = \mathit{id}_{\mathcal{A}(\Lambda)}$ when $\Sigma: (\Lambda,g) \rightarrow ( \Lambda,g)$ is the identity cobordism, $j^1(s \cdot \Lambda)$.  In this case, $\widetilde{\Sigma} = j^1(h(s) \cdot \Lambda)$ where $h:(s_--\epsilon, s_M+\epsilon) \rightarrow \R_{>0}$ has a non-degenerate min (resp. max) at $s=s_-$ (resp. $s=s_M$) and $h'(s)>0$ for $s \in (s_-, s_M)$.  Local difference functions for $\widetilde{\Sigma}$ have the form $F_{i,j}(s,x) = h(s)\cdot f_{i,j}(x)$ where $f_{i,j}$ is a local difference function for $\Lambda$, and using the product metric $g_\R \times g$ on $\R_{>0}\times M$ leads to
\[
\nabla F_{i,j}(s,x) = \left(h'(s)f_{i,j}(x), h(s)  \nabla f_{i,j}(x)\right).
\] 
A trajectory of $\nabla F_{i,j}$ defined on an interval $I$ then has the form $\gamma:I \rightarrow \R_{>0} \times M$, $\gamma(t) = \left( \alpha(t), \beta(t) \right)$ with
\[
 \beta'(t) = h(\alpha(t)) \cdot \nabla f_{i,j}(\beta(t)).
\]
Thus, $\beta$ is just a reparametrization of a trajectory of $\nabla f_{i,j}$. As a result, projecting all edges of $\Gamma$ to $M$ (and reparametrizing in an appropriate manner) produces a GFT $\Gamma_M$ for $\Lambda$.  For computing $f_\Sigma$, we are concerned with the case where $\Gamma$ has a positive puncture at $\widehat{a} \in R(\widetilde{\Sigma};\{s_M\})$\vspace{0.02in} and negative punctures $b_1, \ldots, b_n \in R(\widetilde{\Sigma};\{s_-\})$.  Now, the grading shift in the bijection $R(\widetilde{\Sigma};\{s_M\}) \cong R(\Lambda_+)$ implies that
\[
\dim \Gamma = \dim \Gamma_M +1.
\]
As $g$ is regular for $\Lambda$, we see that $\dim \Gamma \geq 0$ and the inequality is strict whenever $\Gamma_M$ is non-constant.  Thus, the only way $\dim \Gamma = 0$ is if $\Gamma$ consists of a single edge whose $M$ component is constant at a Reeb chord of $\Lambda$.  It follows that $g_\R \times g$ is almost regular for $\Sigma$ and $f_\Sigma$ is the identity as required.
\end{proof}

\subsection{The embedded case} \label{sec:embed} Many investigations involving functoriality of LCH currently in the literature focus on the case of embedded exact Lagrangian cobordisms considered up to exact Lagrangian isotopy.  In this section, we consider the embedded case and observe the relationship with the induced maps from \cite{EHK}.

Under the bijection from Proposition \ref{prop:Conical}, a conical Legendrian cobordism $\Sigma: \Lambda_- \rightarrow \Lambda_+$ in $ J^1(\R_{>0} \times M)$ corresponds to an {\it embedded} exact Lagrangian cobordism $L \subset \mathit{Symp}(J^1M)$ with primitive $\rho:L \rightarrow \R$  if and only if $\Sigma$ has no Reeb chords.  For fixed $n\geq 0$, we can consider a category of Legendrians with embedded Lagrangian cobordisms, $\mathfrak{Leg}^n$, having the same objects as $\mathfrak{Leg}^n_{im}$, but allowing equivalence classes of only those $\Sigma$ that correspond to embedded exact Lagrangians as morphisms.  Moreover, we modify the notion of equivalence so that $\Sigma_1 \sim \Sigma_2$ only when they are related by a conical Legendrian isotopy, $\Sigma_t$,  such that $\Sigma_t$ has no Reeb chords for all $1\leq t \leq 2$.  Equivalently, via  Proposition \ref{prop:Conical} the morphisms of $\mathfrak{Leg}^n$ correspond to  embedded exact Lagrangian cobordisms $(L,\rho)$ in $\mathit{Symp}(J^1M)$ considered up deformations $(L_t,\rho_t)$ where  $L_t$ is a compactly supported isotopy through exact Lagrangians with the primitive $\rho_t$ allowed to vary outside a compact set.  There is then a well defined functor $j:\mathfrak{Leg}^n \rightarrow \mathfrak{Leg}^n_{im}$ that is the identity on objects and has $j([\Sigma]) = [\Sigma]$, although $j$ may not be injective on morphism sets.  (See Question \ref{Q:strong}.)

For a morphism $[\Sigma]$ in $\mathfrak{Leg}^n$, the absence of Reeb chords of $\Sigma$ implies that $\mathcal{A}(\Sigma) = \mathcal{A}(\Lambda_-)$, so that $f_\Sigma$ contains all of the interesting information from the immersed DGA map  
\[
M_\Sigma =\left( \mathcal{A}(\Lambda_+) \stackrel{f_\Sigma}{\rightarrow} \mathcal{A}(\Lambda_-) \stackrel{\mathit{id}}{\leftarrow} \mathcal{A}(\Lambda_-) \right).
\]
Recall from Section \ref{sec:EmbeddedDGA} the category  $\mathfrak{DGA}^n$ (morphisms are just DGA maps up to DGA homotopy) and its inclusion $I:\mathfrak{DGA}^n \rightarrow \mathfrak{DGA}^n_{im}$ into the immersed DGA category.  
\begin{proposition}  The correspondence 
\[
\Sigma: (\Lambda_-,g_-) \rightarrow (\Lambda_+,g_+) \quad \leadsto \quad f_\Sigma: \mathcal{A}(\Lambda_+, g_+) \rightarrow \mathcal{A}(\Lambda_-,g_-)
\] 
gives a well-defined contravariant functor $f:\mathfrak{Leg}^n \rightarrow \mathfrak{DGA}^n$.  Moreover, there is a commutative diagram.
\begin{equation} \label{eq:FunctorDiag}
\xymatrix{ \mathfrak{Leg}^n_{im} \ar[r]^{F} & \mathfrak{DGA}_{im}^n   
\\  \mathfrak{Leg}^n \ar[u]^j \ar[r]^{f} & \mathfrak{DGA}^n \ar[u]^I }
\end{equation}
\end{proposition}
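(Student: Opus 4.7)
The plan is to reduce everything to Theorems \ref{inva} and \ref{thm:con}, using the inclusion functor $I$ of Proposition \ref{prop:InclusionFunctor} as a bookkeeping device. The first observation is that when $\Sigma$ has no Reeb chords, the bijection $R(\widetilde{\Sigma}; [s_-, s_M)) \cong R(\Lambda_-) \cup R(\Sigma)$ from Proposition \ref{prop:chords1} collapses to $R(\Lambda_-)$, so $\mathcal{A}(\Sigma) = \mathcal{A}(\Lambda_-)$ and the inclusion $i$ in $M_\Sigma$ is the identity. Therefore, by the very definition of $I$,
\[
M_\Sigma \;=\; \big(\mathcal{A}(\Lambda_+) \xrightarrow{f_\Sigma} \mathcal{A}(\Lambda_-) \xleftarrow{\mathit{id}} \mathcal{A}(\Lambda_-)\big) \;=\; I(f_\Sigma).
\]
This identity is precisely the commutativity of the square \eqref{eq:FunctorDiag} at the morphism level (object-level commutativity is immediate since $F,f,j,I$ all act as $(\Lambda,g,\nu) \mapsto \mathcal{A}(\Lambda,g)$ on objects), modulo first verifying that $f$ actually defines a functor.

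Next I would verify well-definedness of $f$ on morphism classes in $\mathfrak{Leg}^n$. If $\Sigma_1 \sim \Sigma_2$, then a fortiori they are conical Legendrian isotopic, so Theorem \ref{inva} yields $M_{\Sigma_1} \simeq M_{\Sigma_2}$ in $\mathfrak{DGA}^n_{im}$. By the identification above, this reads $I([f_{\Sigma_1}]) = I([f_{\Sigma_2}])$, and the injectivity of $I$ on Hom-spaces (Proposition \ref{prop:InclusionFunctor}) gives $[f_{\Sigma_1}] = [f_{\Sigma_2}]$ in $\mathfrak{DGA}^n$.

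For functoriality, preservation of identities comes for free from the identity-cobordism calculation already carried out in the proof of Proposition \ref{prop:imfunctor}, which shows $f_{\Sigma_{\mathit{id}}} = \mathit{id}_{\mathcal{A}(\Lambda)}$. For composition, given embedded cobordisms $\Sigma_i: \Lambda_{i-1} \to \Lambda_i$ with $i=1,2$, note that $\Sigma_2 \circ \Sigma_1$ is again Reeb-chord-free, so all three maps $M_{\Sigma_1}, M_{\Sigma_2}, M_{\Sigma_2 \circ \Sigma_1}$ lie in the image of $I$. Applying $I$ to $f_{\Sigma_1} \circ f_{\Sigma_2}$ and using that $I$ is a functor gives
\[
I(f_{\Sigma_1} \circ f_{\Sigma_2}) \;=\; I(f_{\Sigma_1}) \circ I(f_{\Sigma_2}) \;=\; M_{\Sigma_1} \circ M_{\Sigma_2} \;\simeq\; M_{\Sigma_2 \circ \Sigma_1} \;=\; I(f_{\Sigma_2 \circ \Sigma_1}),
\]
where the immersed homotopy in the middle is Theorem \ref{thm:con}. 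Injectivity of $I$ then yields $f_{\Sigma_1} \circ f_{\Sigma_2} \simeq f_{\Sigma_2 \circ \Sigma_1}$, establishing contravariant functoriality. The commutativity of \eqref{eq:FunctorDiag} is then just a restatement of $M_\Sigma = I(f_\Sigma)$. I do not foresee a genuine obstacle here — once one recognizes that the embedded case is exactly the image of $I$, the whole statement reduces to invoking the earlier theorems in the correct order and exploiting injectivity of $I$ on Hom-spaces; the only care needed is to confirm that the pushout defining composition in $\mathfrak{DGA}^n_{im}$ of two maps in the image of $I$ is canonically isomorphic to the codomain (since one leg of the pushout is the identity), which is the categorical content ensuring $I$ preserves composition.
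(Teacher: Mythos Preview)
Your proposal is correct and follows essentially the same approach as the paper: both arguments hinge on the identification $M_\Sigma = I(f_\Sigma)$ in the embedded case, then use the injectivity of $I$ on Hom-spaces (Proposition~\ref{prop:InclusionFunctor}) together with the functoriality of $F$ (equivalently, Theorems~\ref{inva} and~\ref{thm:con}) to deduce well-definedness and preservation of composition for $f$. The paper packages the composition step slightly more succinctly by writing $F\circ j$ rather than invoking Theorem~\ref{thm:con} by name, but the content is identical.
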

\begin{proof}
For a morphism $[\Sigma]$ in $\mathfrak{Leg}^n$, from the definitions, we have that $F \circ j([\Sigma]) = [M_\Sigma] = I([f_\Sigma])$.  Moreover, since $j$ is well-defined, we get that if $[\Sigma] =[\Sigma']$ in $\mathfrak{Leg}^n$ then $I([f_\Sigma]) = I([f_{\Sigma'}])$, and since $I$ is injective on hom-spaces (see Proposition \ref{prop:InclusionFunctor}) we conclude that $[f_\Sigma] = [f_{\Sigma'}]$ in   $\mathfrak{DGA}^n$ so that $f$ is well-defined on hom-spaces.  Moreover, the commutativity of (\ref{eq:FunctorDiag}) together with the fact that $I$ is injective on hom-spaces implies that $f$ preserves compositions since
\[
I(f(\Sigma_2 \circ \Sigma_1)) = F(j(\Sigma_2 \circ \Sigma_1)) = F\circ j(\Sigma_1) \circ F\circ j(\Sigma_2) = I(f(\Sigma_1)) \circ I(f(\Sigma_2)) = I(f(\Sigma_1) \circ f(\Sigma_2)).
\]
Finally, it has already been verified that $f_\Sigma$ is the identity map when $\Sigma$ is an identity cobordism.
\end{proof}

Note that Corollary \ref{cor:L} from the introduction follows immediately.

\begin{remark}[Comparison with \cite{EHK}] 
For Legendrian knots in $J^1\R$, the articles \cite{EHK,E2} associate an induced cobordism map 
\[
\Phi_{L,J}:\mathcal{A}(\Lambda_+, J_+) \rightarrow \mathcal{A}(\Lambda_+, J_-)
\]
to an embedded exact Lagrangian cobordisms $L \subset \mathit{Symp}(J^1\R)$ equipped with suitable choices of almost complex structure $J_\pm$ and $J$ such that  the DGA homotopy class of $\Phi_{L,J}$ depends only on the exact Lagrangian isotopy type of $L$.  The map $\Phi_{L,J}$ is defined by a count of holomorphic curves with punctures asymptotic to Reeb chords at the positive and negative ends of $\mathit{Symp}(J^1\R)$.  
In addition, \cite{EHK} considers  cobordism maps defined by counts of GFTs on an associated Morse cobordism, precisely as in the definition of our $f_\Sigma$, and observe that with a suitable choice of almost complex structure and fiber rescaling the two maps agree. See Theorem 1.6 of \cite{EHK} for more details.   In particular, all the explicit computations of cobordism maps in Section 6 of \cite{EHK}  are in terms of GFTs, and  apply to our $f_\Sigma$ as well.
A discussion of a holomorphic curve approach to constructing immersed DGA maps for immersed cobordisms appears in Section \ref{sec:SFTpers} below. 
\end{remark}

\section{The immersed LCH functor II:  Proof of statements}  \label{sec:proof}
This section contains the proofs of the Theorems \ref{prop:SigmaDGA} (1), \ref{DGAmap}, \ref{inva}, and \ref{thm:con} stated in the previous section.   The main idea is to show that the DGA of a Legendrian surface $\ol\Sigma$, obtained from perturbing the Morse cobordism $\ol\Sigma_0$ from Section \ref{sec:Morse}, is a mapping cylinder DGA for the $f_{\Sigma}$ map.  The algebraic results from Section \ref{sec:mapc} can then be applied to obtain the stated properties of the induced map $M_\Sigma$.   
After introducing $\ol\Sigma$, the proofs of Theorem \ref{prop:SigmaDGA} (1) and \ref{DGAmap} appear at the end of \ref{sec:fmapproof}.  The proofs of the remaining theorems appear at the end of \ref{sec:concate}.

\subsection{$f_{\Sigma}$ is a DGA map}  \label{sec:fmapproof}
Recall that $\Sigma: \Lambda_- \rightarrow \Lambda_+$ is a conical Legendrian cobordism, and that in defining the map $f_{\Sigma}: \mathcal{A}(\Lambda_+,g_+) \rightarrow \mathcal{A}(\Sigma,g)$ we have made a choice of end data $(h_-,h_+)$ to construct the Morse cobordism $\widetilde{\Sigma},$ as well as a choice of almost regular metric $g$ for $\widetilde{\Sigma}$ that is conical over $(g_-,g_+)$ at the ends. Let $\overline{h}_+$ be a choice of extension of $h_+$ used to define $\overline{\Sigma}_0$ as in \ref{sec:Morse}.  
We now consider the Reeb chords of $\overline{\Sigma}_0$, using notation as in Section \ref{sec:DefM}.  

\begin{lemma} \label{lem:Reeb3} 
 The Reeb chords of $\overline{\Sigma}_0$ satisfy
\[
R\big(\ol \Sigma_0\big)= R\big(\ol\Sigma_0; \{s_m\}\big) \sqcup R\big(\ol\Sigma_0; \{s_M\}\big)\sqcup R\big(\ol\Sigma_0; [s_-,s_M)\big),
\]
and there are grading preserving bijections
\[
R\big(\ol\Sigma_0; \{s_m\}\big) \cong R(\Lambda_+), \quad \quad R\big(\ol\Sigma_0; \{s_M\}\big)\cong \widehat{R(\Lambda_+)}, \quad \quad R\big(\ol\Sigma_0; [s_-,s_M)\big) \cong R\big(\widetilde{\Sigma}; [s_-,s_M)\big),
\]
where $\widehat{R(\Lambda_+)}$ denotes $R(\Lambda_+)$ with all gradings shifted up by $1$.  
\end{lemma}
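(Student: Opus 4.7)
The plan is to analyze the Reeb chords of $\overline{\Sigma}_0$ by splitting the base $\R_{>0}$ into the region $[s_-,s_M]$, where $\overline{\Sigma}_0$ agrees with $\widetilde{\Sigma}$, and the region $[s_M, s_m+\epsilon')$, where $\overline{\Sigma}_0$ agrees with $j^1(\overline{h}_+(s)\cdot \Lambda_+ - A_+)$. The first piece is handled directly by Proposition \ref{prop:chords1}, which produces the bijection
\[
R(\overline{\Sigma}_0; [s_-, s_M)) \cong R(\widetilde{\Sigma}; [s_-,s_M))
\]
and identifies the Reeb chords at $\{s=s_M\}$ as coming from $R(\Lambda_+)$ with grading shifted up by $1$ (since $\overline{h}_+ = h_+$ in a neighborhood of $s_M$, where $h_+$ has a nondegenerate local maximum).

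For the region $s \in [s_M, s_m+\epsilon')$, I would argue exactly as in the proof of Proposition \ref{prop:chords1}: a local difference function of $\overline{\Sigma}_0$ has the form
\[
F_{i,j}(s,x) = \overline{h}_+(s)\cdot f_{i,j}(x),
\]
where $f_{i,j}$ is a local difference function for $\Lambda_+$. Its critical points satisfy
\[
\overline{h}_+'(s)\, f_{i,j}(x) = 0  \quad\text{and}\quad \overline{h}_+(s) \,\nabla f_{i,j}(x) = 0.
\]
Since $\overline{h}_+ > 0$, the second equation forces $\nabla f_{i,j}(x) = 0$, i.e.\ $x$ is (the $x$-projection of) a Reeb chord of $\Lambda_+$. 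At such a point $f_{i,j}(x) > 0$, so the first equation forces $\overline{h}_+'(s) = 0$, which by construction of $\overline{h}_+$ gives $s \in \{s_M, s_m\}$. This confirms that Reeb chords of $\overline{\Sigma}_0$ in this region occur only at $\{s=s_M\}$ and $\{s=s_m\}$.

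The remaining content is the grading/index computation, which is routine. Near $s_m$, the function $\overline{h}_+$ has a nondegenerate local minimum, so the Hessian of $F_{i,j}$ at a critical point $(s_m, x)$ has Morse index equal to $\mathrm{ind}(f_{i,j})$ at $x$. Via the formula (\ref{eq:Maslov}) and the fact that the Maslov potential extends across $\overline{\Sigma}_0$, this produces a grading-preserving bijection $R(\overline{\Sigma}_0;\{s_m\}) \cong R(\Lambda_+)$. Near $s_M$, the local maximum of $\overline{h}_+$ adds one to the Morse index, giving $R(\overline{\Sigma}_0;\{s_M\}) \cong \widehat{R(\Lambda_+)}$ as in Proposition \ref{prop:chords1}. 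I do not anticipate a genuine obstacle here; the main care needed is to verify the index formula at the max and min of $\overline{h}_+$ exactly as in Proposition \ref{prop:chords1}, and to check that $\overline{h}_+$ has no other critical points in $(s_M, s_m)$, which is built into its definition.
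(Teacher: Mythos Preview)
Your proposal is correct and follows exactly the approach indicated in the paper, which simply states that the lemma follows from the construction of $\overline{h}_+(s)$ as in the proof of Proposition \ref{prop:chords1}. You have just spelled out that argument in detail.
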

\begin{proof}This follows from the construction of $\ol h_+(s)$ as in the proof of Proposition \ref{prop:chords1}.  
\end{proof}
Supposing the Reeb chords of $\Lambda_+$ are $a_1, \dots, a_n$ we use the notations $a_i$ (resp. $\widehat{a}_i$) for the corresponding Reeb chords in $R\big(\ol\Sigma_0; \{s_m\}\big)$ (resp. $R\big(\ol\Sigma_0; \{s_M\}$).

Next, we study the GFTs of $\big(\ol\Sigma_0,g\big)$.

\begin{lemma} \label{lem:T} 
The  GFT space $\T_g\big(\ol\Sigma_0\big)$ can be divided into
$$\T_g\big(\ol\Sigma_0\big)=\T_g\big(\ol\Sigma_0; [s_-,s_M)\big)
\sqcup \T_g\big(\ol\Sigma_0; \{s_M\}\big)\sqcup \T_g\big(\ol\Sigma_0; (s_M,s_m)\big)\sqcup
\T_g\big(\ol\Sigma_0; \{s_m\}\big).$$
Moreover, any GFT for $(\overline{\Sigma}_0, g)$ with the positive puncture at a Reeb chord in $R\big(\overline{\Sigma}_0;\{s_m\}\big)$ is in $\mathcal{T}_g\big(\overline{\Sigma}_0;\{s_m\}\big)$. Any  GFT with a negative puncture at a Reeb chord in $R\big(\overline{\Sigma}_0;\{s_M\}\big)$ is in $\mathcal{T}_g(\overline{\Sigma}_0;\{s_M\})$. 
\end{lemma}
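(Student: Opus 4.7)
The plan is to mimic the proof of Proposition 5.5 for $\wt\Sigma$, extending the analysis to the two new critical levels of $\ol h_+$: the interior maximum at $s_M$ and the endpoint minimum at $s_m$. The key geometric input is that both slices $\{s = s_M\}$ and $\{s = s_m\}$ are invariant under the gradient flows of the relevant local difference functions.

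Near each such slice, any local difference function of $\ol\Sigma_0$ factors as $F_{i,j}(s,x) = \ol h_+(s)\, f_{i,j}(x)$, where $f_{i,j}$ is a local difference function for $\Lambda_+$. Since the metric is a product in this region, the $\partial_s$-component of $-\nabla F_{i,j}$ equals $-\ol h_+'(s) f_{i,j}(x)$, which vanishes on each slice because $s_M$ and $s_m$ are critical points of $\ol h_+$. Hence no flow line of $-\nabla F_{i,j}$ can cross either slice in finite time, and any flow line starting on a slice at a non-critical point must stay on that slice. Moreover near $s = s_M$ (a max) the $\partial_s$-component repels from the slice and near $s = s_m$ (a min) it attracts; this sign information will be used at the punctures.

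The decomposition of $\T_g(\ol\Sigma_0)$ into four classes then follows by a short induction along the tree. A GFT is connected, and its internal and cusp-type external vertices are based at non-critical points of any $F_{i,j}$, so an edge meeting such a vertex on a slice forces every adjacent edge to lie on that slice by invariance, propagating the slice condition across the whole tree. Consequently every GFT either lies entirely on $\{s = s_M\}$, entirely on $\{s = s_m\}$, or has all edges in one of the open complementary strata $[s_-, s_M)$ or $(s_M, s_m)$, giving the required four-fold partition.

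For the puncture statements, I would analyze the Hessians of $F_{i,j}$ at Reeb chords lying on the two slices. The mixed $s$-$x$ derivatives vanish by criticality, and $\partial_s^2 F_{i,j}(s_*, x_0) = \ol h_+''(s_*) f_{i,j}(x_0)$ is positive when $s_* = s_m$ and negative when $s_* = s_M$, since $f_{i,j}(x_0) > 0$ at a Reeb chord. Therefore the $s$-direction is stable at any Reeb chord on $\{s_m\}$ and unstable at any Reeb chord on $\{s_M\}$. The initial edge at a positive puncture lies in the unstable manifold of that critical point, while an edge ending at a negative puncture lies in the stable manifold; in both relevant cases the edge is constrained to lie on its slice, and the decomposition above then forces the entire GFT onto that slice. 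The main (mild) obstacle is purely bookkeeping---verifying that the invariance argument propagates correctly through all vertex types---and this is no harder than the corresponding verifications in the proof of Proposition 5.5.
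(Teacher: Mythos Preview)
Your proposal is correct and follows essentially the same approach as the paper, which simply states that the lemma follows as in the proof of Proposition~\ref{prop:GFT1}. Your use of Hessians and stable/unstable manifolds to handle the puncture statements is a slightly more formal rephrasing of the paper's ``directionality'' argument, but the underlying idea---that the product structure of both the local difference functions and the metric forces the $\partial_s$-component of the gradient to vanish on the critical slices and have a definite sign nearby---is identical.
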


\begin{proof}
This follows as in the proof of Proposition \ref{prop:GFT1}.
\end{proof}

Write 
\[
\mathcal{T}_g\big(\overline{\Sigma}_0;(s_-, s_M) \big) = \mathcal{T}_g^1\big(\overline{\Sigma}_0;(s_-, s_M) \big) \sqcup \mathcal{T}_g^2\big(\overline{\Sigma}_0;(s_-, s_M) \big),
\]
where $\mathcal{T}_g^1\big(\overline{\Sigma}_0;(s_-, s_M) \big)$(resp. $\mathcal{T}_g^2\big(\overline{\Sigma}_0;(s_-, s_M) \big)$) denotes the subset of $\mathcal{T}_g\big(\overline{\Sigma}_0;(s_-, s_M)\big)$ consisting of GFTs whose positive puncture is at a Reeb chord in $R\big(\ol\Sigma_0;(s_-, s_M)\big)$ (resp. in $R\big(\ol\Sigma_0; \{s_M\} \big)$). 
We also write 
$$\T^1_g\big(\ol\Sigma_0; [s_-, s_M)\big)=\T^1_g\big(\ol\Sigma_0; (s_-, s_M)\big)\sqcup \T_g\big(\ol\Sigma_0; \{s_-\}\big).$$
\begin{lemma}\label{BarGFT}
Moreover, there are $1-1$ correspondences as follows:
\begin{enumerate}
\item  the GFTs in $\T_g\big(\ol\Sigma_0; \{s_M\}\big)$ $\longleftrightarrow$ the  GFTs for $(\Lambda_+, g_+)$;
\item the GFTs in $\T_g\big(\ol\Sigma_0; \{s_m\}\big)$ $\longleftrightarrow$ the GFTs for $(\Lambda_+, g_+)$;
\item the rigid GFTs in $\T_g\big(\ol\Sigma_0; (s_M, s_m)\big)$ $\longleftrightarrow$ the negative gradient flow lines of $\ol h_+(s)$ that flow from $\wh a_i$ to $a_i$;
\item the rigid GFTs in $\T^1_g\big(\ol\Sigma_0; [s_-, s_M)\big)$
$\longleftrightarrow$ the  rigid GFTs counted by the differential of $\big(\alg(\Sigma,g), \partial\big)$; 
\item the rigid GFTs in $\T^2_g\big(\ol\Sigma_0; (s_-, s_M)\big)$ $\longleftrightarrow$ the  rigid GFTs counted by the $f_{\Sigma}$ map;
\item the GFTs in $T_g\big(\ol\Sigma_0; \{s_-\}\big)$ $\longleftrightarrow$ the GFTs for $(\Lambda_-, g_-)$.
\end{enumerate}
\end{lemma}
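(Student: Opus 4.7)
The plan is to establish the six bijections by a case-by-case analysis, leveraging the flow-invariance of the critical slices $\{s_-\}$, $\{s_M\}$, $\{s_m\}$ together with the product structure of the metric at these slices. First I would extend the metric $g$ from $\wt\Sigma$ to $\ol\Sigma_0$ so that it retains the product form $g_{\R} \times g_+$ on a neighborhood of $s = s_m$. In each region where $\ol\Sigma_0$ has the form $j^1(h(s) \cdot \Lambda)$ (with $h = h_-$, $h_+$, or $\ol h_+$), local difference functions factor as $F_{i,j}(s,x) = h(s) f_{i,j}(x)$ and
$$\nabla F_{i,j}(s,x) = \bigl(h'(s) f_{i,j}(x),\ h(s) \nabla f_{i,j}(x)\bigr).$$
At each of $s_-, s_M, s_m$ one has $h'(s) = 0$, so the $\partial_s$-component vanishes and the three slices are flow-invariant, which is the structural fact behind the decomposition in Lemma \ref{lem:T}.

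Bijections (1), (2), and (6) then follow exactly as in the proof of Proposition \ref{prop:GFT1}: at a critical slice $\{s = s_0\}$ the gradient flow of $F_{i,j}$ reduces to the positive rescaling $h(s_0) \nabla f_{i,j}$ on the $M$-factor, so after reparametrizing edges a GFT contained in the slice is the image of a unique GFT for the relevant 1-dimensional Legendrian ($\Lambda_+$ at $s_M, s_m$ and $\Lambda_-$ at $s_-$), and cusp edges match tautologically under the slice identification.

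For bijection (3), I would analyze any $\Gamma \in \T_g(\ol\Sigma_0; (s_M, s_m))$. By Lemma \ref{lem:T} its positive puncture is some $\wh{a}_i \in R(\ol\Sigma_0; \{s_M\})$, and by the same flow-invariance argument any negative puncture occurs at $s_m$ at chords $a_{j_1}, \ldots, a_{j_k}$. Separability of $F_{i,j}$ together with the product metric implies that the $M$-component of each edge is a reparametrization of a (possibly constant) trajectory of $-\nabla f_{i,j}$, so after collapsing $M$-constant edges the $M$-projection of $\Gamma$ is either constant at a critical point of some $f_{i,j}$ or is a genuine GFT $\Gamma_M$ for $(\Lambda_+, g_+)$. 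A dimension comparison using the grading shift $|\wh{a}_i| = |a_i|+1$ at $s_M$ and grading preservation of $R(\ol\Sigma_0; \{s_m\}) \cong R(\Lambda_+)$ at $s_m$ yields $\dim \Gamma = \dim \Gamma_M + 1$. Regularity of $(\Lambda_+, g_+)$ then rules out $\dim \Gamma_M < 0$, and rigidity of $\Gamma$ forces the $M$-projection to be constant at some $a_i$ with $k = 1$ and $a_{j_1} = a_i$. Solving the remaining 1-dimensional problem in $s$ (with $\ol h_+(s) f_{i,i}(a_i) > 0$ monotonically decreasing in $s$) shows that $\Gamma$ is the unique edge carried by the negative gradient flow line of $\ol h_+$ from $s_M$ to $s_m$ lifted trivially along $x = a_i$, giving the claimed bijection.

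Bijections (4) and (5) follow from the equality $\ol\Sigma_0 = \wt\Sigma$ on $J^1((s_- - \epsilon, s_M] \times M)$ together with the agreement of metrics there. For (4), flow-invariance of $\{s_M\}$ shows that any GFT for $\ol\Sigma_0$ with positive puncture in $R(\ol\Sigma_0; [s_-, s_M))$ stays entirely in $[s_-, s_M)$: a negative puncture at $s_M$ would, by the unstable-$s$ argument used above, confine the entire tree to $\{s_M\}$, contradicting the location of the positive puncture. Such a $\Gamma$ is therefore identified with a GFT for $\wt\Sigma$ with the same endpoint data, which is exactly what the differential $\partial_\Sigma$ of $\alg(\Sigma, g)$ counts. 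For (5), $\T^2_g(\ol\Sigma_0; (s_-, s_M))$ corresponds tautologically to those GFTs for $\wt\Sigma$ in $(s_-, s_M)$ with positive puncture at some $\wh{a} \in R(\wt\Sigma; \{s_M\})$, which by Definition \ref{defnf} are precisely the trees counted by $f_\Sigma$. The main obstacle in the proof is bijection (3), where the grading shift at $s_M$ must be matched to the extra $s$-direction of the unstable manifold and where regularity of $(\Lambda_+, g_+)$ is used to eliminate non-constant $M$-projections; the other five bijections are essentially formal consequences of flow-invariance and of the agreement of $\ol\Sigma_0$ and $\wt\Sigma$ in the lower region.
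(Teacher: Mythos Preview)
Your proposal is correct and follows essentially the same approach as the paper's proof. The paper handles (1), (2), (6) by reference to Proposition~\ref{prop:GFT1}, handles (3) by pointing to the identity-cobordism computation in the proof of Proposition~\ref{prop:imfunctor} (which is exactly the dimension argument $\dim\Gamma = \dim\Gamma_M + 1$ you spell out), and handles (4), (5) by noting that $\ol\Sigma_0$ and $\wt\Sigma$ agree on $J^1([s_-,s_M]\times M)$; your extra care in (4) about excluding negative punctures at $s_M$ is already absorbed into the decomposition of Lemma~\ref{lem:T}, so it is sound but redundant.
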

 \begin{proof} 
 Similar to the proof of Proposition \ref{prop:GFT1}, 
the construction of $\ol h_+(s)$ and the assumption on $g$ imply (1), (2), and (6). The bijection (3) follows from the computation of $f_\Sigma$ in the case of the identity cobordism as in the proof of Proposition \ref{prop:imfunctor}.
(4) and (5) follow since the GFTs of $(\overline{\Sigma}_0,g)$ and $(\widetilde{\Sigma},g)$ in $[s_-,s_M]\times M$ are in bijection as $\widetilde{\Sigma}$ and $\overline{\Sigma}_0$ are identical in this region.  
\end{proof}

Typically, the metric $g$ will not be regular for $\ol\Sigma_0$.
Indeed, according to the correspondence (1) in the Lemma \ref{BarGFT}, a rigid GFT $\Gamma\in \T_{g_+}(a; b_1, \dots, b_m)$ of $\Lambda_+$ corresponds to a GFT $\wh \Gamma\in \T_g(\wh a; \wh b_1, \dots, \wh b_m)$ in $\T_g\big(\ol \Sigma_0; \{s_M\}\big)$.
However, this correspondence does not preserve the formal dimension of trees since (using (\ref{eq:Tformal})) we have
\begin{align*}
\dim(\widehat{\Gamma}) & = |\widehat{a}| -|\widehat{b}_1|- \cdots - |\widehat{b}_m| -1 +\mu(A(\widehat{\Gamma})) \\
 & =  (|a|+1) -(|b_1|+1) - \cdots - (|b_m|+1) -1 +\mu(A(\Gamma)) \\
 & = \dim(\Gamma) +(1-m). 
\end{align*}
In particular, the slice $\{s=s_M\}$ may contain trees of negative formal dimension.   
Thus, a perturbation is required to achieve regularity for  $(\ol\Sigma_0,g)$.

\begin{proposition} \label{prop:sigmabar}  
It is possible to perturb $(\overline{\Sigma}_0,g)$ by an arbitrarily small Legendrian isotopy
supported in a small neighborhood of $\overline{\Sigma}_0 \cap \{s = s_M\}$ to a Legendrian $(\overline{\Sigma}, \overline{g})$ satisfying:
\begin{enumerate}
\item $\overline{g}$ is regular to $\ol\Sigma$;  
\item $\mathcal{A}(\Sigma,g)$ is a sub-DGA of $(\mathcal{A}(\ol\Sigma, \overline{g}), \partial)$; and
\item $(\mathcal{A}(\ol\Sigma, \overline{g}), \partial)$ is a mapping cylinder DGA (as introduced in Section \ref{sec:mapcyl}) with corresponding DGA map
$$
f_{\Sigma}: \alg(\Lambda_+,g_+) \rightarrow \alg(\Sigma,g)
$$
where we use the bijections from Lemma \ref{lem:Reeb3} to identity
\[
\mathcal{A}(\ol\Sigma) = \alg(\Lambda_+) \ast \wh{\alg(\Lambda_+)}  * \alg(\Sigma).
\]
\end{enumerate}

\end{proposition}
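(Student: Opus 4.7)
The plan is to construct a small Legendrian perturbation $\overline{\Sigma}$ of $\overline{\Sigma}_0$ together with a perturbed metric $\overline{g}$, both supported in a neighborhood of the slice $\{s=s_M\}$, and to analyze the rigid GFTs of $(\overline{\Sigma},\overline{g})$ region by region. The obstruction to regularity of $(\overline{\Sigma}_0, g)$ is that every local difference function near $\{s=s_M\}$ factors as $F_{i,j}(s,x) = \overline{h}_+(s) f_{i,j}(x)$ with $\overline{h}'_+(s_M)=0$, so $\{s=s_M\}$ is preserved by $-\nabla F$; combined with the grading shift $|\widehat{a}_i|=|a_i|+1$, this produces trees of arbitrarily negative formal dimension in $\mathcal{T}_g(\overline{\Sigma}_0;\{s_M\})$. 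I would perturb the sheets of $\overline{\Sigma}_0$ in a small neighborhood of $\{s=s_M\}$ so as to break the reflection symmetry across $\{s=s_M\}$, in the spirit of the transversality arguments in \cite{Ekh, EES, EK}. For small enough perturbation no new double points of the Lagrangian projection are created and none collide, so the bijections of Lemma \ref{lem:Reeb3} persist and give $\mathcal{A}(\overline{\Sigma}) \cong \mathcal{A}(\Lambda_+) \ast \widehat{\mathcal{A}(\Lambda_+)} \ast \mathcal{A}(\Sigma)$, while $\overline{g}$ remains cylindrical over $g_\pm$ at the Morse-minimum ends $s_-$ and $s_m$.

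For item (2), I would argue that any rigid GFT of $(\overline{\Sigma},\overline{g})$ with positive puncture $c \in R(\overline{\Sigma};[s_-,s_M))$ is contained in $\{s < s_M\}$ and has all of its negative punctures in $R(\overline{\Sigma};[s_-,s_M))$. Containment is the persistence of Lemma \ref{lem:T}: in the unperturbed situation the flow argument shows edges cannot cross $\{s=s_M\}$ and that a GFT with any negative puncture at some $\widehat{a}_j$ lies entirely at $\{s=s_M\}$, and for a sufficiently small perturbation Gromov-type compactness for GFTs (as in \cite{Ekh}) together with the action estimate $\fraka(a) > \sum_i \fraka(b_i)$ from Section \ref{sec:LCH} prevent any new rigid tree from $c$ from crossing $\{s=s_M\}$ or acquiring a $\widehat{a}_j$ puncture. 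The restriction of $\partial_{\overline{\Sigma}}$ to the subalgebra generated by $R(\overline{\Sigma};[s_-,s_M))$ therefore agrees with $\partial_\Sigma$ on $\mathcal{A}(\Sigma, g)$.

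For item (3) I analyze rigid GFTs of $(\overline{\Sigma},\overline{g})$ with positive puncture at $\widehat{a}_i$. Such a tree either (a) consists of the unique $-\nabla \overline{h}_+$ trajectory from $\widehat{a}_i$ to $a_i$ through $(s_M,s_m)$, producing the term $a_i$ in $\partial \widehat{a}_i$ (Lemma \ref{BarGFT}(3)); or (b) is a rigid tree in $[s_-,s_M]$ from $\widehat{a}_i$ to a word of Reeb chords in $R(\overline{\Sigma};[s_-,s_M))$, which by Lemma \ref{BarGFT}(5) and persistence under the small perturbation corresponds bijectively to a rigid tree counted by $f_\Sigma(a_i)$; or (c) is any remaining rigid tree, which necessarily contains at least one negative puncture at some $\widehat{a}_j$. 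Case (c) covers the trees that in the unperturbed picture were confined to $\{s=s_M\}$ (with every negative puncture a $\widehat{a}_j$) plus the rigid trees born from a generic perturbation of this degenerate family; in both cases the requirement that an edge cannot cross $\{s=s_M\}$ away from punctures forces at least one negative puncture at $\{s=s_M\}$, i.e.\ at some $\widehat{a}_j$. Summing over (a), (b), (c) yields
\[
\partial \widehat{a}_i = f_\Sigma(a_i) + a_i + \gamma_i, \qquad \gamma_i \in \mathcal{I}(\widehat{a}_1,\ldots,\widehat{a}_m),
\]
which is exactly the mapping cylinder condition from Proposition \ref{prop:DGAmap} applied to $f_\Sigma$.

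The principal obstacle will be constructing the perturbation in (1): one must achieve regularity for $\overline{g}$ without disrupting the compartmentalization of GFTs used in (2) and (3). This is a transversality statement for GFTs with a Morse \emph{maximum} end, parallel to but not covered by the Morse-minimum analysis of \cite[\S 3]{EK}, and it must be supplemented by a Gromov-type compactness argument in the GFT category ruling out the birth under small isotopy of rigid trees that straddle $\{s=s_M\}$ in an uncontrolled way. Once these analytic inputs are secured, the algebraic consequences encoded in (2) and (3) follow from the region decomposition of Lemma \ref{lem:T} combined with the combinatorics sketched above.
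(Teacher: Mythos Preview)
Your overall strategy matches the paper's, and your treatment of items (1) and (2) is essentially the same (though the paper's argument for (2) is simpler: the finitely many rigid trees with positive puncture in $[s_-,s_M)$ are already transversally cut out by the almost regular $g$ and, by the flow direction of $-\nabla F_{i,j}$ near $s_M$, have image bounded away from $\{s=s_M\}$, so a sufficiently local perturbation leaves them literally unchanged; no compactness or action estimate is needed).

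The genuine gap is in your case (c). Your assertion that ``an edge cannot cross $\{s=s_M\}$ away from punctures'' is precisely the feature the perturbation destroys: before perturbation the slice $\{s=s_M\}$ is flow-invariant because $\partial_s F_{i,j}(s_M,x)=\overline{h}'_+(s_M)f_{i,j}(x)=0$, but after perturbing $\overline{\Sigma}_0$ near $s_M$ this vanishing no longer holds and edges \emph{can} cross the slice. Concretely, a broken tree for $(\overline{\Sigma}_0,g)$ consisting of a top level $\Gamma'\in\mathcal{T}_g(\overline{\Sigma}_0;\{s_M\})$ from $\widehat{a}_i$ to $\widehat{a}_{j_1},\ldots,\widehat{a}_{j_s}$ together with the index-one trajectories $\widehat{a}_{j_k}\to a_{j_k}$ in $(s_M,s_m)$ has \emph{no} negative puncture at any $\widehat{a}_j$, and after perturbation could in principle glue to a single rigid tree contributing a term $a_{j_1}\cdots a_{j_s}\notin\mathcal{I}(\widehat{a}_1,\ldots,\widehat{a}_m)$ to $\partial\widehat{a}_i$. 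Your geometric barrier does not exclude this. The paper rules it out by a formal dimension count: viewing $\Gamma'$ as a (possibly broken) tree for $(\Lambda_+,g_+)$ gives $|a_i|\geq\sum_k|a_{j_k}|+1-\mu(\Gamma')$ by regularity of $g_+$, and combining with the non-negative dimension of each attached piece (which lies in a region where $g$ is already regular) one finds that any such broken tree with endpoints only in $\mathcal{A}(\Sigma)\cup\mathcal{A}(\Lambda_+)$ has formal dimension $\geq 1$. Hence under a generic small perturbation it cannot produce a rigid tree, and the only rigid contributions to $\partial\widehat{a}_i$ outside $\mathcal{I}(\widehat{a}_1,\ldots,\widehat{a}_m)$ are exactly your cases (a) and (b). This dimension argument, not slice-invariance, is the missing ingredient.
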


Note that in establishing item (2) we will show that $\mathcal{A}(\Sigma,g)$ is a DGA, so this will complete the proof of Theorem \ref{prop:SigmaDGA} (1).  

\begin{proof}
According to \cite{Ekh}, the regularity of $g$ can be achieved  with a small perturbation.  
The perturbation to produce $\ol\Sigma$ can be made in a small neighborhood of the slice $s=s_M$ so that the rigid GFTs whose positive puncture is not in the slice $s=s_M$ do not change in the perturbation.
As a consequence of Lemmas \ref{lem:Reeb3} and  \ref{BarGFT},   
$(\alg(\Sigma, g), \dd)$ and  $(\alg(\Lambda_+,g_+), \dd)$ are identified with sub-DGAs of  $\big(\alg(\ol \Sigma, \overline{g}), \dd\big)$.  In particular, $(\alg(\Sigma, g), \dd)$ is a DGA.  

It remains to show that the differential on $\mathcal{A}(\overline{\Sigma}, \overline{g})$ satisfies that for any $\wh a_i\in \wh{\alg(\Lambda_+)}$,
\begin{equation}  \label{eq:diffAplusi}
\partial \widehat{a}_i = f(a_i) + a_i + \gamma_i
\end{equation}
with $\gamma_i \in I(\widehat{a}_1, \ldots, \widehat{a}_n)$ (the $2$-sided ideal generated by  $\{\widehat{a}_1, \ldots, \widehat{a}_n\} = R(\overline{\Sigma}_0; \{s_M\})$).   
Since the flow trees from (3) and (5) of Lemma \ref{BarGFT} that would define lead to the $f(a_i)+a_i$ term are transversally cut out by $g$, they persist under small enough perturbation.  Thus, to show that (\ref{eq:diffAplusi}) can be achieved it is enough to verify that there are no broken flow trees for $(\overline{\Sigma}_0, g)$ from $\widehat{a}_i$ to $d_1, \ldots, d_r \in \mathcal{A}(\Sigma) \cup \mathcal{A}(\Lambda_+)$, $r \geq 0$ with non-positive formal dimension and having their first level (i.e. the one with the puncture at $\widehat{a}_i$) in $\mathcal{T}_g(\ol\Sigma_0; \{s_M\})$.  

Suppose that $\Gamma_0$ is such a broken tree.
As shown in Lemma \ref{lem:T}, the GFTs of $\ol\Sigma_0$ with a negative puncture at $R\big(\ol\Sigma_0;  \{s_M\}\big)$ stays in $\T_g\big(\ol\Sigma_0; \{ s_M\}\big)$.
Thus
 the union of those broken pieces of $\Gamma_0$ that are contained in $\mathcal{T}_g\big(\overline{\Sigma}_0; \{s_M\}\big)$ forms a broken tree, $\Gamma'$, from $\widehat{a}_i$ to some sequence of Reeb chords $\widehat{a}_{j_1}, \ldots, \widehat{a}_{j_s}$.
Then $\Gamma_0$ is obtained from $\Gamma'$ by attaching further broken trees $\Gamma_1, \ldots, \Gamma_s$ (each necessarily with image in $\T_g\big(\ol\Sigma_0; [s_-, s_M)\big)$ or $\T_g\big(\ol\Sigma_0; (s_M, s_m]\big)$) at the negative punctures of $\Gamma'$. 
 Now, when viewed as a broken tree for $(\Lambda_+, g_+)$, $\Gamma'$ must have non-negative formal dimension, so
\[
|a_i| \geq \sum_{k=1}^s |a_{j_k}| +1 -\mu(\Gamma') \geq \sum_{k=1}^r |d_k| +1-\mu(\Gamma_0).
\]  
At the second inequality we used the non-existence of trees of negative formal dimension in $\mathcal{T}_g\big(\overline{\Sigma};[s_-, s_M)\big)$ and $\mathcal{T}_g\big(\overline{\Sigma}_0;(s_M,s_m]\big)$,
 to estimate that if $\Gamma_k$ has endpoints at $x_1, \ldots, x_p$ then $\displaystyle{|a_{j_k}| = |\widehat{a}_{j_k}| - 1 \geq \sum_{i=1}^p |x_i|- \mu(\Gamma_k)}$, and we used that $\displaystyle{\mu(\Gamma_0)=\mu(\Gamma')+ \sum_{k=1}^s \mu(\Gamma_k)}$.
This implies 
$$
|\widehat{a}_i|  \geq \sum_{k=1}^r |d_k| -\mu(\Gamma_0)+2,
$$
i.e. $\dim(\Gamma_0) \geq 1$.
\end{proof}

With the form of the DGA of  $\ol\Sigma$ established we can now show that $f_{\Sigma}$ is a DGA map.

\begin{proof}[Proof of Theorem \ref{DGAmap}]
This follows from Proposition \ref{prop:sigmabar} together with Proposition \ref{prop:DGAmap}.  
\end{proof}

\subsection{Independence of choices and invariance under compactly supported Legendrian isotopy}
 Recall from Lemma \ref{lem:diffconandcom} that when $\Sigma_1$ and $\Sigma_2$ are conical Legendrian isotopic, they become Legendrian isotopic via a {\it compactly supported} Legendrian isotopy after concatenating $\Sigma_1$ with a certain standard form cobordism from $\Lambda_+$ to $\Lambda_+$.  In this section, we establish invariance of the immersed DGA map $M_\Sigma$ under compactly supported Legendrian isotopy of $\Sigma$.

\begin{proposition}\label{thm:inva}
Suppose that $\Sigma_1$ and $\Sigma_2$, are two conical Legendrian cobordisms from $(\Lambda_-,g_-)$ to $(\Lambda_+, g_+)$ connected by a compactly supported Legendrian isotopy, and let $M_{\Sigma_1}$ and $M_{\Sigma_2}$ be associated immersed DGA maps computed with respect to a choice of metrics $g_1$ and $g_2$ and end data $h^1_{\pm}$ and $h^2_{\pm}$.  Then, $M_{\Sigma_1}$ and $M_{\Sigma_2}$ are immersed homotopic.
\end{proposition}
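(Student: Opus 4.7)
The plan is to use the mapping cylinder DGA structure of $\mathcal{A}(\overline{\Sigma}_i)$ established in Proposition \ref{prop:sigmabar}, combined with the Legendrian-isotopy invariance of LCH for Morse-ended surfaces, in order to apply the algebraic criterion for immersed homotopy provided by Proposition \ref{prop:DGAhmtp}.

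First I would reduce to the case in which the end data and metrics agree near the three critical slices: $h^1_\pm = h^2_\pm$, $\overline{h}^1_+ = \overline{h}^2_+$, and $g_1 = g_2$ on neighborhoods of $\{s_-\}$, $\{s_M\}$, and $\{s_m\}$.  This reduction is a standalone independence statement for a fixed $\Sigma$: two sets of choices can be interpolated, producing a Legendrian isotopy of $\overline{\Sigma}$ with (possibly moving) Morse ends, which after conjugation by a family of fiber-preserving contactomorphisms of $J^1(\R_{>0}\times M)$ becomes an isotopy with fixed-position ends.  The argument given below, applied with $\Sigma_1 = \Sigma_2$, then yields the desired immersed homotopy between the two resulting $M_\Sigma$.

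With matching end data in place, the compactly supported Legendrian isotopy from $\Sigma_1$ to $\Sigma_2$, supported in $J^1((e^{-T}, e^T) \times M) \subset J^1((s_-, s_M) \times M)$, extends verbatim to a compactly supported isotopy $\overline{\Sigma}_t$ from $\overline{\Sigma}_1$ to $\overline{\Sigma}_2$ that is constant in a neighborhood of each critical slice; the perturbation in Proposition \ref{prop:sigmabar} can be chosen identically near $s = s_M$ at both endpoints.  After a small perturbation to make the family admissible in the sense of Section \ref{sec:continuation}, Proposition \ref{prop:EK} provides a stable tame isomorphism
\[
\Phi : \mathcal{A}(\overline{\Sigma}_1) * S_1 \longrightarrow \mathcal{A}(\overline{\Sigma}_2) * S_2,
\]
and Lemma \ref{lem:identity2}(2) shows that $\Phi$ restricts to the identity on the sub-DGAs $\mathcal{A}(\Lambda_-)$ (at $s_-$) and $\mathcal{A}(\Lambda_+)$ (at $s_m$).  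Moreover, because every birth/death pair and every $(-1)$-disk event in the admissible isotopy involves only Reeb chords with $s \in (e^{-T}, e^T)$, and because the disk-version of Part (2) of Proposition \ref{prop:GFT1} rules out negative punctures at $s = s_M$ or $s = s_m$ for holomorphic disks with positive puncture in the $\Sigma$-region, the explicit formulas for continuation maps in Section \ref{sec:continuation} show that $\Phi$ restricts to a stable tame isomorphism $\Phi' : \mathcal{A}(\Sigma_1) * S_1 \to \mathcal{A}(\Sigma_2) * S_2$.

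To conclude, I would apply Proposition \ref{prop:DGAhmtp} to the mapping cylinder DGAs $\mathcal{A}(\overline{\Sigma}_i) * S_i$ (themselves mapping cylinder DGAs for the stabilized maps $\iota_i \circ f_{\Sigma_i}:\mathcal{A}(\Lambda_+)\to \mathcal{A}(\Sigma_i)*S_i$) with $\phi = \Phi$; the hypotheses $\Phi|_{\mathcal{A}(\Lambda_+)} = \mathit{id}$ and $\Phi(\mathcal{A}(\Sigma_1)*S_1) \subset \mathcal{A}(\Sigma_2)*S_2$ are exactly what has just been verified, and the conclusion $\Phi' \circ f_{\Sigma_1} \simeq f_{\Sigma_2}$, combined with $\Phi'|_{\mathcal{A}(\Lambda_-)} = \mathit{id}$, is precisely the required immersed homotopy $M_{\Sigma_1} \simeq M_{\Sigma_2}$.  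The main obstacle is the combinatorial verification that $\Phi$ truly restricts to $\mathcal{A}(\Sigma_i)*S_i$; this requires careful bookkeeping of the $s$-location of every Reeb chord participating in any $(-1)$-disk or birth/death event during the isotopy, and relies crucially on the compact support of the isotopy together with the confinement statements of Section \ref{sec:Morse}.
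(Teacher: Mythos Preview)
Your overall architecture is exactly the paper's: pass to the Morse cobordisms $\overline{\Sigma}_i$, use Proposition \ref{prop:sigmabar} to identify $\mathcal{A}(\overline{\Sigma}_i)$ as a mapping cylinder DGA for $f_{\Sigma_i}$, obtain a continuation stable tame isomorphism $\Phi$ from the invariance of LCH for Morse-ended surfaces, check $\Phi|_{\mathcal{A}(\Lambda_\pm)}=\mathit{id}$ via Lemma \ref{lem:identity2}, and feed this into Proposition \ref{prop:DGAhmtp}.  Your identification of the crux---showing $\Phi$ restricts to a stable tame isomorphism $\mathcal{A}(\Sigma_1)*S_1\to\mathcal{A}(\Sigma_2)*S_2$---is also correct.

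The gap is in how you justify that restriction.  You appeal to a ``disk-version of Part (2) of Proposition \ref{prop:GFT1}'' to rule out $(-1)$-disks with positive puncture in the $\Sigma$-region having negative punctures at $s=s_M$.  No such statement is available.  The holomorphic confinement in \cite{EK} (invoked in Proposition \ref{prop:EK} and the remark following it) applies only at Morse \emph{minimum} ends, where the almost complex structure is required to be standard; $s_M$ is a Morse maximum, and after the perturbation in Proposition \ref{prop:sigmabar} the Legendrian is no longer of product form there, so even imposing a split $J$ near $s_M$ would not give the needed boundary behavior.  The GFT argument in Proposition \ref{prop:GFT1}(2) relies on the sign of $\partial_s$-components of gradient vectors near a maximum, and this has no direct holomorphic analog.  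So your ``combinatorial verification'' cannot be completed as stated.

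The paper replaces this geometric step with an action argument.  In Lemma \ref{invhs} the common end data $h_\pm$ is chosen so that, uniformly in $t$, every Reeb chord in $R(\widetilde{\Sigma}_t;[s_-,s_M))$ has action $<B-1$ while every chord at $s_M$ (and, after extending to $\overline{h}_+$, at $s_m$) has action $>B+1$.  Proposition \ref{prop:inv2} then shows the continuation map preserves the sublevel sub-DGA $\mathcal{A}^{<B}=\mathcal{A}(\Sigma_i)$, with no need for any confinement at $s_M$.  Your reduction to common end data should also be tightened: the paper's Lemma \ref{invhs} proves the stronger statement that the immersed map $M_\Sigma$ is literally \emph{unchanged} under the specific change of end data (via an explicit bijection of GFTs), rather than appealing circularly to the proposition being proved.
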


\begin{proof}  Let $\Sigma_t$, $1 \leq t \leq 2$, be a compactly supported Legendrian isotopy from $\Sigma_1$ to $\Sigma_2$.  Fix $K \gg 0$ so that outside of $(e^{-K},e^{K}) \times M$ the isotopy  $\Sigma_t$ remains constant and both metrics $g_1$ and $g_2$ agree with the products $g_\R \times g_{\pm}$.  We begin by adjusting $\Sigma_1$ and $\Sigma_2$ to have common end data.   

\begin{lemma}\label{invhs}
There exists a choice of end data $h_{\pm}$ such that 
\begin{enumerate}
\item $h_{\pm}(s) =s$ in $[e^{-K}, e^K]$;
\item there exists $B>0$ such that for any $1 \leq t \leq 2$ the action of any Reeb chords $c \in R(\widetilde{\Sigma}_t; [s_-,s_M))$ and $\widehat{a} \in R(\widetilde{\Sigma}_t; \{s_M\})$ satisfies
\[
\mathfrak{a}(c) < B-1 < B+1< \mathfrak{a}(\widehat{a});
\]  
\item both $(\Sigma_1,g_1)$ and $(\Sigma_2, g_2)$ are almost regular with respect to $h_{\pm}$, i.e. the metrics $g_i$ are almost regular when $\widetilde{\Sigma}_i$ is constructed using the end data $h_\pm$; and
\item the immersed DGA maps $M_{\Sigma_1}$ and $M_{\Sigma_2}$ are unchanged when the end data $h^1_{\pm}$ and $h^2_{\pm}$ are replaced with $h_{\pm}$.
\end{enumerate}
\end{lemma}

\begin{proof}[Proof of Lemma \ref{invhs}] 
By the construction in Section \ref{sec:wtSigma}, one can construct $h_+(s)$ so that its maximum $s_M$ is large enough (bigger than the maxima of $h^1_+$ and $h^2_+$) so that $(1)$ and $(2)$ are satisfied.

To prove the last two statements, we consider first the effect of changing the positive end data.
For $\Sigma=\Sigma_k$ with $k\in\{1,2\}$ fixed, let $\wt\Sigma^i$, for $i=1,2$ be two Morse cobordisms constructed from $\Sigma$ using different positive end data $h_{i}$ having their maxima at $s^i_M$ and satisfying $h_1(s) = h_2(s)$ for $s \leq e^{T}$ where the metric $g_k$ has $g_k = g_\R\times g_+$ on $[e^T, +\infty)\times M$   with the maximum at $s^i_M$. 
We are left to prove that the flow lines of $\wt\Sigma^i$ on $(e^T,s^i_M)\times  M$ for $i=1,2$ are in bijection.

Observe from the construction of $\wt\Sigma^i$ that the gradient vector of a local difference function $F_{i,j}(s,x)=h_i(s)f_{i,j}(x)$  of $\wt\Sigma^i$ on $ [e^T,s^i_M]\times M$ is given by
$$\big(\ h'_i(s) f_{i,j}(x), \ h_i(s) \nabla f_{i,j}(x)\ \big),$$
where $f_{i,j}(x)$ is a local difference function of $\Lambda_+$.
These vectors can be directed by
 $$X_i=\left(\ \displaystyle{\frac{h'_i(s)}{h_i(s)} f_{i,j}(x)},\   \nabla f(x)\ \right).$$
To get the bijection between $X_i$, we want to find an invertible map
 $$\begin{array} {rcl}
\Psi: [e^T,s^1_M]\times M & \to & [e^T,s^2_M]\times M\\
 (s, x) & \to & (t, x),
 \end{array}
 $$
 where $t=\psi(s)$ such that $\Psi_* X_1=X_2$. 
 In other words, we need to solve the differential equation $$\displaystyle{\frac{h'_1(s)}{h_1(s)} \frac{dt}{ds}= \frac{h'_2(t)}{h_2(t)}}$$
to get $t=\psi(s)$ with the initial condition $\psi(e^T)=e^T$.
Thus $$\displaystyle{\int_{e^T}^{s} \frac{h_1(p)}{h'_1(p)} \ dp=  \int_{e^T}^{\psi(s)} \frac{h_2(q)}{h'_2(q)} \ dq}.$$ 
It follows from the fact that $h'_i(s)>0$ and $h_i(s)>0$ for $s \in  [e^T,s^i_M)$
that the anti-derivative functions $\displaystyle{\int_{e^T}^{s} \frac{h_i(p)}{h'_i(p)} \ dp}$ are well-defined strictly increasing functions and thus are invertible.
Therefore the above differential equation can be solved and is invertible.
Moreover, using the local formula of $h_i(s)$ near $s_M^i$, one can get $\psi(s^1_M)=s^2_M$.

The effect of change of $h_-(s)$ can be studied following  a similar argument as above.
\end{proof}

Using the common end data $h_\pm$ as in Lemma \ref{invhs}, we can now form for all $ 1 \leq t \leq 2$ the Morse cobordisms $\widetilde{\Sigma}_t$.   Moreover, we can extend $h_+$ to $\overline{h}_+$ to form $(\overline{\Sigma}_t)_0$ as in Section \ref{sec:Sigmabar} with the property that the new Reeb chords at $s_m$ all have action larger than $B$.  After perturbing $(\overline{\Sigma}_t)_0$  near $t=1$ and $t=2$ and modifying the metrics $g_i$ to $\overline{g}_i$ for $i=1,2$, we can produce a new isotopy $\overline{\Sigma}_t$ with $(\overline{\Sigma}_i, \overline{g}_i)$ as in Proposition \ref{prop:sigmabar}.   Moreover, (after possibly applying a fiber rescaling) we can apply Proposition \ref{prop:EK} to arrive at almost complex structures $J_1$ and $J_2$, regular for $\overline{\Sigma}_1$ and $\overline{\Sigma}_2$ and standard at the ends such that 
\[
\big(\mathcal{A}(\overline{\Sigma}_i, J_i), \partial\big) = \big(\mathcal{A}(\overline{\Sigma}_i, \overline{g}_i), \partial\big), \quad i=1,2.
\]
Next, observe that, for all $1 \leq t \leq 2$, the action of Reeb chords satisfies $\mathfrak{a}(b) < B-1< \mathfrak{a}(c)$  when $b$ is any Reeb chord of $\overline{\Sigma}_t$ arising from a Reeb chord in $R(\widetilde{\Sigma}_t, [s_-,s_M))$, and $c$ is any Reeb chord of $\overline{\Sigma}_t$ arising  from the critical points of $h_+$ at $s=s_M$ or $s=s_m$.  
Thus, the Proposition \ref{prop:inv2} implies that the stable tame isomorphism $\overline{\varphi}$ associated to a suitable choice of $J_t$ connecting $J_1$ and $J_2$ restricts to a stable tame isomorphism between the sub-algebras $\mathcal{A}(\Sigma_i) \subset \mathcal{A}(\overline{\Sigma}_i)$. Moreover, Lemma \ref{lem:identity2} shows that $\overline{\varphi}$ restricts to the identity on $\mathcal{A}(\Lambda_\pm)$.  
 In summary, there exists a stable tame isomorphism between two mapping cylinder DGAs for $f_{\Sigma_1}$ and $f_{\Sigma_2}$, 
\[
\overline{\varphi}: \mathcal{A}(\Lambda_+) * \widehat{\mathcal{A}(\Lambda_+)} *\mathcal{A}(\Sigma_1)*S \rightarrow  \mathcal{A}(\Lambda_+) * \widehat{\mathcal{A}(\Lambda_+)} *\mathcal{A}(\Sigma_2)*S', 
\]
 that restricts to a stable tame isomorphism $\varphi:\mathcal{A}(\Sigma_1)*S \rightarrow \mathcal{A}(\Sigma_2)*S'$, and has $\overline{\varphi}|_{\mathcal{A}(\Lambda_\pm)} = \mathit{id}_{\mathcal{A}(\Lambda_\pm)}$.  
 Then, Proposition \ref{prop:DGAhmtp} applies to produce a DGA homotopy $\varphi \circ f_{\Sigma_1} \simeq f_{\Sigma_2}$ so that the required immersed homotopy between $M_{\Sigma_1}$ and $M_{\Sigma_2}$ is given by the diagram
\[
\xymatrix{ & &  \mathcal{A}(\Sigma_1)*S \ar[dd]^{\varphi} & &  \\ \mathcal{A}(\Lambda_+) \ar[rru]^{f_{\Sigma_1}} \ar[rrd]_{f_{\Sigma_2}} & & & & \alg(\Lambda_-). \ar[llu]_{i_1} \ar[lld]^{i_2}\\  & &  \mathcal{A}(\Sigma_2)*S'  & & } 
\]  
\end{proof}

\subsection{Concatenation and invariance under conical Legendrian isotopy}\label{sec:concate}
The following proposition establishes the behavior of the immersed DGA map under concatenation of conical Legendrian cobordisms.
Since we have not yet proven the invariance of the induced immersed DGA map under conical Legendrian isotopy (only compactly supported isotopy), we consider a fixed concatenation parameter $\tau$ as introduced in Section \ref{sec:Concatenate}.  
\begin{proposition} \label{prop:concatenate} For $k=0, 1, 2$, let $\Lambda_k \subset J^1M$ be a Legendrian equipped with a regular metric $g_k$ on $M$.  
For conical Legendrian cobordisms $\Sigma_1 : \Lambda_0 \rightarrow \Lambda_1$ and $\Sigma_2:\Lambda_1 \rightarrow \Lambda_2$, 
let 
\[ 
M_1 = \big(\alg(\Lambda_1) \stackrel{f_{\Sigma_1}}{\rightarrow} \alg(\Sigma_1) \stackrel{i_1}{\hookleftarrow} \alg(\Lambda_0) \big) \quad \mbox{and} \quad
M_2 = \big(\alg(\Lambda_2) \stackrel{f_{\Sigma_2}}{\rightarrow} \alg(\Sigma_2) \stackrel{i_2}{\hookleftarrow} \alg(\Lambda_1) \big)
\]
be the immersed DGA maps associated to $\Sigma_1$ and $\Sigma_2$ (with respect to some choice of metrics). 
Then, the immersed DGA map associated to $\Sigma_2 \circ_{\tau} \Sigma_1$ is immersed homotopic to the composition $M_1 \circ M_2$ (see Definition \ref{def:Comp}).
\end{proposition}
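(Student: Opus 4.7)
The plan is to realize $M_1 \circ M_2$ geometrically by inserting a Morse max--min bump into the cylindrical neck of $\Sigma_2 \circ_\tau \Sigma_1$, and then apply Proposition \ref{lem:CompositionAlt}(2). Fix $\tau$ so that $\Sigma := \Sigma_2 \circ_\tau \Sigma_1$ agrees with $j^1(s\cdot \Lambda_1 - A_1^+)$ on a cylindrical interval $(s_a, s_b)\times M$; pick $s_a < s_1 < s_2 < s_b$ and a positive smooth $h:\R_{>0}\rightarrow \R_{>0}$ with $h(s)=s$ outside $(s_a, s_b)$, a non-degenerate local maximum at $s_1$, and a non-degenerate local minimum at $s_2$. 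Let $\Sigma'$ be the conical Legendrian cobordism from $\Lambda_0$ to $\Lambda_2$ that agrees with $\Sigma$ outside $(s_a, s_b)\times M$ and equals $j^1(h(s)\cdot \Lambda_1 - A_1^+)$ inside. The interpolation $h_t = (1-t)s + t\,h(s)$ produces a compactly supported Legendrian isotopy from $\Sigma$ to $\Sigma'$, so by Proposition \ref{thm:inva} it suffices to show $M_{\Sigma'}\simeq M_1\circ M_2$.

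Build the Morse cobordism $\widetilde{\Sigma'}$ with Morse minimum at some $s_-<s_a$ and Morse maximum at some $s_M>s_b$, equipped with an almost regular metric $g$ of product form $g_\R\times g_M$ near each of $s_-, s_1, s_2, s_M$. By the arguments of Proposition \ref{prop:chords1} and Lemma \ref{lem:Reeb3}, the Reeb chords at $s=s_1$ are in degree-shifted bijection with $R(\Lambda_1)$, giving generators $\widehat{a}_1,\dots,\widehat{a}_m$ with $|\widehat{a}_i|=|a_i|+1$, while those at $s=s_2$ are in grading-preserving bijection with $R(\Lambda_1)$, giving generators $a_1,\dots,a_m$. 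Proposition \ref{prop:GFT1}(2) applies verbatim at each of the four slices where the $\partial_s$-component of every $-\nabla F_{i,j}$ vanishes, so no GFT can cross these slices except asymptotically at a puncture. Consequently the sub-algebras generated by chords in $[s_-,s_1)$ and $[s_2,s_M)$ are canonically identified with $\mathcal{A}(\Sigma_1)$ and $\mathcal{A}(\Sigma_2)$ respectively (with the inclusions $\mathcal{A}(\Lambda_0)\hookrightarrow \mathcal{A}(\Sigma_1)$ and $i_2:\mathcal{A}(\Lambda_1)\hookrightarrow \mathcal{A}(\Sigma_2)$ preserved), so $\mathcal{A}(\Sigma')=\mathcal{A}(\Sigma_2)\ast\widehat{\mathcal{A}(\Lambda_1)}\ast\mathcal{A}(\Sigma_1)$ as algebras; moreover GFTs with positive puncture at $s_M$ stay in $[s_2,s_M]$, so $f_{\Sigma'}$ coincides with $f_{\Sigma_2}$ composed with the inclusion $\mathcal{A}(\Sigma_2)\hookrightarrow\mathcal{A}(\Sigma')$.

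The final step is to verify $\partial\widehat{a}_i = f_{\Sigma_1}(a_i) + i_2(a_i) + \gamma_i$ with $\gamma_i\in\mathcal{I}(\widehat{a}_1,\dots,\widehat{a}_m)$: the single-edge GFT whose $M$-component is constant at the critical point $x_0$ of $f_{i,j}$ corresponding to $a_i$ and whose $s$-component solves $\dot{s}=-h'(s)f_{i,j}(x_0)$ from $s_1$ to $s_2$ is rigid and contributes $i_2(a_i)$; GFTs whose initial edge leaves $\widehat{a}_i$ in the $-s$ direction immediately enter the Reeb-chord-free cylindrical top of $\Sigma_1$ and then $\Sigma_1$ proper, giving a bijection with the GFTs defining $f_{\Sigma_1}(a_i)$; and every remaining rigid GFT must include a negative puncture at some $\widehat{a}_j$ by the no-crossing property at $s=s_1$ combined with a dimension/sheet-tracking analogous to the end of the proof of Proposition \ref{prop:sigmabar}. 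Proposition \ref{lem:CompositionAlt}(2) then yields $M_{\Sigma'}\simeq M_1\circ M_2$, completing the proof. The main obstacle is this last bookkeeping: ruling out stray rigid trees with positive puncture $\widehat{a}_i$ whose negative punctures all lie outside the slice $s=s_1$ but which are not of the two expected forms requires the same type of broken-tree/index argument used for the $\gamma_i$ term in Proposition \ref{prop:sigmabar}.
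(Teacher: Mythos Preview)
Your approach is essentially the same as the paper's: insert a Morse max--min bump in the cylindrical neck, invoke compact-isotopy invariance (Proposition \ref{thm:inva}) to reduce to $\Sigma'$, identify $\mathcal{A}(\Sigma')$ with the DGA $\mathcal{D}$ of Proposition \ref{lem:CompositionAlt}(1), and then apply part (2). The paper packages the middle step slightly more efficiently by observing that the region $[s_-,s_2]$ (your notation) is literally an instance of $(\overline{\Sigma}_1)_0$ and the region $[s_2,s_M]$ is an instance of $\widetilde{\Sigma}_2$, so that the entire verification of $\partial\widehat{a}_i = f_{\Sigma_1}(a_i)+i_2(a_i)+\gamma_i$ and of $f_{\Sigma'}=f_{\Sigma_2}$ can be read off directly from Lemma \ref{BarGFT} and Proposition \ref{prop:sigmabar} rather than being argued again from scratch; but your more explicit breakdown into ``initial edge goes right / goes left / remainder'' leads to the same conclusion and correctly isolates the one genuinely technical step (the perturbation and broken-tree index argument near $s=s_1$) as the content borrowed from Proposition \ref{prop:sigmabar}.
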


\begin{proof}  Note that from Theorems \ref{thm:inva} and Proposition \ref{prop:Comp}, when proving the result we are free to modify any of $\Sigma_1$, $\Sigma_2$ or $\Sigma_2\circ_{\tau} \Sigma_1$ by a compactly supported Legendrian isotopy and to use any choice of regular metrics (with the correct form near the positive and negative ends of the Morse cobordisms).

Construct the Legendrian $\Sigma_2 \circ_{\tau} \Sigma_1$ as in Section \ref{sec:Concatenate}.
Then, there is an interval $(C-1,C+1) \subset \R_{>0}$ that separates the non-conical parts of $\Sigma_1$ and $\tau \odot\Sigma_2$ from one another in $J^1(\R_{>0}\times M)$, and so that, in $J^1((C-1, C+1) \times M) \cap \Sigma_2 \circ_{\tau} \Sigma_1$ agrees with $j^1(s \cdot \Lambda_2 - A_2)$ for some $A_2 \in \R$.
  Modify $\Sigma_2 \circ_{\tau} \Sigma_1$ to $\Sigma'$ by a Legendrian isotopy supported in $J^1((C-1, C+1) \times M)$ so that $\Sigma'$ appears there as $j^1(h(s) \cdot \Lambda_2 - A_2)$ where $h(s) = s$ near $C\pm 1$ and has a single non-degenerate maximum (resp. minimum) at $s= s_a$ (resp. $s= s_b$) with $C-1 < s_a < s_b < C+1$.  Then, the underlying algebra for the DGA of $\Sigma'$ has the form
\[
\mathcal{A}(\Sigma') = \mathcal{A}(\Sigma_1) * \widehat{\mathcal{A}(\Lambda_2)} * \mathcal{A}(\Sigma_2).
\]
For computing the immersed DGA map $M_{\Sigma'}$  induced 
from the associated Morse cobordism $\widetilde{\Sigma}'$ on $J^1([s_-,s_+] \times M)$, we consider a metric $g$ that agrees with $g_\R \times g_0$ near $\{s_-\}\times M$; with $g_\R\times g_1$ on $(C-1,C+1)\times M$; and with $g_\R \times g_2$ near $\{s_M\}\times M$.  By the construction of $g$, GFTs in $[s_-, s_b] \times M$ are as in $(\overline{\Sigma}_1)_0$ (from Section \ref{sec:Sigmabar}), while GFTs in $[s_b, s_M]\times M$ are as in $\widetilde{\Sigma}_2$ (from Section \ref{sec:wtSigma}).  Therefore, making a small perturbation near $s=s_a$ to arrange regularity and arguing as in the proof of Proposition \ref{prop:sigmabar} we see that the 
resulting immersed map $\Sigma'$ is  related to $M_1$ and $M_2$ 
as in (1) of Proposition \ref{lem:CompositionAlt}.  Thus, the result follows from applying (2) of Proposition \ref{lem:CompositionAlt}.  
\end{proof}

With the formula for concatenation we can now complete the remaining proofs of the theorems stated in Section \ref{sec:functorI}.
\begin{proof}[Proof of Theorems \ref{inva} and \ref{thm:con}]
Theorem \ref{thm:con} follows from Proposition \ref{prop:wd}, 
Theorem \ref{inva}, and Proposition \ref{prop:concatenate}.
  To prove Theorem \ref{inva}, by Lemma \ref{lem:diffconandcom} and Theorem \ref{thm:inva}, we are left to check the invariance of the induced immersed DGA map under global shift and concatenation with a cobordism of the form $\Sigma_0=j^1(s\cdot \Lambda_+ +h(s))$  as in the statement of Lemma \ref{lem:diffconandcom}. 
The first part is trivial.
As to the second part, observe from the construction of $\Sigma_0$ that the immersed DGA map induced by $\Sigma_0$ is   
$$\alg(\Lambda_+) \stackrel{id}\rightarrow \alg(\Lambda_+) \stackrel{id}\hookleftarrow \alg(\Lambda_+).$$
This is because the local difference functions $F_{i,j}$ (and so also all GFTs) for $\Sigma_0$ are exactly the same as for the trivial conical cobordism $j^1(s\cdot \Lambda_+)$ for which $f_\Sigma$ was computed in the proof of Proposition \ref{prop:imfunctor}.  [Since the $h(s)$ term is simultaneously added to all sheets it cancels when we subtract.]    
By the concatenation formula given in the Proposition \ref{prop:concatenate}, we have that the immersed DGA map induced by the concatenation $\Sigma'=\Sigma_0\circ \Sigma$ is immersed homotopic to the immersed DGA map induced by $\Sigma$.
\end{proof}

\section{Augmentations from immersed Lagrangian fillings} \label{sec:aug}

 In this section we verify Theorem \ref{thm:induceaug} stated in the introduction about the invariance (up to DGA homotopy) of the set of augmentations induced by a good immersed Lagrangian filling. See Proposition \ref{prop:aug}.  We then give an example of a family of Legendrian knots with augmentations that can be induced by immersed fillings but not by any  oriented embedded filling.  Along the way, we construct good Lagrangian cobordisms with a single double point associated to a clasp move in the front projection of a Legendrian knot.

\subsection{Induced augmentation sets}
Recall that when $(\mathcal{A}, \partial)$ is a $\Z/n$-graded DGA and $\rho$ is a divisor of $n$, a {\bf $\rho$-graded augmentation} is an unital algebra homomorphism $\epsilon:\mathcal{A} \rightarrow \Z/2$ satisfying $\epsilon(1) = 1$, $\epsilon \circ \partial = 0$, and $\epsilon(a) = 0$ when $|a| \neq 0 \, \mbox{mod}\, \rho$.  Equivalently, $\epsilon$ is a unital homomorphism of $\Z/\rho$-graded DGAs from $(\mathcal{A}^\rho, \partial)$ to $(\Z/2,0)$ where $\mathcal{A}^\rho$ denotes $\mathcal{A}$ with the grading collapsed mod $\rho$.    
In the literature, $0$-graded augmentations (where $n=\rho =0$) are often referred to simply as {\it graded augmentations} or as {\it $\Z$-graded augmentations}.  We write $\mathit{Aug}^\rho(\mathcal{A})$ for the set of all $\rho$-graded augmentations of $\mathcal{A}$. 

Now, let $(\Lambda_+,\nu)$ be a Legendrian knot equipped with a $\Z/n$-graded Maslov potential, and let $\Sigma: \emptyset\rightarrow \Lambda_+$ be a conical Legendrian filling of $\Lambda_+$ corresponding to a good immersed Lagrangian filling $L$.  Moreover, suppose that $\Sigma$ is equipped with a $\Z/\rho$-valued Maslov potential, $\eta$, with the property that $\nu \equiv \eta|_{\Lambda_+} \, \mbox{mod}\, \rho$.  In particular, $\rho$ must divide the Maslov number $m(\Sigma)$.  Then, for any choice of almost regular metric $g$, the induced map $f_\Sigma: \mathcal{A}(\Lambda_+,g_+) \rightarrow \mathcal{A}(\Sigma,g)$ preserves $\Z/\rho$-gradings, and for any $\rho$-graded augmentation $\alpha:\mathcal{A}(\Sigma) \rightarrow \Z/2$ we have an {\bf induced augmentation} of $\Lambda_+$,
\[
\epsilon_{(\Sigma, \alpha)} = \alpha \circ f_{\Sigma}
\]
which is also $\rho$-graded.  Letting $\mathit{Aug}^\rho(\Lambda_+)/\!\!\sim$ denote the set of all $\rho$-graded augmentations of $\Lambda_+$ up to DGA homotopy we can then define the {\bf induced augmentation set} 
\[
I^\rho_\Sigma := \{[\epsilon_{(\Sigma, \alpha)}]\,|\, \alpha \in \mathit{Aug}^\rho(\mathcal{A}(\Sigma))\} \subset \mathit{Aug}^\rho(\Lambda_+)/\!\!\sim.
\]

\begin{proposition} \label{prop:aug} For any fixed regular metric $g_+$ on $\Lambda_+$, the induced augmentattion set $I^\rho_\Sigma$ is a conical Legendrian isotopy invariant of $\Sigma$ (equivalently an invariant of $L$ up to good Lagrangian isotopy).
\end{proposition}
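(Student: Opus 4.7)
The plan is to deduce the invariance from Theorem \ref{inva}. Given two conical Legendrian fillings $\Sigma,\Sigma'$ of $\Lambda_+$ related by a conical Legendrian isotopy (carrying the Maslov potential $\eta$ through the isotopy), Theorem \ref{inva} produces an immersed homotopy between $M_\Sigma$ and $M_{\Sigma'}$.  Since $\Lambda_-=\emptyset$ the inclusion conditions on $i,i'$ are automatic, so what is asserted is: a stable tame isomorphism $\varphi\colon \mathcal{A}(\Sigma)\ast S \to \mathcal{A}(\Sigma')\ast S'$ together with a DGA homotopy $\varphi\circ\iota\circ f_{\Sigma}\simeq \iota'\circ f_{\Sigma'}$, where $\iota,\iota'$ are the stabilization inclusions.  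Everything is $\Z/\rho$-graded because the functor $F^\rho$ of Proposition \ref{prop:imfunctor} is $\Z/\rho$-graded once Maslov potentials are fixed consistently on both Legendrians.

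Next, I would define a transfer of augmentations $\alpha \mapsto \alpha'$.  Given $\alpha\in\mathit{Aug}^\rho(\mathcal{A}(\Sigma))$, first extend it to $\alpha^+\colon \mathcal{A}(\Sigma)\ast S \to \Z/2$ by setting $\alpha^+(a_i)=\alpha^+(b_i)=0$ for each stabilizing pair with $\partial a_i=b_i$.  This $\alpha^+$ is a $\rho$-graded augmentation: the chain map condition reduces to $\alpha^+(b_i)=0$, which holds by fiat, and vanishing on non-degree-$0$ elements is inherited from $\alpha$ (with stabilization generators being sent to $0$ regardless of their degree).  Then set
\[
\alpha' \;=\; \alpha^+\circ\varphi^{-1}\circ\iota'\colon \mathcal{A}(\Sigma')\to\Z/2.
\]
Since $\varphi^{-1}$ and $\iota'$ are $\rho$-graded DGA maps, $\alpha'$ is a $\rho$-graded augmentation of $\mathcal{A}(\Sigma')$.

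Now I would verify that $\epsilon_{(\Sigma,\alpha)}$ and $\epsilon_{(\Sigma',\alpha')}$ are DGA homotopic.  Post-composing the DGA homotopy $\varphi\circ\iota\circ f_\Sigma \simeq \iota'\circ f_{\Sigma'}$ with the DGA map $\alpha^+\circ\varphi^{-1}$ preserves DGA homotopy, yielding
\[
(\alpha^+\circ\varphi^{-1})\circ\varphi\circ\iota\circ f_{\Sigma}\;\simeq\;(\alpha^+\circ\varphi^{-1})\circ\iota'\circ f_{\Sigma'}.
\]
The left-hand side collapses to $\alpha^+\circ\iota\circ f_{\Sigma}=\alpha\circ f_{\Sigma}=\epsilon_{(\Sigma,\alpha)}$, and the right-hand side is $\alpha'\circ f_{\Sigma'}=\epsilon_{(\Sigma',\alpha')}$.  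Hence $[\epsilon_{(\Sigma,\alpha)}]=[\epsilon_{(\Sigma',\alpha')}]$ lies in $I^\rho_{\Sigma'}$, and since $\alpha$ was arbitrary, $I^\rho_\Sigma\subset I^\rho_{\Sigma'}$.  Reversing the roles of $\Sigma$ and $\Sigma'$ (i.e.\ applying the construction with $\varphi^{-1}$) yields the opposite inclusion.

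The argument is, at this point, largely formal; the genuine work has been done in Theorem \ref{inva}, and here we only exploit that composing a DGA homotopy on either side with a DGA map produces a DGA homotopy.  The main technical point to watch is the grading bookkeeping: one must check that the Maslov potential $\eta$ on $\Sigma$ deforms consistently through any conical Legendrian isotopy so that the $\Z/\rho$-grading is preserved throughout (this is true because a Legendrian isotopy induces a canonical identification of components, and Maslov potentials are locally constant), and that the trivial extension $\alpha^+$ is indeed $\rho$-graded regardless of the degrees of the stabilization generators.  With those checks in place, the proposition follows, and the good-Lagrangian-isotopy version is obtained from Proposition \ref{prop:Conical}.
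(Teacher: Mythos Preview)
Your proof is correct and follows essentially the same approach as the paper's: both invoke Theorem~\ref{inva} to obtain the stable tame isomorphism $\varphi$ and the DGA homotopy $\varphi\circ\iota\circ f_\Sigma\simeq\iota'\circ f_{\Sigma'}$, then transfer augmentations across $\varphi$ and post-compose to conclude $\epsilon_{(\Sigma,\alpha)}\simeq\epsilon_{(\Sigma',\alpha')}$. The only cosmetic difference is that you construct $\alpha'$ explicitly as $\alpha^+\circ\varphi^{-1}\circ\iota'$ (equivalently $\alpha\circ\pi\circ\varphi^{-1}\circ\iota'$), whereas the paper phrases the same step as pulling back along the associated homotopy equivalence $\pi'\circ\varphi\circ\iota$; these are the same construction.
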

\begin{proof}
From Proposition \ref{inva}, when $(\Sigma, g)$ and $(\Sigma',g')$ are conical Legendrian isotopic there is a stable tame isomorphism $\phi:\mathcal{A}(\Sigma) *S \rightarrow \mathcal{A}(\Sigma') *S'$ such that $ \iota' \circ f_{\Sigma'} \simeq \phi \circ \iota \circ f_{\Sigma}$ where $\iota,\iota'$ are the inclusions of each DGA into its stabilization.  (In Definition \ref{def:homotopy}, the $\iota$ and $\iota'$ are suppressed from notation.)  Then, $\pi' \circ\phi\circ \iota$ is a DGA homotopy equivalence (since all three factors are) where $\pi': \mathcal{A}(\Sigma')*S' \rightarrow \mathcal{A}(\Sigma')$ is the projection, so $\pi' \circ\phi\circ \iota$ induces a bijection between DGA homotopy classes of augmentations of $\mathcal{A}(\Sigma)$ and $\mathcal{A}(\Sigma')$.  Thus, for any augmentation $\alpha: \mathcal{A}(\Sigma) \rightarrow \Z/2$ there exists an augmentation $\alpha': \mathcal{A}(\Sigma') \rightarrow \Z/2$ such that $ \alpha \simeq \alpha' \circ \pi' \circ\phi\circ \iota$, so we can compute
\[
\epsilon_{(\Sigma, \alpha)} = \alpha \circ f_\Sigma \simeq \alpha' \circ \pi' \circ\phi\circ \iota \circ f_\Sigma \simeq \alpha' \circ \pi' \circ \iota' \circ f_{\Sigma'} = \alpha' \circ f_{\Sigma'} = \epsilon_{(\Sigma', \alpha')}.
\]
This shows that $I^\rho_\Sigma \subset I^\rho_{\Sigma'}$, and the reverse inclusion follows by symmetry.
\end{proof}

\begin{remark}
Note that when $\mathcal{A}(\Sigma)$ has a single $\rho$-graded augmentation, the set $I_\Sigma^\rho$ consists of only one element so that $\Sigma$ induces a unique augmentation of $\Lambda_+$.  This situation may occur for grading reasons, for instance, when $\rho = 0$ and $\Sigma$ has no generators in degree $0$ or $1$, as in the example below.
\end{remark}

\subsection{Examples via the Clasp Move}  Suppose $\Lambda_-$ and $\Lambda_+$ are related by a Clasp Move as pictured in Figure \ref{fig:ClaspMove}.  Moreover, suppose that $\Lambda_+$ has Maslov number $0$ and is equipped with a $\Z$-valued Maslov potential $\nu$ such that the upper and lower strands, $U$ and $L$, involved in the Clasp Move satisfy $\nu(U) =u$ and $\nu(L) = l$.

\begin{figure}[!ht]

\quad

\quad

\labellist
\small
\pinlabel $x$ [l] at 36 4
\pinlabel $z$ [b] at 4 36
\pinlabel $U$ [bl] at 366 110
\pinlabel $L$ [tl] at 366 38
\pinlabel $\Lambda_-$ [b] at 116 132
\pinlabel $\Lambda_+$ [b] at 312 132
\endlabellist
\includegraphics[scale=.6]{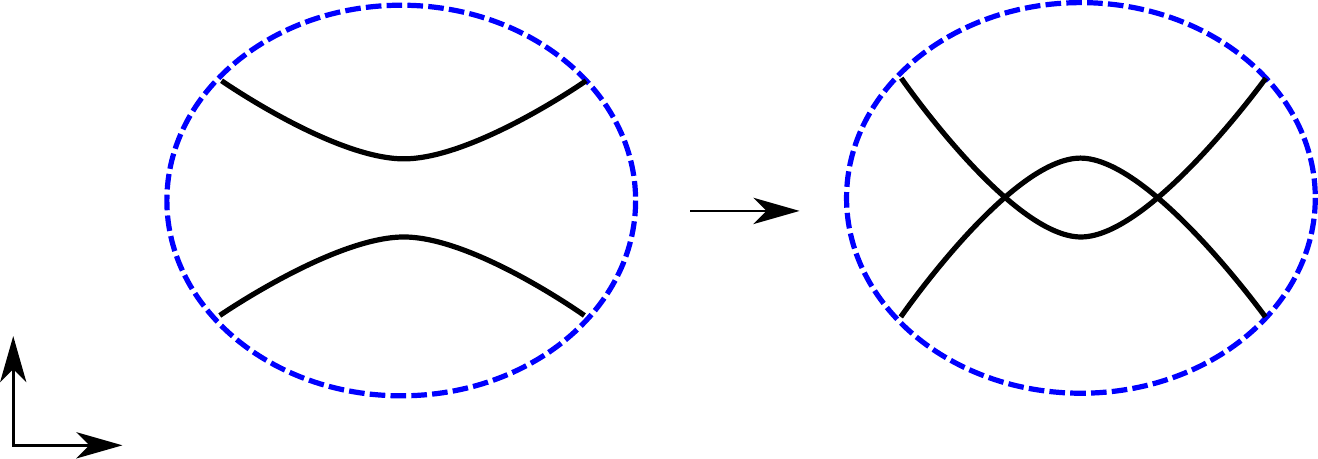}
\caption{Two Legendrians related by a Clasp Move.  The front projections of $\Lambda_-$ and $\Lambda_+$ are assumed to agree outside of the pictured disk.}
\label{fig:ClaspMove}
\end{figure}

\begin{proposition}  \label{prop:clasp}
There exists a conical Legendrian cobordism $\Sigma: \Lambda_- \rightarrow \Lambda_+$ with only one Reeb chord, $r$.  Moreover, the Maslov number of $\Sigma$ is $0$, and with respect to the unique Maslov potential on $\Sigma$ extending $\nu$ the degree of $r$ is $|r| = u-l$.  
\end{proposition}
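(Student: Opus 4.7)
The plan is to construct $\Sigma$ by gluing an explicit local model inside a neighborhood of the clasp disk into the trivial conical cobordism over $\Lambda_+$, and then to read off all three claims from the local Morse structure of the difference function.

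First, let $D = [a_0, a_1]\times I \subset \R_{>0}\times M$ be a small rectangle whose positive slice contains the clasp disk of $\Lambda_+$. The two strands $U$ and $L$ of $\Sigma$ over $D$ can be described as graphs $z = z_U(s,x)$, $z = z_L(s,x)$, and any such choice automatically gives a Legendrian surface in $J^1(D)$, with no Legendrian constraint on the difference $F(s,x) := z_U(s,x) - z_L(s,x)$. I would use (a suitably truncated and patched version of) the model
\[
F(s,x) = 1 + x^2 - s^2
\]
on $[-1/2, 2] \times [-\sqrt{3},\sqrt{3}]$, so that $F(-1/2,x) = 5/4 + x^2 > 0$ (no crossings in the negative slice), $F(2,x) = x^2 - 3$ has exactly two zeros at $x = \pm\sqrt{3}$ (the two clasp crossings), and $F$ has a unique interior critical point at $(0,0)$ where $F(0,0) = 1 > 0$ and the Hessian $\mathrm{diag}(-2,2)$ is non-degenerate of index $1$. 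This critical point corresponds to exactly one Reeb chord $r$ of $\Sigma$, with $r_u$ on $U$ and $r_l$ on $L$. Outside $D$ one takes $\Sigma$ to agree with $j^1(s\cdot \Lambda_+)$, which is Reeb-chord-free, so $r$ is the only Reeb chord of $\Sigma$.

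Second, because $\Sigma$ over $D$ is given by two graphs it contains no cusp edges there, and outside $D$ the cobordism is cylindrical; hence the Maslov potential $\nu$ on $\Lambda_+$ extends unobstructed to a $\Z$-valued potential $\eta$ on $\Sigma$ by constant-along-$s$ extension on the cylindrical part and by the constant values $u$ and $l$ on the $U$- and $L$-sheets inside $D$. In particular $m(\Sigma) = 0$, and $\eta(r_u) = u$, $\eta(r_l) = l$.

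Finally, applying formula \eqref{eq:Maslov} to $r$ yields
\[
|r| = \eta(r_u) - \eta(r_l) + \mathrm{ind}(F) - 1 = u - l + 1 - 1 = u - l,
\]
the claimed degree. The hardest step will be the first one, namely producing the smooth gluing of the local model to the trivial cobordism along $\partial D$ so that the resulting surface is a genuine conical Legendrian cobordism from $\Lambda_-$ to $\Lambda_+$ realizing the clasp move, with no additional critical points of the difference function created in the transition region. This reduces to a standard interpolation of the graphs $z_U, z_L$ away from $(0,0)$, keeping the single non-degenerate index-$1$ saddle fixed; once this is done, the degree computation is transparent from the Hessian above.
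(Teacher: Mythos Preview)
Your overall strategy---build a local two-sheet model with a single index-$1$ saddle in the difference function, extend by the trivial cobordism, and read off $|r|$ from the Hessian via \eqref{eq:Maslov}---is exactly the paper's. The degree computation and the Maslov-potential extension are fine. But two genuine gaps remain, and they are precisely the content of the paper's construction.

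First, your model $F(s,x)=1+x^2-s^2$ is not conical at either end: a conical Legendrian cobordism requires the local difference function to have the form $s\cdot f_\pm(x)$ for $s$ large and $s$ small (Definition~\ref{def:conical}), not a form that is merely constant along the boundary of a rectangle. Interpolating from your saddle to this $s$-linear behavior is not a soft step: the $\partial_s$-derivative must transition from the saddle regime to $f_\pm(x)$, and spurious zeros of $\partial_s F$ in the transition region are exactly what create unwanted Reeb chords. You flag this as ``the hardest step'' but do not do it; in the paper this is the entire content of the cutoff $\tau(s)$, the plateau $h(x)$, the inequalities \eqref{eq:ineq1}--\eqref{eq:ineq2}, and the two lemmas that follow, culminating in the verification that $a(s)=\partial_s F_a(s,0)$ has a unique zero.

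Second, and not acknowledged in your proposal: other strands of $\Lambda_+$ will generically pass through the same $x$-interval as the clasp (lying above or below $U$ and $L$ in the $z$-direction). Over the base region $D$ the cobordism $\Sigma$ therefore has sheets besides the two you modify, and altering $z_U,z_L$ changes their difference functions with those sheets, potentially creating new Reeb chords. Your sentence ``outside $D$ one takes $\Sigma$ to agree with $j^1(s\cdot\Lambda_+)$, which is Reeb-chord-free'' does not address this, since the danger is \emph{inside} $J^1(D)$. The paper handles it by first isotoping $\Lambda_+$ so that in the clasp strip the other strands are linear with slopes large enough to satisfy \eqref{eq:ineq3}, and then checking $|\partial_x(s f_i \pm F_a)|>0$ in the final paragraph of the proof.
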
 
\begin{figure}[!ht]

\quad

\quad

\labellist
\small
\pinlabel $\emptyset$ [r] at -12 82
\pinlabel $k$~crossings  at 670 78
\endlabellist
\includegraphics[scale=.6]{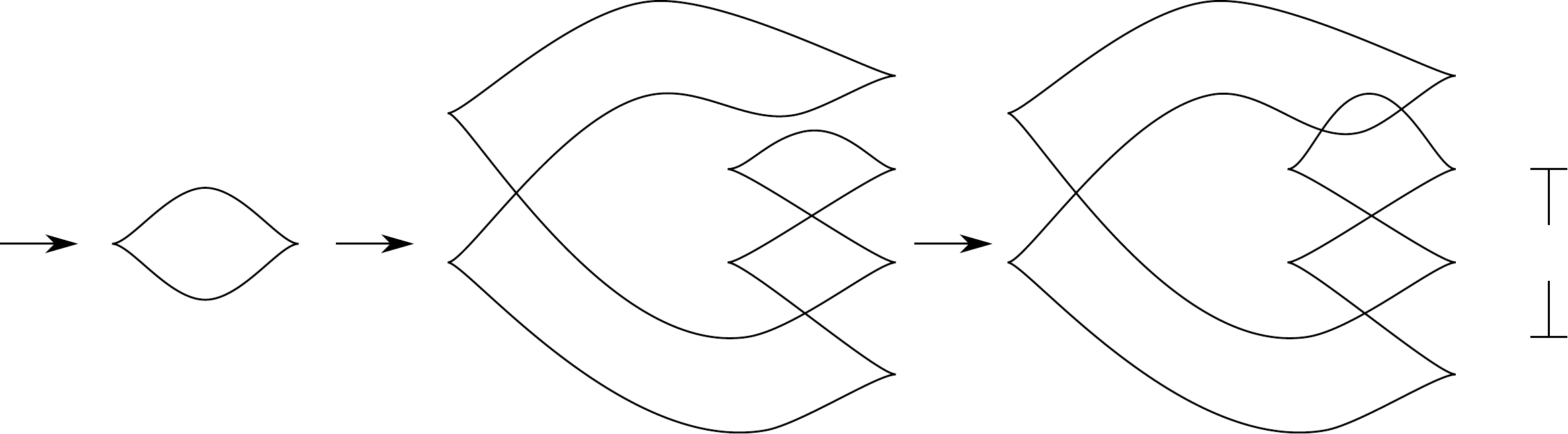}
\caption{An immersed exact Lagrangian filling of the Legendrian knot $\Lambda_k$ with one double point.  From left to the right the arrows are a standard filling of the Legendrian unknot, a Legendrian isotopy, and the clasp move.}
\label{fig:m52}
\end{figure}
\begin{example}
Consider the family of Legendrian twist knots $\Lambda_k$, $k \geq 0$, pictured at the right in Figure \ref{fig:m52};  for $k=0, 1,2$, $\Lambda_k$ is topologically a trefoil, a figure 8 knot, and a $m(5_2)$ knot.  The figure presents an immersed exact Lagrangian filling of $\Lambda_k$ obtained from concatenating a clasp move cobordism from Proposition \ref{prop:clasp} with standard embedded exact Lagrangian cobordisms (i.e. conical Legendrian cobordisms without Reeb chords) realizing a Legendrian isotopy and a filling of the unknot.  (See for instance \cite{EHK}.)  From Proposition \ref{prop:clasp} the corresponding Legendrian filling, $\Sigma_k$, has a single Reeb chord $r$ of degree $|r| = -k$.  Thus, when $k \geq 1$, $\mathcal{A}(\Sigma_k)$ has a unique $0$-graded augmentation $\alpha$ (it has $\alpha(r) =0$), and so the filling induces a unique augmentation $\epsilon = \alpha \circ f_{\Sigma_k}$ of $\mathcal{A}(\Lambda_k)$.  Moreover, a direct computation (for instance, using Ng's resolution procedure \cite{Ng} to compute the DGA of $\Lambda_k$) shows that 
$\mathcal{A}(\Lambda_k)$ 
only has one $0$-graded augmentation and 
the linearized contact homology, $\mathit{LCH}_*(\Lambda_k, \epsilon)$, 
has Poincare polynomial 
\[
P(t) = \sum_{i \in \Z} \mbox{rank}\,LCH_i(\Lambda_k, \epsilon) t^i = t^{-k} + t^1 + t^{k}.
\]
As a result, $\epsilon$ cannot be induced by any embedded exact Lagrangian filling $L$ with Maslov number $0$ since then the Siedel isomorphism would imply $LCH_i(\Lambda_k, \epsilon)  \cong H_{i+1}(L, \Lambda_k; \Z/2)$ so that $P(t) = 2g(L) + t^1$.  

In fact, when $k$ is odd, the assumption that $L$ has Maslov number $0$ is not necessary, as it can be shown that $\epsilon$ cannot be induced by any oriented, embedded  Lagrangian filling $L$.  Indeed, since $L$ is orientable the Maslov number of $L$ is even.  [This follows since a well-defined $\Z/2$-valued Maslov potential for the conical Legendrian lift $\Sigma$ is given by assigning the value $0$ (resp. $1$) to points of $\Sigma$ where the base projection $\Sigma \rightarrow \R_{>0} \times M$ preserves (resp. reverses) orientation.]   Then, a version of the Siedel isomorphism applies with gradings reduced mod $2$ to produce a contradiction.

For $k=0$, $\Lambda_0$ is the right handed Legendrian trefoil with Thurston-Bennequin number $\mathit{tb}=1$.  In this case, $|r| = 0$ so that $\mathcal{A}(\Sigma_0)$ has two $0$-graded augmentations $\alpha_0$ and $\alpha_1$  with $\alpha_j(r) = j$.  Moreover, it can be shown that the induced augmentations $\epsilon_i= \alpha_i \circ f_{\Sigma_0}$ are distinct, so that in this case the induced augmentation set, $I^0_{\Sigma_0}$, has more than one element.  See \cite{PanRu2}.      
\end{example}

\subsubsection{Construction of $\Sigma$}  In this section we construct the cobordism $\Sigma$ from Proposition \ref{prop:clasp} and verify its properties.  Note that in proving the proposition we may replace $\Lambda_-$ and $\Lambda_+$ with Legendrian isotopic links since any Legendrian isotopy can be realized by an embedded exact Lagrangian cobordism (in fact, concordance).  See \cite{EHK}.  

We start with some preliminaries.
\begin{enumerate}
\item  Choose $0< s_- < s_+$ to satisfy
\begin{equation} \label{eq:ineq1}
\frac{s_-}{s_+-s_-} < \frac{1}{4}.
\end{equation}
For instance, take $s_-=1$ and $s_+ =100$.
\item  Choose small positive constants $\alpha, \epsilon >0$ to satisfy $\alpha < \frac{s_+-s_-}{2}$ and
\begin{equation} \label{eq:ineq2}
\mbox{max}\left(\left| \frac{ \alpha}{s_+-s_-}\right|,\left| \frac{ \alpha}{s_+-s_-} + s_- \epsilon + \alpha\left( \frac{-1}{s_+-s_-}-\epsilon\right)\right|\right) < \frac{1}{8}.
\end{equation}
\item Fix a cutoff function $\tau:\R_{>0} \rightarrow [0,1]$ satisfying the following properties
\begin{itemize}
\item $\tau(s) = 1$ for $s \in (0,s_-]$, 
\item $\tau(s) = 0$ for $s \in [s_+,+\infty)$, 
\item $\tau(s)$ is linear in $[s_-+\alpha, s_+-\alpha]$,
\item $\displaystyle 0 \geq \tau'(s) > \frac{-1}{s_+-s_-}-\epsilon$,
\item $\displaystyle \tau(s) \geq \frac{s_+-s}{s_+-s_-}$ for all $s \in [s_-, s_- + \alpha]$,
\item $\displaystyle \tau(s) \leq \frac{s_+-s}{s_+-s_-}$ for all $s \in [s_+ -\alpha,s_+]$.  \vspace{0.02in}
\end{itemize}
See Figure \ref{fig:Interpolate} for the construction of such a function.
\item Fix a plateau function $h:\R \rightarrow [0,1]$ such that  
\begin{align*} 
&h(x) = 1 &  \mbox{for $x \in[-3/4,3/4]$,} \\
& h(x) =0 &  \mbox{for $x \in(-\infty,-1] \cup [1, +\infty)$.} 
\end{align*}
\item Fix small, positive $\delta, k>0$  to satisfy
\begin{equation} \label{eq:ineq4}
\frac{3 \delta}{4\|h'\|_{C^0([-1,1])}} > k.
\end{equation}
\end{enumerate}
Next, isotope $\Lambda_+$ to $\Lambda_+'$ so that in the front projection of $\Lambda_+'$ the crossings that form the clasp appear in the vertical strip  $(x,z) \in [-2, 2]\times \R$ and no other crossings or cusps appear in this strip.  By a further Legendrian isotopy, assume that in this strip the two strands where the crossings occur have local defining functions $f_a,f_b:[-2,2] \rightarrow \R$ given by
\[
f_a(x) = \delta x^2-k,  \quad \quad f_b(x) = -f_a(x) = -\delta x^2+k.
\]
Moreover, arrange that every other strand of $\Lambda_+'$ in $[-2,2]\times\R$ has a linear defining function $f_i$ with slope $m_i$ satisfying
\begin{equation} \label{eq:ineq3}
|m_i| > \sup_{(s,x) \in [s_-,s_+]\times[-2,2]} \left|2 \delta x + \tau(s) \cdot h'(x) 2k \right|.
\end{equation}

\begin{figure}[!ht]

\quad

\quad

\labellist
\small
\pinlabel $s_-$ [t] at 112 20
\pinlabel $s_+$ [t] at 292 20
\pinlabel $\frac{s_-+s_+}{2}$ [t] at 202 20
\pinlabel $1$ [r] at 34 98
\pinlabel $s$ [l] at 338 32
\endlabellist
\includegraphics[scale=1]{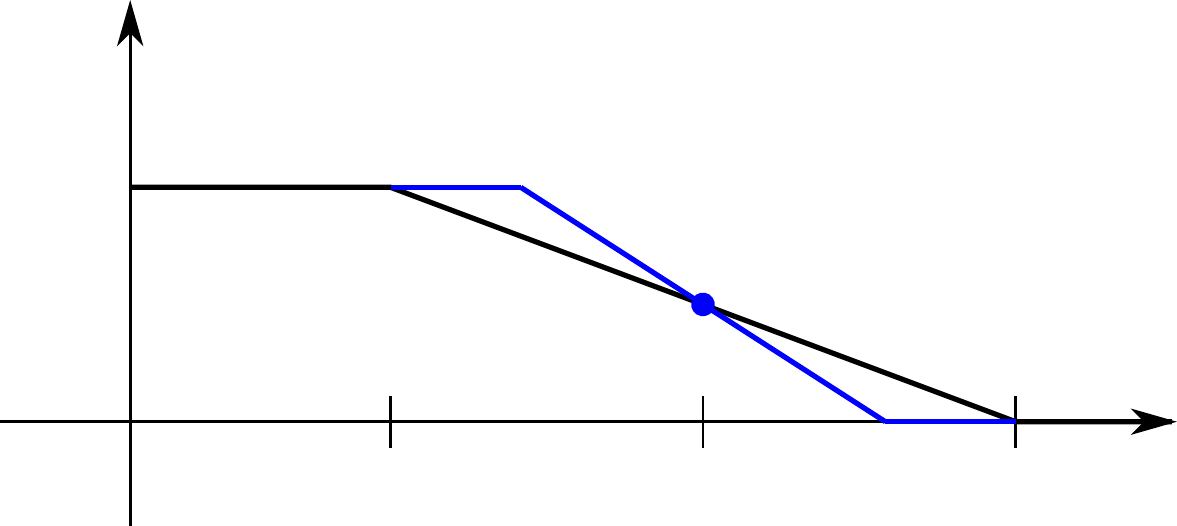}
\caption{To produce $\tau(s)$ with the required properties, start with the piecewise linear function (pictured in black) that is $0$ on $(0, s_-]$, is $1$ on $[s_+,+\infty)$, and is $\displaystyle \frac{s_+-s}{s_+-s_-}$ on $[s_-,s_+]$.  Produce a new piecewise linear function (pictured in blue) by modifying the middle segment by a small positive rotation around its mid-point.  Finally, obtain $\tau$ by rounding the corners in a manner such that the magnitude of the slope is largest in the middle linear section.  Since the rotation can be arbitrarily small, for any $\alpha, \epsilon >0$, the construction can be carried out so that $\tau$ is linear in $[s_-+\alpha, s_+ -\alpha]$ and so that the slope is never larger than $\frac{-1}{s_+-s_-} - \epsilon$ as required.}
\label{fig:Interpolate}
\end{figure}

Now define $\Sigma$ to by starting with the trivial cobordism $j^1(s\cdot \Lambda_+)$,  
removing the part of $j^1(s\cdot \Lambda_+)$ corresponding to $z= s \cdot f_a(x)$ and $z=s \cdot f_b(x)$ with $(s,x) \in \R_{>0} \times [-2,2]$, and replacing it with the Legendrian subset defined by functions   $F_a, F_b:\R_{>0} \times[-2,2] \rightarrow \R$ where
\begin{align*}
F_a(s,x) &= s\cdot \left( \delta x^2-k + \tau(s) \cdot h(x) \cdot 2k \right) \\
F_b(s,x) & = -F_a(s,x).
\end{align*}
Observe that at the ends $\Sigma$ agrees with $j^1(s\cdot \Lambda_-')$ when $s \leq s_-$ and $j^1(s\cdot \Lambda_+')$ when $s \geq s_+$ where $\Lambda_-'$ has defining functions
\[
f^-_a(x) = \delta x^2 -k+h(x)\cdot 2k, \quad f^-_b = -f^-_a. 
\]

\medskip

\begin{lemma}  \label{lem:partialx}
For all $(s,x) \in \R_{>0} \times [-2,2]$ 
\[
\sgn \left(\partial_x(F_a-F_b)\right)  = \sgn (x).
\]
\end{lemma}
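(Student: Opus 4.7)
The plan is to reduce the statement to a sign analysis of a one-variable expression in $x$ (with $s$ appearing only as a positive scaling and through $\tau(s) \in [0,1]$), and then exploit the support properties of $h'$ together with inequality \eqref{eq:ineq4}.

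First I would compute directly. Since $F_b = -F_a$, we have $F_a - F_b = 2F_a = 2s(\delta x^2 - k + 2k\,\tau(s) h(x))$, so
\[
\partial_x(F_a - F_b) \;=\; 4s\bigl(\delta x + k\,\tau(s)\,h'(x)\bigr).
\]
Because $s > 0$, the sign in question equals $\sgn\!\bigl(\delta x + k\,\tau(s)\,h'(x)\bigr)$, and the problem is now $s$-independent up to the multiplicative factor $\tau(s) \in [0,1]$.

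Next I would split into cases according to $|x|$, using that by construction $h \equiv 1$ on $[-3/4,3/4]$ and $h \equiv 0$ outside $[-1,1]$, so $h'(x) = 0$ whenever $|x| \leq 3/4$ or $|x| \geq 1$. In either of these regimes the expression reduces to $\delta x$, which plainly has $\sgn = \sgn(x)$ (and equals $0$ at $x=0$, matching $\sgn(0)=0$). The only remaining case is $3/4 \leq |x| \leq 1$, where $h'$ may be nonzero; here $|\delta x| \geq 3\delta/4$, while
\[
|k\,\tau(s)\,h'(x)| \;\leq\; k\,\|h'\|_{C^0([-1,1])} \;<\; \tfrac{3\delta}{4}
\]
by inequality \eqref{eq:ineq4}. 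Thus the $\delta x$ term strictly dominates the perturbation, and the sign of the sum equals $\sgn(\delta x) = \sgn(x)$.

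There is no serious obstacle: the lemma is essentially a calibration check that the constants $\delta, k, \|h'\|_{C^0}$ were chosen precisely so that the bump contribution $2k\,\tau(s)\,h(x)$ never destroys the convexity in $x$ supplied by the $\delta x^2$ term. The only thing to be careful about is that the bound on $|k\tau(s) h'(x)|$ uses $\tau(s) \leq 1$ uniformly, which is immediate from the construction of $\tau$ in step (3) of the preliminary choices.
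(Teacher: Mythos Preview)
Your proof is correct and follows essentially the same approach as the paper's: compute the derivative, split according to whether $h'(x)$ vanishes, and in the region $3/4 \le |x| \le 1$ use inequality \eqref{eq:ineq4} to show the $\delta x$ term dominates. If anything, you are slightly more explicit than the paper in concluding the sign (rather than just nonvanishing) in the case where $h'$ is active.
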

\begin{proof}
Compute 
\[
\partial_xF_a(x,s) = s \cdot \left( 2 \delta x + h'(x)\tau(s) 2k\right).
\]
When $x \notin [-1,-3/4] \cup[3/4,1]$, we have $h'(x) =0$ so that we can compute
\[
\sgn \left(\partial_xF_a(x,s) \right) = \sgn( s  2 \delta x ) = \sgn(x).
\] 
When $x \in  [-1,-3/4] \cup[3/4,1]$, estimate
\begin{align*}
|\partial_xF_a(x,s)| & \geq s \cdot \left( |2 \delta x| - |h'(x)\tau(s) 2k|\right)
  \geq s\cdot \left( 2\delta\left(\frac{3}{4}\right)  - \|h'\|_{C^0([-1,1])} 2k \right) > 0
\end{align*}
where the last inequality was from  (\ref{eq:ineq4}).
\end{proof}
As a consequence, for any fixed $s$, the function $F_a(s,\cdot) - F_b(s,\cdot)$ has a unique critical point that is a local minimum at $x = 0$.  In particular, since $f^-_a(0)-f^-_b(0) = 2k > 0$ it follows that $f^-_a(x) > f^-_b(x)$ for all $x \in [-2,2]$ so that the sheets defined by $f^-_a$ and $f^-_b$ do not intersect in the front projection of $\Lambda_-'$.  Thus, $\Lambda_-'$ is indeed Legendrian isotopic to $\Lambda_-$.  
\begin{lemma}
The difference function $F_a-F_b:\R_{>0}\times[-2,2] \rightarrow \R$ only has one critical point.  This unique critical point has the form $p = (s_0,0)$ for some $s_-< s_0< s_+$; the Morse index is $\mbox{ind}(p) = 1$; and $(F_a-F_b)(p) > 0$.   
\end{lemma}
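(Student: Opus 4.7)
The plan is to use the preceding lemma to reduce the two-dimensional critical point problem for $F_a-F_b$ to a one-variable equation on the line $\{x=0\}$, then analyze it using the quantitative hypotheses on $\tau(s)$.

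Since the previous lemma says $\partial_x(F_a-F_b)$ vanishes only along $\{x=0\}$, every critical point has $x=0$. The plateau function $h$ is identically $1$ near $0$, so $h(0)=1$ and $h^{(n)}(0)=0$ for all $n\geq 1$, and along this line $(F_a-F_b)(s,0)=2k(2s\tau(s)-s)$. Setting $\phi(s):=s\tau(s)$, the criticality condition on $\{x=0\}$ becomes $\phi'(s)=1/2$. I would then analyze this interval by interval. On $(0,s_-)$ we have $\phi'\equiv 1$ and on $(s_+,\infty)$ we have $\phi'\equiv 0$, so any solution lies in $[s_-,s_+]$. On the middle linear section $[s_-+\alpha,s_+-\alpha]$, $\tau$ has constant slope $m<0$, so $\phi'(s)=\tau(s)+sm$ is affine with negative slope $2m$; hence there is exactly one solution $s_0$ there, and a direct computation together with the strict inequalities \eqref{eq:ineq1}--\eqref{eq:ineq2} places $s_0$ near $(s_-+s_+)/4$ and strictly inside $[s_-+\alpha,s_+-\alpha]$.

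On the right transition $[s_+-\alpha,s_+]$, the bound $\phi'(s)\leq \tau(s) \leq (s_+-s)/(s_+-s_-)\leq \alpha/(s_+-s_-)<1/8$ rules out solutions immediately. On the left transition $[s_-,s_-+\alpha]$ I would estimate $\phi'(s)\geq (s_+-2s)/(s_+-s_-)-s\epsilon$ using $\tau(s)\geq (s_+-s)/(s_+-s_-)$ together with $\tau'(s)\geq -1/(s_+-s_-)-\epsilon$; the slack in \eqref{eq:ineq1} and \eqref{eq:ineq2}, combined with $\epsilon$ sufficiently small, then yields $\phi'(s)>1/2$, precluding additional solutions. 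Hence $p=(s_0,0)$ is the unique critical point.

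For the Hessian at $p$, using $h'(0)=h''(0)=0$ the mixed partial $\partial_s\partial_x(F_a-F_b)|_p$ vanishes, while $\partial_x^2(F_a-F_b)|_p=4\delta s_0>0$ and $\partial_s^2(F_a-F_b)|_p=8km<0$; the Hessian is therefore diagonal with one positive and one negative eigenvalue, so $\mbox{ind}(p)=1$. For the critical value, $(F_a-F_b)(s_0,0)=2ks_0(2\tau(s_0)-1)$, and a short calculation on the middle linear section shows $s_0=s^*/2$, where $s^*$ is the unique point at which $\tau(s^*)=1/2$; since $s_0<s^*$ and $\tau$ is strictly decreasing there, $\tau(s_0)>1/2$ and $(F_a-F_b)(p)>0$. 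The main obstacle will be the left-transition estimate on $[s_-,s_-+\alpha]$, which is precisely what the otherwise opaque inequality \eqref{eq:ineq2} is tailored to handle.
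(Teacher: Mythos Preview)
Your argument is correct and follows essentially the same route as the paper: reduce to $x=0$ via the preceding lemma, analyze $\tau(s)+s\tau'(s)$ piecewise on $[s_-,s_+]$ using \eqref{eq:ineq1}--\eqref{eq:ineq2}, and read off the index and sign from the diagonal Hessian. Two small tightenings: you should not write ``$\epsilon$ sufficiently small'' since $\epsilon$ is already fixed---the left-transition bound comes directly from \eqref{eq:ineq1}--\eqref{eq:ineq2} as stated; and your positivity argument via $s_0=s^*/2$ tacitly needs $s^*$ in the linear section, whereas $\tau(s_0)>1/2$ follows more simply from $\tau(s_0)=\tfrac12 - s_0\tau'(s_0)$ with $\tau'(s_0)<0$ (this is the paper's version).
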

\begin{proof}  From Lemma \ref{lem:partialx}, a critical point of $F_a-F_b$ occurs at $(s,x)$ if and only if $x=0$ and $\partial_s(F_a-F_b)(s,0) = 0$.  Noting that 
\[
\partial_s(F_a-F_b)(s,0)= 2 \partial_sF_a(s,0), \quad \mbox{and} \quad \partial_sF_a(s,x) = \left(\delta x^2-k + \tau(s) \cdot h(x) \cdot 2 k\right) + s \cdot \tau'(s)\cdot h(x) \cdot 2k 
\] 
we are lead to examine the zeros of  
\[
a(s) := \partial_s F_a(s,0) = -k + 2k \left(\tau(s) + s\cdot \tau'(s)\right). 
\]

\medskip

\noindent{\bf Claim 1:}  For $s\leq s_-+\alpha$, $a(s)>0$.

\medskip
To verify, note that when $s\leq s_-$ we have $a(s) = k$ and when $s_- \leq s \leq s_-+\alpha$ we can estimate 
\begin{align*}
a(s) & \geq -k + 2k \left( \frac{s_+-s}{s_+-s_-} + s\cdot\left(\frac{-1}{s_+-s_-} - \epsilon\right) \right) \\
 & \geq -k + 2k \left( \frac{s_+-(s_-+\alpha)}{s_+-s_-} + (s_-+\alpha)\cdot\left(\frac{-1}{s_+-s_-} - \epsilon\right) \right) \\
 & = -k + 2k \left( 1-\frac{s_-}{s_+-s_-} - \frac{\alpha}{s_+-s_-} -s_-\epsilon- \alpha\left(\frac{1}{s_+-s_-} + \epsilon\right) \right) \\
& \geq -k +2k \left( 1 - 1/4 -1/8 \right) = k/4.
\end{align*}
[The first inequality used the properties of $\tau$; the last used (\ref{eq:ineq1}) and (\ref{eq:ineq2})].

\medskip

\noindent{\bf Claim 2:}  For $s\geq s_+-\alpha$, $a(s)<0$.

\medskip
To verify, note that when $s\geq s_+$ we have $a(s) = -k$ and when $s_+-\alpha \leq s \leq s_+$ we can estimate
\begin{align*}
a(s) & \leq -k + 2k \left( \frac{s_+-s}{s_+-s_-} \right) \\
 & \leq -k + 2k \left( \frac{\alpha}{s_+-s_-} \right) \leq -k+2k(1/8) = -k(3/4). \\
\end{align*}

\medskip
Next, since $\tau(s)$ is linear in $[s_-+\alpha, s_+-\alpha]$, note that $a(s)$ will be linear there as well.  Together with the claims, this shows that $a(s)$ has a unique zero $s_0 \in (s_-+ \alpha, s_+-\alpha)$.     The index computation follows since at $(s_0,0)$ we have 
\[
\frac{\partial^2F_a}{\partial x^2}= 2 \delta s_0 >0, \quad \frac{\partial^2F_a}{\partial s^2}=a'(s_0) < 0, \quad \mbox{and} \quad \frac{\partial^2F_a}{\partial x \,\partial s} =0.
\]  
Finally, to check that $(F_a-F_b)(s_0,0) = 2F_a(s_0,0) > 0$, note that $F_a(s,0) = s \cdot(-k + \tau(s) \cdot 2k)$ has $F_a(s,0) \leq 0$  only when $\tau(s) \leq 1/2$. If it were the case that $\tau(s_0) \leq 1/2$, then we would have the contradiction
\[
0=a(s_0) \leq -k+2k(1/2+s_0\tau'(s_0)) = 2ks_0 \tau'(s_0) < 0.
\]
\end{proof}

\begin{proof}[Proof of Proposition \ref{prop:clasp}]
We have already observed that $\Sigma:\Lambda_-' \rightarrow \Lambda_+'$ is a conical Legendrian cobordism and that $\Lambda_-'$ and $\Lambda_+'$ are Legendrian isotopic to $\Lambda_-$ and $\Lambda_+$.  The Reeb chord $r$ corresponding to the critical point of $F_a-F_b$ at $(s_0,0)$ has upper sheet on $z=F_a$ where the Maslov potential $\nu$ takes the value $u$ and lower sheet on $z=F_b$ where the Maslov potential takes the value $l$.  Thus, as in (\ref{eq:Maslov}) the degree of $r$ is
\[
|r| = u-l +\mathit{ind}(s_0,0) -1 = u-l.
\]  Since $\Sigma$ agrees with $j^1(s\cdot \Lambda_+)$ outside of $j^1(\R_{>0} \times [-2,2])$, the only other possibility for Reeb chords would be as critical points of either $s\cdot f_i-F_a$ or $s\cdot f_i-F_b = s\cdot f_i +F_a$ where $f_i$ is one of the linear functions defining a sheet of $\Lambda_+'$ that appears above or below the two sheets involved in the clasp move.  These functions do not have critical points since
\begin{align*}
\left|\partial_x(s \cdot f_i \pm F_a)\right| &= s \cdot \left| m_i \pm \left(2 \delta x + \tau(s) \cdot h'(x) \cdot 2k\right) \right| \\
 & \geq s \cdot \left( |m_i| - \left|2 \delta x + \tau(s) \cdot h'(x) \cdot 2k\right| \right) > 0
\end{align*}
where the last inequality is (\ref{eq:ineq3}).
\end{proof}
\section{The SFT perspective}
\label{sec:SFTpers}

In this section, we give an alternative definition for the DGA of $\Sigma$ and the DGA map $f_{\Sigma}$ by counting holomorphic disks in $\mathit{Symp}(J^1M)$ with boundary on the good Lagrangian cobordism $L$ in Section \ref{sec:alterdis}.
This perspective does not need the construction of $\wt\Sigma$ and fits into the general picture of the symplectic field theory.
We will not prove the well-definedness and invariance theorem for these definitions here, although this should be possible along the lines of \cite{E2,EHK} with the setting for analytic results about holomorphic disks modified appropriately.  Instead, we conclude by observing that with a certain good choice of almost complex structure the correspondence between GFTs and holomorphic disks from \cite{EHK} 
can be applied to relate the SFT perspective with the earlier definitions using GFTs as in Section \ref{sec:DefM}.


\subsection{Almost complex structures}\label{sec:acs}

Let $J_{\pi}^{\pm}$ be compatible almost complex structures in $T^*M$ that are adapted and regular to $\Lambda_{\pm}$ as introduced in Section \ref{sec:diff} and agree with each other outside a compact set.
We first introduce the almost complex structure on $\mathit{Symp}(J^1M)$ that we will use for holomorphic disks with boundary on the good Lagrangian $L: \Lambda_- \to \Lambda_+$ in Section \ref{sec:holo} following  \cite[Section 2.2]{CDGG}.

An almost complex structure $J$ on $\mathit{Symp}(J^1M)=\big(\R_t \times J^1M, \omega=d(e^t \alpha) \big)$ is a {\bf cylindrical almost complex structure}
if
\begin{itemize}
\item $J$ is invariant under the action of $\R_t$;
\item $J(\partial_t)= \partial_z$ and $J(\xi)=\xi$ where $\xi = \ker \alpha$;
\item $J$ is compatible with the symplectic form $\omega$, i.e., $\omega(\cdot, J \cdot)$ is a metric on $\xi$.
\end{itemize}
Given a compatible almost complex structure $J_{\pi}$ in $T^*M$ , there is a unique cylindrical almost complex structure $J$ satisfying $d\pi\circ J=J_{\pi}\circ d\pi$,
where $\pi$ is the projection map $\mathit{Symp}(J^1M)\to T^*M$.
The almost complex structure $J$ is called the {\bf cylindrical lift of $J_{\pi}$}.

Let $J^{\pm}$ be the cylindrical lifts of $J_{\pi}^{\pm}$ in $\mathit{Symp}(J^1 M)$.
When $L \subset \mathit{Symp}(J^1M)$ is a good Lagrangian cobordism from $(\Lambda_-, J^-_{\pi})$ to $(\Lambda_+, J^+_{\pi})$, an almost complex structure $J$ on $\mathit{Symp}(J^1M)$ is {\bf admissible for $L$} if 

 \begin{itemize}
\item for some $N>0$, the restrictions of $J$ to $(N, \infty)\times J^1M$ and $(-\infty,-N)\times J^1M$ agree with $J^+$ and $J^-$ respectively;
\item  the almost complex structures $J$, $J^+$, and $J^-$ all coincide outside of 
$\R\times K$ for some compact $K \subset J^1M$;  
\item  the almost complex structure $J$ is integrable in some neighborhood of  each double point $b$ of $L$, and the two sheets of $L$ around $b$ are real-analytic.
\end{itemize}

Moreover, with a generic choice of almost complex structure $J$ that is admissible to $L$,  one can make $J$  regular to $L$, 
 i.e. all the moduli spaces that we are going to talk about  in Section \ref{sec:holo} are transversely cut out.  (See \cite[Section 3.3]{CDGG} \cite{D}.)

\subsection{Moduli spaces of holomorphic curves}\label{sec:holo}

We will consider two types of holomorphic disks.

\begin{enumerate}
\item The first type of moduli space is the one we count in the differential of the DGA of $\Lambda_{\pm}$.

Let $D_{m+1}$ be the unit disk with $m+1$ points $q, p_1, p_2,\dots, p_m$ removed from the boundary, labeled in a counter-clockwise order,  and let $A$ be an element in $H_1(\Lambda_{\pm})$.
Let $a^{\pm}_i, a^{\pm}_{j_1}, \dots, a^{\pm}_{j_m}$ be $m+1$ Reeb chords of $\Lambda_{\pm}$.
We denote $\M^A_{J^{\pm}}(a^{\pm}_i; a^{\pm}_{j_1}, \dots, a^{\pm}_{j_m})$ by  the moduli space of the $J^{\pm}-$holomorphic disks up to conformal reparametrization, (see Figure \ref{diff} part $(a)$),  
$$u:\  (D_{m+1}, \dd D_{m+1}) \to \big(\mathit{Symp}(J^1M),\R\times \Lambda_{\pm} \big)$$
such that
\begin{itemize}
\item the map $u$ is asymptotic to $(0, \infty) \times a^{\pm}_i$ in a neighborhood of $q$ in $D_{m+1}$;
\item the map $u$ is asymptotic to $(-\infty, 0)\times a^{\pm}_{j_k}$ in a neighborhood of $p_k$ in $D_{m+1}$ for $k=1,\dots, m$;
\item the homology class of the union of boundary segments of $u$ with the capping paths of the corresponding Reeb chords (see Section \ref{sec:diff}), $\ol{u}= \mbox{Image}(u(\dd D_{m+1}))\cup (-\gamma_{a^{\pm}_i}) \cup \gamma_{a^{\pm}_{j_1}}\cup \cdots \cup \gamma_{a^{\pm}_{j_m}}$, is $A$ in $H_1(\Lambda_{\pm})$.
\end{itemize}
One can check \cite{Abbas} for the detailed definition of asymptotic.
Let $\widetilde{\M}_{J^{\pm}}^A(a^{\pm}_i; a^{\pm}_{j_1}, \dots, a^{\pm}_{j_m})$ denote the quotient of $\M^A_{J^{\pm}}(a^{\pm}_i; a^{\pm}_{j_1}, \dots, a^{\pm}_{j_m})$  by the vertical translation of $\R_t$. 
With the almost complex structure $J^{\pm}$ being regular to $\Lambda_{\pm}$, when $\dim \widetilde{\M}^A_{J^{\pm}}(a^{\pm}_i; a^{\pm}_{j_1}, \dots, a^{\pm}_{j_m})=0$, a holomorphic disk $u\in \M^A_{J^{\pm}}(a^{\pm}_i; a^{\pm}_{j_1}, \dots, a^{\pm}_{j_m})$ is called {\bf rigid}.
\item The second type of moduli space has boundary on $L$ instead of $\R\times \Lambda_{\pm}$.

As stated before, the points $q, p_1, \dots, p_m$ are $m+1$ points removed from the boundary of the unit disk.
Let $c_0$ be a double point of $L$ or a Reeb chord of $\Lambda_+$ and for each $1 \leq i \leq m$ let $c_i$ be a double point of $L$ or a  Reeb chord of $\Lambda_-$. 
We denote by $\M^A_J(c_0; c_1, \dots, c_m)$ the moduli space of the $J$--holomorphic disks, up to conformal reparametrization,
$$u:\  (D_{m+1}, \dd D_{m+1}) \to \big(\mathit{Symp}(J^1M), L\big)$$
satisfying the following conditions.  (See Figure \ref{diff} part $(b)$ and $(c)$.)
\begin{itemize}
\item  
If $c_0$ is a Reeb chord of $\Lambda_+$, the map $u$ is asymptotic to $(0,\infty)\times c_0$ in a neighborhood of $q$ in $D_{m+1}$.
If $c_0$ is a double point of $L$, we have that $\displaystyle{\lim_{s\to q} u(s)=c_0}$ around $q$ and $q$ is a positive puncture.
\item For $i=1,\dots, m$, if $c_i$ is a Reeb chord of $\Lambda_-$, the map $u$ is asymptotic to $(-\infty,0)\times c_i$ in a neighborhood of $p_i$ in $D_{m+1}$.
 If $c_i$ is a double point of $L$,  we have $\displaystyle{\lim_{s\to p_i} u(s)=c_i}$ and $p_i$ is a negative puncture.
\item  the homology class of  $\ol{u}= \mbox{Image}(u(\dd D_{m+1}))\cup (-\gamma_{c_0}) \cup \gamma_{c_{1}}\cup \cdots \cup \gamma_{c_{m}}$ is $A$ in $H_1(L)$.
\end{itemize}
When $J$ is regular to $L$ and $\dim \M^A_J(c_0; c_1, \dots, c_m)=0$, a holomorphic disk $u\in \M^A_J(c_0; c_1, \dots, c_m)$ is called {\bf rigid}. 
One can find the detailed description of the formal dimension in \cite[Theorem A1]{CEL} and \cite[Section 3.3]{CDGG}.
\end{enumerate}
 
 \begin{figure}[!ht]
 \labellist
 \pinlabel $a_i^{\pm}$ at 75 150
 \pinlabel $(a)$ at 75 -40
 \pinlabel $a^{\pm}_{j_1}$ at 20 -10
 \pinlabel $a^{\pm}_{j_2}$ at 80 -10
 \pinlabel $a^{\pm}_{j_3}$ at 130 -10
 \pinlabel $(b)$ at 270 -40
 \pinlabel $c_1$ at 230 -10
 \pinlabel $c_2$ at 270 10
 \pinlabel $c_3$ at 310 -10
 \pinlabel $c_0$ at 270 150
 \pinlabel $(c)$ at 450 -40
 \pinlabel $c_1$ at 410 -10
 \pinlabel $c_2$ at 450 10
 \pinlabel $c_3$ at 490 -10
 \pinlabel $c_0$ at 450 150
 \endlabellist
 \includegraphics[width=5in]{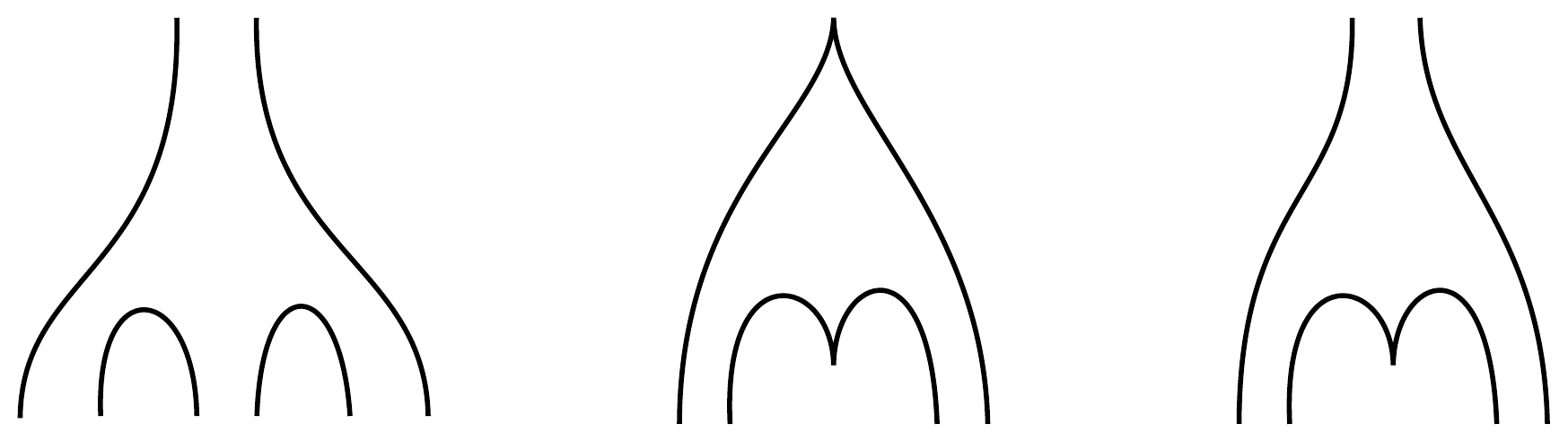}

 \vspace{0.4in}
 
 \caption{Figure $(a)$ describes the holomorphic disk in the first type of moduli space $\M^A_{J^{\pm}}(a^{\pm}_i; a^{\pm}_{j_1},  a^{\pm}_{j_2}, a^{\pm}_{j_3})$.
Figure $(b)$ and $(c)$ describe the holomorphic disk in the second type of moduli space $\M_J^A(c_0; c_1, c_2, c_3)$ for different types of positive punctures.}
 \label{diff}
 \end{figure}

\begin{remark}
The  first type of holomorphic disks gives an equivalent way of  defining the differential on the Legendrian contact homology DGA of $\Lambda_{\pm}$.  See \cite{ENS, DR}.

In the special case where $L$ has no cylindrical ends, i.e. when $L$ is compact,  all the punctures of discs of type (2) are at double points of $L$, and these holomorphic disks are the ones counted by the differential of the Legendrian contact DGA of $\Sigma^{\mathit{Symp}}$, which is the Legendrian lift of $L$ in $\mathit{Symp}(J^1M) \times \R$ as introduced in Section \ref{sec:conical}.
\end{remark}

\subsection{Alternative definitions of DGA of $\Sigma$ and the DGA map $f_{\Sigma}$}\label{sec:alterdis}

Now we can define the DGA of $\Sigma$ and the DGA map $f_{\Sigma}$ as follows.

The underlying graded algebra $\alg(\Sigma)$ is a free product of $\alg(\Lambda_-)$ and $\alg(L)$ over $\Z/2$, i.e
$$\alg(\Sigma)=\alg(\Lambda_-)\ast\alg(L).
$$

Let $L$ be a good Lagrangian cobordism from $(\Lambda_-, J^-)$ to $(\Lambda_+, J^+)$, where $J^{\pm}$ are cylindrical lifts of almost complex structures on $T^*M$ that are adapted and regular to $\Lambda_{\pm}$ as stated in Section \ref{sec:acs}.
Given an almost complex structure $J$ in $\mathit{Symp}(J^1M)$  that is admissible for and is regular to $L$.
The differential $\dd_{\Sigma}$ can be defined by counting rigid holomorphic disks as follows.
For a Reeb chord $a^-_i\in \alg(\Lambda_-)$, the differential is
$$\displaystyle{\dd_{\Sigma}(a^-_i)=\sum_{\dim \wt{\M}^A_{J^-}(a^-_i; a^-_{j_1}, \dots, a^-_{j_m})=0} | \wt{\M}_{J_-}^A(a^-_i; a^-_{j_1}, \dots, a^-_{j_m})|\ a^-_{j_1} \cdots a^-_{j_m} },$$ 
where $a^-_{j_k}$ for $k=1,\dots, m$ are Reeb chords in $\alg(\Lambda_-)$.
For a double point $c_0\in \alg(\Sigma)$, the differential is
$$\displaystyle{\dd_{\Sigma}(c_0)=\sum_{\dim \M^A_{J}(c_0; c_1, \dots, c_m)=0} | \M^A_{J}(c_0; c_1, \dots, c_m)|\ c_1 \cdots c_m },$$
 where $c_1,\dots, c_m$ are Reeb chords of $\Lambda_-$ or double points of $L$. 
The moduli spaces are as described in Section \ref{sec:holo}.
The differential can be extended to $\alg(\Sigma)$ through the Leibniz rule.

Note that the differential of $\alg(\Lambda_+)$ can be defined through counting rigid $J^+$--holomorphic disks with boundary on $\R\times \Lambda_+$ similarly as $\dd_{\Sigma}$ acting on $\alg(\Lambda_-)$.

Finally, we redefine the map $f_{\Sigma}$.
Let $a$ be a Reeb chord of $\Lambda_+$ and $c_i$, for $i=1,\dots,m $ be a double point of $L$ or a Reeb chord of $\Lambda_-$.
The map $f_{\Sigma}:\alg(\Lambda_+)\to \alg(\Sigma)$ can be defined on generators through
$$\displaystyle{f_{\Sigma}(a)=\sum_{\dim\M^A_J(a; c_1,\dots,c_m)=0}} |\M^A_J(a; c_1,\dots,c_m)| \ c_1\cdots c_m$$
and extend to $\alg(\Lambda_+)$ through the Leibniz Rule.

\begin{proposition}
When $L$ is an embedded exact Lagrangian cobordism, the $f_{\Sigma}$ map agrees with the DGA map induced by $L$ in \cite{EHK}.
\end{proposition}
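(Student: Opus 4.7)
The plan is to observe that when $L$ has no double points the SFT-style definition of $f_\Sigma$ introduced in this section reduces essentially tautologically to the definition of the cobordism map in \cite{EHK}. I would break the argument into three steps, with the only real content being a compatibility check on almost complex structures.

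First, I would unpack the codomain. Since $\alg(L)$ is freely generated by the double points of $L$ and $L$ is embedded, $\alg(L)$ is trivial; hence $\alg(\Sigma) = \alg(\Lambda_-) \ast \alg(L) = \alg(\Lambda_-)$. Moreover, by construction the differential $\dd_\Sigma$ restricted to this $\alg(\Lambda_-)$ counts rigid $J^-$-holomorphic disks in $\symp(J^1M)$ with boundary on $\R \times \Lambda_-$, which is just the usual LCH differential $\dd_{\Lambda_-}$. Thus the codomain DGA $(\alg(\Sigma), \dd_\Sigma)$ coincides with $(\alg(\Lambda_-), \dd_{\Lambda_-})$, which is also the codomain of the \cite{EHK} cobordism map.

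Second, I would examine the moduli spaces $\M_J^A(a; c_1, \ldots, c_m)$ appearing in the definition of $f_\Sigma(a)$. By hypothesis each $c_i$ is either a double point of $L$ or a Reeb chord of $\Lambda_-$; with $L$ embedded the former is excluded, so every negative puncture is a Reeb chord of $\Lambda_-$. The remaining moduli spaces --- $J$-holomorphic disks in $\symp(J^1M)$ with boundary on $L$, a positive puncture at a Reeb chord of $\Lambda_+$, and negative punctures at Reeb chords of $\Lambda_-$ --- are precisely those counted in \cite{EHK}. Third, I would verify that the class of admissible almost complex structures from Section \ref{sec:acs} matches the class used in \cite{EHK}: both demand cylindrical agreement with $J^\pm$ at the ends and compatibility with $\omega$; the real-analytic condition at double points is vacuous in the embedded case. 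Once the conventions on capping paths, grading, and the homology class $A$ are confirmed to align, the formula
\[
f_\Sigma(a) = \sum_{\dim \M_J^A(a; c_1, \ldots, c_m) = 0} |\M_J^A(a; c_1, \ldots, c_m)|\, c_1 \cdots c_m
\]
agrees on generators with the formula for the \cite{EHK} cobordism map, and since both are algebra homomorphisms out of a free algebra, the two maps coincide.

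The main (and only) obstacle is this bookkeeping compatibility check on admissibility and orientations; no new geometric or analytic input is required beyond what is already invoked from \cite{EHK} in the setup of Section \ref{sec:holo}. In particular there is no need to invoke any stretching or gluing argument, which is what makes this proposition genuinely easier than the comparison with the GFT definition alluded to in the discussion preceding it.
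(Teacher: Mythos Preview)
Your proposal is correct and matches the paper's approach exactly: the paper's proof is the single sentence ``It follows directly from the definition,'' and what you have written is simply an explicit unpacking of why this is so. Your three steps (codomain collapses to $\alg(\Lambda_-)$, all negative punctures are Reeb chords, admissible $J$ reduces to the \cite{EHK} class) are precisely the observations implicit in that one-line proof.
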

\begin{proof} It follows directly from the definition.
\end{proof}

\begin{remark}\label{sft}
To prove the well-definedness and invariance theorem of the immersed DGA map associated to the above SFT definitions of the DGA of $\Sigma$ and the DGA map $f_{\Sigma}$ one could hope to modify and extend the arguments of \cite{E2,EES}.  This would involve the compactness/gluing results for the corresponding moduli spaces of disks for good Lagrangian cobordisms from Section \ref{sec:holo}.  As the disks involved have punctures both at infinity and at double points of $L$, this would rely on a synthesis of the compactness and gluing results of relative  symplectic field theory \cite{Abbas, BEHWZ} and Legendrian contact homology \cite{EES}.
As the details of these results are lengthy and scattered throughout the literature, we do not attempt this here.  An alternate approach, at least for a restricted class of almost complex structures, is to relate the moduli spaces from Section \ref{sec:holo} with the GFTs used in the definitions from Section \ref{sec:functorI}.  
The correspondence between holomorphic disks and GFTs is first introduced in the case of closed Legendrians in \cite{Ekh} and then in the case of embedded exact Lagrangian cobordisms in \cite{EHK}.
In particular, after an appropriate choice of almost complex structure and a fiber rescaling, the two definitions of the $f_\Sigma$ map  using holomorphic disks and GFTs match with each other.
Furthermore, the exposition in \cite{EHK} indicates that 
the correspondence constructed there applies also to the case of 
 immersed exact Lagrangian cobordisms.
Thus, for certain choices of almost complex structure, the definitions from the SFT perspective presented in this section agree with the definitions from Section \ref{sec:functorI}  using GFTs.


 \end{remark}

\bibliographystyle{abbrv}
\bibliography{PR}

\end{document}